\documentclass[a4paper,10pt]{amsart}
\usepackage[a4paper,top=2.5cm,bottom=2.5cm,left=2.5cm,right=2.5cm]{geometry}
\usepackage{macros}

\begin{document}
\title[Comparing metric completions with Verdier quotients]{Comparing metric completions of triangulated categories with Verdier quotients}

\author{\textsc{Cyril Matou\v sek}}
\address{Paderborn University,
Institute of Mathematics,
Warburger Strasse 100,
33098 Paderborn,
Germany}
\email{cyril.matousek@uni-paderborn.de}

\date{\today}

\begin{abstract}
Neeman's metric on a triangulated category assigns lengths to morphisms and allows us to construct triangulated metric completions of the category.
We prove that the metric completion of a~triangulated category with respect to a constant metric is always a subcategory of the Verdier quotient by the thick subcategory generated by cones of zero-length morphisms, and we provide a sufficient criterion for equality.
To that end, we define continuous functors between completions which induce triangulated functors between completions.
Moreover, we give two applications. First, we show that the additive equivalence discovered by Cummings and Gratz between the Paquette-Yıldırım completion and a specific metric completion of an Igusa-Todorov discrete cluster category is, in fact, a~triangulated equivalence. Second, we fully classify metric completions of bounded derived categories of coherent sheaves over weighted projective lines of domestic or tubular type.
\end{abstract}

\maketitle

{
\hypersetup{linkcolor=black}
\tableofcontents
}

\section{Introduction}
\label{sec:introduction}

Taking metric completions of triangulated categories is a unique method to construct new triangulated categories. Using the idea behind convergence of sequences of elements of a metric space, one can construct a metric completion of a triangulated category by adding formal colimits of Cauchy sequences of objects and morphisms to the category in such a way that the resulting category is again triangulated.
Despite being discovered relatively recently,
the~method of metric completions has already spawned an array of applications in in homological algebra, representation theory, and algebraic geometry \cite{BiswasChenRahulParkerZheng24,CanonacoHaesemeyerNeemanStellari24,CummingsGratz24,DeynLankRahul25,Neeman21C,Neeman24,Neeman26,Rahul25A,Rahul25B,SunZhang21}.

Metrics and metric completions of triangulated categories have been initially introduced by Neeman in \cite{Neeman18} with the purpose to provide an alternative proof to Rickard's results on derived Morita theory \cite{Rickard89} that for coherent rings \(\perfect(R)\simeq\perfect(S)\) if and only if \(\derived^b(\modf\dashmodule R)\simeq\derived^b(\modf\dashmodule S)\).   

The~same metric-based strategy works even for a larger class of triangulated categories such as noetherian, weakly approximable categories (see \cite{CanonacoNeemanStellari25,Neeman18,Neeman23,Neeman25B}) and thus offers a~generalisation of derived Morita theory to, for example, a~derived category of a separated noetherian scheme.

What makes the technique of metric completions such a groundbreaking result, however, are not necessarily only its applications to derived Morita theory, but also the fact that it allows us to construct a new triangulated category from the data of an old one. 
Besides the standard constructions of a triangulated subcategory and Verdier quotient, there are not that many methods to reproduce a triangulated category. For example, the constructions of a relative stable category \cite{BalmerStevenson21,Beligiannis00} or an orbit category \cite{Keller05}  deliver a~new triangulated category. Some of the rare such methods rely on rather strong additional assumptions such as the existence of a separable ring object and a tensor-triangulated structure in \cite{Balmer11}, or a suitable enhancement.

On the other hand, the condition for a family of subcategories of a triangulated category to be a~good metric (Definition~\ref{D:Metric}) is relatively easy to satisfy. For example, any triangulated subcategory automatically determines the corresponding constant metric, and both an aisle and a coaisle of any $t$-structure provide us with their own metrics (Subsection~\ref{SS:MetricsAndMetricCompletions}).
One is then, in principle, capable of constructing many different completions of a given triangulated category with respect to different choices of a metric.

There are, nevertheless, certain unnaturalities of the completion process. The metric completion \(\mathfrak{S}({\mathcal{S}})\) (see Definition~\ref{D:Completion}) of a triangulated category \(\mathcal{S}\) is formally an intersection
of two subcategories  \(\mathfrak{L}({\mathcal{S}}),\mathfrak{C}({\mathcal{S}})\) of the additive functor category \(\Mod\dashmodule\mathcal{S}\), where \(\mathfrak{L}({\mathcal{S}})\) consists of colimits of so-called \textit{ Cauchy sequences} with respect to the metric, while the functors from \(\mathfrak{C}({\mathcal{S}})\) send morphisms perceived by the~metric as very short to isomorphisms. This two-step process can make certain desirable constructions rather completed, as an~operation behaving nicely with respect to one of the defining categories does not have to preserve the~other. For example, unless a metric is \textit{embeddable} (in the sense of \cite{BiswasChenRahulParkerZheng24}), there is no obvious way to obtain a functor \(\mathcal{S}\rightarrow\mathfrak{S}({\mathcal{S}})\).

Similar obstacle was addressed by Sun and Zhang in \cite{SunZhang21} in their study of functionality of the completion construction. They define a notion of a~\textit{compression} functor (Definition~\ref{D:Compression}) between triangulated categories with good metrics and prove that an adjoint pair \(\mathcal{S}\rightleftarrows\mathcal{T}\) of compressions determines a triangulated functor \(\mathfrak{S}({\mathcal{T}})\rightarrow\mathfrak{S}({\mathcal{S}})\) in one direction. The requirement of the existence of an adjoint pair is there precisely to ensure compatibility with the \(\mathfrak{L}\) and \(\mathfrak{C}\) constructions simultaneously.  

In our first main result, we propose a definition of a \textit{continuous} functor (Definition~\ref{D:Continuous}) between triangulated categories with metrics, which is more restrictive than the one of a compression but no longer requires an adjoint pair to induce functors on completions.
\begin{theorem}[Theorem~\ref{T:CompletionOfContinuousFunctor}]
\label{T:IntroductionContinuous}
Let \(\mathcal{S}\) and \(\mathcal{T}\) be triangulated categories equipped with good metrics and \(\varphi:\mathcal{S}\rightarrow\mathcal{T}\) a continuous functor.
Then there is a natural triangulated functor \(\mathfrak{S}(\varphi):\mathfrak{S}(\mathcal{S})\rightarrow\mathfrak{S}(\mathcal{T})\) compatible with the action of \(\varphi\) on Cauchy sequences.
\end{theorem}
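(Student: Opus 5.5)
We do not yet know the exact form of Definition~\ref{D:Continuous}. We assume it says that \(\varphi\) is triangulated and that for every \(n\) there is \(m\) with \(\varphi(\mathcal{M}_m)\subseteq\mathcal{N}_n\). Here \(\{\mathcal{M}_m\}\) and \(\{\mathcal{N}_n\}\) are the good metrics on \(\mathcal{S}\) and \(\mathcal{T}\). We also assume it includes a condition "in the other direction", which is what lets us bypass the adjoint pair in \cite{SunZhang21}. That condition should say that each \(\mathcal{N}_n\) is controlled by \(\varphi\): for every \(n\) and every \(t\in\mathcal{T}\) there is \(m\) such that, up to objects of \(\mathcal{N}_n\), the functor \(\Hom(\varphi(-),t)\) is insensitive to \(\mathcal{M}_m\).

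The plan is to define \(\mathfrak{S}(\varphi)\) on Cauchy sequences and then check that the definition is independent of choices. Let \(E_*\) be a Cauchy sequence in \(\mathcal{S}\), so each cone of \(E_i\to E_{i+1}\) lies in \(\mathcal{M}_{n}\) for \(i\gg0\). By the first continuity condition, \(\varphi(E_*)\) is Cauchy in \(\mathcal{T}\). For \(F=\colim \mathcal{Y}(E_*)\in\mathfrak{S}(\mathcal{S})\) we set \(\mathfrak{S}(\varphi)(F)=\colim \mathcal{Y}(\varphi E_*)\), which lies in \(\mathfrak{L}(\mathcal{T})\).

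Three checks follow.

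\textbf{(a) Well-definedness on objects and action on morphisms.} We use the description from \cite{Neeman18} of morphisms between colimits of Cauchy sequences. A morphism \(\colim\mathcal{Y}(E_*)\to\colim\mathcal{Y}(E'_*)\) is represented by a compatible system \(E_i\to E'_{j(i)}\), modulo equivalence. Applying \(\varphi\) gives a system \(\varphi E_i\to\varphi E'_{j(i)}\), and functoriality then comes from functoriality of \(\varphi\). Two representatives of the same object become isomorphic in \(\mathfrak{L}\), and \(\varphi\) preserves the interleaving maps, so the image does not depend on the chosen sequence.

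\textbf{(b) Landing in \(\mathfrak{C}(\mathcal{T})\).} This is the main obstacle. We must show that \(\colim\mathcal{Y}(\varphi E_*)\) sends morphisms with cone in \(\mathcal{N}_n\), for large \(n\), to isomorphisms. Only \(F\in\mathfrak{C}(\mathcal{S})\) is known. Here we will use the second continuity condition. For \(t\in\mathcal{T}\), the value \(\colim\Hom(t,\varphi E_i)\) should be computed as \(\colim\Hom(s,E_i)\) for suitable approximating \(s\), via the controlling condition. Then the vanishing of \(F\) on \(\mathcal{M}_m\)-short morphisms transfers to vanishing on \(\mathcal{N}_n\)-short ones. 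Concretely, we will first try to show that \(\Hom(N,\varphi E_i)\to\Hom(N,\varphi E_{i+1})\) is eventually zero, stably in \(i\), for \(N\in\mathcal{N}_n\) with \(n\) large. This uses the standard criterion that an object of \(\mathfrak{L}\) lies in \(\mathfrak{C}\) iff it is annihilated by \(\mathcal{Y}(\mathcal{N}_n)\) shifted appropriately.

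\textbf{(c) Triangulated structure.} Distinguished triangles in \(\mathfrak{S}(\mathcal{S})\) are, by Definition~\ref{D:Completion} and \cite{Neeman18}, colimits of Cauchy sequences of distinguished triangles in \(\mathcal{S}\). Since \(\varphi\) is triangulated, it maps such a sequence to a Cauchy sequence of triangles in \(\mathcal{T}\), whose colimit is distinguished in \(\mathfrak{S}(\mathcal{T})\). The functor commutes with \(\Sigma\) because \(\varphi\) does.

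Naturality in \(\varphi\) and compatibility with Cauchy sequences then hold by construction. The compatibility statement is exactly \(\mathfrak{S}(\varphi)(\colim\mathcal{Y}E_*)=\colim\mathcal{Y}\varphi E_*\).
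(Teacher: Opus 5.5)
\textbf{The gap is in your step (b), and it comes from guessing the wrong definition.} In the paper, a compression \((\varphi,\mathcal{M},\mathcal{N})\) is \emph{continuous} precisely when \(\varphi\) maps compactly supported Cauchy sequences in \(\mathcal{S}\) to compactly supported Cauchy sequences in \(\mathcal{T}\). There is no ``condition in the other direction''. An object of \(\mathfrak{S}_{\mathcal{M}}(\mathcal{S})\) is exactly \(\moco\mathbf{E}\) for a compactly supported Cauchy sequence \(\mathbf{E}\). So \(\moco\varphi(\mathbf{E})\in\mathfrak{C}_{\mathcal{N}}(\mathcal{T})\) holds by definition, and what you call the main obstacle needs no proof.

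Your sketch for (b) would also fail under your own hypothesis. Compact support of \(\moco\varphi(\mathbf{E})\) is about \(\varinjlim_i\Hom_{\mathcal{T}}(t,\varphi E_i)\) for \(t\) in \(C_n\) (your \(\mathcal{N}_n\)), i.e.\ maps \emph{into} \(\varphi\)-images. Your controlling condition is about \(\Hom_{\mathcal{T}}(\varphi(-),t)\), i.e.\ maps \emph{out of} \(\varphi\)-images. Rewriting the former as \(\varinjlim_i\Hom_{\mathcal{S}}(s,E_i)\) needs something like a left adjoint \(\psi\) of \(\varphi\) with \(\psi(C_n)\subseteq B_m\). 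That is exactly the Sun--Zhang hypothesis you wanted to avoid. Some extra hypothesis beyond being a compression is unavoidable: the compression \((\Id_{\mathcal{S}},0,\mathcal{S})\) sends \(\Yoneda(S)\in\mathfrak{S}_0(\mathcal{S})\) outside \(\mathfrak{C}_{\mathcal{S}}(\mathcal{S})\).

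\textbf{Step (a) is workable but takes a more laborious route than the paper.} The paper never defines the functor on representatives. It takes the left Kan extension \(L\) along \(\varphi\), i.e.\ the left adjoint of \(({-})\circ\varphi\colon\Mod\dashmodule\mathcal{T}\to\Mod\dashmodule\mathcal{S}\). This \(L\) preserves colimits and satisfies \(L(\Yoneda(B))\simeq\Yoneda(\varphi(B))\), so \(L(\moco\mathbf{E})\simeq\moco\varphi(\mathbf{E})\). Because \(L\) is already a functor on the whole module category, the following come for free:
\begin{itemize}
\item independence of the chosen Cauchy sequence;
\item the action on morphisms;
\item functoriality.
\end{itemize}
Then \(\mathfrak{S}(\varphi)\) is just the restriction of \(L\). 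Your ind-object description gives the same functor on sequential colimits of representables. However, it needs two things you only assert: reindexing morphisms into strictly compatible systems (Neeman's Lemma~2.8), and checking that the result is independent of representatives.

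\textbf{Step (c) matches the paper.} Note that it also depends on (b), to know the three terms of the image triangle lie in \(\mathfrak{S}(\mathcal{T})\). The paper also makes the isomorphism \(L\Sigma\simeq\Sigma L\) explicit, using \(\Sigma F=F\circ\Sigma^{-1}\) and \(\varphi\Sigma\simeq\Sigma\varphi\).
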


Another potential issue with metric completions it that they are notoriously hard to calculate, making the list of examples in the existing literature rather limited.
With the sole exception of \cite{CummingsGratz24}, every instance of an explicitly calculated metric completion relied on realising the completion as a subcategory an auxiliary, well-understood triangulated category known as \textit{a good extension} (Definition~\ref{D:GoodExtension}).
As a~consequence, most of the contemporary results employ only those metrics on a triangulated category \(\mathcal{S}\) determined by either $t$-structures on \(\mathcal{S}\), or $t$-structures on a good extension \(\mathcal{T}\) of \(\mathcal{S}\).
Only few systematic classifications of metric completions \cite{CummingsGratz24,Matousek26C,Matousek26B} of a specific triangulated category have been done.

In his previous work \cite{Matousek26B}, the author identified that certain completions of \(\derived^b(\modf\dashmodule R)\) for a well-behaved hereditary noetherian ring $R$ were equivalent to \(\derived^b(\modf\dashmodule Q)\) for $Q$ a universal localisation of~$R$. This result relied heavily on the theory of the aforementioned good extensions. Since \(\derived^b(\modf\dashmodule Q)\) can be also obtained from \(\derived^b(\modf\dashmodule R)\) as a Verdier quotient by some thick subcategory, it is natural to ask whether similar results are possible for other triangulated categories. Using the theory of continuous functors (Theorem~\ref{T:IntroductionContinuous}), we arrived at an affirmative answer (Theorem~\ref{T:IntroductionQuotient}) even without resorting to good extensions.

Technically, a good metric on a triangulated category \(\mathcal{S}\) is a collection \(\mathcal{M}=\{B_n\}_{n\in\N}\) of subcategories of \(\mathcal{S}\) (Definition~\ref{D:Metric}). We can, in principle, make all the \(B_n\)'s equal to a fixed thick subcategory \(\mathcal{C}\subseteq\mathcal{S}\). Such a constant metric postulates that a length of a morphism $f$ is zero if and only if \(\cone f\in\mathcal{C}\).

\begin{theorem}[Theorem~\ref{T:EmbeddingIntoQuotient}, Corollary~\ref{C:CountablyGeneratedCompletionVerdierQuotient}]
\label{T:IntroductionQuotient}
Let \(\mathcal{C}\) be a thick subcategory of a triangulated category \(\mathcal{S}\). Then there is a fully faithful triangulated functor \(\mathfrak{S}_{\mathcal{C}}(\mathcal{S})\xhookrightarrow{}\mathcal{S}/\mathcal{C}\) where the metric completion \(\mathfrak{S}_{\mathcal{C}}(\mathcal{S})\) is taken with respect to the constant metric \(\{B_n\}_{n\in\N}=\{\mathcal{C}\}_{n\in\N}\).

Moreover, if \(\mathcal{S}\) is Hom-finite and \(\kk\)-linear over a commutative ring \(\kk\) and \(\mathcal{C}\) is countably generated as a triangulated subcategory, then \(\mathfrak{S}_{\mathcal{C}}(\mathcal{S})\simeq\mathcal{S}/\mathcal{C}\).
\end{theorem}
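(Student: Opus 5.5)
The plan is to apply Theorem~\ref{T:IntroductionContinuous} to the Verdier quotient functor $\pi:\mathcal{S}\rightarrow\mathcal{S}/\mathcal{C}$. For this we equip $\mathcal{S}$ with the constant metric $\{\mathcal{C}\}_{n\in\N}$ and $\mathcal{S}/\mathcal{C}$ with the trivial constant metric $\{0\}_{n\in\N}$.

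\textbf{Step 1: continuity of $\pi$ and the trivial completion.} First I check that $\pi$ is continuous in the sense of Definition~\ref{D:Continuous}. It sends $\mathcal{C}$ to $0$, so morphisms of length zero go to isomorphisms. Next I identify the completion of $\mathcal{S}/\mathcal{C}$ with respect to $\{0\}$. A Cauchy sequence for the zero metric is eventually a sequence of isomorphisms. Hence its colimit in $\Mod\dashmodule(\mathcal{S}/\mathcal{C})$ is a representable functor. The condition defining $\mathfrak{C}$ is vacuous here, since all length-zero maps are already isomorphisms. Therefore $\mathfrak{S}_0(\mathcal{S}/\mathcal{C})\simeq\mathcal{S}/\mathcal{C}$ through the Yoneda embedding, and Theorem~\ref{T:IntroductionContinuous} gives a triangulated functor $\Phi:\mathfrak{S}_{\mathcal{C}}(\mathcal{S})\rightarrow\mathcal{S}/\mathcal{C}$.

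\textbf{Step 2: the objects of $\mathfrak{S}_{\mathcal{C}}(\mathcal{S})$.} For the constant metric, a Cauchy sequence $E_1\rightarrow E_2\rightarrow\cdots$ is, after discarding finitely many terms, a sequence of maps with cones in $\mathcal{C}$. Let $F=\colim\, y(E_i)\in\mathfrak{L}$. Membership $F\in\mathfrak{C}$ means that $F$ inverts every morphism whose cone lies in $\mathcal{C}$; in particular $F$ vanishes on $\mathcal{C}$. I then claim that
\[
F(-)\cong\Hom_{\mathcal{S}/\mathcal{C}}(-,E_1)
\]
as functors on $\mathcal{S}$. To see this, compare $F$ with the restriction of $\Hom_{\mathcal{S}/\mathcal{C}}(-,E_1)$ along $\pi$, computed by the calculus of fractions:
\[
\Hom_{\mathcal{S}/\mathcal{C}}(X,E_1)=\colim_{s:X'\rightarrow X,\ \cone s\in\mathcal{C}}\Hom(X',E_1).
\]
Because $F$ inverts such maps $s$, the natural map $y(E_1)\rightarrow F$ factors through this localised functor. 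I would prove that the factorisation is an isomorphism as follows.
\begin{itemize}
\item \emph{Surjectivity.} An element of $\colim\Hom(X,E_i)$ is a map $X\rightarrow E_i$, and $E_1\rightarrow E_i$ is invertible in $\mathcal{S}/\mathcal{C}$. This gives a roof $X\rightarrow E_i\leftarrow E_1$, which I must convert into a left fraction. This uses the Ore conditions.
\item \emph{Injectivity.} This uses that $F$ kills $\mathcal{C}$.
\end{itemize}
Granting this, $\Phi$ sends $F$ to $\pi(E_1)$. Full faithfulness follows because morphisms in $\mathfrak{S}$ are natural transformations of functors on $\mathcal{S}$. These agree with natural transformations of the corresponding functors on $\mathcal{S}/\mathcal{C}$, since the functors in $\mathfrak{C}$ factor through $\pi$, and Yoneda applies.

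\textbf{Main obstacle.} The key identification is the most delicate step: matching the colimit over the Cauchy sequence with the colimit over fractions. This requires $F\in\mathfrak{C}$ essentially, and the roof manipulations must be carried out carefully.

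\textbf{Equality in the countably generated case.} Here I must show essential surjectivity: every $\pi(X)$ arises as some $F$. Write $\mathcal{C}=\bigcup_n\mathcal{C}_n$, where $\mathcal{C}_n$ is the thick closure of finitely many generators, and enumerate countably many generators $c_n$. Then build a Cauchy sequence $X=E_0\rightarrow E_1\rightarrow\cdots$ such that $\colim\, y(E_i)$ inverts all maps with cone in $\mathcal{C}$. Equivalently, $\colim\Hom(c,E_i)=0$ for every $c\in\mathcal{C}$, together with its shifts. At each stage I kill the finitely many maps from $c_1,\dots,c_n$ (and shifts in a finite range) into $E_i$. Hom-finiteness over $\kk$ makes each $\Hom(c_j,E_i)$ finitely generated, so a single map $\bigoplus c_j^{m}\rightarrow E_i$ and its cone $E_{i+1}$ do the job. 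A diagonal argument over $n$ and shifts then makes every element eventually vanish. This also suffices for $\mathfrak{C}$, since vanishing on $\mathcal{C}$ implies that maps with cone in $\mathcal{C}$ are inverted, using the long exact sequence and the colimit being cohomological. The resulting $F$ is sent to $\pi(X)$.
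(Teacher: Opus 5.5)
Your proposal is correct. The construction of the functor matches the paper's Lemma~\ref{L:FunctorIntoQuotient}: continuity of $(\pi,\mathcal{C},0)$ and $\mathfrak{S}_0(\mathcal{S}/\mathcal{C})\simeq\mathcal{S}/\mathcal{C}$. The essential surjectivity is the paper's precover-killing construction. You kill all $\Sigma^k c_j$ with $j,|k|\le n$ at stage $n$, whereas the paper enumerates a $\Sigma$-closed generating set so that each generator is repeated infinitely often; these are equivalent.

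The genuine difference is full faithfulness. The paper proves fullness by a roof argument: it prepends the denominator $v$ of a fraction to the Cauchy sequence and applies Lemma~\ref{L:BasicFactorisationProperty} (Lemma~\ref{L:FullCompletionQuotient}). It proves faithfulness from the general Lemma~\ref{L:ObjectiveFaithful}, using that $\pi$ is objective and kills exactly $\mathcal{C}$. Those general lemmas belong to the continuous-functor toolkit and are reused later, for instance in Proposition~\ref{P:FullContinuousFullyFaithful}.

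You instead prove a sharper structural fact: every object $\moco\mathbf{E}$ of $\mathfrak{S}_{\mathcal{C}}(\mathcal{S})$ is the restriction $\Hom_{\mathcal{S}/\mathcal{C}}(\pi(-),\pi(E_1))$ of a representable. Full faithfulness then becomes formal, because restriction along the localisation $\pi$ is fully faithful on module categories, and Yoneda finishes. This is more transparent and avoids objectivity entirely, but it is specific to the quotient situation.

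Two remarks on your write-up.

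\begin{itemize}
\item The step you flag as the main obstacle is actually formal, with no roof manipulations needed. Since $-\circ\pi$ is fully faithful, the counit of $L\dashv(-\circ\pi)$ is an isomorphism. Your $F$ is cohomological, being a filtered colimit of representables, and vanishes on $\mathcal{C}$. Hence it inverts all morphisms with cone in $\mathcal{C}$ and so $F\cong F'\circ\pi$. Then $F'\cong L(F)\cong\moco\pi(\mathbf{E})\cong\Yoneda(\pi(E_1))$ by Lemma~\ref{L:ContinuousAdjointVersion}.
\item The same observation is what you need to say explicitly: the Hom-set bijection you obtain is the one induced by $\Phi$, because $\Phi$ is the restriction of $L$. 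Otherwise you only have an abstract bijection, not full faithfulness of $\Phi$.
\end{itemize}

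Also, in your fraction argument, injectivity uses only that $\pi(e_{1,i})$ is invertible. The hypothesis $F\in\mathfrak{C}$ is used earlier, in building the comparison map, where you need $F(s)$ to be invertible.
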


In Section~\ref{sec:DCC} and Section~\ref{sec:CompletingWPL}, we present two sample applications of the fact that we may now recover Verdier quotients as specific instances of metric completions.  

The first application revolves around
discrete cluster categories \(\mathcal{C}_p\), introduced by Igusa and Todorov \cite{IgusaTodorov15} as a generalisation of cluster categories of infinite Dynkin type \cite{HolmJoergensen12}. Those are triangulated categories possessing a combinatorial model using arcs of an infinity-gon (Subsection~\ref{SS:DiscreteClusterCategories}). They have led to multiple applications in infinity cluster combinatorics \cite{AugustCheungFaberGratzSchroll23,Fisher17,GratzHolmJoergensen19,GratzZvonareva23,Murphy25A}. In \cite{PaquetteYildirim21}, Paquette and Yıldırım constructed a combinatorial completion \(\overline{\mathcal{C}_p}\) (without using any metric techniques) for the discrete cluster category by taking a Verdier quotient of a larger cluster category. On the level of the combinatorial model, this completion procedure adjoins additional arcs to the model, allowing for further combinatorial arguments \cite{Franchini25,Murphy25B,CanakciKalckPressland25}.

Cummings and Gratz conducted an exhaustive study of metric completions on discrete cluster categories via $t$-structure approximations in \cite{CummingsGratz24}. They provide an explicit combinatorial description of all possible metric completions of the category taken with respect to good additive metrics determined by aisles and coaisles of $t$-structures.
On top of that, for the specific choice of a coaisle metric of a non-degenerate, bounded above $t$-structure, the coaisle metric completion turns out to be additively equivalent to the Paquette-Yıldırım combinatorial completion (see \cite[Corollary~6.24, Remark~6.25]{CummingsGratz24}). The authors of \cite{CummingsGratz24} ask whether this additive equivalence could be triangulated.

Given that \(\overline{\mathcal{C}_p}\) is formed as a certain Verdier quotient, we can now view it as metric completions as well via Theorem~\ref{T:IntroductionQuotient}. The question now boils down to comparing two potentially different metric completions. Using Theorem~\ref{T:IntroductionContinuous}, we can confirm the conjecture.

\begin{theorem}[Theorem~\ref{T:TheEquivalenceIsAnEquivalence}]
Let \(\mathfrak{S}(\mathcal{C}_p)\) be a metric completion of a discrete cluster category \(\mathcal{C}_p\) with respect to a coaisle metric given by a non-degenerate, bounded above $t$-structure. Then there exists a~triangulated equivalence \(\mathfrak{S}(\mathcal{C}_p)\xrightarrow{\simeq}\overline{\mathcal{C}_p}\), which coincides as an additive functor with the~equivalence from \cite[Corollary~6.24]{CummingsGratz24}.
\end{theorem}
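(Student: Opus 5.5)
The plan is to realise \(\overline{\mathcal{C}_p}\) itself as a metric completion, and then compare it with \(\mathfrak{S}(\mathcal{C}_p)\) through a continuous functor.

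Recall that Paquette and Yıldırım build \(\overline{\mathcal{C}_p}\) as a Verdier quotient \(\mathcal{C}'/\mathcal{D}\). Here \(\mathcal{C}'\) is a larger cluster category, combinatorially given by an infinity-gon with additional accumulation points. The subcategory \(\mathcal{D}\) is the thick subcategory generated by the objects whose arcs meet the accumulation points in the appropriate way.

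\textbf{Step 1.} Verify that \(\mathcal{C}'\) is Hom-finite and \(\kk\)-linear, and that \(\mathcal{D}\) is countably generated as a triangulated subcategory. The indecomposables of \(\mathcal{C}'\) are arcs between countably many marked points, so this should be direct. Theorem~\ref{T:IntroductionQuotient} then gives a triangulated equivalence \(\mathfrak{S}_{\mathcal{D}}(\mathcal{C}')\simeq \overline{\mathcal{C}_p}\) for the constant metric \(\{\mathcal{D}\}_{n}\).

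\textbf{Step 2.} Use the inclusion \(\varphi\colon \mathcal{C}_p\hookrightarrow \mathcal{C}'\), the full subcategory on arcs avoiding the accumulation points. Equip \(\mathcal{C}_p\) with the coaisle metric \(B_n=\mathcal{U}^{\perp}[n]\) of the chosen non-degenerate, bounded above \(t\)-structure, and \(\mathcal{C}'\) with the constant metric \(\mathcal{D}\). I then check that \(\varphi\) is continuous in the sense of Definition~\ref{D:Continuous}. The expected key point is that for \(n\gg 0\) every object of \(B_n\) becomes, in \(\mathcal{C}'\), an object of \(\mathcal{D}\), or at least is sent into \(\mathcal{D}\) up to the required approximation.

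This is where the Cummings--Gratz combinatorial description enters. The coaisle \(B_n\) consists of arcs with endpoints close to the accumulation points, and for large \(n\) these are exactly the arcs that become zero after the quotient. The continuity property presumably also requires a cofinality or compatibility on Hom-sets; I expect to derive it from the same arc description.

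Theorem~\ref{T:IntroductionContinuous} then produces a triangulated functor
\[\mathfrak{S}(\varphi)\colon \mathfrak{S}(\mathcal{C}_p)\to \mathfrak{S}_{\mathcal{D}}(\mathcal{C}')\simeq\overline{\mathcal{C}_p}.\]

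\textbf{Step 3.} Show that this triangulated functor agrees additively with the equivalence of \cite[Corollary~6.24]{CummingsGratz24}. Both functors are determined by their effect on colimits of Cauchy sequences. For a Cauchy sequence \(E_1\to E_2\to\cdots\) in \(\mathcal{C}_p\), Cummings and Gratz send its colimit to the limit arc in \(\overline{\mathcal{C}_p}\). Our functor sends it to the object of \(\mathcal{C}'/\mathcal{D}\) represented by the sequence, which stabilises in the quotient because the cones of its transition maps lie in \(\mathcal{D}\) eventually. Matching these on objects and on morphisms via the compatibility clause of Theorem~\ref{T:IntroductionContinuous} shows the two coincide, so \(\mathfrak{S}(\varphi)\) is an equivalence.

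I expect the main obstacle to be Step 2. The continuity condition must hold uniformly in \(n\) and relate a non-constant coaisle filtration to a constant thick subcategory. Here I would carefully use that the \(t\)-structure is bounded above and non-degenerate, so that arcs in \(B_n\) are forced towards the accumulation points as \(n\) grows.
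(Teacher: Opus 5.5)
Your Step~1 is fine; it is exactly how the paper identifies $\overline{\mathcal{C}_p}\simeq\mathfrak{S}_{\mathcal{D}}(\mathcal{C}_{2p})$. Step~2, however, fails for a structural reason, not a technical one.

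The inclusion $\varphi$ identifies $\mathcal{C}_p$ with $\mathcal{D}'=\mathcal{D}^{\perp}={}^{\perp}\mathcal{D}$. So $\varphi(B_m)\cap\mathcal{D}=0$, while $B_m=\mathcal{C}_p^{>m}\neq0$. Hence there is no $m$ with $\varphi(B_m)\subseteq\mathcal{D}$, and $(\varphi,\mathcal{M},\mathcal{D})$ is not even a compression.

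Your heuristic that arcs near the accumulation points ``become zero after the quotient'' is false: the composite $\mathcal{C}_p\hookrightarrow\mathcal{C}_{2p}\to\overline{\mathcal{C}_p}$ is fully faithful. For instance, the double fan $\{x_i,(x+n+1)_j\}$ has transition cones $\{(x+n)_j,(x+n+2)_j\}$, and these are nonzero in $\overline{\mathcal{C}_p}$. So its image is not Cauchy for $\mathcal{D}$, and the stabilisation you invoke in Step~3 never happens.

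No cleverer functor rescues the plan. Suppose $(\psi,\mathcal{M},\mathcal{D})$ is a compression for some triangulated $\psi\colon\mathcal{C}_p\to\mathcal{C}_{2p}$. Then $\psi(\langle B_m\rangle)\subseteq\mathcal{D}$ for some $m$. With the representative $\mathcal{C}_p^{>0}$ fixed in the paper, $\langle B_m\rangle$ contains every arc $X$ with both endpoints in a single $\Z_i$. Such an $X$ is a nonzero compactly supported object of $\mathfrak{S}_{\mathcal{M}}(\mathcal{C}_p)$. Continuity applied to the constant sequence at $X$ forces $\Hom(\mathcal{D},\psi(X))=0$; since $\psi(X)\in\mathcal{D}$, this gives $\psi(X)=0$. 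Then $\mathfrak{S}(\psi)$ sends $\id_X$ to zero and cannot be an equivalence. So the equivalence cannot come from a single continuous functor into the constant-metric completion $\mathfrak{S}_{\mathcal{D}}(\mathcal{C}_{2p})$.

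The missing idea is to compare the two completions inside a common, larger completion. The paper equips $\mathcal{C}_{2p}$ with the join $\mathcal{N}\vee\mathcal{D}$, where $\mathcal{N}=\{\nu(B_n)\}_{n\in\N}$. Then $(\nu,\mathcal{M},\mathcal{N}\vee\mathcal{D})$ and $(\Id_{\mathcal{C}_{2p}},\mathcal{D},\mathcal{N}\vee\mathcal{D})$ are both continuous. The second uses the 2-Calabi--Yau property and the fact that $\nu(B_1)$ is a coaisle of a bounded above $t$-structure. Proposition~\ref{P:FullContinuousFullyFaithful} then embeds $\mathfrak{S}_{\mathcal{M}}(\mathcal{C}_p)$ and $\overline{\mathcal{C}_p}$ fully faithfully into $\mathcal{T}=\mathfrak{S}_{\mathcal{N}\vee\mathcal{D}}(\mathcal{C}_{2p})$.

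Inside $\mathcal{T}$, the two incarnations $E$ and $F$ of an arc $\{x_i,A_{j+1}\}$ are genuinely different objects. Here $E$ is the double-fan limit and $F$ is the image of $\{x_i,1_{(j+1)'}\}$. They fit into a triangle $G^j\to E\to F\to\Sigma G^j$, where $G^j=\moco_{n}\{n_j,n_{(j+1)'}\}$ satisfies $\Sigma G^j\simeq G^j$.

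The paper then proves that $\mathcal{G}=\addf(G^1,\ldots,G^p)$ is bireflective with ${}^{\perp}\mathcal{G}=\mathfrak{S}_{\mathcal{M}}(\mathcal{C}_p)$. So $(\mathfrak{S}_{\mathcal{M}}(\mathcal{C}_p),\mathcal{G},\mathcal{G}^{\perp})$ is a TTF-triple, and the coaisle truncation is a triangulated equivalence onto $\mathcal{G}^{\perp}$. Its image is identified with $\overline{\mathcal{C}_p}$, and its action with the Cummings--Gratz equivalence, by computing it on the arcs $\{x_i,A_j\}$ and using that these generate both categories. Your Step~3 has a counterpart there, but it only makes sense once a triangulated functor exists, which your construction does not supply.
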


As the final main result, we are able to bring our newfound understanding of Verdier quotients as completions to fruition by generalising the classification result \cite[Theorem~1.1]{Matousek26B}. The original theorem fully classifies additive metric completions of \(\derived^b(\modf\dashmodule R)\) certain hereditary rings $R$, including finite dimensional algebras of tame representation type. Here, we describe the situation in a more intricate situation of the bounded derived category \(\dbcohX\) of coherent sheaves on a weighted projective line \(\X\) of domestic or tubular type.
Note that indecomposable objects of \(\cohX\) can be assigned a numerical invariant called \textit{a slope} attaining values in \(\Q\cup\{\infty\}\).

\begin{theorem}[{Theorem~\ref{T:WeightedCompletions}}]
Let \(\X\) be a weighted projective line of a domestic or tubular type over an algebraically closed field.
Equip \(\dbcohX\) with an additive good metric \(\mathcal{M}=\{B_n\}_{n\in\N}\) and denote the thick subcategory \(\mathcal{B}:=\bigcap_{n\in\N} B_n\).
Then:
\begin{enumerate}[label=(\roman*)]
\item If \(\mathcal{M}\sim\mathcal{M}_{\infty}\vee\mathcal{B}\) (see Subsection~\ref{SS:Compressions}, Lemma~\ref{L:DecompostionMetricsCompletion}) and \(\mathcal{B}\) is countably generated, then
\[
\mathfrak{S}_{\mathcal{M}}\left(\dbcohX\right)\simeq\mathfrak{S}_{\mathcal{B}}\left(\dbcohX\right)\simeq\dbcohX/\mathcal{B}
.\]
 In this case, it holds that \(\mathfrak{S}_{\mathcal{M}}\big(\dbcohX\big)\simeq\mathcal{B}^{\perp}\cap\dbcohX\) if and only if \(\mathcal{B}\) is generated by an~exceptional sequence.
\item Otherwise, there exists \(q\in\Q\cup\{\infty\}\) depending on \(\mathcal{M}\) such that
\[
\mathfrak{S}_{\mathcal{M}}\left(\dbcohX\right)\simeq\mathcal{B}^{\perp}\cap\mathscr{H}^{(q)}
\]
where \(\mathscr{H}^{(q)}\subseteq\dbcohX\) is a thick subcategory generated by all indecomposable objects from \(\cohX\) of slope $q$.
\end{enumerate}
\end{theorem}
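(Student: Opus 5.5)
The strategy is to split along the dichotomy in the statement, using the metric decomposition of Lemma~\ref{L:DecompostionMetricsCompletion} together with Theorem~\ref{T:IntroductionQuotient} and Theorem~\ref{T:IntroductionContinuous}. First I would classify additive good metrics on \(\dbcohX\) up to equivalence. Every \(B_n\) is an additive extension-closed subcategory, hence determined by its indecomposables. For domestic or tubular \(\X\), the indecomposables are the shifts of indecomposable sheaves, which are ordered by slope, and Hom between sheaves only goes up in slope (Serre duality giving \(\Ext^1\) downward). Using \(B_{n+1}*B_{n+1}\subseteq B_n\) and the decreasing chain, I would show that the ``tails'' \(B_n\setminus\mathcal{B}\) eventually either become contained in a single shifted slope cone around some \(q\in\Q\cup\{\infty\}\), or shrink to \(\mathcal{M}_\infty\) (the objects of arbitrarily high shift, i.e.\ a \(t\)-structure type metric). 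This gives the two cases: \(\mathcal{M}\sim\mathcal{M}_\infty\vee\mathcal{B}\), or the metric concentrates at slope \(q\).

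\emph{Case (i).} By Lemma~\ref{L:DecompostionMetricsCompletion}, the completion for \(\mathcal{M}_\infty\vee\mathcal{B}\) is the completion of \(\dbcohX\) for \(\mathcal{M}_\infty\), computed inside \(\dbcohX/\mathcal{B}\) with the constant metric. Here I would use that \(\dbcohX\) is already complete for the \(t\)-structure metric \(\mathcal{M}_\infty\) (Cauchy sequences stabilise in each cohomological degree because of boundedness and Hom-finiteness). This leaves \(\mathfrak{S}_{\mathcal{B}}(\dbcohX)\). Since \(\dbcohX\) is Hom-finite over the field and has only countably many isoclasses of indecomposables, the countable generation of \(\mathcal{B}\) lets Theorem~\ref{T:IntroductionQuotient} give \(\mathfrak{S}_{\mathcal{B}}\simeq\dbcohX/\mathcal{B}\). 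For the last claim: if \(\mathcal{B}\) is generated by an exceptional sequence, then \(\mathcal{B}\) is admissible and \(\dbcohX/\mathcal{B}\simeq\mathcal{B}^\perp\). Conversely, if the quotient is equivalent to \(\mathcal{B}^\perp\) via the natural functor, then \(\mathcal{B}\) is right admissible. I would then invoke the classification of admissible, i.e.\ functorially finite thick, subcategories of \(\dbcohX\) for these types: they are generated by exceptional sequences, while tubes give non-admissible thick subcategories. The hardest part of this case is making the ``if and only if'' independent of the specific equivalence. I would try comparing Grothendieck groups or Hom-finiteness, since \(\dbcohX/\mathcal{B}\) fails to be Hom-finite when \(\mathcal{B}\) contains a non-exceptional tube piece.

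\emph{Case (ii).} Here the metric balls eventually consist of objects of slope near \(q\) up to \(\mathcal{B}\). Cauchy sequences in \(\mathfrak{L}\) modulo \(\mathcal{B}\) then converge to objects whose ``tail'' is invisible, and the compactly-supported condition \(\mathfrak{C}\) forces the objects of \(\mathfrak{S}\) to be of slope \(q\) and orthogonal to \(\mathcal{B}\). I would first reduce to \(\mathcal{B}=0\) by passing to \(\mathcal{B}^\perp\) via a continuous functor, using Theorem~\ref{T:IntroductionContinuous} on both inclusion and projection to get an equivalence of completions. Then I would compute the completion directly. Under the metric, every object \(F\) of \(\mathscr{H}^{(q)}\) is represented by a constant sequence and gives a functor \(\Hom(-,F)\) in \(\mathfrak{C}\); conversely, any \(\mathfrak{C}\)-object must kill slopes away from \(q\), which forces its restriction to be representable by a slope-\(q\) object. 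This last identification is the main obstacle. I would attack it by showing that a Cauchy sequence eventually has its differences in slope arbitrarily close to \(q\), and using the finite length of the tubes, or the homogeneity when \(q\) is irrational-free, to bound the limit.
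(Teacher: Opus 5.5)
\textbf{Case (i).} Your case (i) matches the paper. Lemma~\ref{L:DecompostionMetricsCompletion} replaces $\mathcal{M}$ by the constant metric $\mathcal{B}$, Corollary~\ref{C:CountablyGeneratedCompletionVerdierQuotient} gives $\dbcohX/\mathcal{B}$, and Corollary~\ref{C:CharterisingCoreflectivity} with Proposition~\ref{P:BireflectiveExceptionalCollection} gives the ``if and only if''. That equivalence is meant as an equality of subcategories $\mathfrak{S}'_{\mathcal{M}}=\mathcal{B}^{\perp}\cap\dbcohX$ of $\derived(\QcohX)$, so your worry about arbitrary equivalences is moot.

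However, your remark that $\dbcohX$ has only countably many indecomposables is false for uncountable $K$: there is a tube for every point of $\ProjectiveLine{K}$. This hides a subcase you never treat, namely $\mathcal{M}\sim\mathcal{M}_{\infty}\vee\mathcal{B}$ with $\mathcal{B}$ \emph{not} countably generated. It belongs to (ii), but your dichotomy sends it to case (i), whose argument needs countable generation. Here Theorem~\ref{T:EmbeddingIntoQuotient} only embeds the completion into $\dbcohX/\mathcal{B}$, and the image is $\mathcal{B}^{\perp}\cap\mathscr{H}^{(q)}$. The paper handles this in Lemma~\ref{L:WellDefinedSlopeUncountable}:
\begin{itemize}
\item $\mathcal{B}$ is not bireflective, so it lies in a single $\mathscr{H}^{(q)}$ by Lemma~\ref{L:TwoSlopesBireflective};
\item every completion object lies in $\mathfrak{S}'_{\mathcal{D}}$ for a countably generated $\mathcal{D}\subseteq\mathcal{B}$ (Lemma~\ref{L:LivingInCompletionGivenByThickSubcategory});
\item by uncountability some homogeneous tube lies in $\mathcal{B}$ but misses $\mathcal{D}$, and Proposition~\ref{P:UncountablyGeneratedCompletion} applies (after a tubular mutation).
\end{itemize}

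\textbf{Case (ii) with $\mathcal{M}\not\sim\mathcal{M}_{\infty}\vee\mathcal{B}$.} This is the main gap: your sketch has no working mechanism here.

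First, the balls do not concentrate at slopes ``near'' $q$. Eventually $\langle H^0(B_n)\rangle\subseteq\mathscr{H}^{(q)}$ for a single $q$. This does not follow from slope-monotonicity of $\Hom$ and $B_{n+1}*B_{n+1}\subseteq B_n$. It comes from the finiteness of the lattice of thick subcategories away from homogeneous tubes: a thick subcategory containing sheaves of two slopes is bireflective, hence generated by an exceptional sequence of bounded length, and the exceptional tubes carry only finitely many thick subcategories. These are Lemmas~\ref{L:TubularInfiniteChainDiffersAtHomogenousTubes}, \ref{L:DomesticInfiniteChainDiffersAtHomogenousTubes} and~\ref{L:WellDefinedSlopeAntithesisUniformConverence}.

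Second, your key claim that a compactly supported object ``must kill slopes away from $q$'' is false. Compact support means vanishing on some $B_n$, whose objects (apart from $\mathcal{B}$ and far-away shifts) have slope $q$. For $q=\infty$ the completion consists of torsion complexes, and a nonzero torsion sheaf receives nonzero maps from line bundles.

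What actually forces $E\in\mathscr{H}^{(q)}$ is a two-level comparison:
\begin{itemize}
\item Lemma~\ref{L:ObstructingObjectsSubcategories} gives a strictly decreasing chain of thick subcategories $\mathcal{C}_n$ with $E\in\mathfrak{S}'_{\mathcal{C}_n}$ for all $n$.
\item The chain lemmas give a homogeneous tube $\mathscr{H}^{(q)}_{\lambda}\subseteq\mathcal{C}_m$ with $\mathscr{H}^{(q)}_{\lambda}\cap\mathcal{C}_n=0$ for some $n>m$.
\item For indecomposable $T$ in that tube, compact support at $\mathcal{C}_m$ gives $\Hom(\Sigma^jT,E)=0$.
\item Tube orthogonality makes $\Hom(\Sigma^jT,-)$ constant along a $\mathcal{C}_n$-Cauchy sequence for $E$, so every entry is orthogonal to all shifts of $T$.
\item After mutating to $q=\infty$, Lemma~\ref{L:HomogenousTubesExtSeeVectorBundles} (a homogeneous tube has nonzero $\Ext^1$ into every vector bundle) excludes vector bundle summands.
\item Finally, \cite[Lemma~6.2]{Matousek26B} puts $E$ back in $\dbcohX$.
\end{itemize}

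Third, the reduction to $\mathcal{B}=0$ is unavailable. In general $\mathcal{B}$ is a non-(co)reflective union of tube pieces (e.g.\ uncountably many homogeneous tubes), so there is no projection onto $\mathcal{B}^{\perp}$. Passing to $\dbcohX/\mathcal{B}$ loses information, since Theorem~\ref{T:EmbeddingIntoQuotient} is only an embedding.

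Finally, the inclusion $\mathcal{B}^{\perp}\cap\mathscr{H}^{(q)}\subseteq\mathfrak{S}_{\mathcal{M}}$ is not automatic, because you must show $\Hom(B_n,T)=0$ for large $n$. The paper does this by stabilising the chain of thick subcategories generated by the sheaves in $B_n$ with nonzero maps to $T\oplus\Sigma T$. It then uses Lemma~\ref{L:GrothendieckBruningWideTheoremForMetrics} to push the stable part into $\mathcal{B}$.
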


The abelian category \(\cohX\) has a~tilting object, so \(\dbcohX\) is triangle equivalent to a~bounded derived category of a finite dimensional algebra, while not necessarily a hereditary one.
In fact, for \(\X\) domestic \(\dbcohX\simeq\derived^b(\modf\dashmodule A)\) where $A$ is a tame hereditary finite dimensional algebra.
The key new aspect of the result is the case of \(\X\) tubular, where no previous classification of completions was available. 

\subsubsection*{Acknowledgments}
I would like to express my gratitude to Sira Gratz for her supervision during my Ph.D.\ studies.
I wish to thank Gustavo Jasso for introducing me to weighed projective lines.
I would also like to thank Isaac Bird, David Nkansah, Greg Stevenson, and Alexandra Zvonareva for useful discussions. 

At Aarhus University, the author was supported by a research grant (VIL42076) from VILLUM FONDEN.
At Paderborn University, the author was partly supported by the Deutsche Forschungsgemeinschaft (DFG, German Research Foundation): Project-ID 491392403 (TRR 358).

\subsubsection*{Declaration}
There is a substantial text overlap with two previously unpublished chapters \cite[Introduction, Project~C]{Matousek26D} of the author's Ph.D.~thesis, which served as a draft version of Section~\ref{sec:introduction}, Section~\ref{sec:preliminaries}, and Section~\ref{sec:CompletingWPL} of this manuscript. Corollary~\ref{C:EkvilaleceEkvivalencakEkvivalencata} appeared as \cite[{Lemma~9.2}]{Matousek26D}.
Otherwise, Section~\ref{sec:continuous}, Section~\ref{sec:quotients}, and Section~\ref{sec:DCC} are completely new.

\section{Preliminaries}
\label{sec:preliminaries}

\subsection{Notation and conventions}
By \textit{a category}, we mean a category \(\mathcal{A}\) which is locally small; that is \(\Hom_{\mathcal{A}}(X,Y)\) forms a set. We shall write \(A\in\mathcal{A}\) to express that $A$ is an~object of \(\mathcal{A}\), and \(\mathcal{B\subseteq\mathcal{A}}\) expresses that \(\mathcal{B}\) is a subcategory of \(\mathcal{A}\). All subcategories are considered to be full and replete.

An adjoint pair of functors is depicted as \(F:\mathcal{A}\rightleftarrows\mathcal{B}:G\), meaning that \(F\) is a~left adjoint to $G$.
Any functor \(F:\mathcal{A}\rightarrow\mathcal{B}\) between additive categories is automatically assumed to be additive. 

Let \(\mathcal{A}\) be an additive category and \(A\in\mathcal{A}\).
We denote the two orthogonal categories to $A$ as
\[
A^{\perp}:=\big\{B\in\mathcal{A}:\Hom_{\mathcal{A}}(A,B)\simeq0\big\}
\text{ and }
{}^{\perp}A:=\big\{B\in\mathcal{A}:\Hom_{\mathcal{A}}(B,A)\simeq0\big\}
.\]
The subcategory \(\addf(A)\subseteq\mathcal{A}\) consists, by definition, of direct summands of finite coproducts of \(A\in\mathcal{A}\).
Analogous notations \(\mathcal{B}^{\perp}\), \({}^{\perp}\mathcal{B}\), and \(\addf(\mathcal{B})\) are employed when \(\mathcal{B}\subseteq\mathcal{A}\).

For an essentially small additive category, the category \(\Mod\dashmodule\mathcal{A}\) of modules over \(\mathcal{A}\) consists of contravariant functors \(\mathcal{A}^{\opposite}\rightarrow\Ab\) from \(\mathcal{A}\) to \(\Ab\).
The category \(\mathcal{A}\) embeds into \(\Mod\dashmodule\mathcal{A}\) via the Yoneda embedding \(\Yoneda:\mathcal{A}\rightarrow\Mod\dashmodule\mathcal{A}\), \(A\mapsto\Hom_{\mathcal{A}}(\blank,A)\).

\subsection{Triangulated categories}
The shift on all triangulated categories is denoted as \(\Sigma\).
In this subsection, we fix a triangulated category \(\mathcal{T}\). 
For \(\mathcal{X},\mathcal{Z}\subseteq\mathcal{T}\) we denote
\[
\mathcal{X}*\mathcal{Z}:=\{Y\in\mathcal{T}:\exists X\in\mathcal{X},\exists Z\in\mathcal{Z},\exists \text{ a triangle }X\rightarrow Y\rightarrow Z\rightarrow \Sigma X\}
.\]
The thick subcategory generated by \(\mathcal{C}\subseteq\mathcal{T}\) is denoted as \(\langle\mathcal{C}\rangle\).
If a triangulated subcategory of \(\mathcal{T}\) is generated by finitely many or countably many objects, then it is called \textit{countably generated}.

\begin{definition}[{\cite[Définition~1.3.1]{BeilinsonBernsteinDeligne82},\cite{BeilinsonBernsteinDeligneGabber18}}]
We say that a pair \(\left(\mathcal{T}^{\leq0},\mathcal{T}^{>0}\right)\) of subcategories of \(\mathcal{T}\) is \textit{a~t\=/structure} if the following conditions hold:
\begin{itemize}
    \item \(\mathcal{X}=\addf(\mathcal{X})\) and \(\mathcal{Y}=\addf(\mathcal{Y})\),
    \item \(\mathcal{Y}\subseteq\mathcal{X}^{\perp}\),
    \item \(\mathcal{T}=\mathcal{X}*\mathcal{Y}\).
    \item \(\Sigma\mathcal{T}^{\leq0}\subseteq\mathcal{T}^{\leq0}\).
\end{itemize}

In that case, we call \(\mathcal{T}^{\leq0}\) \textit{an aisle}, \(\mathcal{T}^{>0}\) \textit{a coaisle}, and for each \(n\in\Z\) we denote \(\mathcal{T}^{\leq n}:=\Sigma^{-n}\mathcal{T}^{\leq0}\) and \(\mathcal{T}^{>n}:=\Sigma^{-n}\mathcal{T}^{>0}\). 
Moreover, the $t$-structure \(\left(\mathcal{T}^{\leq0},\mathcal{T}^{>0}\right)\) is said to be \textit{bounded above} if \(\mathcal{T}=\bigcup_{n\in\Z}\mathcal{T}^{\leq n}\),
\end{definition}

We recall that a subcategory \(\mathcal{B}\) of an additive category \(\mathcal{A}\) is called \textit{reflective} if the~inclusion functor \(\mathcal{B}\xhookrightarrow{}\mathcal{A}\) has a left adjoint. Dually, the subcategory \(\mathcal{B}\) is \textit{coreflective} if the inclusion \(\mathcal{B}\xhookrightarrow{}\mathcal{A}\) has a right adjoint. If \(\mathcal{B}\) is both reflective and coreflective, then it is \textit{bireflective}. (Some authors call bireflective subcategories \textit{admissible}.)

The aisle of a $t$-structure \(\left(\mathcal{T}^{\leq0},\mathcal{T}^{>0}\right)\) 
is coreflective within \(\mathcal{T}\). The right adjoint \(\tr^{\leq0}:\mathcal{T}\rightarrow\mathcal{T}^{\leq0}\)  to the inclusion is called \textit{the truncation}. Dually, the coaisle \(\mathcal{T}^{>0}\) is reflective with the truncation functor \(\tr^{>0}:\mathcal{T}\rightarrow\mathcal{T}^{>0}\) as a left adjoint to the respective inclusion.
Any distinguished triangle
\[
X\rightarrow Y\rightarrow Z\rightarrow\Sigma X
\]
in \(\mathcal{T}\) with \(X\in\mathcal{T}^{\leq0}\) and \(Z\in\mathcal{T}^{>0}\) is uniquely isomorphic a distinguished triangle
\[
\tr^{\leq0}X\rightarrow X\rightarrow\tr^{>0}X\rightarrow\Sigma\tr^{\leq0}X
\]
called \textit{the approximation triangle} (see \cite[Proposition~1.3.3]{BeilinsonBernsteinDeligne82}).

We can even find out whether thick subcategory sits in a $t$-structure pair solely based on the existence of adjoints to the inclusion functor.

\begin{proposition}
\label{P:CharacterisationReflectivity}
Let \(\mathcal{S}\) be a thick subcategory of \(\mathcal{T}\). Then:
\begin{itemize}
    \item \(\mathcal{S}\) is reflective \(\Leftrightarrow\) \(\mathcal{S}\) is preenveloping \(\Leftrightarrow\) \(\mathcal{S}\) is enveloping \(\Leftrightarrow\) \(\left({}^{\perp}\mathcal{S},\mathcal{S}\right)\) is a $t$-structure. 
    \item \(\mathcal{S}\) is coreflective \(\Leftrightarrow\) \(\mathcal{S}\) is precovering \(\Leftrightarrow\) \(\mathcal{S}\) is covering \(\Leftrightarrow\) \(\left(\mathcal{S},\mathcal{S}^{\perp}\right)\) is a $t$-structure. 
    \item \(\mathcal{S}\) is bireflective \(\Leftrightarrow\) \(\left({}^{\perp}\mathcal{S},\mathcal{S}\right)\) and \(\left(\mathcal{S},\mathcal{S}^{\perp}\right)\) are both simultaneously $t$-structures.
\end{itemize}
\end{proposition}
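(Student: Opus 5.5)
We prove the second bullet in detail; the first is its formal dual, obtained by passing to \(\mathcal{T}^{\opposite}\). The third bullet is then just the conjunction of the first two, because bireflective means reflective and coreflective. Throughout we use that \(\mathcal{S}\) is thick: it is closed under \(\Sigma^{\pm1}\), cones and direct summands, and in particular \(\mathcal{S}=\addf(\mathcal{S})\).

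\emph{Reflective/coreflective \(\Leftrightarrow\) $t$-structure.} If \((\mathcal{S},\mathcal{S}^{\perp})\) is a $t$-structure, then \(\mathcal{S}\) is an aisle. By the discussion before the proposition, an aisle is coreflective with right adjoint \(\tr^{\leq0}\). For the converse, let \(R\) be a right adjoint to the inclusion. For \(T\in\mathcal{T}\), complete the counit \(RT\to T\) to a triangle \(RT\to T\to Z\to\Sigma RT\). Applying \(\Hom(S,\blank)\) for \(S\in\mathcal{S}\) gives a long exact sequence. In it, \(\Hom(S,RT)\to\Hom(S,T)\) is an isomorphism, and so is the same map for \(\Sigma^{-1}S\in\mathcal{S}\). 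Hence \(\Hom(S,Z)=0\), so \(Z\in\mathcal{S}^{\perp}\) and \(\mathcal{T}=\mathcal{S}*\mathcal{S}^{\perp}\). The remaining axioms are immediate: \(\mathcal{S}^{\perp}\) is closed under summands, \(\mathcal{S}^{\perp}\subseteq\mathcal{S}^{\perp}\) trivially, and \(\Sigma\mathcal{S}\subseteq\mathcal{S}\).

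\emph{Precovering \(\Leftrightarrow\) coreflective.} Coreflective implies covering: the counit \(RT\to T\) is an \(\mathcal{S}\)-cover, since every map from \(\mathcal{S}\) factors through it uniquely, and any endomorphism \(e\) of \(RT\) compatible with the counit must be the identity. Covering implies precovering trivially. The main step is precovering \(\Rightarrow\) $t$-structure. Take an \(\mathcal{S}\)-precover \(f\colon S_0\to T\) and complete it to a triangle \(S_0\xrightarrow{f}T\xrightarrow{g}Z\xrightarrow{h}\Sigma S_0\). Since \(f_*\colon\Hom(S,S_0)\to\Hom(S,T)\) is surjective for all \(S\in\mathcal{S}\), we get \(g_*=0\) on \(\Hom(S,\blank)\), so \(\Hom(S,Z)\xrightarrow{h_*}\Hom(S,\Sigma S_0)\) is injective. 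Apply this to \(S\in\mathcal{S}\) and \(u\colon S\to Z\). The composite \(hu\colon S\to\Sigma S_0\) lies in \(\mathcal{S}\); rotating, \(\Sigma^{-1}(hu)\colon\Sigma^{-1}S\to S_0\) composed with \(-\Sigma^{-1}\) of the relevant map gives \(f\circ\Sigma^{-1}h=0\). Here is the obstacle: this alone does not make \(hu\) vanish. A single precover need not give the $t$-structure triangle; one needs a ``kernel'' correction.

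The fix I would try first replaces \(Z\) by a better object built using thickness. Take a precover \(p\colon S_1\to Z\) of \(Z\) and form the composite \(S_1\to Z\to\Sigma S_0\). This lies in \(\mathcal{S}\), so its cone \(C\) is in \(\mathcal{S}\). Via the octahedral axiom, build a map \(C'\to T\) with \(C'\in\mathcal{S}\), extending \(f\). Its cone \(Z'\) should then satisfy \(\Hom(\mathcal{S},Z')=0\): every map \(S\to Z\) factors through \(p\), and the construction kills precisely those maps. If this iteration does not terminate after one step, I would instead cite the standard result, e.g.\ Neeman's or Beligiannis--Reiten's lemma that a precovering triangulated (thick) subcategory is coreflective, which is proved by exactly this octahedral argument. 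Finally, for enveloping versus the dual notions, all arguments dualise verbatim.
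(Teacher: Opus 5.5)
Your formal implications are correct, and one of them is more self-contained than the paper. The paper cites Neeman and Laking--Vit\'oria for coreflective $\Leftrightarrow$ (pre)covering, cites Iyama--Yoshino for the $t$-structure, and uses the truncation for the converse. Your counit-triangle argument for coreflective $\Rightarrow$ $(\mathcal{S},\mathcal{S}^{\perp})$ being a $t$-structure is correct, the counit is indeed an $\mathcal{S}$-cover, and the duality and the third bullet are fine.

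The gap is in the only non-formal implication, precovering $\Rightarrow$ coreflective. Your octahedral correction does not work. The octahedron identifies the new cone $Z'$ with $\cone(p)$. Since $p_*$ is surjective on $\Hom(S,-)$, you only get $\Hom(S,Z')\cong\ker\big(\Hom(\Sigma^{-1}S,S_1)\to\Hom(\Sigma^{-1}S,Z)\big)$. The obstruction has merely moved from $f$ to $p$, and nothing makes the iteration stop.

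In fact no argument using only the triangulated axioms, thickness and precovers can succeed, because the implication fails when idempotents do not split. Inside $\mathrm{D}^b(\mathrm{mod}\,K)\times\mathrm{D}^b(\mathrm{mod}\,K)$, let $\mathcal{T}$ consist of the pairs $(A,B)$ with $\chi(A)+\chi(B)$ even; this is triangulated since the Euler characteristic is additive. Let $\mathcal{S}:=\{(A,0):\chi(A)\text{ even}\}$.
\begin{itemize}
\item $\mathcal{S}$ is thick in $\mathcal{T}$.
\item The map $(A\oplus\Sigma A,0)\to(A,B)$ induced by the projection onto $A$ is an $\mathcal{S}$-precover of every object.
\item A coreflection of $(K,K)$ would have to be isomorphic to $(K,0)$. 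Indeed, every $(A,0)$ is a summand of an object of $\mathcal{S}$, so Yoneda's lemma still pins down a representing object among all pairs $(A,0)$. But $(K,0)\notin\mathcal{S}$, as $\chi(K)=1$.
\end{itemize}

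Hence the result you fall back on, which the paper also cites, must be used with a hypothesis such as ``idempotents split in $\mathcal{S}$''. This is automatic when $\mathcal{T}$ is idempotent complete. Its proof is not an octahedral iteration but a retract argument. Take precovers $S_0\to T$ and $S_1\to K$, where $K\to S_0\to T\to\Sigma K$, and set $S_2:=\cone(S_1\to S_0)$. Then $\Hom_{\mathcal{T}}(-,T)|_{\mathcal{S}}$ is a retract of $\Hom_{\mathcal{S}}(-,S_2)$, and splitting the corresponding idempotent of $S_2$ gives the coreflection.

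You also already had the tool that removes your obstacle, namely right minimality. Suppose $f\colon S_0\to T$ is an $\mathcal{S}$-cover and $u\colon S\to Z$.
\begin{itemize}
\item Set $v:=\Sigma^{-1}(hu)$, so $fv=0$.
\item Complete $v$ to a triangle $\Sigma^{-1}S\xrightarrow{v}S_0\xrightarrow{w}C\to S$ with $C\in\mathcal{S}$.
\item Write $f=f'w$, and use the precover property to write $f'=ft$.
\item Then $f=f(tw)$ forces $tw$ to be invertible, so $w$ is a split monomorphism and $v=0$.
\item Hence $hu=0$, and $u=0$ by the argument you already gave.
\end{itemize}
This is a Wakamatsu-type argument and gives covering $\Rightarrow$ $t$-structure directly; the paper obtains the $t$-structure by citing Iyama--Yoshino. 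The only genuinely external input is then precovering $\Rightarrow$ covering or coreflective, and that step needs the splitting hypothesis above.
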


\begin{proof}
By \cite[Proposition~1.4]{Neeman10} and \cite[Theorem~2.9]{LakingVitoria20}, the thick subcategory \(\mathcal{S}\) is coreflective if and only if it is (pre)covering. Then \cite[Proposition~2.3]{IyamaYoshino08} implies that \(\big(\mathcal{S},\mathcal{S}^{\perp}\big)\) is a~$t$-structure. Conversely, if \(\big(\mathcal{S},\mathcal{S}^{\perp}\big)\) is a~$t$-structure, then the right adjoint to the inclusion \(\mathcal{S}\xhookrightarrow{}\mathcal{T}\) is given by the truncation.

The proof of the remaining statements is analogous.
\end{proof}

\begin{remark}
A comment on terminology above: a morphism \(f:B\rightarrow A\) in an additive category \(\mathcal{A}\) with \(B\in\mathcal{B}\subseteq\mathcal{A}\) is \textit{a \(\mathcal{B}\)-precover} (also known as \textit{a right \(\mathcal{B}\)-approximation}) if any morphism \(f':B'\rightarrow A\) with \(B'\in\mathcal{B}\) factors through \(f\). \textit{A \(\mathcal{B}\)-cover} (also \textit{a minimal right \(\mathcal{B}\)-approximation}) is a \(\mathcal{B}\)-precover \(f:B\rightarrow A\) such that the equality \(fg=f\) for any \(g\in\End_{\mathcal{A}}(B)\) forces $g$ to be an isomorphism.
\textit{Preenvelopes} and \textit{envelopes} are defined dually and also go under the alias of \textit{(minimal) left approximations}.
\end{remark}

Triples \((\mathcal{U},\mathcal{V},\mathcal{W})\) of thick subcategories of \(\mathcal{T}\) such that \((\mathcal{U},\mathcal{V})\) and \((\mathcal{V},\mathcal{W})\) are $t$-structures are known as \textit{TTF\=/triples} (TTF\(=\) \textit{torsion torsion-free}) and correspond to so-called re\-col\-le\-ments of triangulated categories.
By \cite[{Sous-section~1.4.4}]{BeilinsonBernsteinDeligne82} (see also \cite[Subsection~2.1]{NicolasSaorin09}), in this case, the coaisle truncation \(\mathcal{T}\rightarrow\mathcal{W}\) restricted to \(\mathcal{U}\) is an equivalence \(\mathcal{U}\simeq\mathcal{W}\).

\begin{definition}[{\cite{BondalKapranov90}, \cite[Section~6.3]{Krause21}}]
Let $K$ be a field and \(\mathcal{A}\) a Hom-finite, $K$-linear category.
A~$K$\=/linear autoequivalence \(\Serre:\mathcal{A}\rightarrow\mathcal{A}\) is called \textit{a Serre functor} if
there exists an isomorphism
\[
\Hom_{\mathcal{A}}(X,Y)\simeq \Hom_K\Big(\Hom_{\mathcal{A}}\big(Y,\Serre (X)\big),K\Big)
\]
for all \(X,Y\in\mathcal{A}\) natural in both variables.
\end{definition}

If \(\Sigma^n\) is a Serre functor on a $K$-linear triangulated category \(\mathcal{S}\) for some \(n\in\N\), then we say \(\mathcal{S}\) is \textit{\(n\)-Calabi-Yau}. This property gives an insight on orthogonality within \(\mathcal{S}\), since it then holds that \(\Hom_{\mathcal{S}}(X,Y)\simeq0\) if and only if \(\Hom_{\mathcal{S}}(Y,\Sigma^{n}X)\).

\begin{definition}[{\cite[Section~1.8]{ChenKrause10}}]
Let \(\mathcal{A}\) be a Hom-finite, $K$-linear abelian category. A~$K$\=/linear autoequivalence \(\tau:\mathcal{A}\rightarrow\mathcal{A}\) is called \textit{an Auslander-Reiten translation} if
there exists an isomorphism 
\[
 \Hom_K\Big(\Ext^1_{\mathcal{A}}(X,Y),K\Big)\simeq\Hom_{\mathcal{A}}(Y,\tau X)
\]
for all \(X,Y\in\mathcal{A}\) natural in both variables.  
\end{definition}

If \(\mathcal{A}\) has an Auslander-Reiten translation, it is said to \textit{satisfy Serre duality}. Any such category \(\mathcal{A}\) is necessarily Ext-finite and hereditary because \(\Ext^1_{\mathcal{A}}(X,\blank)\simeq\Hom_{K}(\blank,K)\circ\Hom_{\mathcal{A}}(\blank,\tau X)\) is right exact. It follows that \(\Sigma\tau\) is a Serre functor on \(\derived^b(\mathcal{A})\).

\subsection{Metrics and metric completions}
\label{SS:MetricsAndMetricCompletions}

Fix \(\mathcal{S}\) an essentially small triangulated category.
In this subsection, we go through the~definition of a metric on \(\mathcal{S}\), the metric completion of \(\mathcal{S}\), and their most important properties. The concepts have been developed by Neeman in \cite{Neeman25A}, while we mostly follow the~notation from \cite{CummingsGratz24} and \cite{Matousek26B}.

\begin{definition}
\label{D:Metric}
Consider a family \(\mathcal{M}=\{B_n\}_{n\in\N}\) of subcategories of \(\mathcal{S}\).
We say that \(\mathcal{M}\) is \textit{a good metric} on \(\mathcal{S}\) if for each \(n\in\N\) it holds that:
\begin{enumerate}[label=(\roman*)]
\item \(0\in B_n\),
\item \(B_n*B_n=B_n\),
\item \(\Sigma^{-1}B_{n+1}\cup B_{n+1}\cup\Sigma B_{n+1}\subseteq B_n\).
\end{enumerate}
Moreover, a metric \(\mathcal{M}\) is considered to be 
\textit{additive} if
\begin{enumerate}[label=(\roman*)]
\item[(iv)] \(B_n=\addf(B_n)\) for all \(n\in\N\).
\end{enumerate}
\end{definition}

Metrics on triangulated categories have been originally defined by Neeman in \cite[Definition~1.2]{Neeman18}. In practice, one works most often with good metrics introduced later in \cite[Definition~10]{Neeman20}. The additive metrics appear in \cite{CummingsGratz24} without being named explicitly; the terminology of additive metrics comes from \cite[Definition~2.8]{Matousek26B}.

A metric \(\{B_n\}_{n\in\N}\) on \(\mathcal{S}\) imposes on every morphism \(f:X\rightarrow Y\) in \(\mathcal{S}\) a length in the~spirit of Lawvere's norms on categories \cite{Lawvere73}; the morphism \(f\) has length \(\leq\frac{1}{n}\) if and only if \(\cone f\in B_n\) for some \(n\in\N\).

\begin{example}
For any triangulated subcategory \(\mathcal{C}\subseteq\mathcal{S}\), setting \(B_n:=\mathcal{C}\) for every \(n\in\N\) defines a~good metric \(\{B_n\}_{n\in\N}=\{\mathcal{C}\}_{n\in\N}\) on \(\mathcal{S}\). We call such a metric \textit{a constant metric} (\cite[Subsection~2.2.2]{Matousek26B}) and we shall not be making a formal distinction between a~constant good metric and the corresponding triangulated subcategory.
A length of a morphism with respect to the constant metric \(\mathcal{C}\) is zero if and only if its cone lies in \(\mathcal{C}\), otherwise the length can be interpreted as infinite.

The subcategory \(\mathcal{C}\) is thick if and only if the constant metric \(\mathcal{C}\) is additive.
\end{example}

A Cauchy sequence is a diagram of morphisms which are eventually becoming shorter. The definition below combines \cite[Definition~1.6]{Neeman25A}, \cite[Definition~3.2, Definition~3.11]{CummingsGratz24}, and \cite[Definition~2.2]{Matousek26B}.

\begin{definition}
Let \(\mathcal{M}=\{B_n\}_{n\in\N}\) be a good metric on \(\mathcal{S}\).
By \textit{a sequence} in \(\mathcal{S}\) we mean an \(\N\)\=/directed diagram
\(
\mathbf{E}=(E_n,e_n)_{n\in\N}=E_1\xrightarrow{e_1} E_2\xrightarrow{e_2}E_3\rightarrow\cdots
\)
in \(\mathcal{S}\).

We denote the~composition \(e_{m,n}:=e_n\circ e_{n-1}\cdots\circ e_{m+1}\circ e_m\) for any \(m\leq n\).
We say that \(\mathbf{E}\) \textit{stabilises at} \(\mathcal{C}\subseteq\mathcal{S}\) \textit{from} \(N\in\N\) \textit{onwards} if for all \(n\geq m\geq N\) we have \(\cone e_{m,n}\in\mathcal{C}\). 

We call \(\mathbf{E}\) \textit{a~Cauchy sequence} with respect to \(\mathcal{M}\) if for each \(M\in\N\) the sequence \(\mathbf{E}\) stabilises at \(B_M\) from some \(N\in\N\) number onwards; to simplify the notation, in this case, we say that \(\mathbf{E}\) \textit{stabilises at} \(M\) \textit{from} $N$ \textit{onwards}.
\end{definition}

\begin{definition}[{\cite[Definition~3.16]{CummingsGratz24}}]
Let \(\mathcal{C}\subseteq\mathcal{S}\).
A sequence \(\mathbf{E}=(E_n,e_n)_{n\in\N}\) in \(\mathcal{S}\) is said to be \(\mathcal{C}\)\textit{-trivial} if there is no \(n\in\N\) such that \(E_n\) has a direct summand from \(\mathcal{C}\).
\end{definition}

\begin{definition}[{\cite[Definition~3.4]{CummingsGratz24}}]
Let \(\mathbf{E}=(E_n,e_n)\) be a sequence in \(\mathcal{S}\). We define \textit{the module colimit} of \(\mathbf{E}\) as the functor \(\moco \mathbf{E}:=\varinjlim \Yoneda(\mathbf{E})\in\Mod\dashmodule\mathcal{S}\).  
\end{definition}

\begin{definition}[{\cite[Definition~1.11]{Neeman25A}}]
\label{D:Completion}
Let \(\mathcal{M}=\{B_n\}_{n\in\N}\) be a good metric on \(\mathcal{S}\), and let \(s\in\N\).
We say that \(F\in\Mod\dashmodule\mathcal{S}\) is \textit{compactly supported at} \(B_s\) (or \textit{compactly supported at} \(s\) for short) if \(F(B_s)=0\).
We also define the following subcategories of \(\Mod\dashmodule\mathcal{S}\):
\begin{center}
\begin{itemize}
    \item
    \textit{the pre-com\-ple\-tion} of \(\mathcal{S}\) with respect to \(\mathcal{M}\)
    \[
    \mathfrak{L}_{\mathcal{M}}(\mathcal{S}):=\left\{F\in\Mod\dashmodule\mathcal{S}:F\simeq\moco\mathbf{E} \text{\ \ for some Cauchy sequence } \mathbf{E} \text{ in } \mathcal{S}\right\}
    ,\]
    \item \(\mathfrak{C}_{\mathcal{M}}(\mathcal{S}):=\big\{F\in\Mod\dashmodule\mathcal{S}:\exists n\in\N,F(B_n)\simeq0\big\}\) the category of \textit{compactly supported} elements of \(\Mod\dashmodule\mathcal{S}\) with respect to \(\mathcal{M}\),
    \item
    \(\mathfrak{S}_{\mathcal{M}}(\mathcal{S}):=\mathfrak{L}_{\mathcal{M}}(\mathcal{S})\cap\mathfrak{C}_{\mathcal{M}}(\mathcal{S})\)
    called \textit{the completion} of \(\mathcal{S}\) with respect to \(\mathcal{M}\).
\end{itemize}
\end{center}
\end{definition}

We will omit the index if the good metric is implicit from the context. A sequence \(\mathbf{E}\) in \(\mathcal{S}\) is said to be \textit{compactly supported} if \(\moco\mathbf{E}\) is compactly supported. 
Likewise, an object \(X\in\mathcal{S}\) is \textit{compactly supported} if \(\Yoneda(X)\) is compactly supported. And as long as \(X\in\mathcal{S}\) is compactly supported, it holds that \(\Yoneda(X)\in\mathfrak{S}(\mathcal{S})\) because \(\Yoneda(X)\simeq \moco \mathbf{X}\) for the constant Cauchy sequence \(\mathbf{X}=(X,\id_X)_{n\in\N}\).

\begin{theorem}[{\cite[Theorem~2.14]{Neeman25A}}]
For any good metric \(\mathcal{M}\) on \(\mathcal{S}\), the completion \(\mathfrak{S}_{\mathcal{M}}(\mathcal{S})\) is a triangulated category.
\end{theorem}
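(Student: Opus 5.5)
The plan is to follow Neeman's construction in \cite{Neeman25A}. The shift on \(\mathfrak{S}(\mathcal{S})\) is induced from \(\Mod\dashmodule\mathcal{S}\), and the distinguished triangles are the module colimits of Cauchy sequences of distinguished triangles in \(\mathcal{S}\). The work is to check that this class satisfies the axioms, and the essential inputs are axioms (ii) and (iii) of Definition~\ref{D:Metric}.

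\emph{Shift.} Define \(\Sigma F:=F\circ\Sigma^{-1}\) on \(\Mod\dashmodule\mathcal{S}\). Then \(\Sigma\moco\mathbf{E}\simeq\moco\Sigma\mathbf{E}\). The sequence \(\Sigma\mathbf{E}\) is Cauchy whenever \(\mathbf{E}\) is, because \(\cone\Sigma e_{m,n}=\Sigma\cone e_{m,n}\) and \(\Sigma B_{M+1}\subseteq B_M\). For compact support, if \(F(B_{s})=0\) then \(\Sigma F(B_{s+1})\subseteq F(\Sigma^{-1}B_{s+1})\subseteq F(B_s)=0\). The same computation works for \(\Sigma^{-1}\), so \(\Sigma\) is an automorphism of \(\mathfrak{S}(\mathcal{S})\).

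\emph{Morphisms come from sequences.} Every object of \(\mathfrak{L}(\mathcal{S})\) is a filtered colimit of representables, hence homological. Since \(\moco\mathbf{E}(X)=\varinjlim\Hom(X,E_n)\), Yoneda gives
\[
\Hom(\moco\mathbf{E},\moco\mathbf{F})\simeq\varprojlim_i\varinjlim_j\Hom(E_i,F_j).
\]
I would show, using that \(\moco\mathbf{F}\) is compactly supported at some \(B_s\) and that \(\mathbf{E}\) stabilises at \(s\) (or \(s+1\), to absorb shifts) from some \(N\) onwards, that this inverse limit is essentially constant from \(N\) onwards. The mechanism is the exact sequence attached to \(E_i\to E_{i+1}\to\cone e_i\): a map killing the cone comes from a unique compatible system. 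It follows that every morphism in \(\mathfrak{S}(\mathcal{S})\) is, after passing to subsequences, the colimit of a morphism of Cauchy sequences \(\mathbf{E}\to\mathbf{F}\).

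\emph{Triangles, cones and the axioms.} A candidate triangle is \(\moco\) of a sequence of distinguished triangles \(E_n\to F_n\to G_n\to\Sigma E_n\) with compatible transition maps. Given a morphism of Cauchy sequences, I would build levelwise cones inductively, choosing transition maps \(g_n\colon G_n\to G_{n+1}\) by TR3. The octahedral axiom then shows that \(\cone g_{m,n}\) lies in \(B_M* \Sigma B_M\subseteq B_{M-1}*B_{M-1}= B_{M-1}\), so \(\mathbf{G}\) is Cauchy. Filtered colimits in \(\Ab\) are exact, so \(\moco\) of the triangle gives a long exact sequence of functors. Applying it to objects of \(B_{s+1}\), and using that the neighbouring terms vanish by compact support and (iii), shows \(\moco\mathbf{G}\) is compactly supported. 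Hence \(\mathfrak{S}(\mathcal{S})\) is closed under cones. TR1 and rotation (TR2) follow levelwise from the corresponding axioms in \(\mathcal{S}\) together with the shift computation. TR3 follows from the lifting of morphisms above plus levelwise TR3. TR4 follows by lifting a composable pair to composable maps of Cauchy sequences and applying levelwise octahedra, with the compatibility of transition maps arranged inductively.

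\emph{Main obstacle.} I expect the hardest step to be well-definedness. Levelwise cones and levelwise TR3 fills are not functorial, so different choices must give isomorphic colimits, and the class of triangles must be closed under isomorphism in \(\Mod\dashmodule\mathcal{S}\). My first approach would be to show, again by compact support, that \(\moco\mathbf{G}\) is determined up to isomorphism by the morphism \(\moco\mathbf{E}\to\moco\mathbf{F}\). The argument is a five-lemma comparison of the two long exact sequences, with phantom-type ambiguities killed because they factor through objects of \(B_s\). Only then can the axioms be stated for the class of triangles itself rather than for chosen representatives.
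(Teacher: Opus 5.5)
Your set-up is the structure the paper takes from \cite{Neeman25A}: shift \(F\mapsto F\circ\Sigma^{-1}\), and distinguished triangles the module colimits of Cauchy sequences of triangles. Three of your steps are fine as they stand: the shift, the lifting of morphisms via Lemma~\ref{L:BasicFactorisationProperty}, and the compact support of cones. The Cauchy property of the cone sequence is not fine as written. TR3 alone does not give \(\cone g_n\in\cone f_n*\Sigma\cone e_n\). You must choose \(g_n\) via the octahedral axiom (a \(3\times3\) diagram), which gives \(\cone g_n\in\cone f_n*\Sigma\cone e_n\subseteq B_{M-1}\). Then handle \(g_{m,n}\) through the consecutive maps, using \(\cone(\beta\alpha)\in\cone\alpha*\cone\beta\) and \(B_{M-1}*B_{M-1}=B_{M-1}\), rather than applying the estimate to \(g_{m,n}\) directly.

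The genuine gap is TR4, and also TR3 if \emph{levelwise} means at every level. \emph{Levelwise octahedra, with the compatibility of transition maps arranged inductively} is exactly the point that needs an argument, and you give none. Octahedra, like TR3 fills, are not functorial. Nothing in the axioms of \(\mathcal{S}\) lets you choose the level-\((n+1)\) octahedron so that its octahedral maps commute with transition maps already fixed.

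The missing idea is that no levelwise compatibility is needed. By Lemma~\ref{L:BasicFactorisationProperty}, a map from \(\moco\mathbf{E}\) to an object of \(\mathfrak{S}(\mathcal{S})\) is determined by its restriction to \(\Yoneda(E_N)\) for a single large \(N\), and can be prescribed there. Equalities of such maps can also be checked there. So for TR3, fill at one level and extend. For TR4:
\begin{enumerate}
\item Build triangles on \(f_n,g_n,g_nf_n\) for a lifted composable pair, and apply the octahedral axiom only at level \(N\).
\item Extend the octahedral maps to \(p\colon Z'\to Y'\) and \(q\colon Y'\to X'\), and check all commutativities at level \(N\).
\item Complete \(p\) to a distinguished triangle \(Z'\to Y'\to W\to\Sigma Z'\) whose \(N\)-th member is a triangle on \(p_N\).
\item TR3 and the five lemma in \(\mathcal{S}\) give an isomorphism \(\theta_N\colon W_N\to X'_N\); extend it to \(\theta\colon W\to X'\).
\item For \(T\in\{W,X'\}\), \(\Hom(\theta,T)\) identifies with \(T(\theta_N)\), so \(\theta\) is invertible.
\end{enumerate}
Here \(N\) is fixed beforehand, large for all support indices involved; this is possible because those of \(W\) are bounded in terms of \(Z'\) and \(Y'\).

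For the same reason your \emph{main obstacle} is misplaced. Closure under isomorphism is part of the definition. Two cones on one morphism are compared by a one-level TR3 fill followed by the pointwise five lemma in \(\Mod\dashmodule\mathcal{S}\). The indeterminacy of a fill factors through \(\Sigma E_n\) and \(F'_n\), not through \(B_s\). It is harmless only because maps into compactly supported functors see a single level.
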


The triangulated structure on \(\mathfrak{S}_{\mathcal{M}}(\mathcal{S})\) is induced by the original triangulated structure on \(\mathcal{S}\). The shift of a functor \(F\in\mathfrak{S}_{\mathcal{M}}(\mathcal{S})\) is given by \(\Sigma F:=F\circ\Sigma^{-1}\).
And a candidate triangle in \(\mathfrak{S}_{\mathcal{M}}(\mathcal{S})\) is proclaimed to be distinguished if it is a module colimit of a Cauchy sequence of distinguished triangles from \(\mathcal{S}\).

\begin{example}
\label{E:ZeroMetric}
If we interpret the zero subcategory of \(\mathcal{S}\) as a constant metric on \(\mathcal{S}\), we get an equivalence \(\mathfrak{S}_{0}(\mathcal{S})\simeq\mathcal{S}\). This is because every Cauchy sequence \(\mathbf{E}=(E_n,e_n)_{n\in\N}\) with respect to~$0$ satisfies that \(e_n\) is an isomorphism for all but finitely many \(n\in\N\). Thus \(\mathfrak{L}_0(\mathcal{S})\simeq\Yoneda(\mathcal{S})\).
Every functor from \(\Mod\dashmodule\mathcal{S}\) is compactly supported at $0$ by the definition of an additive functor. 

We conclude \(\mathfrak{S}_{0}(\mathcal{S})=\mathfrak{L}_{0}(\mathcal{S})\cap\mathfrak{C}_0(\mathcal{S})\simeq\Yoneda(\mathcal{S})\), so \(\mathfrak{S}_{0}(\mathcal{S})\simeq\mathcal{S}\) follows by the Yoneda lemma.
\end{example}

\begin{example}[{\cite[Example~1.13]{Neeman25A}}]
On the other hand, if we take \(\mathcal{S}\) as a constant metric on itself, we get \(\mathfrak{S}_{\mathcal{S}}(\mathcal{S})\simeq0\). The only functor \(F\in\Mod\dashmodule\mathcal{S}\) vanishing on the whole of \(\mathcal{S}\) is the zero functor \(F\simeq0\). Hence, there are no non-zero compactly supported elements. 
\end{example}

Every $t$-structure \(\big(\mathcal{S}^{\leq0},\mathcal{S}^{>0}\big)\) on \(\mathcal{S}\) provides two (generally) non-constant good metric, \textit{the aisle metric} \(\big\{\mathcal{S}^{\leq -n}\big\}_{n\in\N}\) and  \textit{the coaisle metric} \(\big\{\mathcal{S}^{> n}\big\}_{n\in\N}\).
For aisle metrics the subsequent completions are fully understood.
\begin{theorem}[{\cite[Theorem~1.1]{CummingsGratz24}}]
\label{T:CummingsGratzAisleTheorem}
Let \(\big(\mathcal{S}^{\leq0},\mathcal{S}^{>0}\big)\) be a $t$-structure on \(\mathcal{S}\). Equip \(\mathcal{S}\) with the aisle metric. Then \(\mathfrak{S}(\mathcal{S})\simeq\bigcup_{n\in\N}\mathcal{S}^{>-n}\).
\end{theorem}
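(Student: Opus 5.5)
The plan is to prove both inclusions directly from the definitions. On one side, objects of \(\bigcup_{n\in\N}\mathcal{S}^{>-n}\) are compactly supported and therefore lie in \(\mathfrak{S}(\mathcal{S})\) as constant sequences. On the other side, every \(F\in\mathfrak{S}(\mathcal{S})\) is representable by a single truncation \(\tr^{>-s}E_N\). Throughout, the metric is \(B_n=\mathcal{S}^{\leq -n}\), and I use \(\Sigma\mathcal{S}^{\leq -k}=\mathcal{S}^{\leq -k-1}\subseteq\mathcal{S}^{\leq -k}\).

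\emph{Step 1 (easy inclusion).} Let \(X\in\mathcal{S}^{>-n}\). Then \(\Hom_{\mathcal{S}}(B_n,X)=\Hom_{\mathcal{S}}(\mathcal{S}^{\leq -n},X)=0\), because \(\mathcal{S}^{>-n}=(\mathcal{S}^{\leq -n})^{\perp}\). Hence \(\Yoneda(X)\) is compactly supported. It is also the module colimit of the constant Cauchy sequence, so \(\Yoneda(X)\in\mathfrak{S}(\mathcal{S})\). Since \(\Yoneda\) is fully faithful, we obtain a fully faithful functor \(\bigcup_n\mathcal{S}^{>-n}\to\mathfrak{S}(\mathcal{S})\).

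\emph{Step 2 (the candidate representing object).} Let \(F=\moco\mathbf{E}\) with \(\mathbf{E}\) Cauchy and \(F(\mathcal{S}^{\leq -s})=0\). Since \(F\) also vanishes on \(\mathcal{S}^{\leq -s-1}\), I may enlarge \(s\) whenever needed. Choose \(N\) such that \(\mathbf{E}\) stabilises at \(s+1\) from \(N\) onwards, so \(\cone e_{m,n}\in\mathcal{S}^{\leq -s-1}\) for \(n\geq m\geq N\).

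Take any \(Z\in\mathcal{S}^{>-s}\) and apply \(\Hom(\blank,Z)\) to the triangle \(E_m\to E_n\to C\to\Sigma E_m\). Both \(\Hom(C,Z)\) and \(\Hom(\Sigma^{-1}C,Z)\) vanish, since \(\Sigma^{-1}C\in\mathcal{S}^{\leq -s}\). It follows that \(\tr^{>-s}e_{m,n}\) is an isomorphism. Set \(X:=\tr^{>-s}E_N\in\mathcal{S}^{>-s}\). The unit maps \(E_n\to\tr^{>-s}E_n\cong X\) are compatible with the \(e_{m,n}\), so they assemble into a natural transformation \(\theta\colon F\to\Yoneda(X)\).

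\emph{Step 3 (\(\theta\) is an isomorphism).} I would evaluate \(\theta\) at a test object \(A\) and use the approximation triangles \(\tr^{\leq -s}E_n\to E_n\to\tr^{>-s}E_n\xrightarrow{\delta_n}\Sigma\tr^{\leq -s}E_n\).

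For injectivity, suppose \(a\colon A\to E_n\) has \(\tr(a)=0\). Then \(a\) factors through \(\tr^{\leq -s}E_n\). Since \(F(\tr^{\leq -s}E_n)=0\), the element \(e_{n,m}\circ(\tr^{\leq -s}E_n\to E_n)\) vanishes for some \(m\), so \(a\) dies in the colimit.

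For surjectivity, given \(b\colon A\to X\cong\tr^{>-s}E_n\), the obstruction to lifting \(b\) to \(E_n\) is \(\delta_n b\). I would try to show that \(\delta_n\) becomes zero after passing to some later index \(m\), i.e.\ after composing with \(\Sigma\tr^{\leq -s}e_{n,m}\). Here I would use that \(\Sigma\tr^{\leq -s}E_n\in\mathcal{S}^{\leq -s-1}\), on which \(F\) and hence \(\moco(\Sigma\mathbf{E})\) vanish. This gives \(m\) with \(\Sigma\tr^{\leq -s}E_n\to\Sigma E_n\to\Sigma E_m\) equal to zero, and then an octahedral comparison of the triangles at \(n\) and \(m\) should yield a lift of \(b\) to \(E_m\).

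\emph{Step 4.} Finally, I would check that the equivalence is triangulated. Distinguished triangles in \(\mathfrak{S}(\mathcal{S})\) are colimits of Cauchy sequences of triangles, and by Steps 2–3 such colimits become eventually constant after applying \(\tr^{>-s}\) for a uniform \(s\). This shows they correspond to triangles in \(\mathcal{S}\) with all vertices in \(\mathcal{S}^{>-s}\).

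I expect the surjectivity half of Step 3 to be the main obstacle. There the vanishing of \(F\) only kills maps out of \(\tr^{\leq -s}E_n\) after composing into \(E_m\), whereas the obstruction \(\delta_n b\) lives in the \(\Sigma\tr^{\leq -s}\)-part. If the octahedral argument does not go through directly, I would instead truncate at \(-s-1\) to gain one degree of room and compare the two truncations.
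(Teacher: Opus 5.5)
The paper gives no proof of this statement; it is quoted from Cummings--Gratz. Your argument therefore has to stand on its own.

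\textbf{Steps 1--3.} These are right in substance, and the injectivity half of Step~3 is complete. For surjectivity you already have the key input: \(e_{n,m}\circ i_n=0\) for some \(m\geq n\), where \(i_n\colon\tr^{\leq -s}E_n\to E_n\). No octahedron is needed to finish.
\begin{itemize}
\item By naturality, \(i_m\circ\tr^{\leq -s}(e_{n,m})=e_{n,m}\circ i_n=0\). So \(\tr^{\leq -s}(e_{n,m})\) factors through \(\Sigma^{-1}\tr^{>-s}E_m\in\mathcal{S}^{>-s}\).
\item Its source \(\tr^{\leq -s}E_n\) lies in \(\mathcal{S}^{\leq -s}\), so \(\tr^{\leq -s}(e_{n,m})=0\).
\item The morphism of truncation triangles gives \(\delta_m\circ\tr^{>-s}(e_{n,m})=\Sigma\tr^{\leq -s}(e_{n,m})\circ\delta_n=0\). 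Since \(\tr^{>-s}(e_{n,m})\) is invertible, \(\delta_m=0\).
\item Hence \(E_m\to\tr^{>-s}E_m\cong X\) is a split epimorphism, and every \(b\colon A\to X\) lifts to \(E_m\).
\end{itemize}
Equivalently, write \(e_{n,m}=g\circ u_n\) with \(u_n\colon E_n\to\tr^{>-s}E_n\) the unit. The universal property of \(u_n\) gives \(u_m\circ g=\tr^{>-s}(e_{n,m})\), so \(g\circ b\) is the lift. This is exactly the vanishing you wanted, so your plan goes through once this is inserted.

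\textbf{Step 4.} This step does not work as written. The truncation \(\tr^{>-s}\) is not a triangulated functor: it does not commute with \(\Sigma\), and applying it termwise to a distinguished triangle generally does not give a distinguished triangle. So truncating a Cauchy sequence of triangles does not produce triangles in \(\mathcal{S}\). Being ``eventually constant after truncation'' then says nothing about distinguishedness. You also never specified which triangulated structure on \(\bigcup_n\mathcal{S}^{>-n}\) you are comparing with.

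You do not need this direction at all:
\begin{itemize}
\item \(\bigcup_n\mathcal{S}^{>-n}\) is a thick subcategory of \(\mathcal{S}\). Each \(\mathcal{S}^{>-n}\) is extension-closed and closed under summands, \(\Sigma^{-1}\mathcal{S}^{>-n}\subseteq\mathcal{S}^{>-n}\), and \(\Sigma\mathcal{S}^{>-n}=\mathcal{S}^{>-n-1}\).
\item \(\Yoneda\) satisfies \(\Yoneda(X)\circ\Sigma^{-1}\cong\Yoneda(\Sigma X)\).
\item \(\Yoneda\) sends a distinguished triangle with vertices in this subcategory to the module colimit of the constant sequence of that triangle. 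That colimit is distinguished in \(\mathfrak{S}(\mathcal{S})\) by definition.
\end{itemize}
So \(\Yoneda\) is a triangulated functor, and by Steps 1--3 it is an equivalence. The quasi-inverse of a triangulated equivalence is automatically triangulated. To see this, take a triangle in \(\mathfrak{S}(\mathcal{S})\), complete the preimage of its base to a triangle in \(\mathcal{S}\), and apply \(\Yoneda\). Compare the two triangles via TR3, and pull the resulting isomorphism back using full faithfulness.
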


\begin{corollary}
\label{C:CoreflectiveGivesPerp}
Let \(\mathcal{C}\) be a coreflective thick subcategory of \(\mathcal{S}\).
Then it holds that \(\mathfrak{S}_{\mathcal{C}}(\mathcal{S})\simeq\mathcal{C}^{\perp}\subseteq\mathcal{S}\).
\end{corollary}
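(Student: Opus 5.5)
Since \(\mathcal{C}\) is coreflective and thick, Proposition~\ref{P:CharacterisationReflectivity} tells us that \(\big(\mathcal{C},\mathcal{C}^{\perp}\big)\) is a \(t\)-structure on \(\mathcal{S}\). The plan is to recognise the constant metric \(\{\mathcal{C}\}_{n\in\N}\) as the aisle metric of this \(t\)-structure and then apply Theorem~\ref{T:CummingsGratzAisleTheorem}.

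\textbf{Step 1: identify the aisle metric.} Write \(\mathcal{S}^{\leq0}:=\mathcal{C}\) and \(\mathcal{S}^{>0}:=\mathcal{C}^{\perp}\). Because \(\mathcal{C}\) is a triangulated subcategory, \(\Sigma^{\pm1}\mathcal{C}=\mathcal{C}\). Hence \(\mathcal{S}^{\leq -n}=\Sigma^{n}\mathcal{C}=\mathcal{C}\) for every \(n\in\N\). So the aisle metric \(\{\mathcal{S}^{\leq -n}\}_{n\in\N}\) is literally the constant metric \(\{\mathcal{C}\}_{n\in\N}\), and \(\mathfrak{S}_{\mathcal{C}}(\mathcal{S})\) is the aisle-metric completion.

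\textbf{Step 2: compute the union of coaisles.} Similarly, \(\mathcal{S}^{>-n}=\Sigma^{-n}(\mathcal{C}^{\perp})=\mathcal{C}^{\perp}\), since \(\mathcal{C}^{\perp}\) is also closed under \(\Sigma^{\pm1}\): if \(\Hom(\mathcal{C},Y)=0\), then \(\Hom(C,\Sigma Y)\simeq\Hom(\Sigma^{-1}C,Y)=0\). Therefore \(\bigcup_{n\in\N}\mathcal{S}^{>-n}=\mathcal{C}^{\perp}\). Theorem~\ref{T:CummingsGratzAisleTheorem} then gives \(\mathfrak{S}_{\mathcal{C}}(\mathcal{S})\simeq\mathcal{C}^{\perp}\subseteq\mathcal{S}\).

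\textbf{Main obstacle.} The only point needing care is a convention check: the \(t\)-structure definition in Theorem~\ref{T:CummingsGratzAisleTheorem} must allow the pair \((\mathcal{C},\mathcal{C}^{\perp})\). The stated definition does allow it. Both classes are closed under \(\addf\) (thickness of \(\mathcal{C}\), and \(\mathcal{C}^{\perp}\) is closed under summands), the orthogonality condition holds by construction, \(\mathcal{S}=\mathcal{C}*\mathcal{C}^{\perp}\) holds by Proposition~\ref{P:CharacterisationReflectivity}, and \(\Sigma\mathcal{C}\subseteq\mathcal{C}\).

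It is also worth remarking that the equivalence is realised by Yoneda. Every object of \(\mathcal{C}^{\perp}\) is compactly supported at \(\mathcal{C}\), which is the content of the Cummings--Gratz identification here.
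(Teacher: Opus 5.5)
Your proof is correct and follows essentially the paper's argument: Proposition~\ref{P:CharacterisationReflectivity} makes \((\mathcal{C},\mathcal{C}^{\perp})\) a $t$-structure whose aisle metric is the constant metric \(\mathcal{C}\), and Theorem~\ref{T:CummingsGratzAisleTheorem} then applies because the coaisle \(\mathcal{C}^{\perp}\) is stable under \(\Sigma\). One small sign slip: with the paper's convention \(\mathcal{S}^{>n}:=\Sigma^{-n}\mathcal{S}^{>0}\), one has \(\mathcal{S}^{>-n}=\Sigma^{n}\mathcal{C}^{\perp}\) rather than \(\Sigma^{-n}\mathcal{C}^{\perp}\), but this is harmless since \(\mathcal{C}^{\perp}\) is closed under both shifts.
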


\begin{proof}
As \(\mathcal{C}\) is coreflective, it is the aisle of the $t$-structure \(\left(\mathcal{C},\mathcal{C}^{\perp}\right)\) on \(\mathcal{S}\) by Proposition~\ref{P:CharacterisationReflectivity}.
But then
\mbox{\(\mathfrak{S}_{\mathcal{C}}(\mathcal{T})\simeq\mathcal{C}^{\perp}\)}
by Theorem~\ref{T:CummingsGratzAisleTheorem} because, in this case, the coaisle \(\mathcal{C}^{\perp}\) is stable under~\(\Sigma\).
\end{proof}

Natural transformations from a module colimit of a Cauchy sequence to a compactly supported functor can be understood in terms of representable functors corresponding to the entries of the sequence. 

\begin{lemma}[{\cite[Lemma 2.10]{Neeman25A}}]
\label{L:BasicFactorisationProperty}
Equip $\mathcal{S}$ with a~good metric.
Let \(E=\moco \mathbf{E}\in\mathfrak{L}(\mathcal{S})\) for~some Cauchy sequence \(\mathbf{E}=(E_n,e_n)_{n\in\N}\) in \(\mathcal{S}\).
Let us have a cohomological functor \(F\in\Mod\dashmodule\mathcal{S}\).

If \(F\) is compactly supported at \(s\in\N\) and \(\mathbf{E}\) stabilises at $s+1$ from \(N\in\N\) onwards,
then
\[
\Hom_{\Mod\dashmodule\mathcal{S}}(\varphi_n,F):\Hom_{\Mod\dashmodule\mathcal{S}}(E,F)\rightarrow\Hom_{\Mod\dashmodule\mathcal{S}}\left(\Yoneda(E_n),F\right)
\]
is an isomorphism for all \(n\geq N\) where \(\varphi_n:\Yoneda(E_n)\rightarrow E\) is the~colimit injection. 
\end{lemma}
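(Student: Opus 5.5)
We need to prove Lemma~\ref{L:BasicFactorisationProperty}. Since \(\Hom(E,F)=\Hom(\varinjlim_m \Yoneda(E_m),F)\simeq\varprojlim_m \Hom(\Yoneda(E_m),F)\simeq\varprojlim_m F(E_m)\) by the universal property of the colimit and the Yoneda lemma, the map in question is the projection \(\varprojlim_m F(E_m)\rightarrow F(E_n)\). The plan is therefore to show that for all \(m\geq n\geq N\) the transition map \(F(e_{n,m}):F(E_m)\rightarrow F(E_n)\) (induced by the composite \(E_n\rightarrow E_m\)) is an isomorphism. Once this holds, the inverse system is eventually constant from \(N\) onwards, so its limit is \(F(E_n)\) via the projection for every \(n\geq N\). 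The earlier terms \(m<N\) are determined by the later ones, since the limit of an \(\N^{\opposite}\)-indexed system only depends on a cofinal part.

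For the key step, fix \(m\geq n\geq N\) and complete the morphism \(E_n\rightarrow E_m\) to a distinguished triangle
\[
E_n\rightarrow E_m\rightarrow C\rightarrow \Sigma E_n,
\]
where \(C\in B_{s+1}\) because \(\mathbf{E}\) stabilises at \(s+1\) from \(N\) onwards. Applying the cohomological contravariant functor \(F\) yields an exact sequence
\[
F(\Sigma E_n)\rightarrow F(C)\rightarrow F(E_m)\rightarrow F(E_n)\rightarrow F(\Sigma^{-1}C).
\]
To obtain this we rotate the triangle to \(\Sigma^{-1}C\rightarrow E_n\rightarrow E_m\rightarrow C\). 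Now axiom (iii) of a good metric gives \(C,\Sigma^{-1}C\in B_s\), and \(F(B_s)=0\). Hence \(F(C)=0=F(\Sigma^{-1}C)\), and \(F(E_m)\rightarrow F(E_n)\) is an isomorphism.

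The only point needing care, rather than a real obstacle, is the identification of \(\Hom(\varphi_n,F)\) with the limit projection. We also have to check that this identification is compatible with the Yoneda isomorphisms, which is a naturality check. Note that the shift by one index, \(s+1\) versus \(s\), is exactly what is needed to absorb \(\Sigma^{-1}C\) into \(B_s\).
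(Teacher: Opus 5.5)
Your proof is correct. You identify \(\Hom(E,F)\) with \(\varprojlim_m F(E_m)\), then use the triangle on \(E_n\rightarrow E_m\) together with \(\Sigma^{-1}B_{s+1}\cup B_{s+1}\subseteq B_s\) to show that the transition maps are isomorphisms for \(m\geq n\geq N\); this is the standard argument for this lemma. The paper gives no proof of its own here and simply cites Neeman, so there is no different route in the paper to compare against.
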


It has been proved in \cite[Lemma~3.17]{CummingsGratz24}, that in Krull-Schmidt categories working with additive metrics allows us to remove redundant direct summands from Cauchy sequences. We state a reformulation from \cite[Corollary~4.3]{Matousek26B}.

\begin{lemma}
\label{L:CummingsGratzTrivialisation}
Assume \(\mathcal{S}\) is Krull-Schmidt and equipped with an~additive good metric \(\mathcal{M}=\{B_n\}_{n\in\N}\).
Let \(s\in\N\).
If \(\mathbf{E}\) is a Cauchy sequence in \(\mathcal{S}\) compactly supported at \(s\), then there exists a~\(B_s\)\=/trivial Cauchy sequence \(\mathbf{F}\) in \(\mathcal{S}\) such that \(\mathbf{F}\) is compactly supported at~\(s\) and \(\moco \mathbf{E}\simeq\moco \mathbf{F}\).
\end{lemma}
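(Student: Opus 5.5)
The plan is to discard, in each term of $\mathbf{E}$, the summands lying in $B_s$, and then pass to a subsequence so that the discarded summands are killed along the way. First I will set up the splitting. Since $\mathcal{S}$ is Krull--Schmidt and $B_s=\addf(B_s)$ with $B_s*B_s=B_s$, each $E_n$ splits as $E_n\simeq E_n'\oplus E_n''$. Here $E_n''$ is the sum of all indecomposable summands of $E_n$ lying in $B_s$, so $E_n''\in B_s$ and $E_n'$ has no nonzero summand from $B_s$. I write $\iota_n',\iota_n''$ for the inclusions and $\pi_n',\pi_n''$ for the projections, so that $\iota'_n\pi'_n+\iota''_n\pi''_n=\id_{E_n}$.

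Next I use compact support. Since $(\moco\mathbf{E})(B_s)=0$, the element of $(\moco\mathbf{E})(E_n'')=\varinjlim_m\Hom(E_n'',E_m)$ given by $\iota_n''$ vanishes. Because the colimit is filtered, there is an $m>n$ with $e_{n,m}\iota_n''=0$. Inductively I choose $n_1<n_2<\cdots$ with $e_{n_k,n_{k+1}}\iota''_{n_k}=0$. I may also arrange that $n_k$ is large enough that $\mathbf{E}$ stabilises at $k+s$ from $n_k$ onwards. Replacing $\mathbf{E}$ by this subsequence does not change $\moco\mathbf{E}$ or the Cauchy property. I then define $F_k:=E'_{n_k}$ and $f_k:=\pi'_{n_{k+1}}e_{n_k,n_{k+1}}\iota'_{n_k}$. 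This sequence is $B_s$-trivial by construction.

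To identify the colimits, I compare two families of maps: $\pi'_{n_k}:E_{n_k}\to F_k$ and $\iota'_{n_k}:F_k\to E_{n_k}$. The $\pi'$ form a map of sequences, because $e\iota''=0$ gives
\[
f_k\pi'_{n_k}=\pi' e(\id-\iota''\pi'')=\pi' e .
\]
The $\iota'$ commute with the transition maps only up to the error term $\iota''\pi''e\iota'$, which dies after one further step since $e\iota''=0$. Likewise $\iota'\pi'$ differs from the identity by $\iota''\pi''$, which is killed by the next transition map, and $\pi'\iota'=\id$. So after a shift by one index the two families are mutually inverse ind-morphisms, and therefore $\moco\mathbf{E}\simeq\moco\mathbf{F}$. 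In particular $\mathbf{F}$ is compactly supported at $s$.

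The step I expect to be the main obstacle is checking that $\mathbf{F}$ is again Cauchy with respect to every $B_M$ with $M>s$. The cones of $\iota'$ and $\pi'$ only lie in $B_s$, which is too large for $M>s$. The approach I would take uses the relation, valid for $k<l<m$,
\[
\iota'_{n_m}f_{l,m}f_{k,l}\pi'_{n_k}=\iota'\pi'\,e_{n_k,n_m}
\]
(up to the vanishing terms). This exhibits $e_{n_k,n_m}$, in matrix form with respect to the decompositions, as having zero $E''$-column and as factoring through $f$ up to maps that annihilate the $E''$-parts. I would then apply the octahedral axiom to the factorisation
\[
E_{n_k}\xrightarrow{\pi'}F_k\xrightarrow{f_{k,m}}F_m\xrightarrow{\iota'}E_{n_m}.
\]
The aim is to write $\cone f_{k,m}$ as a direct summand of an extension built from $\cone e_{n_k,n_m}\in B_{k+s}$ and its shifts. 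Here I would use $B_{n+1}\cup\Sigma^{\pm1}B_{n+1}\subseteq B_n$ and closure under summands (additivity) to land in $B_{M}$ once $k\ge M$. If this direct comparison fails, the fallback is to work in the module category: by Lemma~\ref{L:BasicFactorisationProperty}, the two ind-objects agree, and the estimate on cones is transported from $\mathbf{E}$ to $\mathbf{F}$ by passing to a further subsequence.
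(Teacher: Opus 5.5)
Your construction is sound up to the last step: splitting off the $B_s$-part $E''_n$, passing to a subsequence with $e_{n_k,n_{k+1}}\iota''_{n_k}=0$, and taking $F_k=E'_{n_k}$. Your checks that $\pi'$ is a morphism of sequences and that $\moco\mathbf{E}\simeq\moco\mathbf{F}$ are correct. The paper gives no argument of its own here and simply cites \cite{CummingsGratz24}.

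The gap is the step you flag yourself: you never prove that $\mathbf{F}$ is Cauchy, and neither of your two routes works as stated.

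\textbf{The fallback fails.} Being Cauchy is not an invariant of the module colimit, even up to subsequences. Take $0\neq X$ with no nonzero summand in $B_s$. The sequence $X\xrightarrow{0}X\xrightarrow{0}\cdots$ is $B_s$-trivial and has $\moco=0$, the module colimit of the Cauchy zero sequence. Yet every transition map in it, or in any subsequence, has cone $X\oplus\Sigma X\notin B_s$. Moreover, Lemma~\ref{L:BasicFactorisationProperty} only computes Hom-sets out of a colimit and says nothing about cones.

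\textbf{The direct comparison with $\cone e_{n_k,n_m}$ alone is insufficient.} In matrix form
$e_{n_k,n_m}=\left(\begin{smallmatrix} f_{k,m}&0\\ \alpha&0\end{smallmatrix}\right)$, and the entry $\alpha=\pi''_{n_m}e_{n_k,n_m}\iota'_{n_k}$ is in general nonzero. So $\cone f_{k,m}$ is only an extension of a piece of $\cone e_{n_k,n_m}$ by the \emph{target} summand $\Sigma E''_{n_m}$. From $\cone e_{n_k,n_m}$ you learn nothing about $E''_{n_m}$ beyond $E''_{n_m}\in B_s$.

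\textbf{The missing idea.} The discarded summands are themselves short, and this comes from the \emph{next} transition map.

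1. Since $e_{n_k,n_l}$ vanishes on $E''_{n_k}$ for $l>k$, its triangle is the direct sum of a triangle on $e_{n_k,n_l}\iota'_{n_k}$ and the trivial triangle $E''_{n_k}\to0\to\Sigma E''_{n_k}\to\Sigma E''_{n_k}$. Hence $\cone e_{n_k,n_l}\simeq\cone(e_{n_k,n_l}\iota'_{n_k})\oplus\Sigma E''_{n_k}$.

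2. Because $B_{k+s}=\addf(B_{k+s})$ and $\cone e_{n_k,n_l}\in B_{k+s}$ by your choice of $n_k$, both $\Sigma E''_{n_k}$ and $\cone(e_{n_k,n_l}\iota'_{n_k})$ lie in $B_{k+s}$.

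3. Iterating your identity $f_k\pi'_{n_k}=\pi'_{n_{k+1}}e_{n_k,n_{k+1}}$ gives $f_{k,m}=\pi'_{n_m}\circ(e_{n_k,n_m}\iota'_{n_k})$.

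4. Apply the octahedral axiom to this composite. Since $\cone\pi'_{n_m}\simeq\Sigma E''_{n_m}$, for $m>k$ you get
\[
\cone f_{k,m}\in\cone(e_{n_k,n_m}\iota'_{n_k})*\Sigma E''_{n_m}\subseteq B_{k+s}*B_{m+s}\subseteq B_{k+s}.
\]

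Thus $\mathbf{F}$ stabilises at $M$ from $\max\{1,M-s\}$ onwards. No shifts are lost and no further subsequence is needed.
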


\subsubsection{Good extensions}

Assume that there exists a fully faithful triangulated functor \(\nu:\mathcal{S}\xhookrightarrow{}\mathcal{T}\). Then the~functors from \(\Mod\dashmodule\mathcal{T}\) act on \(\mathcal{S}\) by restriction, giving us a functor \(\Mod\dashmodule\mathcal{T}\rightarrow\Mod\dashmodule\mathcal{S}\), \(F\mapsto F\circ\nu\).
If we limit our choice of functors from \(\Mod\dashmodule\mathcal{T}\) to representable functors, we obtain \textit{the~restricted Yoneda functor}
\(\YonedaKatakana:\mathcal{T}\rightarrow\Mod\dashmodule\mathcal{S}\), \(T\mapsto\Hom_{\mathcal{T}}\big(\nu(\blank),T\big)\) (see \cite[Notation~3.1]{Neeman25A}).

The notation of the restricted Yoneda functor will be used in the following definition. Also recall from \cite[Definition~1.6.4]{Neeman01} that \textit{the homotopy colimit} \(\hoco\mathbf{E}\) of a sequence \(\mathbf{E}=(E_n,e_n)\) in \(\mathcal{T}\)
is determined by the distinguished triangle
\[
\coprod_{n\in\N}E_n\xrightarrow{(\id_{E_n}-e_n)_{n\in\N}}\coprod_{n\in\N}E_n\rightarrow\hoco\mathbf{E}\rightarrow\coprod_{n\in\N}\Sigma E_n
\]
provided that countable coproducts in \(\mathcal{T}\) exist.

\begin{definition}[{\cite[Definition~3.5]{Neeman18}}]
\label{D:GoodExtension}
Let \(\mathcal{T}\) be a triangulated category with countable coproducts. Equip \(\mathcal{S}\) with a good metric \(\mathcal{M}\).
A fully faithful triangulated functor \(\nu:\mathcal{S}\rightarrow\mathcal{T}\) is called \textit{a good extension} with respect to \(\mathcal{M}\) if \(\moco \mathbf{E}\simeq\YonedaKatakana\big(\hoco\nu(\mathbf{E})\big)\) holds for every Cauchy sequence \(\mathbf{E}=(E_n,e_n)\) in~\(\mathcal{S}\).
\end{definition}

\begin{definition}[{\cite[Definition~3.11]{Neeman25A}}]
\label{D:IntroductionDefinitionGoodExtensionCompletion}
Equip \(\mathcal{S}\) with a good metric \(\mathcal{M}\). Let \(\mathcal{S}\xhookrightarrow{}\mathcal{T}\) be a good extension with respect to \(\mathcal{M}\). We define the following subcategories of \(\mathcal{T}\):    
\begin{itemize}
    \item
    \(
    \mathfrak{L}_{\mathcal{M}}'(\mathcal{S}):=\left\{T\in\mathcal{T}:T\simeq\hoco\nu(\mathbf{E}) \text{\ \ for some Cauchy sequence } \mathbf{E} \text{ in } \mathcal{S}\right\}
    ,\)
    \item \(\mathfrak{C}_{\mathcal{M}}'(\mathcal{S}):=\YonedaKatakana^{-1}\big(\mathfrak{C}_{\mathcal{M}}(\mathcal{S})\big)\),
    \item\(\mathfrak{S}_{\mathcal{M}}'(\mathcal{S}):=\mathfrak{L}_{\mathcal{M}}'(\mathcal{S})\cap\mathfrak{C}_{\mathcal{M}}'(\mathcal{S})\).
\end{itemize}
\end{definition}

The point of the whole construction of \(\mathfrak{S}'_{\mathcal{M}}(\mathcal{S})\) is that it allows us to exhibit the metric completion \(\mathfrak{S}_{\mathcal{M}}(\mathcal{S})\) as a triangulated subcategory of the good extension. Again, we will omit the lower index indicating the metric if it is not necessary.

\begin{theorem}[{\cite[Theorem~3.23]{Neeman25A}}]
Equip \(\mathcal{S}\) with a good metric. Let \(\mathcal{S}\xhookrightarrow{}\mathcal{T}\) be a good extension.
Then \(\mathfrak{S}'(\mathcal{S})\) is a triangulated subcategory of \(\mathcal{T}\) and \(\YonedaKatakana\restriction_{\mathfrak{S}'(\mathcal{S})}:\mathfrak{S}'(\mathcal{S})\xrightarrow{\simeq}\mathfrak{S}(\mathcal{S})\) is a~well-defined triangulated equivalence.
\end{theorem}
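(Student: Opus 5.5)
The plan is to verify, in order, that \(\YonedaKatakana\) maps \(\mathfrak{S}'(\mathcal{S})\) into \(\mathfrak{S}(\mathcal{S})\), that it is fully faithful there, that it is essentially surjective, and finally that \(\mathfrak{S}'(\mathcal{S})\) is closed under cones with distinguished triangles matching. Well-definedness is immediate. If \(T\simeq\hoco\nu(\mathbf{E})\) for a Cauchy sequence \(\mathbf{E}\), then \(\YonedaKatakana(T)\simeq\moco\mathbf{E}\in\mathfrak{L}(\mathcal{S})\) by the definition of a good extension (Definition~\ref{D:GoodExtension}). Moreover, \(T\in\mathfrak{C}'(\mathcal{S})\) means exactly that \(\YonedaKatakana(T)\in\mathfrak{C}(\mathcal{S})\). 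Essential surjectivity is equally direct. Every \(F\in\mathfrak{S}(\mathcal{S})\) is \(\moco\mathbf{E}\) for some Cauchy \(\mathbf{E}\), and then \(\hoco\nu(\mathbf{E})\) lies in \(\mathfrak{L}'(\mathcal{S})\) and satisfies \(\YonedaKatakana(\hoco\nu(\mathbf{E}))\simeq F\in\mathfrak{C}(\mathcal{S})\).

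For full faithfulness, take \(T=\hoco\nu(\mathbf{E})\) and \(T'\in\mathfrak{S}'(\mathcal{S})\). I will apply \(\Hom_{\mathcal{T}}(\blank,T')\) to the defining triangle of the homotopy colimit. This gives the Milnor-type short exact sequence
\[
0\rightarrow{\varprojlim}^1\Hom_{\mathcal{T}}\big(\Sigma\nu E_n,T'\big)\rightarrow\Hom_{\mathcal{T}}(T,T')\rightarrow\varprojlim\Hom_{\mathcal{T}}\big(\nu E_n,T'\big)\rightarrow0 .
\]
The functor \(\YonedaKatakana(T')\) is cohomological and compactly supported at some \(s\), and \(\mathbf{E}\) (as well as \(\Sigma\mathbf{E}\), by axiom (iii) of a good metric) stabilises at \(s+1\) from some \(N\) onwards. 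By Lemma~\ref{L:BasicFactorisationProperty}, the transition maps of the inverse system \(\YonedaKatakana(T')(E_n)\) are then eventually isomorphisms. Hence the \(\varprojlim^1\) term vanishes and \(\Hom_{\mathcal{T}}(T,T')\simeq\varprojlim\YonedaKatakana(T')(E_n)\). By the Yoneda lemma this limit equals \(\Hom(\moco\mathbf{E},\YonedaKatakana(T'))\), and one checks that the composite isomorphism is the map induced by \(\YonedaKatakana\).

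The remaining and hardest step is the triangulated part. I must show that \(\mathfrak{S}'(\mathcal{S})\) is closed under cones, and that \(\YonedaKatakana\) carries the resulting triangles to those declared distinguished in \(\mathfrak{S}(\mathcal{S})\), namely module colimits of Cauchy sequences of triangles. Closure under \(\Sigma^{\pm1}\) is clear from axiom (iii) of a good metric.

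\begin{itemize}
\item Given \(f:\hoco\nu(\mathbf{E})\rightarrow\hoco\nu(\mathbf{F})\), full faithfulness together with Lemma~\ref{L:BasicFactorisationProperty} identifies \(f\) with a compatible family of maps \(E_n\rightarrow F_{m(n)}\). Reindexing \(\mathbf{F}\), this becomes a morphism of Cauchy sequences.
\item I will complete it to a sequence of distinguished triangles \(E_n\rightarrow F_n\rightarrow G_n\rightarrow\Sigma E_n\) in \(\mathcal{S}\). The sequence \(\mathbf{G}\) is Cauchy by the octahedral axiom and \(B_n*B_n=B_n\), after shifting indices using axiom (iii).
\item By Neeman's construction of homotopy colimits of sequences of triangles, there is a distinguished triangle
\[
\hoco\nu(\mathbf{E})\rightarrow\hoco\nu(\mathbf{F})\rightarrow\hoco\nu(\mathbf{G})\rightarrow\Sigma\hoco\nu(\mathbf{E})
\]
whose first map can be chosen to be \(f\).
\end{itemize}

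Its image under \(\YonedaKatakana\) is the module colimit of the triangles, hence distinguished in \(\mathfrak{S}(\mathcal{S})\). In particular \(\YonedaKatakana(\hoco\nu(\mathbf{G}))\) lies in the triangulated category \(\mathfrak{S}(\mathcal{S})\), so it is compactly supported and \(\hoco\nu(\mathbf{G})\in\mathfrak{S}'(\mathcal{S})\).

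The main obstacle is the non-canonicity of homotopy colimits of triangles. One must ensure that the chosen triangle in \(\mathcal{T}\) genuinely realises the given \(f\). One must also ensure that every distinguished triangle of \(\mathfrak{S}(\mathcal{S})\) arises this way; for this I would use the vanishing of \(\varprojlim^1\) above to rigidify the choice.
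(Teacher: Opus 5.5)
\textbf{Verdict: there is a genuine gap, and it is in the cone step.} The paper gives no argument for this statement; it quotes \cite[Theorem~3.23]{Neeman25A}, so your proposal has to stand on its own.

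\textbf{What works.} Well-definedness and essential surjectivity are immediate. The Milnor-sequence argument for full faithfulness is correct. It only needs \(T\in\mathfrak{L}'(\mathcal{S})\), \(T'\in\mathfrak{C}'(\mathcal{S})\), and the good-extension isomorphism being the canonical comparison map \(\moco\mathbf{E}\to\YonedaKatakana(\hoco\nu(\mathbf{E}))\). The lift of \(\YonedaKatakana(f)\) to a morphism of sequences is \cite[Lemma~2.8.i]{Neeman25A}. The sequence \(\mathbf{G}\) is Cauchy provided the maps \(G_n\to G_{n+1}\) are produced by the \(3\times3\) lemma, not as arbitrary TR3 fill-ins.

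\textbf{The gap.} It is your third bullet, which is exactly the step showing \(\mathfrak{S}'(\mathcal{S})\) is closed under cones. A distinguished triangle \(\hoco\nu(\mathbf{E})\to\hoco\nu(\mathbf{F})\to\hoco\nu(\mathbf{G})\to\Sigma\hoco\nu(\mathbf{E})\) is not something you can cite. Homotopy colimits in \(\mathcal{T}\) are not functorial. Applying the \(3\times3\) lemma to the square formed by \(\id-\mathrm{shift}\) on \(\coprod\nu E_n\) and \(\coprod\nu F_n\) gives a triangle whose third vertex is the cone of \emph{some} fill-in \(\coprod\nu G_n\to\coprod\nu G_n\). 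That fill-in need not be \(\id-\mathrm{shift}\) for any choice of transition maps. So the core of the statement is left unproved; you flag it as the main obstacle but only gesture at a remedy.

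\textbf{Two of your worries are not real.} Any \(f'\) with \(\YonedaKatakana(f')=\YonedaKatakana(f)\) equals \(f\), by your own injectivity of \(\Hom_{\mathcal{T}}(A,B)\to\Hom_{\Mod\dashmodule\mathcal{S}}(\YonedaKatakana(A),\YonedaKatakana(B))\) for \(B\in\mathfrak{C}'(\mathcal{S})\). Also, once \(\YonedaKatakana\) restricted to \(\mathfrak{S}'(\mathcal{S})\) is a fully faithful, essentially surjective triangulated functor into the triangulated category \(\mathfrak{S}(\mathcal{S})\), every distinguished triangle of \(\mathfrak{S}(\mathcal{S})\) comes from \(\mathfrak{S}'(\mathcal{S})\) automatically.

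\textbf{How to close the gap.} The real task is to show \(C:=\cone(f)\in\mathfrak{L}'(\mathcal{S})\). Your eventual-constancy idea does this if you never form a triangle of homotopy colimits.
\begin{enumerate}
\item \(C\in\mathfrak{C}'(\mathcal{S})\), since \(\YonedaKatakana\) is homological and \(\YonedaKatakana(B)\) and \(\YonedaKatakana(\Sigma A)\) are compactly supported.
\item Take one TR3 fill-in \(c_N:\nu G_N\to C\) of the square given by the colimit maps \(\nu E_N\to A\) and \(\nu F_N\to B\).
\item For \(N\) large, the inverse systems \(\Hom_{\mathcal{T}}(\nu G_n,C)\), \(\Hom_{\mathcal{T}}(\nu F_n,C)\) and \(\Hom_{\mathcal{T}}(\nu G_n,\Sigma A)\) are constant from \(N\) on, by the argument of Lemma~\ref{L:BasicFactorisationProperty}. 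Hence \(c_N\) extends uniquely to a compatible family of fill-ins \(c_n\), \(n\geq N\).
\item The Milnor sequence then gives \(w:\hoco\nu(\mathbf{G})\to C\) restricting to the \(c_n\).
\item The five lemma in \(\Mod\dashmodule\mathcal{S}\) shows \(\YonedaKatakana(w)\) is an isomorphism. This uses that filtered colimits are exact and that the outer vertical maps are the good-extension isomorphisms.
\item Set \(K:=\cone(w)\). Then \(\YonedaKatakana(K)=0\), so the Milnor sequence gives \(\Hom_{\mathcal{T}}(X,K)=0\) for every \(X\in\mathfrak{L}'(\mathcal{S})\).
\item \(K\) is built by cones from \(A\), \(B\), \(\hoco\nu(\mathbf{G})\) and their shifts. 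Therefore \(\Hom_{\mathcal{T}}(K,K)=0\), so \(K=0\).
\end{enumerate}
Thus \(C\simeq\hoco\nu(\mathbf{G})\in\mathfrak{S}'(\mathcal{S})\). The \(c_n\) identify the image under \(\YonedaKatakana\) of the triangle with the module colimit of the Cauchy sequence of triangles, which is distinguished in \(\mathfrak{S}(\mathcal{S})\).
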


The following fact is a direct consequence of \cite[Lemma~2.8]{Neeman96}. As the compact elements \(\mathcal{T}^c\subseteq\mathcal{T}\) respect homotopy colimits, they possess an inherent good extension working for any choice of a metric.  

\begin{theorem}[{\cite[Example~3.9]{Neeman25A}}]
\label{T:PrototypicalGoodExtension}
Let \(\mathcal{T}\) be a triangulated category with countable coproducts.
Then \(\mathcal{T}^c\xrightarrow{}\mathcal{T}\) is a good extension with respect to all good metrics on \(\mathcal{T}^c\).
\end{theorem}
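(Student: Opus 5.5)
The plan is to check the two conditions of Definition~\ref{D:GoodExtension} directly. Neither needs the metric. The inclusion \(\nu:\mathcal{T}^c\hookrightarrow\mathcal{T}\) is fully faithful and triangulated, since \(\mathcal{T}^c\) is a thick (in particular triangulated) subcategory of \(\mathcal{T}\). So what has to be shown is that for every sequence \(\mathbf{E}=(E_n,e_n)_{n\in\N}\) in \(\mathcal{T}^c\), Cauchy or not, there is an isomorphism
\[
\moco\mathbf{E}\simeq\YonedaKatakana\big(\hoco\mathbf{E}\big)=\Hom_{\mathcal{T}}(\blank,\hoco\mathbf{E})\restriction_{\mathcal{T}^c}
\]
in \(\Mod\dashmodule\mathcal{T}^c\). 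Because the Cauchy condition plays no role, the statement holds for every good metric at once. (I take \(\mathcal{T}^c\) to be essentially small, as the setting of Subsection~\ref{SS:MetricsAndMetricCompletions} requires.)

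Fix a compact object \(C\in\mathcal{T}^c\) and apply \(\Hom_{\mathcal{T}}(C,\blank)\) to the triangle defining \(\hoco\mathbf{E}\). This gives a long exact sequence. Compactness of \(C\) identifies \(\Hom(C,\coprod_n E_n)\) with \(\bigoplus_n\Hom(C,E_n)\), and likewise for \(\Sigma E_n\). Under this identification the first map becomes \(\id-\text{shift}\colon\bigoplus_n\Hom(C,E_n)\to\bigoplus_n\Hom(C,E_n)\), sending \(x\in\Hom(C,E_n)\) to \(x-e_n x\). This is the usual presentation of a sequential colimit of abelian groups: the map is injective and its cokernel is \(\varinjlim_n\Hom(C,E_n)\).

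Injectivity holds for every \(C\), and in particular for \(\Sigma^{-1}C\). Hence the connecting map \(\Hom(C,\hoco\mathbf{E})\to\bigoplus_n\Hom(C,\Sigma E_n)\) has zero image, because its image is the kernel of the injective map \(\id-\text{shift}\) on \(\bigoplus_n\Hom(\Sigma^{-1}C,E_n)\). The long exact sequence therefore reduces to a short exact sequence, which yields
\[
\Hom(C,\hoco\mathbf{E})\simeq\operatorname{coker}(\id-\text{shift})\simeq\varinjlim_n\Hom(C,E_n)=(\moco\mathbf{E})(C).
\]
The isomorphism is induced by the canonical maps \(E_n\to\coprod_n E_n\to\hoco\mathbf{E}\), which do not depend on \(C\). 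This is the content of \cite[Lemma~2.8]{Neeman96}.

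The only step needing care is naturality in \(C\): the isomorphisms must assemble into an isomorphism of functors on \(\mathcal{T}^c\), not merely of values. This follows because all identifications above are natural in \(C\). The coproduct comparison map \(\bigoplus_n\Hom(C,E_n)\to\Hom(C,\coprod_n E_n)\) is natural in \(C\), and the maps in the long exact sequence are given by postcomposition with fixed morphisms. Concretely, the compatible family \(\Yoneda(E_n)\to\YonedaKatakana(\hoco\mathbf{E})\) induced by the structure maps \(E_n\to\hoco\mathbf{E}\) gives a natural transformation \(\moco\mathbf{E}\to\YonedaKatakana(\hoco\mathbf{E})\). By the computation above it is an isomorphism objectwise, hence an isomorphism of functors.
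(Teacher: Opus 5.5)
Your proposal is correct and takes the same route as the paper. The paper gives no argument beyond citing \cite[Lemma~2.8]{Neeman96}: for a compact object $C$, applying the Hom-functor out of $C$ turns the homotopy colimit of any sequence into the colimit of the Hom-groups, with no Cauchy condition needed; you reprove that lemma from the defining triangle and spell out the naturality in $C$ that the definition of a good extension requires.
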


\section{Continuous functors}
\label{sec:continuous}

Throughout this section we fix essentially small triangulated categories \(\mathcal{S}\) and \(\mathcal{T}\).
In Subsection~\ref{SS:Compressions}, we recall the lattice theory of metrics and the Sun-Zhang compression functors.
We introduce continuous functors (Definition~\ref{D:Continuous}) as specific instances of compressions in Subsection~\ref{SS:ContinuousInduced} and show that they induce functors on the level of completions (Theorem~\ref{T:CompletionOfContinuousFunctor}).
Subsection~\ref{SS:ContinuousProperties} is dedicated to proving basic properties of the~induced functors.

\subsection{Lattice of metrics and compressions}
Sun and Zhang introduced compression functors between triangulated categories with good metrics in~\cite{SunZhang21}. They prove that an adjoint pair \(\psi:\mathcal{S}\rightleftarrows\mathcal{T}:\varphi\) of compression functors induces a triangulated functor \(\mathfrak{S}(\mathcal{T})\rightarrow\mathfrak{S}(\mathcal{S})\) (see~\cite[Theorem~1.1]{SunZhang21}). Roughly speaking, the compression property of \(\varphi\) is needed for the induced functor to preserve module colimits of Cauchy sequences, and \(\psi\) being a compression forces the induced functor to preserve compactly supported elements. This is why we need a whole adjoint pair to induce a single functor in one direction only.    
 
Our strategy to remove this obstacle is to define continuous functors more restrictively, so that one such a functor is enough to induce a functor between completions. 

\label{SS:Compressions}
\begin{definition}[{\cite[Definition~4.1]{SunZhang21}}]
\label{D:Compression}
By \textit{a compression}, we mean a triple \((\varphi,\mathcal{M},\mathcal{N})\) where \(\varphi:\mathcal{S}\rightarrow\mathcal{T}\)  is a triangulated functor, \(\mathcal{M}=\{B_n\}_{n\in\N}\) a good metric on \(\mathcal{S}\), and \(\mathcal{N}=\{C_n\}_{n\in\N}\) a good metric on \(\mathcal{T}\) such that for all \(n\in\N\) there exists \(m\in\N\) satisfying \(\varphi(B_m)\subseteq C_n\).    
\end{definition}

By an abuse of notation, we will often say that \(\varphi:\mathcal{S}\rightarrow\mathcal{T}\) is a compression if there is no possibility of confusion about the metrics involved.  

\begin{lemma}[{\cite[Lemma~4.2]{SunZhang21}}]
\label{L:CompressionsPreserveCauchySequences}
Every compression \(\mathcal{S}\rightarrow\mathcal{T}\) preserves Cauchy sequences. 
\end{lemma}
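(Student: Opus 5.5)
The plan is to apply the compression $\varphi$ termwise to a Cauchy sequence and check the Cauchy condition directly from the definitions. Let $(\varphi,\mathcal{M},\mathcal{N})$ be a compression with $\mathcal{M}=\{B_n\}_{n\in\N}$ on $\mathcal{S}$ and $\mathcal{N}=\{C_n\}_{n\in\N}$ on $\mathcal{T}$. Let $\mathbf{E}=(E_n,e_n)_{n\in\N}$ be a Cauchy sequence in $\mathcal{S}$ with respect to $\mathcal{M}$. We consider the image sequence $\varphi(\mathbf{E})=(\varphi(E_n),\varphi(e_n))_{n\in\N}$. Since $\varphi$ is a functor, $\varphi(e)_{m,n}=\varphi(e_{m,n})$, so the transition composites of the image sequence are the images of those of $\mathbf{E}$.

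Fix $M\in\N$; we must find $N$ such that $\varphi(\mathbf{E})$ stabilises at $C_M$ from $N$ onwards. By the compression property there is $m\in\N$ with $\varphi(B_m)\subseteq C_M$. Since $\mathbf{E}$ is Cauchy, it stabilises at $B_m$ from some $N$ onwards. Thus $\cone e_{k,l}\in B_m$ for all $l\geq k\geq N$.

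Because $\varphi$ is triangulated, it sends the distinguished triangle $E_k\xrightarrow{e_{k,l}}E_{l+1}\to\cone e_{k,l}\to\Sigma E_k$ to a distinguished triangle in $\mathcal{T}$. Hence $\cone\varphi(e_{k,l})\simeq\varphi(\cone e_{k,l})\in\varphi(B_m)\subseteq C_M$. Here we use that subcategories are replete, so membership in $C_M$ is invariant under isomorphism. This shows that $\varphi(\mathbf{E})$ stabilises at $M$ from $N$ onwards. As $M$ was arbitrary, $\varphi(\mathbf{E})$ is Cauchy with respect to $\mathcal{N}$.

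There is no real obstacle. The only points needing care are that cones are well defined only up to isomorphism, which repleteness handles, and that the index conventions for $e_{m,n}$ carry over functorially to the image sequence.
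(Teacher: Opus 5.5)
Your proof is correct, and it is the standard direct verification behind Sun--Zhang's Lemma~4.2; the paper only cites that lemma and gives no proof of its own. The argument is: pick $m$ with $\varphi(B_m)\subseteq C_M$, use that $\mathbf{E}$ stabilises at $B_m$ from some $N$ onwards, and note that the triangulated functor $\varphi$ carries cones of $e_{k,l}$ to cones of $\varphi(e_{k,l})$. Repleteness of $C_M$ handles the fact that cones are only defined up to isomorphism, exactly as you say.
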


The order on good metrics on \(\mathcal{S}\), originally defined in \cite[Definition~1.2]{Neeman25A}, can be also phrased in terms of compressions. For metrics \(\mathcal{M}\) and \(\mathcal{N}\) on \(\mathcal{S}\) we write \(\mathcal{M}\leq\mathcal{N}\) if \(\left(\Id_{\mathcal{S}},\mathcal{M},\mathcal{N}\right)\) is a compression. Furthermore, the metrics \(\mathcal{M}\) and \(\mathcal{N}\) are \textit{equivalent} (denoted as \(\mathcal{M}\sim\mathcal{N}\)) if simultaneously \(\mathcal{M}\leq\mathcal{N}\) and \(\mathcal{N}\leq\mathcal{M}\). Equivalent metrics yield equivalent completions (see \cite[Remark~1.12]{Neeman25A}).

In \cite[Definition~1.14]{Neeman26}, Neeman defines the equivalence of $t$-structures on a triangulated category. Translating this definition into the language of metrics, two $t$-structures on a triangulated category are \textit{equivalent} if their aisle metrics are equivalent.

The collection of all equivalence classes of good metrics on \(\mathcal{S}\) forms a lattice (see \cite[Proposition~5.1]{Matousek26B}). Given good metrics \(\mathcal{M}=\{B_n\}_{n\in\N}\) and \(\mathcal{N}=\{C_n\}_{n\in\N}\) good metrics on \(\mathcal{S}\), we have \(\mathcal{M}\wedge\mathcal{N}=\{B_n\cap C_n\}_{n\in\N}\) and \(\mathcal{M}\vee\mathcal{N}=\{D_n\}_{n\in\N}\) where \(D_n\) is the closure of \(B_n\cup C_n\) under triangle extensions. These operation do not depend on the choice of representatives from  the equivalence classes of metrics.

\subsection{Continuous and induced functors}
\label{SS:ContinuousInduced}

\begin{definition}
\label{D:Continuous}
Let \(\varphi:\mathcal{S}\rightarrow\mathcal{T}\) be a triangulated functor. We say that a compression \((\varphi,\mathcal{M},\mathcal{N})\) is \textit{continuous} if $\varphi$ maps compactly supported Cauchy sequences in \(\mathcal{S}\) to compactly supported Cauchy sequences in \(\mathcal{T}\).
In this case, we also call $\varphi$ \textit{a continuous functor}.
\end{definition}

We want to characterise continuous functors in terms of adjoint pairs on the level of module categories. To that end, we need the following standard argument involving Kan extensions. 

\begin{lemma}[{\cite[Exercise~X.3.2]{Lane98}}]
\label{L:KanExtensionRepresentable}
Let \(\mathcal{B}\) and \(\mathcal{C}\) be essentially small additive categories and \(\varphi:\mathcal{B}\rightarrow\mathcal{C}\) an additive functor.
Then \(\blank\circ \varphi:\Mod\dashmodule\mathcal{C}\rightarrow\Mod\dashmodule\mathcal{B}\) has a~left adjoint $L$ satisfying \(L\big(\Yoneda(B)\big)\simeq\Yoneda\big(\varphi(B)\big)\) for all \(B\in\mathcal{B}\). 
\end{lemma}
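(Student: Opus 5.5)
The plan is to construct $L$ explicitly as the left Kan extension along $\varphi$ and then verify the adjunction and its value on representables via the Yoneda lemma. Concretely, for $F\in\Mod\dashmodule\mathcal{B}$ we set
\[
L(F):=\varinjlim_{(B,x)\in\mathrm{el}(F)}\Yoneda\big(\varphi(B)\big),
\]
the colimit in $\Mod\dashmodule\mathcal{C}$ over the category of elements of $F$. Its objects are pairs $(B,x)$ with $x\in F(B)$; its morphisms $(B,x)\to(B',x')$ are maps $b:B\to B'$ with $F(b)(x')=x$. Because $\mathcal{B}$ is essentially small, this indexing category is essentially small, and $\Mod\dashmodule\mathcal{C}$ is cocomplete since colimits are computed pointwise in $\Ab$. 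Hence $L(F)$ exists, and it is evidently functorial in $F$.

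To check the adjunction, we use the density of representables: every $F$ is canonically the colimit $F\simeq\varinjlim_{(B,x)}\Yoneda(B)$. For $G\in\Mod\dashmodule\mathcal{C}$ we then compute
\[
\Hom\big(L(F),G\big)\simeq\varprojlim_{(B,x)}\Hom\big(\Yoneda(\varphi B),G\big)\simeq\varprojlim_{(B,x)}G(\varphi B)\simeq\varprojlim_{(B,x)}\Hom\big(\Yoneda(B),G\circ\varphi\big)\simeq\Hom\big(F,G\circ\varphi\big),
\]
using the Yoneda lemma twice and then density once more. Each isomorphism is natural in $F$ and $G$, so $L\dashv(\blank\circ\varphi)$.

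One point needs care regarding additivity. If we take the colimit over the ordinary (non-additive) category of elements, then $L(F)$ is a colimit of additive functors in $\Mod\dashmodule\mathcal{C}$, which is again additive, so it is well defined. The density formula also holds in the additive setting; an alternative is the coend $\int^{B}F(B)\otimes\Hom_{\mathcal{C}}(\blank,\varphi B)$, the $\Ab$-enriched Kan extension. Either description works, and I would use the coend version to avoid subtleties about additivity.

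Finally, for the value on representables, the Yoneda lemma gives
\[
\Hom\big(L\Yoneda(B),G\big)\simeq\Hom\big(\Yoneda(B),G\circ\varphi\big)\simeq G(\varphi B)\simeq\Hom\big(\Yoneda(\varphi B),G\big),
\]
naturally in $G$. By Yoneda in $\Mod\dashmodule\mathcal{C}$ this yields $L\Yoneda(B)\simeq\Yoneda(\varphi(B))$, naturally in $B$.

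The main obstacle is only the bookkeeping in the density and enrichment step. This is standard, and it is exactly the content of \cite[Exercise~X.3.2]{Lane98}.
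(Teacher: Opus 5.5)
Your proposal is correct and follows the paper's route. The paper gets $L$ as the pointwise left Kan extension along $\varphi$ by citing \cite[Corollary~6.2.9]{Riehl16}; your colimit (or coend) formula together with the density argument is an explicit version of exactly that step, and your final Yoneda computation $\Hom(L\Yoneda(B),G)\simeq G(\varphi B)\simeq\Hom(\Yoneda(\varphi B),G)$ is the same as the paper's. Your attention to additivity is a useful addition that the paper passes over: the coend formula settles it, and because $\mathcal{B}$ has biproducts, the ordinary category-of-elements colimit in $\Ab$ gives the same $L$ anyway.
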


\begin{proof}
The two module categories consist of functors into the cocomplete category \(\Ab\). Then by \cite[Corollary~6.2.9]{Riehl16}, to a functor \(F\in\Mod\dashmodule\mathcal{B}\) we may assign its left Kan extension \(L(F)\) along $\varphi$. This assignment is functional, and \(L\) is the left adjoint of \(\blank\circ \varphi\). 
For \(B\in \mathcal{B}\) there is an isomorphism
\[
\begin{split}
\Hom_{\Mod\dashmodule\mathcal{C}}\Big(L\big(\Yoneda(B)\big),G\Big)&\simeq\Hom_{\Mod\dashmodule\mathcal{B}}\big(\Yoneda(B),G\circ \varphi\big)\simeq G\big(\varphi(B)\big)\\
&\simeq\Hom_{\Mod\dashmodule\mathcal{C}}\Big(\Yoneda\big(\varphi(B)\big),G\Big)
\end{split}
\]
natural in \(G\in\Mod\dashmodule\mathcal{C}\). We then have \(L\big(\Yoneda(B)\big)\simeq\Yoneda\big(\varphi(B)\big)\) by the Yoneda lemma.
\end{proof}

\begin{lemma}
\label{L:ContinuousAdjointVersion}
Let \(\varphi:\mathcal{S}\rightarrow\mathcal{T}\) be a triangulated functor and \((\varphi,\mathcal{M},\mathcal{N})\) a compression. 
Consider the~adjoint pair
\(
L:\Mod\dashmodule\mathcal{S}\leftrightarrows\Mod\dashmodule\mathcal{T}:\blank\circ \varphi
\)
provided by Lemma~\ref{L:KanExtensionRepresentable}.
Let \(E=\moco\mathbf{E}\in\mathfrak{L}_{\mathcal{M}}(\mathcal{S})\) for a~Cauchy sequence \(\mathbf{E}\) in \(\mathcal{S}\).
Then \(L(E)\simeq\moco \varphi(\mathbf{E})\in\mathfrak{L}_{\mathcal{N}}(\mathcal{T})\).

In particular, the compression \((\varphi,\mathcal{M},\mathcal{N})\) is continuous if and only if \(L\big(\mathfrak{S}_{\mathcal{M}}(\mathcal{S})\big)\subseteq\mathfrak{C}_{\mathcal{N}}(\mathcal{T})\).
\end{lemma}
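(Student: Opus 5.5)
The first claim, \(L(E)\simeq\moco\varphi(\mathbf{E})\), follows because \(L\) is a left adjoint and so preserves all colimits, in particular filtered ones. Write \(\mathbf{E}=(E_n,e_n)_{n\in\N}\). Then
\[
L(E)=L\Big(\varinjlim\Yoneda(E_n)\Big)\simeq\varinjlim L\big(\Yoneda(E_n)\big)\simeq\varinjlim\Yoneda\big(\varphi(E_n)\big)=\moco\varphi(\mathbf{E}).
\]
The middle isomorphism is the one from Lemma~\ref{L:KanExtensionRepresentable}. The one point to verify is that this isomorphism \(L\circ\Yoneda\simeq\Yoneda\circ\varphi\) is natural in \(B\in\mathcal{S}\), so that the two diagrams being colimited are isomorphic as \(\N\)-indexed diagrams and not merely objectwise. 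Naturality holds because the chain of isomorphisms in the proof of Lemma~\ref{L:KanExtensionRepresentable} is natural in \(B\) as well as in \(G\), and the Yoneda lemma then yields a natural isomorphism of functors \(\mathcal{S}\to\Mod\dashmodule\mathcal{T}\). Finally, \(\varphi(\mathbf{E})\) is Cauchy with respect to \(\mathcal{N}\) by Lemma~\ref{L:CompressionsPreserveCauchySequences}, so \(L(E)\in\mathfrak{L}_{\mathcal{N}}(\mathcal{T})\).

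For the equivalence, I would unwind the definitions. An object \(F\in\mathfrak{S}_{\mathcal{M}}(\mathcal{S})\) is exactly one of the form \(F\simeq\moco\mathbf{E}\) with \(\mathbf{E}\) a Cauchy sequence that is compactly supported, that is, with \(\moco\mathbf{E}\) compactly supported.

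Suppose first that \(\varphi\) is continuous. Then \(\varphi(\mathbf{E})\) is a compactly supported Cauchy sequence, so \(L(F)\simeq\moco\varphi(\mathbf{E})\in\mathfrak{C}_{\mathcal{N}}(\mathcal{T})\). Here we use that \(\mathfrak{C}_{\mathcal{N}}(\mathcal{T})\) is closed under isomorphism, which is clear from its definition.

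Conversely, suppose \(L\big(\mathfrak{S}_{\mathcal{M}}(\mathcal{S})\big)\subseteq\mathfrak{C}_{\mathcal{N}}(\mathcal{T})\), and let \(\mathbf{E}\) be a compactly supported Cauchy sequence in \(\mathcal{S}\). Then \(\moco\mathbf{E}\in\mathfrak{S}_{\mathcal{M}}(\mathcal{S})\), so \(\moco\varphi(\mathbf{E})\simeq L(\moco\mathbf{E})\) is compactly supported. Together with the Cauchy property from Lemma~\ref{L:CompressionsPreserveCauchySequences}, this is precisely the statement that \(\varphi\) is continuous.

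The only step needing care is the naturality of \(L\circ\Yoneda\simeq\Yoneda\circ\varphi\). Everything else is formal.
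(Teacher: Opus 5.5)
Your proposal is correct and follows the paper's proof. The paper likewise commutes $L$ with the colimit, identifies $L\circ\Yoneda\simeq\Yoneda\circ\varphi$ via Lemma~\ref{L:KanExtensionRepresentable}, and gets the Cauchy property from Lemma~\ref{L:CompressionsPreserveCauchySequences}, after which the equivalence is a direct unwinding of the definitions. Two things you add: you check that $L(\Yoneda(B))\simeq\Yoneda(\varphi(B))$ is natural in $B$, a detail the paper leaves implicit, and you write out both directions of the ``if and only if'', which the paper compresses into one line.
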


\begin{proof}
By Lemma~\ref{L:CompressionsPreserveCauchySequences}, the functor $\varphi$ automatically sends Cauchy sequences to Cauchy sequences.
As left adjoints commute with colimits, we have
\[
L(E)\simeq L\big(\varinjlim \Yoneda(\mathbf{E})\big)\simeq\varinjlim L\big(\Yoneda(\mathbf{E})\big)\simeq\varinjlim\Yoneda\big(\varphi(\mathbf{E})\big)\simeq\moco \varphi(\mathbf{E}) 
\]
where the third isomorphism is obtained from Lemma~\ref{L:KanExtensionRepresentable}.
We now see that \(L(E)\in\mathfrak{C}_{\mathcal{N}}(\mathcal{T})\) if and only if the Cauchy sequence \(\varphi(\mathbf{E})\) is compactly supported.
Since $E$ was chosen arbitrarily, the claim follows.     
\end{proof}

\begin{example}
A compression does not have to be continuous in general. 
Assume \(0\not\simeq\mathcal{S}\), and consider the compression \((\Id_{\mathcal{S}},0,\mathcal{S})\)
involving the constant metrics $0$ and \(\mathcal{S}\) on \(\mathcal{S}\).
Then \(\Id_{\Mod\dashmodule\mathcal{S}}\simeq\blank\circ\Id_{\mathcal{S}}\) is its own left adjoint. Let \(0\neq S\in\mathcal{S}\). Then \(\Yoneda(S)\in\mathfrak{S}_0(\mathcal{S})\) by Example~\ref{E:ZeroMetric} but \(\Id_{\Mod\dashmodule\mathcal{S}}\big(\Yoneda(S)\big)\notin\mathfrak{C}_{\mathcal{S}}(\mathcal{S})\) because \(\Yoneda(S)\) does not vanish on $S$.
Lemma~\ref{L:ContinuousAdjointVersion} tells us that \((\Id_{\mathcal{S}},0,\mathcal{S})\) cannot be continuous.
\end{example}

We now prove the main theorem of the section.

\begin{theorem}
\label{T:CompletionOfContinuousFunctor}
Let \(\varphi:\mathcal{S}\rightarrow\mathcal{T}\) be a continuous functor and \((\varphi,\mathcal{M},\mathcal{N})\) the associated compression.
Then the left adjoint to \(\blank\circ \varphi:\Mod\dashmodule\mathcal{T}\rightarrow\Mod\dashmodule\mathcal{S}\) restricts to a triangulated functor \(\mathfrak{S}(\varphi):\mathfrak{S}_{\mathcal{M}}(\mathcal{S})\rightarrow\mathfrak{S}_{\mathcal{N}}(\mathcal{T})\) acting on module colimits of compactly supported Cauchy sequences as \(\moco\mathbf{E}\mapsto\moco \varphi(\mathbf{E})\).
\end{theorem}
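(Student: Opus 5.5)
We plan to prove the theorem by restricting the left Kan extension \(L\) from Lemma~\ref{L:KanExtensionRepresentable} and then checking exactness directly on the level of Cauchy sequences.

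\textbf{Well-definedness.} Let \(F\in\mathfrak{S}_{\mathcal{M}}(\mathcal{S})\), so \(F\simeq\moco\mathbf{E}\) for a Cauchy sequence \(\mathbf{E}\) in \(\mathcal{S}\) with \(\moco\mathbf{E}\) compactly supported. By Lemma~\ref{L:ContinuousAdjointVersion}, \(L(F)\simeq\moco\varphi(\mathbf{E})\in\mathfrak{L}_{\mathcal{N}}(\mathcal{T})\). Continuity then gives \(L(F)\in\mathfrak{C}_{\mathcal{N}}(\mathcal{T})\). Hence \(L\) restricts to an additive functor
\[
\mathfrak{S}(\varphi):\mathfrak{S}_{\mathcal{M}}(\mathcal{S})\rightarrow\mathfrak{S}_{\mathcal{N}}(\mathcal{T}).
\]
On objects it is \(\moco\mathbf{E}\mapsto\moco\varphi(\mathbf{E})\), independently of the chosen presentation, since \(L\) is a functor. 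Naturality in morphisms is automatic for the same reason.

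\textbf{Commuting with the shift.} The shift on both completions is \(\Sigma G=G\circ\Sigma^{-1}\). Since \(\varphi\) is triangulated, there is a natural isomorphism \(\varphi\Sigma\simeq\Sigma\varphi\). This gives an isomorphism \(\Sigma\circ(\blank\circ\varphi)\simeq(\blank\circ\varphi)\circ\Sigma\) of restriction functors, and by uniqueness of left adjoints a natural isomorphism \(L\Sigma\simeq\Sigma L\). On module colimits this is simply
\[
\moco\varphi(\Sigma\mathbf{E})\simeq\moco\Sigma\varphi(\mathbf{E}),
\]
induced termwise from \(\varphi\Sigma\simeq\Sigma\varphi\).

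\textbf{Preserving distinguished triangles.} This is the main step. By the description of the triangulated structure after Neeman's theorem, a distinguished triangle in \(\mathfrak{S}_{\mathcal{M}}(\mathcal{S})\) is, up to isomorphism, \(\moco\) of a Cauchy sequence of distinguished triangles
\[
\mathbf{A}\rightarrow\mathbf{B}\rightarrow\mathbf{C}\rightarrow\Sigma\mathbf{A}
\]
in \(\mathcal{S}\), whose entries are compactly supported. Applying \(\varphi\) termwise gives a sequence of distinguished triangles in \(\mathcal{T}\), because \(\varphi\) is triangulated. Each of \(\varphi(\mathbf{A}),\varphi(\mathbf{B}),\varphi(\mathbf{C})\) is Cauchy by Lemma~\ref{L:CompressionsPreserveCauchySequences} and compactly supported by continuity. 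Since \(L\) commutes with colimits, applying \(L\) to the original triangle yields, up to the shift isomorphism above, the module colimit of this image sequence. That colimit is therefore distinguished in \(\mathfrak{S}_{\mathcal{N}}(\mathcal{T})\).

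The hard part is making sure the triangles in Neeman's construction are exactly of this form. I expect the formal definition requires a Cauchy sequence of triangles, possibly with a stabilisation condition on the third term or a "Cauchy as a sequence in the category of triangles" condition. I would verify that the compression property preserves whatever condition is used, since it transports stabilisation at \(B_m\) to stabilisation at \(C_n\). I would also check that the isomorphism of the candidate triangle with a colimit of such a sequence is transported correctly by \(L\), which again reduces to naturality of \(L\) and of \(L\Sigma\simeq\Sigma L\).
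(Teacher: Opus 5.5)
Your proposal is correct and follows essentially the same route as the paper. You restrict the left Kan extension $L$ using Lemma~\ref{L:ContinuousAdjointVersion} and continuity, then show that distinguished triangles are sent to module colimits of the $\varphi$-images of Cauchy sequences of triangles, with $L\Sigma\simeq\Sigma L$ coming from $\varphi\Sigma\simeq\Sigma\varphi$. The only minor difference is that you get $L\Sigma\simeq\Sigma L$ globally from uniqueness of left adjoints, whereas the paper computes $L(\Sigma\moco\mathbf{X})\simeq\Sigma\moco\varphi(\mathbf{X})$ directly on module colimits; your version makes naturality of the isomorphism automatic.
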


\begin{proof}
Denote $L$ the left adjoint to \(\blank\circ \varphi\) given by Lemma~\ref{L:KanExtensionRepresentable}. By Lemma~\ref{L:ContinuousAdjointVersion}, for any Cauchy sequence \(\mathbf{E}\) in \(\mathcal{S}\) we have
\(
L\left(\moco\mathbf{E}\right)\simeq\moco \varphi(\mathbf{E}) 
.\)
Because $\varphi$ preserves Cauchy sequences by Lemma~\ref{L:CompressionsPreserveCauchySequences}, we see that $L$ restricts to a functor \(\mathfrak{L}_{\mathcal{M}}(\mathcal{S})\rightarrow\mathfrak{L}_{\mathcal{N}}(\mathcal{T})\).
Since $L$ also restricts to a functor \(\mathfrak{S}_{\mathcal{M}}(\mathcal{S})\rightarrow\mathfrak{C}_{\mathcal{N}}(\mathcal{T})\), again by  Lemma~\ref{L:ContinuousAdjointVersion}, it yields an additive functor \(\mathfrak{S}_{\mathcal{M}}(\mathcal{S})\rightarrow\mathfrak{S}_{\mathcal{N}}(\mathcal{T})\).

All we need to do is to show that \(L:\mathfrak{S}_{\mathcal{M}}(\mathcal{S})\rightarrow\mathfrak{S}_{\mathcal{N}}(\mathcal{T})\) is triangulated.
Any distinguished triangle in \(\mathfrak{S}_{\mathcal{M}}(\mathcal{S})\) is isomorphic to a module colimit
\[
\moco \mathbf{X}\xrightarrow{} \moco \mathbf{Y}\xrightarrow{}\moco \mathbf{Z}\xrightarrow{}\Sigma\moco \mathbf{X}
\]
of a Cauchy sequence of triangles in \(\mathcal{S}\) and is sent to a candidate triangle
\[
\moco \varphi(\mathbf{X})\xrightarrow{} \moco \varphi(\mathbf{Y})\xrightarrow{}\moco \varphi(\mathbf{Z})\xrightarrow{}L\left(\Sigma\moco \mathbf{X}\right)
\]
by $L$.
Using the definition of the shift on completion and the natural isomorphism  \(\varphi\Sigma\Rightarrow\Sigma \varphi\) establishing $\varphi$ as a triangulated functor, we obtain
\[
\begin{split}
L\left(\Sigma\moco\mathbf{X}\right)&\simeq L\left(\varinjlim\Hom_{\Mod\dashmodule\mathcal{S}}\left(\Sigma^{-1}\blank,\mathbf{X}\right)\right)\simeq L\left(\varinjlim\Hom_{\Mod\dashmodule\mathcal{S}}(\blank,\Sigma\mathbf{X})\right)\\
&\simeq \varinjlim\Hom_{\Mod\dashmodule\mathcal{T}}(\blank,\varphi\Sigma\mathbf{X})\simeq \varinjlim\Hom_{\Mod\dashmodule\mathcal{T}}(\blank,\Sigma \varphi\mathbf{X}) \\
&\simeq\varinjlim\Hom_{\Mod\dashmodule\mathcal{T}}\left(\Sigma^{-1}\blank,\varphi\mathbf{X}\right)\simeq \Sigma\moco \varphi(\mathbf{X})
.\end{split}
\]
In other words, the functor $L$ sends distinguished triangles to distinguished triangles up to a natural isomorphism \(L\Sigma\Rightarrow\Sigma L\) and is thus triangulated.
\end{proof}

\begin{remark}
The induced functor from Theorem~\ref{T:CompletionOfContinuousFunctor} recovers the Sun-Zhang induced functor (from \cite[Theorem~1.1]{SunZhang21})  provided the latter exists.
The assumptions in \cite{SunZhang21} require the existence of an adjoint pair \(\psi:\mathcal{S}\rightleftarrows\mathcal{T}:\varphi\) of compression functors. In this case, there is (by \cite[Theorem~1.1.2, Corollary~3.2.1]{SunZhang21}) an adjoint pair \(\blank\circ\psi:\Mod\dashmodule\mathcal{T}\rightleftarrows\Mod\dashmodule\mathcal{S}:\blank\circ\varphi\) and \(F\circ\psi\in\mathfrak{C}(\mathcal{S})\) for all \(F\in\mathfrak{C}(\mathcal{T})\).
By Lemma~\ref{L:ContinuousAdjointVersion}, the functor \(\varphi\) is then continuous and induces  \(\mathfrak{S}(\varphi):\mathfrak{S}(\mathcal{T})\rightarrow\mathfrak{S}(\mathcal{S})\). 
Since \(\mathfrak{S}(\varphi)\simeq(\blank\circ\psi)\restriction\mathfrak{S}(\mathcal{S})\), it coincides with the~Sun-Zhang functor.
\end{remark}

\begin{corollary}
\label{C:EkvilaleceEkvivalencakEkvivalencata}
Let \(\varphi:\mathcal{S}\rightarrow\mathcal{T}\) be a triangulated equivalence and let \(\mathcal{M}:=\{B_n\}_{n\in\N}\) be a good metric on~\(\mathcal{S}\). For the image metric \(\mathcal{N}:=\big\{\varphi(B_n)\big\}_{n\in\N}\) on \(\mathcal{T}\) we get \(\mathfrak{S}_{\mathcal{M}}(\mathcal{S})\simeq\mathfrak{S}_{\mathcal{N}}(\mathcal{T})\).
\end{corollary}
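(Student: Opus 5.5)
The plan is to apply Theorem~\ref{T:CompletionOfContinuousFunctor} twice: once to $\varphi$ and once to a quasi-inverse $\psi:\mathcal{T}\rightarrow\mathcal{S}$. Then we check that the two induced functors are mutually quasi-inverse.

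First we verify the metric side. Since $\varphi$ is a triangulated equivalence, $\mathcal{N}=\{\varphi(B_n)\}_{n\in\N}$ (taken replete) is a good metric on $\mathcal{T}$. Indeed, $\varphi$ preserves $0$, commutes with $\Sigma^{\pm1}$ up to isomorphism, and induces a bijection on distinguished triangles up to isomorphism, so $\varphi(B_n)*\varphi(B_n)=\varphi(B_n*B_n)$. Trivially $(\varphi,\mathcal{M},\mathcal{N})$ is a compression, with $m=n$. Choose a quasi-inverse triangulated equivalence $\psi$; then $\psi(\varphi(B_n))=B_n$ by repleteness, so $(\psi,\mathcal{N},\mathcal{M})$ is a compression as well.

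Next, continuity. Let $\mathbf{E}$ be a Cauchy sequence in $\mathcal{S}$ with $\moco\mathbf{E}$ vanishing on $B_s$. Then $\moco\varphi(\mathbf{E})$ evaluated at $\varphi(B)$ is $\varinjlim\Hom_{\mathcal{T}}(\varphi B,\varphi E_n)\simeq\varinjlim\Hom_{\mathcal{S}}(B,E_n)=0$ for $B\in B_s$, because $\varphi$ is fully faithful. Hence $\varphi(\mathbf{E})$ is compactly supported at $s$ with respect to $\mathcal{N}$. The same argument applies to $\psi$. So both are continuous, and Theorem~\ref{T:CompletionOfContinuousFunctor} yields triangulated functors $\mathfrak{S}(\varphi)$ and $\mathfrak{S}(\psi)$. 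These are restrictions of the left Kan extensions $L_\varphi$ and $L_\psi$.

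It remains to show $\mathfrak{S}(\psi)\mathfrak{S}(\varphi)\simeq\Id$ and symmetrically. This is the step needing the most care, since the functors are only defined via Kan extensions. I would argue at the level of module categories. Because $\varphi$ is an equivalence, $\blank\circ\varphi$ is an equivalence $\Mod\dashmodule\mathcal{T}\rightarrow\Mod\dashmodule\mathcal{S}$ with quasi-inverse $\blank\circ\psi$. A left adjoint of an equivalence is its quasi-inverse, so $L_\varphi\simeq\blank\circ\psi$ and $L_\psi\simeq\blank\circ\varphi$. Therefore $L_\psi L_\varphi\simeq\blank\circ\psi\varphi\simeq\Id$ via the natural isomorphism $\psi\varphi\simeq\Id_{\mathcal{S}}$, and similarly in the other composition. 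Restricting these natural isomorphisms to the completions, which are full subcategories preserved by both functors, shows that $\mathfrak{S}(\varphi)$ is an equivalence with quasi-inverse $\mathfrak{S}(\psi)$. Since $\mathfrak{S}(\varphi)$ is triangulated, it is a triangulated equivalence $\mathfrak{S}_{\mathcal{M}}(\mathcal{S})\simeq\mathfrak{S}_{\mathcal{N}}(\mathcal{T})$.

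Alternatively, one can check this on module colimits directly. We have $\moco\mathbf{E}\mapsto\moco\psi\varphi(\mathbf{E})\simeq\moco\mathbf{E}$, and naturality follows from uniqueness of Kan extensions.
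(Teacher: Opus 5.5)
Your proposal is correct and follows essentially the same route as the paper. The paper applies Theorem~\ref{T:CompletionOfContinuousFunctor} to $\varphi$ and to $\varphi^{-1}$, viewed as continuous compressions $(\varphi,\mathcal{M},\mathcal{N})$ and $(\varphi^{-1},\mathcal{N},\mathcal{M})$ after citing an earlier lemma for the fact that the image family is a good metric, and concludes that the induced functors are mutually inverse; you check the good-metric property directly, and your identification of the left Kan extension along $\varphi$ with precomposition by the quasi-inverse justifies the final step, which the paper only asserts.
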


\begin{proof}
The family \(\mathcal{N}\) of subcategories of \(\mathcal{T}\) is a well-defined good metric by \cite[Lemma~4.14]{Matousek26C} because \(\varphi\) is full.
We get continuous compressions \((\varphi,\mathcal{M},\mathcal{N})\) and \(\left(\varphi^{-1},\mathcal{N},\mathcal{M}\right)\). The functors \(\mathfrak{S}(\varphi)\) and \(\mathfrak{S}\left(\varphi^{-1}\right)\)
induced by Theorem~\ref{T:CompletionOfContinuousFunctor} are mutually inverse triangulated equivalences.
\end{proof}

\subsection{Properties of induced functors}
\label{SS:ContinuousProperties}

In this subsection, we fix a continuous functor \(\varphi:\mathcal{S}\rightarrow\mathcal{T}\) together with its associated compression \((\varphi,\mathcal{M},\mathcal{N})\).
Theorem~\ref{T:CompletionOfContinuousFunctor} gives us an induced triangulated functor \(\mathfrak{S}(\varphi):\mathfrak{S}(\mathcal{S})\rightarrow\mathfrak{S}(\mathcal{T})\).

\begin{definition}[{\cite[Appendix]{RingelZhang14}}]
We say that \(\varphi\) is \textit{objective} if every morphism \(f\) in \(\mathcal{S}\) such that \(\varphi(f)=0\)
factors through an~object from \(\Ker\varphi\).
\end{definition}

\begin{lemma}
\label{L:ObjectiveFaithful}
If \(\varphi\) is objective and \(\Ker\varphi\leq\mathcal{M}\), then \(\mathfrak{S}(\varphi)\) is faithful.
\end{lemma}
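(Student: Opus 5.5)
The idea is to reduce $\mathfrak{S}(\varphi)(\alpha)=0$ to a statement about one morphism in $\mathcal{S}$, and then use objectivity to factor that morphism through an object on which the target functor vanishes. Let $\alpha:E\rightarrow F$ be a morphism in $\mathfrak{S}_{\mathcal{M}}(\mathcal{S})$ with $\mathfrak{S}(\varphi)(\alpha)=L(\alpha)=0$. Write $E=\moco\mathbf{E}$ and $F=\moco\mathbf{F}$ for Cauchy sequences $\mathbf{E}=(E_n,e_n)$ and $\mathbf{F}=(F_k,f_k)$. Choose $s$ with $F$ compactly supported at $s$. Note that $F$ is cohomological, being a filtered colimit of representable functors. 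Pick $N$ such that $\mathbf{E}$ stabilises at $s+1$ from $N$ onwards, and fix $n\geq N$. By Lemma~\ref{L:BasicFactorisationProperty}, it suffices to show $\alpha\circ\varphi_n=0$, where $\varphi_n:\Yoneda(E_n)\rightarrow E$ is the colimit injection.

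By Yoneda, $\alpha\circ\varphi_n$ is an element of
\[
F(E_n)=\varinjlim_k\Hom_{\mathcal{S}}(E_n,F_k),
\]
so it is represented by some morphism $g:E_n\rightarrow F_k$. Next I would apply $L$. By Lemma~\ref{L:KanExtensionRepresentable} and Lemma~\ref{L:ContinuousAdjointVersion}, $L$ sends $\Yoneda(E_n)\rightarrow E$ to the colimit injection $\Yoneda(\varphi E_n)\rightarrow\moco\varphi(\mathbf{E})$. It also sends the element $g$ of $F(E_n)$ to the element of
\[
\moco\varphi(\mathbf{F})(\varphi E_n)=\varinjlim_k\Hom_{\mathcal{T}}(\varphi E_n,\varphi F_k)
\]
represented by $\varphi(g)$. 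This compatibility needs to be checked. It comes from naturality of the isomorphisms in Lemma~\ref{L:KanExtensionRepresentable}: $L$ acts on morphisms between representables as $\varphi$, and it commutes with the colimit. I expect this bookkeeping to be the main technical point, though it is routine.

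Since $L(\alpha)=0$, the class of $\varphi(g)$ vanishes in the filtered colimit. Hence there is $k'\geq k$ with $\varphi(f_{k,k'}\circ g)=0$. Because $\varphi$ is objective, $f_{k,k'}\circ g$ factors as
\[
E_n\xrightarrow{u}K\xrightarrow{v}F_{k'}
\]
with $K\in\Ker\varphi$. The hypothesis $\Ker\varphi\leq\mathcal{M}$, comparing with the constant metric $\Ker\varphi$, gives $\Ker\varphi\subseteq B_m$ for every $m$, and in particular $K\in B_s$. Therefore $F(K)=0$, so the class of $v$ in $F(K)$ vanishes. By functoriality, the class of $v\circ u$ in $F(E_n)$ is $F(u)$ applied to the class of $v$, and so it vanishes as well. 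Thus $\alpha\circ\varphi_n=0$, and Lemma~\ref{L:BasicFactorisationProperty} gives $\alpha=0$. This shows that $\mathfrak{S}(\varphi)$ is faithful.
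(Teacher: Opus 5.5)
Your proof is correct and follows essentially the same route as the paper. You reduce via Lemma~\ref{L:BasicFactorisationProperty} to a single stage, use that $L$ acts as $\varphi$ on maps between representables (and commutes with the colimit) to see that $\varphi$ of a representing morphism dies at a later stage of $\varphi(\mathbf{F})$, factor through $\Ker\varphi$ by objectivity, and conclude from $F(\Ker\varphi)=0$. The only cosmetic difference is that you take the representing morphism $g:E_n\rightarrow F_k$ directly as a Yoneda element of $F(E_n)$, whereas the paper invokes \cite[Lemma~2.8.i]{Neeman25A} to write $\alpha$ as the colimit of a morphism of Cauchy sequences and uses one component of it.
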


\begin{proof}
Let \(g:E\rightarrow F\) be a morphism in \(\mathfrak{S}(\mathcal{S})\) such that \([\mathfrak{S}(\varphi)](g)=0\).
By \cite[Lemma~2.8.i]{Neeman25A}, there exist Cauchy sequences \(\mathbf{E}=(E_n,e_n)_{n\in\N}\)  and \(\mathbf{F}=(F_n,f_n)_{n\in\N}\) and a~morphism of sequences \((g_n:E_n\rightarrow F_n)_{n\in\N}\) with \(g=\moco g_n\).
Suppose \(F\) is compactly supported at \(s\in\N\) and that \(\mathbf{E}\) stabilises at \(s+1\) from \(m\in\N\) onwards. Denote \(e:\Yoneda(E_m)\rightarrow E\) and \(f:\Yoneda(F_m)\rightarrow F\) the colimit injections.

By Theorem~\ref{T:CompletionOfContinuousFunctor}, the functor \(\mathfrak{S}(\varphi)\) is a restriction of the left adjoint \(L:\Mod\dashmodule\mathcal{S}\rightarrow\Mod\dashmodule\mathcal{T}\) of \(\blank\circ \varphi\). Applying $L$ to the commutative square
\begin{center}
	\begin{tikzcd}
		 E \arrow[r,"g"] & F  \\
		  \Yoneda(E_m)  \arrow[u,"e"]  \arrow[r,"\Yoneda(g_m)"] & \Yoneda(F_{m}) \arrow[u,"f"]
		\end{tikzcd}
\end{center}
in \(\Mod\dashmodule\mathcal{S}\) gives us (using Lemma~\ref{L:KanExtensionRepresentable}) a commutative square
\begin{center}
	\begin{tikzcd}
		 L(E) \arrow[r,"0"] & L(F)  \\
		  \Yoneda\big(\varphi(E_m)\big)  \arrow[u,"L(e)"]  \arrow[r,"\Yoneda\big(\varphi(g_m)\big)"] & \Yoneda\big(\varphi(F_m)\big) \arrow[u,"L(f)"]
	\end{tikzcd}
\end{center}
in \(\Mod\dashmodule\mathcal{T}\).
Since \(L(F)\simeq\moco \varphi(\mathbf{F})\simeq\varinjlim_{n\in\N}\Hom_{\mathcal{T}}\big(\blank,\varphi(F_n)\big)\),
the bottom commutative square tells us that the element \(\varphi(g_m)\) is annihilated by the colimit injection
\[
\Hom_{\mathcal{T}}\big(\varphi(E_m),\varphi(F_m)\big)\rightarrow\varinjlim_{n\in\N}\Hom_{\mathcal{T}}\big(\varphi(E_m),\varphi(F_n)\big)
.\]
By \cite[Lemma~2.2]{GobelTrlifaj12}, there exists \(m'\geq m\) such that \(\varphi(f_{m,m'})\varphi(g_m)=0\).
Because the functor \(\varphi\) is objective, the map \(f_{m,m'}g_m\) factors through \(\Ker\varphi\). Since \(F\) is compactly supported with respect to \(\mathcal{M}\), it is also compactly supported with respect to \(\Ker\varphi\leq\mathcal{M}\), so \(F(\Ker\varphi)=0\) and \(f\circ\Yoneda(g_m)=0\).

By Lemma~\ref{L:BasicFactorisationProperty}, the map $g$ is the unique factorisation of \(f\circ\Yoneda(g_m)\) through $e$, so \(g=0\). Hence \(\mathfrak{S}(\varphi)=L\restriction\mathfrak{S}_{\mathcal{M}}(\mathcal{S})\) is faithful.
\end{proof}

\begin{lemma}
\label{L:FullnessIsPreserved}
If \(\varphi\) is full, then \(\mathfrak{S}(\varphi)\) is full.
\end{lemma}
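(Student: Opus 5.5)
Given a morphism \(h:L(E)\rightarrow L(F)\) in \(\mathfrak{S}_{\mathcal{N}}(\mathcal{T})\) with \(E=\moco\mathbf{E}\), \(F=\moco\mathbf{F}\) in \(\mathfrak{S}_{\mathcal{M}}(\mathcal{S})\), we want to produce \(g:E\rightarrow F\) with \(L(g)=h\). The plan is to reduce both sides to a single representable entry using Lemma~\ref{L:BasicFactorisationProperty}, lift there using fullness of \(\varphi\), and then extend back.

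Suppose \(F\) is compactly supported at \(s\) with respect to \(\mathcal{M}\), and \(L(F)\simeq\moco\varphi(\mathbf{F})\) is compactly supported at some \(t\) with respect to \(\mathcal{N}\). Choose \(m\) so that \(\mathbf{E}\) stabilises at \(s+1\) from \(m\) onwards. Using that \((\varphi,\mathcal{M},\mathcal{N})\) is a compression, enlarge \(m\) so that \(\varphi(\mathbf{E})\) stabilises at \(t+1\) from \(m\) onwards. Both \(F\) and \(L(F)\) are cohomological, being colimits of representables in the Cauchy setting. Lemma~\ref{L:BasicFactorisationProperty} therefore gives bijections
\[
\Hom(E,F)\simeq F(E_m)\quad\text{and}\quad\Hom\big(L(E),L(F)\big)\simeq L(F)\big(\varphi(E_m)\big),
\]
induced by the colimit injections \(e:\Yoneda(E_m)\rightarrow E\) and \(L(e):\Yoneda(\varphi(E_m))\rightarrow L(E)\). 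Here \(L(e)\) is the colimit injection for \(\varphi(\mathbf{E})\) by Lemma~\ref{L:ContinuousAdjointVersion}.

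Next write \(F(E_m)=\varinjlim_n\Hom_{\mathcal{S}}(E_m,F_n)\) and \(L(F)(\varphi E_m)=\varinjlim_n\Hom_{\mathcal{T}}(\varphi E_m,\varphi F_n)\). The map \(\Hom(E,F)\rightarrow\Hom(L(E),L(F))\) induced by \(L\) corresponds, under the bijections above, to the map of colimits induced by \(\varphi\) on Hom-sets. This compatibility follows from naturality of \(L\) applied to the square \(g\circ e=\) (colimit injection)\(\circ\Yoneda(g_n)\), as in the proof of Lemma~\ref{L:ObjectiveFaithful}, together with \(L(\Yoneda(u))=\Yoneda(\varphi(u))\). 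Since \(\varphi\) is full, each map \(\Hom_{\mathcal{S}}(E_m,F_n)\rightarrow\Hom_{\mathcal{T}}(\varphi E_m,\varphi F_n)\) is surjective. A filtered colimit of surjections is surjective, so the induced map \(F(E_m)\rightarrow L(F)(\varphi E_m)\) is surjective.

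Concretely, \(h\circ L(e)\) is represented by some \(u'\in\Hom_{\mathcal{T}}(\varphi E_m,\varphi F_n)\). We lift it to \(u\) with \(\varphi(u)=u'\), let \(g\) be the unique extension through \(e\) of the class of \(u\) in \(F(E_m)\), and then \(L(g)\circ L(e)=h\circ L(e)\). Uniqueness in Lemma~\ref{L:BasicFactorisationProperty} for \(L(F)\) then gives \(L(g)=h\).

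The main obstacle is the compatibility step: checking that \(L\) acting on \(\Hom(E,F)\) really matches \(\varphi\) on representatives under the two identifications. Equivalently, we must check that \(L\) commutes with the colimit injection \(\Yoneda(F_n)\rightarrow F\), mapping it to the injection \(\Yoneda(\varphi F_n)\rightarrow L(F)\). This holds because \(L\) preserves colimits, as shown in Lemma~\ref{L:ContinuousAdjointVersion}.
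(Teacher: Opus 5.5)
Your proposal is correct and follows the paper's argument. You use Lemma~\ref{L:BasicFactorisationProperty} for both \(F\) and \(L(F)\) to reduce to the single entry \(E_m\), lift a morphism \(\varphi(E_m)\to\varphi(F_n)\) via fullness of \(\varphi\), extend it back through the colimit injection, and conclude \(L(g)=h\) by uniqueness.

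The only difference is how you obtain that morphism. You represent \(h\circ L(e)\in L(F)\big(\varphi(E_m)\big)\) at a finite stage of the filtered colimit, whereas the paper first invokes \cite[Lemma~2.8.i]{Neeman25A} to write \(h\) as a colimit of a morphism of sequences. Your variant is slightly more self-contained, and it avoids checking that the sequences produced by that lemma can be kept of the form \(\varphi(\mathbf{E})\) and \(\varphi(\mathbf{F})\).
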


\begin{proof}
Let \(L:\Mod\dashmodule\mathcal{S}\rightarrow\Mod\dashmodule\mathcal{T}\) be the left adjoint to \(\blank\circ\varphi\). By Theorem~\ref{T:CompletionOfContinuousFunctor}, the~functor \(\mathfrak{S}(\varphi)\)
is a~restriction of $L$.
Let \(L(E), L(F)\in\mathfrak{S}(\mathcal{T})\) be objects in the~image of \(\mathfrak{S}(\varphi)\) for some \(E,F\in\mathfrak{S}(\mathcal{S})\).
Pick a morphism \(g:L(E)\rightarrow L(F)\).
Again by Theorem~\ref{T:CompletionOfContinuousFunctor}, it holds that \(L(E), L(F)\) are module colimits of images of Cauchy sequences in \(\mathcal{S}\) under \(\varphi\).
In particular, we can use \cite[Lemma~2.8.i]{Neeman25A} to find Cauchy sequences \(\mathbf{E}=(E_n,e_n)_{n\in\N}\) and \(\mathbf{F}=(F_n,f_n)_{n\in\N}\) in \(\mathcal{S}\) and a morphism of sequences \(\big(g_n:\varphi(E_n)\rightarrow\varphi(F_n)\big)_{n\in\N}\) from \(\varphi(\mathbf{E})\) to \(\varphi(\mathbf{E})\) such that \(g\simeq\moco g_n\).

Suppose that both \(F\) and \(L(F)\) are compactly supported at \(s\in\N\) with respect to their relevant metrics. 
We find \(m\in\N\) such that \(\mathbf{E}\) and \(\varphi(\mathbf{E})\) stabilise at \(s+1\) from \(m\) onwards.
As \(\varphi\) is full, we find \(t:E_m\rightarrow F_m\) satisfying \(\varphi(t)=g_m\).
By Lemma~\ref{L:BasicFactorisationProperty}, there is a unique morphism $h$ making the square
\begin{center}
	\begin{tikzcd}
		 E \arrow[r,"h", dashed] & F  \\
		  \Yoneda(E_m)  \arrow[u,""]  \arrow[r,"\Yoneda(t)"] & \Yoneda(G_{m}) \arrow[u,""].
		\end{tikzcd}
\end{center}
commute. As \(L\big(\Yoneda(t)\big)\simeq\Yoneda\big(\varphi(t)\big)\simeq\Yoneda(g_m)\), applying $L$ provides us with a commutative square
\begin{center}
	\begin{tikzcd}
		 L(E) \arrow[r,"L(h)"] & L(F)  \\
		  \Yoneda\big(\varphi(E_m)\big)  \arrow[u,""]  \arrow[r,"\Yoneda(g_m)"] & \Yoneda\big(\varphi(F_m)\big) \arrow[u,""].
		\end{tikzcd}
\end{center}
Replacing \(L(h)\) with $g$ would also yield a commutative square, but $g$ is the unique map making the square commute by Lemma~\ref{L:BasicFactorisationProperty}. Thus \(g=L(h)=[\mathfrak{S}(\varphi)](h)\).
We conclude that \(\mathfrak{S}(\varphi)\) is full.
\end{proof}

\begin{proposition}
\label{P:FullContinuousFullyFaithful}
If \(\varphi\) is full and \(\Ker\varphi\leq\mathcal{M}\), then \(\mathfrak{S}(\varphi)\) is fully faithful.
\end{proposition}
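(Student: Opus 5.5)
Fully faithful means full and faithful. Lemma~\ref{L:FullnessIsPreserved} already gives that \(\mathfrak{S}(\varphi)\) is full whenever \(\varphi\) is full. Lemma~\ref{L:ObjectiveFaithful} gives faithfulness whenever \(\varphi\) is objective and \(\Ker\varphi\leq\mathcal{M}\). Since \(\Ker\varphi\leq\mathcal{M}\) is assumed, the plan reduces to one claim: a full triangulated functor is automatically objective.

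To prove that a full triangulated \(\varphi\) is objective, let \(f:X\rightarrow Y\) in \(\mathcal{S}\) satisfy \(\varphi(f)=0\). Complete \(f\) to a distinguished triangle
\[
W\xrightarrow{u}X\xrightarrow{f}Y\xrightarrow{}\Sigma W .
\]
Applying \(\varphi\) gives a distinguished triangle in \(\mathcal{T}\). Since \(\varphi(f)=0\), the morphism \(\varphi(u)\) is a split epimorphism, so there is \(s':\varphi(X)\rightarrow\varphi(W)\) with \(\varphi(u)s'=\id_{\varphi(X)}\). By fullness, \(s'=\varphi(s)\) for some \(s:X\rightarrow W\). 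Then \(\varphi(\id_X-us)=0\), and \(f=f(\id_X-us)\) because \(fu=0\).

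This alone does not produce a factorisation through \(\Ker\varphi\). Instead, take the cone \(C\) of \(\id_X-us\), or better the cone \(Z\) of \(f\) itself. I expect the cleanest route to be a different factorisation: since \(\varphi(f)=0\), the triangle on \(\varphi(f)\) shows that \(\varphi(Y)\rightarrow\varphi(Z)\) is split mono. Lifting the splitting via fullness gives \(r:Z\rightarrow Y\) with \(\varphi(r\,v)=\id_{\varphi(Y)}\), where \(v:Y\rightarrow Z\). One would then like to conclude that \(\id_Y-rv\) has image in \(\Ker\varphi\), and that \(f=(\id_Y-rv)f\) because \(vf=0\).

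To finish, consider \(e:=\id_Y-rv\in\End(Y)\) with \(\varphi(e)=0\). Then \(f=ef\), and by induction \(f=e^nf\). This does not obviously factor through the kernel, and this is the main obstacle: fullness only lifts morphisms, not idempotent splittings.

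My fallback is the standard argument that a full triangulated functor has a kernel behaving like an ideal kernel. Given \(\varphi(f)=0\), take the triangle \(X\xrightarrow{f}Y\xrightarrow{v}Z\rightarrow\Sigma X\). The image triangle splits, so \(\varphi(Z)\simeq\varphi(Y)\oplus\varphi(\Sigma X)\). Lift the projection and inclusion maps by fullness to compare \(Z\) with \(Y\oplus\Sigma X\) via a morphism \(\theta\) with \(\varphi(\theta)\) an isomorphism. The cone of \(\theta\) then lies in \(\Ker\varphi\), and the octahedral axiom exhibits \(f\) as factoring through a shift of that cone.

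I would check this octahedral bookkeeping carefully. Once objectivity is established, the two lemmas combine to give the proposition.
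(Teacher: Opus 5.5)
Your reduction is the same as the paper's: fullness from Lemma~\ref{L:FullnessIsPreserved}, faithfulness from Lemma~\ref{L:ObjectiveFaithful}, and the remaining claim that a full triangulated functor is objective. The paper does not prove that claim; it cites \cite[Corollary~4.4]{RingelZhang15}. Your proposal does not establish it. You abandon both direct attempts and leave the octahedral fallback unchecked, so the proof is incomplete at exactly the step you single out. The fallback can be completed. Take \(\theta=(v,t):Y\oplus\Sigma X\rightarrow Z\), where \(w:Z\rightarrow\Sigma X\) is the third morphism of your triangle and \(\varphi(t)\) is a section of \(\varphi(w)\). The octahedral axiom applied to \(Y\rightarrow Y\oplus\Sigma X\xrightarrow{\theta}Z\) identifies \(\cone\theta\) with a cone of \(wt\). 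This is more bookkeeping than needed, and you would have to write it out.

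The gap closes with one change to your first attempt: you took the cone of the wrong morphism.

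You already have \(s\) with \(\varphi(us)=\id_{\varphi(X)}\) and \(f\circ us=0\). Complete \(us\) to a triangle \(X\xrightarrow{us}X\xrightarrow{q}K\rightarrow\Sigma X\).

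Since \(\varphi\) is triangulated, \(\varphi(K)\) is a cone of \(\id_{\varphi(X)}\). Hence \(\varphi(K)=0\) and \(K\in\Ker\varphi\).

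The sequence \(\Hom_{\mathcal{S}}(K,Y)\xrightarrow{q^{*}}\Hom_{\mathcal{S}}(X,Y)\xrightarrow{(us)^{*}}\Hom_{\mathcal{S}}(X,Y)\) is exact, and \(f\circ us=0\). So \(f=f'q\) for some \(f':K\rightarrow Y\), and \(f\) factors through an object of \(\Ker\varphi\).

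No idempotent has to split. The point is to take the cone of the morphism that \(\varphi\) sends to an identity, rather than of \(\id_X-us\), which \(\varphi\) kills. Your second attempt closes the same way: \(\varphi(rv)=\id_{\varphi(Y)}\) and \(rv\circ f=0\), so \(f\) factors through a desuspended cone of \(rv\). With this argument, or with the Ringel--Zhang citation, the proposition follows as you outline.
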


\begin{proof}
The functor \(\mathfrak{S}(\varphi)\) is full by Lemma~\ref{L:FullnessIsPreserved}. By \cite[Corollary~4.4]{RingelZhang15}, every full triangulated functor is objective. Hence, Lemma~\ref{L:ObjectiveFaithful} also applies, so \(\mathfrak{S}(\varphi)\) is faithful.
\end{proof}

\begin{example}
The induced functor from Theorem~\ref{T:CompletionOfContinuousFunctor} does not have do be neither full, nor faithful.

Let \(\mathcal{S}:=\derived^b(\modf\dashmodule\Z)\), \(\mathcal{T}:=\derived^b\left(\modf\dashmodule\Z\left[\frac{1}{2}\right]\right)\), and use \(\varphi\) to denote \(\blank\otimes^{\mathbf{L}}_{\Z}\Z\left[\frac{1}{2}\right]:\mathcal{S}\rightarrow\mathcal{T}\).
Equip \(\mathcal{S}\) with the constant metric \(\mathcal{M}:=\langle\Z/3\Z\rangle\) and \(\mathcal{T}\) with the constant metric \(\mathcal{N}:=\left\langle\Z/3\Z\otimes_{\Z}\Z\left[\frac{1}{2}\right]\right\rangle\).
With this choice of metrics, the functor \(\varphi\) is a compression.

Since \(\varinjlim\Hom_{\mathcal{S}}(X,\mathbf{E})\otimes^{\mathbf{L}}_{\Z}\Z\left[\frac{1}{2}\right]\simeq\varinjlim\Hom_{\mathcal{T}}\left(X\otimes^{\mathbf{L}}_{\Z}\Z\left[\frac{1}{2}\right],\mathbf{E}\otimes^{\mathbf{L}}_{\Z}\Z\left[\frac{1}{2}\right]\right)\) for any \(X\in\mathcal{S}\) and any sequence \(\mathbf{E}\) in \(\mathcal{S}\), the functor \(\varphi\) has to send compactly supported Cauchy sequences to compactly supported Cauchy sequences and is then continuous.

We calculate \(\mathfrak{S}_{\mathcal{M}}(\mathcal{S})\simeq\derived^b\left(\modf\dashmodule\Z\left[\frac{1}{3}\right]\right)\) and
\(\mathfrak{S}_{\mathcal{N}}(\mathcal{T})\simeq\derived^b\left(\modf\dashmodule\Z\left[\frac{1}{6}\right]\right)\) using \cite[Theorem~6.4]{Matousek26B}. 
The~functor \(\mathfrak{S}(\varphi)\) coincides with
\(
\blank\otimes^{\mathbf{L}}_{\Z}\Z\left[\frac{1}{2}\right]:\derived^b\left(\modf\dashmodule\Z\left[\frac{1}{3}\right]\right)\rightarrow\derived^b\left(\modf\dashmodule\Z\left[\frac{1}{6}\right]\right)
.\)
But \(\mathfrak{S}(\varphi)\) is not faithful, since \(\id_{\Z/2\Z}\) is sent to zero.
And it is not full because \(\cdot\frac{1}{2}:\Z\left[\frac{1}{6}\right]\rightarrow\Z\left[\frac{1}{6}\right]\) has no preimage.
\end{example}

\section{Verdier quotients via continuous functors}
\label{sec:quotients}

In this section, we work with a commutative ring \(\kk\), an essentially small triangulated category \(\mathcal{S}\), a thick subcategory \(\mathcal{C}\subseteq\mathcal{S}\), and  the Verdier quotient functor \(\pi:\mathcal{S}\rightarrow\mathcal{S}/\mathcal{C}\).
Our aim is to use continuous functors to show that \(\mathfrak{S}_{\mathcal{C}}(\mathcal{S})\subseteq\mathcal{S}/\mathcal{C}\) (Theorem~\ref{T:EmbeddingIntoQuotient}) and give a sufficient condition for equality (Corollary~\ref{C:CountablyGeneratedCompletionVerdierQuotient}).
 
\begin{lemma}
\label{L:FunctorIntoQuotient}
There is a well-defined triangulated functor \(\Yoneda^{-1}\circ\mathfrak{S}(\pi):\mathfrak{S}_{\mathcal{C}}(\mathcal{S})\rightarrow \mathcal{S}/\mathcal{C}\) extending $\pi$ in a sense that \(\left[\Yoneda^{-1}\circ\mathfrak{S}(\pi)\right]\left(\moco \mathbf{E}\right)\simeq \pi(E_m)\) for every compactly supported Cauchy sequence \(\mathbf{E}=(E_n,e_n)_{n\in\N}\) stabilising at \(\mathcal{C}\) from \(N\in\N\) onwards and all \(m\geq N\).    
\end{lemma}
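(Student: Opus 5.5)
We regard $\pi:\mathcal{S}\rightarrow\mathcal{S}/\mathcal{C}$ as a compression $(\pi,\mathcal{C},0)$, where $\mathcal{S}$ carries the constant metric $\mathcal{C}$ and $\mathcal{S}/\mathcal{C}$ carries the zero constant metric. The compression condition holds because $\pi(\mathcal{C})\simeq 0$. Once $\pi$ is shown to be continuous, Theorem~\ref{T:CompletionOfContinuousFunctor} yields a triangulated functor $\mathfrak{S}(\pi):\mathfrak{S}_{\mathcal{C}}(\mathcal{S})\rightarrow\mathfrak{S}_0(\mathcal{S}/\mathcal{C})$. By Example~\ref{E:ZeroMetric}, the target is $\Yoneda(\mathcal{S}/\mathcal{C})$, which is equivalent to $\mathcal{S}/\mathcal{C}$ through the Yoneda embedding. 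Composing with a quasi-inverse $\Yoneda^{-1}$ then gives the desired triangulated functor. The shift on $\mathfrak{S}_0(\mathcal{S}/\mathcal{C})$ is $F\mapsto F\circ\Sigma^{-1}$, which matches $\Yoneda\Sigma$, and distinguished triangles there are module colimits of eventually constant sequences of triangles. Hence $\Yoneda^{-1}$ is triangulated, essentially by construction of the structure in Example~\ref{E:ZeroMetric}.

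For continuity, let $\mathbf{E}$ be a compactly supported Cauchy sequence in $\mathcal{S}$. Cauchy with respect to the constant metric $\mathcal{C}$ means exactly that $\mathbf{E}$ stabilises at $\mathcal{C}$ from some $N$ onwards, i.e. $\cone e_{m,n}\in\mathcal{C}$ for $n\geq m\geq N$. Then $\pi(e_{m,n})$ is an isomorphism for all $n\ge m\ge N$, because $\pi$ is triangulated and kills $\mathcal{C}$. So $\pi(\mathbf{E})$ is eventually a sequence of isomorphisms, which is Cauchy for the zero metric. It is also automatically compactly supported, since every additive functor vanishes on $0$. Thus $\pi$ is continuous, and this step is trivial; indeed the compact-support hypothesis on $\mathbf{E}$ is not even needed here.

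For the formula, Theorem~\ref{T:CompletionOfContinuousFunctor} gives $\mathfrak{S}(\pi)(\moco\mathbf{E})\simeq\moco\pi(\mathbf{E})$. For $m\geq N$ the sequence $\pi(\mathbf{E})$ consists of isomorphisms from $m$ onwards, so the colimit injection $\Yoneda(\pi(E_m))\rightarrow\moco\pi(\mathbf{E})$ is an isomorphism. Hence $\Yoneda^{-1}\mathfrak{S}(\pi)(\moco\mathbf{E})\simeq\pi(E_m)$.

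The only point needing some care is well-definedness. The functor is defined on objects of $\mathfrak{S}_{\mathcal{C}}(\mathcal{S})$, i.e. on functors, and not on chosen sequences. This is handled by defining it as the restriction of the left adjoint $L$, which is intrinsic, and by fixing one quasi-inverse of the equivalence $\Yoneda:\mathcal{S}/\mathcal{C}\rightarrow\mathfrak{S}_0(\mathcal{S}/\mathcal{C})$. The value $\pi(E_m)$ is then determined only up to the canonical isomorphisms above, independently of the representing sequence and of $m\ge N$. The transition maps $\pi(e_{m,n})$ provide the comparison isomorphisms between different choices of $m$.
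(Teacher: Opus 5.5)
Your proposal is correct and follows essentially the same route as the paper: you view $\pi$ as the compression $(\pi,\mathcal{C},0)$ and observe that continuity is vacuous because every module is compactly supported for the zero metric. You then apply Theorem~\ref{T:CompletionOfContinuousFunctor}, identify $\mathfrak{S}_0(\mathcal{S}/\mathcal{C})$ with $\mathcal{S}/\mathcal{C}$ via $\Yoneda$ as in Example~\ref{E:ZeroMetric}, and get the formula from $\pi(e_{m,n})$ being an isomorphism for $n\geq m\geq N$. Your extra remarks on why $\Yoneda^{-1}$ is triangulated and on well-definedness supply details the paper leaves implicit.
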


\begin{proof}
Because \(\pi:\mathcal{S}\rightarrow\mathcal{S}/\mathcal{C}\) annihilates \(\mathcal{C}\), there is a compression \((\pi,\mathcal{C},0)\). Since every functor from \(\Mod\dashmodule\mathcal{S}/\mathcal{C}\) is compactly supported with respect to $0$, the functor $\pi$ is even continuous as the~defining condition in Definition~\ref{D:Continuous} is now void.
We obtain a triangulated functor \(\mathfrak{S}(\pi):\mathfrak{S}_{\mathcal{C}}(\mathcal{S})\rightarrow\mathfrak{S}_0(\mathcal{S}/\mathcal{C})\) by invoking Theorem~\ref{T:CompletionOfContinuousFunctor}.
Let \(\mathbf{E}=(E_n,e_n)_{n\in\N}\) be a compactly supported Cauchy sequence in \(\mathcal{S}\), which stabilises at \(\mathcal{C}\) from some \(N\in\N\) onwards.
Then for all \(N\leq n\leq m\) it holds that \(\cone e_{n,m}\in\mathcal{C}\), so \(\pi(e_{n,m})\) is an isomorphism in \(\mathcal{S}/\mathcal{C}\). Consequently, the~object \(\mathfrak{S}(\pi)\left(\moco\mathbf{E}\right)\simeq\moco \pi(\mathbf{E})\in\mathfrak{S}_0(\mathcal{S}/\mathcal{C})\) is a module colimit of an eventually constant Cauchy sequence \(\pi(\mathbf{E})\) in \(\mathcal{S}/\mathcal{C}\) and is then isomorphic to \(\Yoneda\big(\pi(E_m)\big)\) for all \(m\geq N\).

By Example~\ref{E:ZeroMetric}, the completion \(\mathfrak{S}_0(\mathcal{S}/\mathcal{C})\) is identified with \(\mathcal{S}/\mathcal{C}\) via \(\Yoneda\). This finishes the proof.
\end{proof}

\begin{lemma}
\label{L:FullCompletionQuotient}
The functor \(\Yoneda^{-1}\circ\mathfrak{S}(\pi):\mathfrak{S}_{\mathcal{C}}(\mathcal{S})\rightarrow \mathcal{S}/\mathcal{C}\) from Lemma~\ref{L:FunctorIntoQuotient} is full.   
\end{lemma}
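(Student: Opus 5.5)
The idea is to lift morphisms of the Verdier quotient back to the completion. A morphism in \(\mathcal{S}/\mathcal{C}\) is a roof whose backwards leg has cone in \(\mathcal{C}\). Compactly supported functors are blind to such legs, and Lemma~\ref{L:BasicFactorisationProperty} then produces the lifted morphism. Note that Lemma~\ref{L:FullnessIsPreserved} does not apply, since \(\pi\) is not full in general.

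\textbf{Setup.} Fix \(E=\moco\mathbf{E}\) and \(F=\moco\mathbf{F}\) in \(\mathfrak{S}_{\mathcal{C}}(\mathcal{S})\) with compactly supported Cauchy sequences \(\mathbf{E}=(E_n,e_n)_{n\in\N}\) and \(\mathbf{F}=(F_n,f_n)_{n\in\N}\).
\begin{itemize}
\item Since the metric is constant, "compactly supported at \(s\)" simply means \(F(\mathcal{C})=0\).
\item Likewise, "stabilises at \(s+1\) from \(N\) onwards" means stabilising at \(\mathcal{C}\) from \(N\) onwards.
\item Choose \(m\) large enough that both sequences stabilise at \(\mathcal{C}\) from \(m\) onwards.
\end{itemize}
By Lemma~\ref{L:FunctorIntoQuotient}, the functor \(\Yoneda^{-1}\circ\mathfrak{S}(\pi)\) sends \(E\) and \(F\) to \(\pi(E_m)\) and \(\pi(F_m)\). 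Under this identification, the colimit injections \(\Yoneda(E_m)\to E\) and \(\Yoneda(F_m)\to F\) go, after applying \(L\), to isomorphisms \(\Yoneda(\pi E_m)\xrightarrow{\simeq}L(E)\) and \(\Yoneda(\pi F_m)\xrightarrow{\simeq}L(F)\).

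A morphism \(g:\pi(E_m)\to\pi(F_m)\) in \(\mathcal{S}/\mathcal{C}\) is a roof \(g=\pi(t)\pi(u)^{-1}\). Here \(u:X\to E_m\) and \(t:X\to F_m\) are morphisms in \(\mathcal{S}\) with \(\cone u\in\mathcal{C}\).

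\textbf{Step 1: \(F\) does not see \(u\).} Every object of the completion is cohomological, being a filtered colimit of representable functors, and filtered colimits in \(\Ab\) are exact. Since \(\mathcal{C}\) is thick and hence closed under \(\Sigma^{\pm1}\), \(F\) vanishes on \(\cone u\) and its shifts. The long exact sequence therefore shows that
\[
F(u):F(E_m)\to F(X)
\]
is an isomorphism, that is, \(\Hom(\Yoneda(E_m),F)\to\Hom(\Yoneda(X),F)\) is bijective.

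\textbf{Step 2: construct \(h\).} Let \(f:\Yoneda(F_m)\to F\) be the colimit injection. Then \(f\circ\Yoneda(t)\) is an element of \(F(X)\). By Step 1 it is \(k\circ\Yoneda(u)\) for a unique \(k:\Yoneda(E_m)\to F\). By Lemma~\ref{L:BasicFactorisationProperty}, \(k\) factors uniquely as \(k=h\circ e\) for some \(h:E\to F\), where \(e:\Yoneda(E_m)\to E\) is the colimit injection.

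\textbf{Step 3: \(h\) maps to \(g\).} Apply \(L\) to the identity \(h\circ e\circ\Yoneda(u)=f\circ\Yoneda(t)\). Using Lemma~\ref{L:KanExtensionRepresentable} this gives
\[
L(h)\circ L(e)\circ\Yoneda(\pi u)=L(f)\circ\Yoneda(\pi t).
\]
Here \(\pi u\) is invertible and \(L(e),L(f)\) are the isomorphisms from the setup. Transporting along these identifications and applying \(\Yoneda^{-1}\), the image of \(h\) is exactly \(\pi(t)\pi(u)^{-1}=g\). This proves fullness.

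\textbf{Main obstacle.} The delicate step is the bookkeeping in Step 3. One must check that the isomorphism \([\Yoneda^{-1}\circ\mathfrak{S}(\pi)](E)\simeq\pi(E_m)\) from Lemma~\ref{L:FunctorIntoQuotient} is precisely the one induced by \(L(e)\), so that no hidden automorphism is introduced. I would handle this by defining that identification through \(L(e)\) itself. The eventually constant colimit \(\moco\pi(\mathbf{E})\) has \(L(e)\) as an isomorphism whenever \(m\geq N\), which makes this choice legitimate.
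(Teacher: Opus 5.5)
Your proof is correct and is essentially the paper's argument. The paper writes the morphism as a roof $E_2 \xleftarrow{v} F_1 \xrightarrow{w} G_2$, prepends $v$ to $\mathbf{E}$ to get a Cauchy sequence with the same module colimit that stabilises at $\mathcal{C}$ from $1$ onwards, and applies Lemma~\ref{L:BasicFactorisationProperty} once at index $1$; this single step packages your Step~1 (a compactly supported cohomological functor inverts the backward leg) together with your Step~2, and the paper then concludes exactly as in your Step~3, using that $L(e)$ and $L(g)$ are isomorphisms.
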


\begin{proof}
Let \(E:=\moco\mathbf{E},G:=\moco\mathbf{G}\in\mathfrak{S}_{\mathcal{C}}(\mathcal{S})\) for some Cauchy sequences \(\mathbf{E}=(E_n,e_n)_{n\in\N}\) and \(\mathbf{G}=(G_n,g_n)_{n\in\N}\) in \(\mathcal{S}\).
W.l.o.g.\ the two sequences stabilise at \(\mathcal{C}\) from $2$ onwards. Denote
\(e:\Yoneda(E_2)\rightarrow E\) and \(g:\Yoneda(G_2)\rightarrow G\) the~co\-lim\-it injections.
By Theorem~\ref{T:CompletionOfContinuousFunctor}, the functor \(\mathfrak{S}(\pi)\) is a restriction of the~left adjoint \(L:\Mod\dashmodule\mathcal{S}\rightarrow\Mod\dashmodule\mathcal{S}/\mathcal{C}\) of \(\blank\circ \pi\).
Let \(t:L(E)\rightarrow L(G)\) be a morphism in \(\mathfrak{S}_0(\mathcal{S}/\mathcal{C})\). By Lemma~\ref{L:FunctorIntoQuotient}, there is a~morphism \(u:\pi(E_2)\rightarrow\pi(G_2)\) in \(\mathcal{S}/\mathcal{C}\) and a commutative square 
\begin{center}
	\begin{tikzcd}
		 L(E) \arrow[r,"t"] & L(F)  \\
		  \Yoneda\big(\pi(E_2)\big) \arrow[u,"L(e)"]  \arrow[r,"\Yoneda(u)"] & \Yoneda\big(\pi(G_{2})\big) \arrow[u,"L(g)"],
		\end{tikzcd}
\end{center}
where the vertical maps are isomorphisms.
By the definition of morphisms in the Verdier quotient, the~map $u$ is given by a~roof \(E_2 \xleftarrow{v} F_1 \xrightarrow{w} G_{2}\)
in \(\mathcal{S}\) with \(\cone v\in\mathcal{C}\) and \(u=\pi(w)\pi(v)^{-1}\).
We define a new Cauchy sequence \(\mathbf{F}=(F_n,f_n)_{n\in\N}\) in \(\mathcal{S}\) by setting \(f_1:=v\), \(F_n:=E_n\), and \(f_n:=e_n\) for all \(n\geq2\). Note that \(\mathbf{F}\) stabilises at \(\mathcal{C}\) from \(1\) onwards and \(\moco\mathbf{F}=E\).

By Lemma~\ref{L:BasicFactorisationProperty}, there is a commutative square 
\begin{center}
	\begin{tikzcd}
		 E \arrow[r,"h"] & F  \\
		  \Yoneda(F_1)  \arrow[u,"e\circ\Yoneda(v)"]  \arrow[r,"\Yoneda(w)"] & \Yoneda(G_{2}) \arrow[u,"g"].
		\end{tikzcd}
\end{center}
By Lemma~\ref{L:FunctorIntoQuotient}, applying $L$ to the square gives us a commutative diagram 
\begin{center}
	\begin{tikzcd}
		 L(E) \arrow[r,"L(h)"] & L(F)  \\
		  \Yoneda\big(\pi(E_2)\big) \arrow[u,"L(e)"]  \arrow[r,"\Yoneda(u)"] & \Yoneda\big(\pi(G_{2})\big) \arrow[u,"L(g)"] \\
        		  \Yoneda\big(\pi(F_1)\big) \arrow[u,"\Yoneda(v)"]  \arrow[ru,"\Yoneda(w)"'] & .
		\end{tikzcd}
\end{center}
Because \(L(e)\) and \(L(g)\) are isomorphisms, we get that \([\mathfrak{S}(\pi)](h)=L(h)=t\). Hence \(\mathfrak{S}(\pi)\) is full and so is \(\Yoneda^{-1}\circ\mathfrak{S}(\pi)\).
\end{proof}

\begin{theorem}
\label{T:EmbeddingIntoQuotient}
The triangulated functor \(\Yoneda^{-1}\circ\mathfrak{S}(\pi):\mathfrak{S}_{\mathcal{C}}(\mathcal{S})\rightarrow \mathcal{S}/\mathcal{C}\) is fully faithful.
\end{theorem}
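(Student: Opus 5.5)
Fullness is Lemma~\ref{L:FullCompletionQuotient}, so only faithfulness of \(\Yoneda^{-1}\circ\mathfrak{S}(\pi)\) remains. Since \(\Yoneda\) is fully faithful, this is the same as faithfulness of \(\mathfrak{S}(\pi)\). The natural route is Lemma~\ref{L:ObjectiveFaithful} applied to the continuous compression \((\pi,\mathcal{C},0)\) from the proof of Lemma~\ref{L:FunctorIntoQuotient}. That lemma needs two inputs: that \(\pi\) is objective, and that \(\Ker\pi\leq\mathcal{M}\), where \(\mathcal{M}\) is the constant metric \(\mathcal{C}\).

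For the second input: \(\mathcal{C}\) is thick, so \(\Ker\pi=\mathcal{C}\). The constant metrics \(\Ker\pi\) and \(\mathcal{M}\) therefore coincide, and \(\Ker\pi\leq\mathcal{M}\) holds trivially, since \((\Id_{\mathcal{S}},\mathcal{C},\mathcal{C})\) is a compression.

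For objectivity, take \(f:X\rightarrow Y\) in \(\mathcal{S}\) with \(\pi(f)=0\). By the standard calculus of fractions in the Verdier quotient, there is a morphism \(s:Y\rightarrow Y'\) with \(\cone s\in\mathcal{C}\) and \(sf=0\). Complete \(s\) to a triangle \(\Sigma^{-1}\cone s\rightarrow Y\xrightarrow{s}Y'\rightarrow\cone s\). Since \(sf=0\), the morphism \(f\) factors through \(\Sigma^{-1}\cone s\in\mathcal{C}=\Ker\pi\), which is exactly objectivity. (Alternatively, \(\pi\) is full, so \cite[Corollary~4.4]{RingelZhang15} gives objectivity directly, as in Proposition~\ref{P:FullContinuousFullyFaithful}. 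In fact that proposition then applies verbatim, yielding both fullness and faithfulness at once.)

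Lemma~\ref{L:ObjectiveFaithful} then makes \(\mathfrak{S}(\pi)\) faithful, and together with Lemma~\ref{L:FullCompletionQuotient} the composite is fully faithful. The only point needing care is the identity \(\Ker\pi=\mathcal{C}\): an object becomes zero in \(\mathcal{S}/\mathcal{C}\) precisely when it is a direct summand of an object of \(\mathcal{C}\), so thickness of \(\mathcal{C}\) is exactly what is needed here. There is no substantial obstacle beyond this.
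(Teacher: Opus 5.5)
Your main argument is correct and matches the paper's proof. Fullness comes from Lemma~\ref{L:FullCompletionQuotient}, and faithfulness comes from Lemma~\ref{L:ObjectiveFaithful} applied to \((\pi,\mathcal{C},0)\) with \(\Ker\pi=\mathcal{C}\). The paper cites \cite[Lemma~2.1.26]{Neeman01} for the objectivity of \(\pi\); your direct verification via calculus of fractions is a fine substitute.

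Your parenthetical alternative, however, is false and should be deleted. The Verdier quotient functor \(\pi\) is not full in general. Take \(\mathcal{S}=\derived^b(\modf\dashmodule\Z)\) and \(\mathcal{C}=\langle\Z/3\Z\rangle\). Then \(\pi(3\cdot\id_{\Z})\) is an isomorphism, but its inverse has no preimage in \(\End_{\mathcal{S}}(\Z)\simeq\Z\). Indeed, \(\pi(n)=\pi(3)^{-1}\) would force \(3n-1\) to factor through an object of \(\mathcal{C}\), which is impossible because \(\Z\) is torsion-free. So neither \cite[Corollary~4.4]{RingelZhang15} nor Proposition~\ref{P:FullContinuousFullyFaithful} applies here. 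This non-fullness is exactly why the paper needs the separate roof argument of Lemma~\ref{L:FullCompletionQuotient}, which absorbs the denominator of a roof into a modified Cauchy sequence.
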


\begin{proof}
By Lemma~\ref{L:FunctorIntoQuotient}, the~functor \(\Yoneda^{-1}\circ\mathfrak{S}(\pi)\) is well-defined and triangulated. It is also full by Lemma~\ref{L:FullCompletionQuotient}. Since \(\pi\) is objective by \cite[Lemma~2.1.26]{Neeman01}, Lemma~\ref{L:ObjectiveFaithful} ensures that \(\Yoneda^{-1}\circ\mathfrak{S}(\pi)\) is faithful.
\end{proof}

The following is a generalisation of \cite[{Lemma~4.10}]{Matousek26B}.

\begin{corollary}
\label{C:CountablyGeneratedCompletionVerdierQuotient}
Assume that \(\mathcal{S}\) is \(\kk\)-linear and that for all \(X,Y\in\mathcal{S}\) the~\(\kk\)\=/module \(\Hom_{\mathcal{S}}(X,Y)\) is finitely generated.
If \(\mathcal{C}\) is countably generated, then \(\mathfrak{S}_{\mathcal{C}}(\mathcal{S})\simeq\mathcal{S}/\mathcal{C}\).
\end{corollary}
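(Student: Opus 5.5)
By Theorem~\ref{T:EmbeddingIntoQuotient}, the functor \(\Yoneda^{-1}\circ\mathfrak{S}(\pi):\mathfrak{S}_{\mathcal{C}}(\mathcal{S})\rightarrow\mathcal{S}/\mathcal{C}\) is already fully faithful and triangulated. It therefore remains to prove essential surjectivity. Since every object of \(\mathcal{S}/\mathcal{C}\) is of the form \(\pi(X)\) with \(X\in\mathcal{S}\), Lemma~\ref{L:FunctorIntoQuotient} reduces the task to the following: for each \(X\in\mathcal{S}\), find a Cauchy sequence \(\mathbf{E}\) with \(E_1=X\) that stabilises at \(\mathcal{C}\) from \(1\) onwards and is compactly supported, i.e.\ \(\moco\mathbf{E}\) vanishes on \(\mathcal{C}\). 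Then \(\moco\mathbf{E}\in\mathfrak{S}_{\mathcal{C}}(\mathcal{S})\) is sent to \(\pi(E_1)=\pi(X)\).

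To build \(\mathbf{E}\), I will fix a countable generating set \(C_1,C_2,\dots\) of \(\mathcal{C}\) and enumerate the list in which every \(\Sigma^iC_j\), \(i\in\Z\), occurs infinitely often. Write this list as \(D_1,D_2,\dots\). Inductively, given \(E_n\), the \(\kk\)-module \(\Hom_{\mathcal{S}}(D_n,E_n)\) is finitely generated, say by \(h_1,\dots,h_k\). I take the combined map \(h=(h_1,\dots,h_k):D_n^{\oplus k}\rightarrow E_n\) and complete it to a triangle \(D_n^{\oplus k}\xrightarrow{h}E_n\xrightarrow{e_n}E_{n+1}\rightarrow\Sigma D_n^{\oplus k}\). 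Then \(\cone e_n\in\mathcal{C}\), hence every composite \(e_{m,n}\) has cone in \(\mathcal{C}\) by the octahedral axiom and thickness. So \(\mathbf{E}\) stabilises at \(\mathcal{C}\) from \(1\) onwards, and in particular it is Cauchy for the constant metric. Moreover, \(e_n\circ h=0\), so every morphism \(D_n\rightarrow E_n\) is killed after one step.

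The main step is showing \(\moco\mathbf{E}(Y)=\varinjlim\Hom(Y,E_n)=0\) for all \(Y\in\mathcal{C}\). For \(Y=\Sigma^iC_j\), any element of the colimit is represented by some \(f:Y\rightarrow E_m\). I choose \(n\geq m\) with \(D_n=Y\). Then \(e_{m,n-1}f\) is a \(\kk\)-linear combination of the \(h_l\), so it dies under \(e_n\), and the class is zero. Next I show that the class \(\mathcal{K}\) of objects \(Y\) with \(\varinjlim\Hom(Y,E_n)=0\) is thick. It is closed under shifts, because the list contains all shifts, and under summands trivially. For extensions, let \(Y'\rightarrow Y\rightarrow Y''\) be a triangle with \(Y',Y''\in\mathcal{K}\). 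The functor \(\varinjlim\Hom(\blank,E_n)\) is cohomological, since filtered colimits of exact sequences are exact, so the exact sequence \(0=\moco\mathbf{E}(Y'')\rightarrow\moco\mathbf{E}(Y)\rightarrow\moco\mathbf{E}(Y')=0\) forces \(\moco\mathbf{E}(Y)=0\). Hence \(\mathcal{C}\subseteq\mathcal{K}\), and \(\mathbf{E}\) is compactly supported.

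I expect the main obstacle to be this vanishing argument, namely keeping the infinite recurrence of each generator aligned with the Hom-finiteness so that every map is eventually killed; the enumeration with infinite repetition is exactly what handles it. A minor point is the convention that \(\N\) starts at \(1\), which only affects indexing. Once vanishing is established, \(\pi(X)\) lies in the essential image, and combined with Theorem~\ref{T:EmbeddingIntoQuotient} this gives \(\mathfrak{S}_{\mathcal{C}}(\mathcal{S})\simeq\mathcal{S}/\mathcal{C}\).
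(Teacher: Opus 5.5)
Your proposal is correct and follows essentially the same route as the paper: the paper also enumerates a $\Sigma$-stable countable generating set with infinite repetition, builds $\mathbf{E}$ by coning off $\addf(C_n)$-precovers (exactly your map $h$), and concludes via Lemma~\ref{L:FunctorIntoQuotient}; your thick-closure argument spells out the step from ``vanishes on all shifts of generators'' to ``vanishes on $\mathcal{C}$'', which the paper only asserts. One small fix there: the class $\mathcal{K}$ as you define it need not be closed under $\Sigma$ (an object can be killed by $\moco\mathbf{E}$ while its shift is not), so instead take $\mathcal{K}$ to be the objects $Y$ with $\moco\mathbf{E}(\Sigma^iY)=0$ for all $i\in\Z$; this class is shift-, summand- and extension-closed, and it contains every $C_j$ because all shifts of $C_j$ occur in your list.
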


\begin{proof}
By Theorem~\ref{T:EmbeddingIntoQuotient}, the triangulated functor \(\Yoneda^{-1}\circ\mathfrak{S}(\pi):\mathfrak{S}_{\mathcal{C}}(\mathcal{S})\rightarrow \mathcal{S}/\mathcal{C}\) is fully faithful. We just need to show that \(\mathfrak{S}(\pi)\) is essentially surjective.

We choose a countable generating subset \(\mathcal{C'}\subseteq\mathcal{C}\) such that \(\Sigma\mathcal{C}'=\mathcal{C}'\).
Then we choose an enumeration \((C_n)_{n\in\N}\) of \(\mathcal{C'}\) such that every object \(C\in\mathcal{C}'\) is repeated infinitely many times, i.e.\ there exists an infinite subset \(I_m\subseteq\N\) satisfying \(C\simeq C_m\) for each \(m\in I\).

For any \(X,Y\in\mathcal{S}\) the finite generation condition on the \(\kk\)-module \(\Hom_{\mathcal{S}}(X,Y)\) implies that the object $Y$ has an \(\addf(X)\)-precover; for example \(\coprod X\rightarrow Y\) where the coproduct is taken over a finite generating subset of \(\Hom_{\mathcal{S}}(X,Y)\) and the map is the natural summation.

Pick an object of \(\mathcal{S}/\mathcal{C}\). It is necessarily of the form \(\pi(X)\) for some \(X\in\mathcal{S}\).
We define a sequence \(\mathbf{E}=(E_n,e_n)_{n\in\N}\) inductively. 
Firstly, we set \(E_1:=X\). Once \(E_n\) is defined for \(n\in\N\), we choose an~\(\addf(C_n)\)\=/pre\-cover \(C_n'\rightarrow E_n\) of \(E_n\) and complete it into a triangle
\[
C_n'\rightarrow E_n \xrightarrow{e_n} E_{n+1} \rightarrow \Sigma C_n'
.\]
This defines \(E_{n+1}\) and \(e_n\).

It is clear that \(\mathbf{E}\) is Cauchy with respect to \(\mathcal{C}\). Hence \(E\in\mathfrak{L}_{\mathcal{C}}(\mathcal{S})\).
Let \(C\in\mathcal{C}'\). There are infinitely many \(n\in\N\) such that the first map in the~distinguished triangle
\[
C_n'\rightarrow E_n\xrightarrow{e_n} E_{n+1}\rightarrow\Sigma C_n'
\]
is an \(\addf(C)\)-precover. By the definition of a precover, the first map in the exact sequence
\[
\Hom_{\mathcal{S}}\big(C,C_n'\big) \xrightarrow{} \Hom_{\mathcal{S}}(C,E_n) \xrightarrow{} \Hom_{\mathcal{S}}(C,E_{n+1}) 
\]
is an epimorphism. By exactness, the map \(\Hom_{\mathcal{S}}(C,e_n)\) is zero for infinitely many indices \(n\in\N\). Hence \(\varinjlim_{n\in\N}\Hom_{\mathcal{S}}(C,E_n)\simeq\moco\mathbf{E}(C)\simeq0\). Since \(\moco \mathbf{E}\) then vanishes on all shifts of generators of \(\mathcal{C}\), it is compactly supported at \(\mathcal{C}\). Thus \(E\in\mathfrak{S}_{\mathcal{C}}(\mathcal{S})\).
Because \(\mathbf{E}\) stabilises at \(\mathcal{C}\) from $1$ onwards, we can conclude by using Lemma~\ref{L:FunctorIntoQuotient} and observing 
\([\mathfrak{S}(\pi)]\left(\moco\mathbf{E}\right)\simeq\pi(E_1)\simeq\pi(X)\).
\end{proof}

\begin{corollary}
\label{C:CharterisingCoreflectivity}
Assume that \(\mathcal{S}\) is \(\kk\)-linear and that for all \(X,Y\in\mathcal{S}\) the~\(\kk\)\=/module \(\Hom_{\mathcal{S}}(X,Y)\) is finitely generated.
Suppose \(\mathcal{C}\) is countably generated.
Then \(\mathfrak{S}_{\mathcal{C}}(\mathcal{S})\simeq\Yoneda\left(\mathcal{C}^{\perp}\right)\subseteq\Mod\dashmodule\mathcal{S}\) if and only if \(\mathcal{C}\) is a coreflective subcategory of \(\mathcal{S}\).
\end{corollary}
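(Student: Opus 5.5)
The backward implication is immediate from Corollary~\ref{C:CoreflectiveGivesPerp}. If \(\mathcal{C}\) is coreflective, then that corollary gives \(\mathfrak{S}_{\mathcal{C}}(\mathcal{S})\simeq\mathcal{C}^{\perp}\). To confirm that the identification is literally the subcategory \(\Yoneda(\mathcal{C}^{\perp})\subseteq\Mod\dashmodule\mathcal{S}\), I would unwind the proof. For \(X\in\mathcal{C}^{\perp}\), the functor \(\Yoneda(X)\) vanishes on \(\mathcal{C}\), so it is compactly supported and lies in \(\mathfrak{S}_{\mathcal{C}}(\mathcal{S})\) via the constant sequence. Conversely, the aisle-metric theorem identifies every object of the completion with a representable functor on the coaisle. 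Alternatively, I would argue directly. Take \(E=\moco\mathbf{E}\) compactly supported and set \(X:=\tr^{>0}E_m\), the \(\mathcal{C}^{\perp}\)-truncation, for \(m\) large. Since \(\cone e_{m,n}\in\mathcal{C}\), the truncations \(\tr^{>0}e_{m,n}\) are isomorphisms. Then Lemma~\ref{L:BasicFactorisationProperty}, applied against both \(E\) and \(\Yoneda(X)\) (both cohomological and vanishing on \(\mathcal{C}\)), gives \(E\simeq\Yoneda(X)\).

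For the forward implication, assume \(\mathfrak{S}_{\mathcal{C}}(\mathcal{S})=\Yoneda(\mathcal{C}^{\perp})\) up to isomorphism. Fix \(X\in\mathcal{S}\). Corollary~\ref{C:CountablyGeneratedCompletionVerdierQuotient} applies, and its proof constructs a compactly supported Cauchy sequence \(\mathbf{E}\) with \(E_1=X\), each \(\cone e_{1,n}\in\mathcal{C}\), and \(\moco\mathbf{E}\in\mathfrak{S}_{\mathcal{C}}(\mathcal{S})\). By hypothesis \(\moco\mathbf{E}\simeq\Yoneda(Y)\) for some \(Y\in\mathcal{C}^{\perp}\). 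The colimit injection \(\Yoneda(X)\to\moco\mathbf{E}\simeq\Yoneda(Y)\) corresponds, by Yoneda, to a morphism \(u:X\to Y\). The plan is to show that the cone of \(u\), shifted appropriately, lies in \(\mathcal{C}\). Completing \(u\) to a triangle \(C\to X\xrightarrow{u}Y\to\Sigma C\) then yields a triangle with \(C\in\mathcal{C}\) and \(Y\in\mathcal{C}^{\perp}\). This shows \(\mathcal{S}=\mathcal{C}*\mathcal{C}^{\perp}\), so \((\mathcal{C},\mathcal{C}^{\perp})\) is a \(t\)-structure, and \(\mathcal{C}\) is coreflective by Proposition~\ref{P:CharacterisationReflectivity}.

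The key step, and the main obstacle, is showing \(\cone u\in\mathcal{C}\). I would use Theorem~\ref{T:EmbeddingIntoQuotient} and Lemma~\ref{L:FunctorIntoQuotient}. The functor \(\Yoneda^{-1}\circ\mathfrak{S}(\pi)\) sends \(\moco\mathbf{E}\) to \(\pi(E_1)=\pi(X)\), and the colimit injection to an isomorphism. On the other hand, it sends \(\Yoneda(Y)\), viewed via the constant sequence, to \(\pi(Y)\), and it sends the morphism \(\Yoneda(u)\) to \(\pi(u)\); this is the compatibility with \(\moco\mathbf{E}\mapsto\moco\pi(\mathbf{E})\) from Theorem~\ref{T:CompletionOfContinuousFunctor}. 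Hence \(\pi(u)\) is an isomorphism in \(\mathcal{S}/\mathcal{C}\). Since \(\mathcal{C}\) is thick, this forces \(\cone u\in\mathcal{C}\), which finishes the argument. The care needed lies in checking that the abstract isomorphism \(\moco\mathbf{E}\simeq\Yoneda(Y)\) commutes with \(\mathfrak{S}(\pi)\), i.e.\ that \(L(\Yoneda(u))=\Yoneda(\pi(u))\) under the identifications. This follows from Lemma~\ref{L:KanExtensionRepresentable} and naturality.
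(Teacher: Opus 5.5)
Your proof is correct. The backward direction is the paper's, via Corollary~\ref{C:CoreflectiveGivesPerp}. The forward direction takes a genuinely different route.

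\textbf{The paper's forward direction.} It argues globally. Combining the hypothesis with Corollary~\ref{C:CountablyGeneratedCompletionVerdierQuotient} gives a commutative square. This square shows that \(\mathcal{C}^{\perp}\hookrightarrow\mathcal{S}\xrightarrow{\pi}\mathcal{S}/\mathcal{C}\) is the composite of the equivalences \(\Yoneda\) and \(\Yoneda^{-1}\circ\mathfrak{S}(\pi)\). The paper then quotes the Bousfield-localisation criterion \cite[Proposition~4.9.1]{Krause10B}: this composite is an equivalence if and only if \(\mathcal{C}\) is coreflective.

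\textbf{Your forward direction.} You work objectwise.
\begin{enumerate}
\item For each \(X\), take the precover-killing sequence from the proof of that corollary.
\item Identify its module colimit with \(\Yoneda(Y)\) for some \(Y\in\mathcal{C}^{\perp}\).
\item Read off \(u:X\to Y\) from the colimit injection.
\item Show that \(\pi(u)\) is invertible.
\end{enumerate}
This gives the approximation triangle \(\Sigma^{-1}\cone u\to X\to Y\to\cone u\) directly. In effect you reprove the needed implication of Krause's criterion in place. The colimit injection supplies the lift of \(\pi(X)\simeq\pi(Y)\) to an honest morphism of \(\mathcal{S}\).

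\textbf{What each buys.} The paper's version is two lines and uses only the statement of the corollary. Yours is self-contained and never uses the full faithfulness from Theorem~\ref{T:EmbeddingIntoQuotient}, only the construction behind essential surjectivity. It also exhibits the coreflection of \(X\) explicitly as the module colimit of that sequence.

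\textbf{One point of precision.} \(\Yoneda(X)\) lies in \(\mathfrak{S}_{\mathcal{C}}(\mathcal{S})\) only when \(X\in\mathcal{C}^{\perp}\). So the colimit injection is not a morphism of the completion, and you cannot say that \(\Yoneda^{-1}\circ\mathfrak{S}(\pi)\) sends it to an isomorphism. The key step must be run with the left adjoint \(L\) on \(\Mod\dashmodule\mathcal{S}\), as you indicate at the end. Alternatively, apply Lemma~\ref{L:BasicFactorisationProperty} to \(F=\Hom_{\mathcal{S}/\mathcal{C}}\big(\pi(\blank),\pi(Z)\big)\). This shows that \(F(u)\) is bijective for every \(Z\in\mathcal{S}\), hence \(\pi(u)\) is invertible, with no Kan extension needed.
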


\begin{proof}
``\(\Rightarrow\)'' 
By Corollary~\ref{C:CountablyGeneratedCompletionVerdierQuotient} and the assumptions, we have a~commutative square
\begin{center}
	\begin{tikzcd}
			\mathfrak{S}_{\mathcal{C}}(\mathcal{S}) \arrow[rr, "\Yoneda^{-1}\circ\mathfrak{S}(\pi)", "\simeq"'] & & \mathcal{S}/\mathcal{C}  \\
			\mathcal{C}^{\perp} \arrow[u, "\simeq", "\Yoneda"'] \arrow[rr, hookrightarrow, ""] & & \mathcal{S} \arrow[u, "\pi"'] 
		\end{tikzcd}
\end{center}
with the composition being an equivalence.
By \cite[Proposition~4.9.1]{Krause10B}, this is equivalent to \(\mathcal{C}\) being coreflective.

``\(\Leftarrow\)'' By Corollary~\ref{C:CoreflectiveGivesPerp}.
\end{proof}

\section{Discrete cluster categories and their completions}
\label{sec:DCC}

In this section, we briefly recapitulate the most important facts about discrete cluster categories (Subsection~\ref{SS:DiscreteClusterCategories}) and their various notions of completion (Subsection~\ref{SS:DiscreteClusterCompletions}). The main result (Theorem~\ref{T:TheEquivalenceIsAnEquivalence}) in Subsection~\ref{SS:Construction} is that the additive equivalence between the Paquette-Yıldırım completion and a specific instance of a~Cummings-Gratz metric completion established in \cite[Corollary~6.24]{CummingsGratz24} is a triangulated equivalence.

We shall work with a fixed \(p\in\N\) and a field $K$.

\subsection{Discrete cluster categories}
\label{SS:DiscreteClusterCategories}
Discrete cluster categories of type \(\A\) were introduced by Igusa and Todorov in \cite{IgusaTodorov15}. For the fixed \(p\) and $K$ \textit{the discrete cluster category} \(\mathcal{C}_p\) is a $K$-linear Hom-finite Krull-Schmidt 2-Calabi-Yau triangulated category, whose structure can be understood through the following combinatorial model.

We consider the circle \(S^1\subseteq\R^2\) with the usual topology and impose a cyclic order on its points by the rule \(x\leq y\leq z\) if and only if moving from \(x\) to \(z\) in the anti-clockwise direction passes through \(y\). 
We fix \textit{accumulation points} \(A_1<A_2<\cdots<A_p\) on \(S^1\). Set \(A_{p+1}:=A_1\).
For each \(1\leq i\leq p\),
we pick countably many distinct points \(\Z_i:=\{n_i\}_{i\in\Z}\) on \(S^1\) ordered by \(\Z\) between \(A_i\) and \(A_{i+1}\) in such a way, that the~ordering on \(\Z_i\) is compatible with the cyclic order on \(S^1\), \(\lim_{n\rightarrow-\infty}n_i=A_{i}\), and \(\lim_{n\rightarrow\infty}n_i=A_{i+1}\).

\textit{An arc} is a two-element subset of \(\mathcal{Z}:=\cup_{i=1}^p\Z_i\) not-containing neighbouring points; or more formally, it is an element of
\(
\left\{X\in2^\mathcal{Z}:|X|=2\right\}\setminus\bigcup_{n\in\Z}\bigcup_{i=1}^p\big\{\{n_i,(n+1)_{i}\}\big\}
\). The two elements of the~arc are called its \textit{endpoints}.
Two arcs \(X,Y\) \textit{cross} if we can name their endpoints in such a way, that \(X=\{x,x'\}\), \(Y=\{y,y'\}\), and \(x<y<x'<y'\). For \(i,j\in\{1,\ldots,p\}\), \(x,y\in\Z\), and an arc \(X=\{x_i,y_j\}\) we define \(\Sigma X:=\{(x-1)_i,(y-1)_j\}\).

Given this data, we can describe the triangulated category \(\mathcal{C}_p\). 
The indecomposable objects of \(\mathcal{C}_p\) are in bijection with arcs.
The operation \(X\mapsto\Sigma X\) or arcs extends to the shift functor on \(\mathcal{C}_p\). 
For arcs \(X\) and \(Y\) we have
\[
\Hom_{\mathcal{C}_p}(X,Y)\simeq
\begin{cases}
K & \text{if \(\Sigma X\) and $Y$ cross},\\
0 & \text{otherwise}.
\end{cases}
\]
If we want to complete a morphism \(0\neq f\in\Hom_{\mathcal{C}_p}(X,Y)\) between two arcs to a triangle, we first take a look at the crossing between \(\Sigma X=\{x,x'\}\) and \(Y=\{y,y'\}\) enabled by the inequality \(x<y<x'<y'\), and then obtain a distinguished triangle (Figure~\ref{F:Triangle})
\[
X\xrightarrow{f}Y\rightarrow\{x,y\}\oplus\{x',y'\}\rightarrow\Sigma X
\]
where we identify any potential ``invalid arc'' connecting neighbouring endpoints with the zero object.

\begin{figure}[!h]
\begin{center}
\centering
\includegraphics[width=0.4\linewidth]{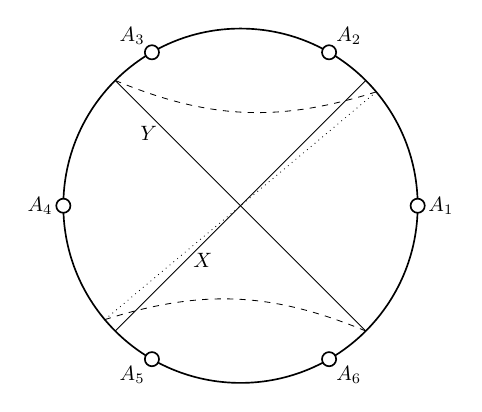}
\end{center}
\caption{A morphism between the arcs \(X\) and \(Y\) in \(\mathcal{C}_6\) exists because \(\Sigma X\) (the dotted arc) and \(Y\) cross. The cone of the morphism is the direct sum of the dashed arcs.}\label{F:Triangle}
\end{figure}

The triangulated category \(\mathcal{C}_p\) is uniquely determined up to triangulated equivalence by the field \(K\) and the number of accumulation points \(p\).

\subsection{Combinatorial and metric completions of discrete cluster categories}
\label{SS:DiscreteClusterCompletions}

\subsubsection{Paquette-Yıldırım combinatorial completion}

The purpose of the combinatorial completion \(\overline{\mathcal{C}_p}\) of \(\mathcal{C}_p\) is to add new arcs with endpoints in accumulation points while retaining the triangulated structure.
The~construction by Paquette and Yıldırım in \cite{PaquetteYildirim21} (see also \cite{Franchini25}) goes as follows.
We consider the~discrete cluster category  \(\mathcal{C}_{2p}\) with twice as many accumulation points \(A_{1'}<A_1<A_{2'}<A_2<\cdots<A_{p'}<A_p\).
Define the thick subcategories \(\mathcal{D}:=\big\{\{x,y\}\in\mathcal{C}_{2p}:x,y\in\Z_{i'},1\leq i\leq p\big\}\subseteq\mathcal{C}_{2p}\) and \(\mathcal{D}':=\mathcal{D}^{\perp}\).
It holds that \(\mathcal{D}'={}^{\perp}\mathcal{D}\) and \(\mathcal{D}'\simeq\mathcal{C}_p\), so we may identify \(\mathcal{C}_p\) as the thick subcategory \(\mathcal{D}'\).
\textit{The Paquette-Yıldırım completion} is then defined as the Verdier quotient \(\overline{\mathcal{C}_p}:=\mathcal{C}_p/\mathcal{D}\).

As \(\mathcal{C}_p=\mathcal{D}^{\perp}={}^\perp\mathcal{D}\), the composition \(\mathcal{C}_p\xhookrightarrow{}\mathcal{C}_{2p}\rightarrow\overline{\mathcal{C}_p}\) is fully faithful. What is more, the combinatorial model of \(\mathcal{C}_p\) using arcs on \(S^1\) with $p$ accumulation points extends to a model for \(\overline{\mathcal{C}_p}\).

\begin{theorem}[{\cite[Corollary~3.11, Proposition~3.14]{PaquetteYildirim21}}]
\label{T:PaquetteYildirimTheorem}
The triangulated category \(\overline{\mathcal{C}_p}\) is Hom-finite and Krull-Schmidt, whose indecomposable objects are arcs of the form \(\{x_i,y_j\}\in\mathcal{C}_p\), \(\{x_i,A_j\}\), or \(\{A_i,A_k\}\) for some \(x,y\in\Z\) and \(i,j,k\in\{1,\ldots,p\}\), \(i\neq k\).

The~shift functor acts on the newly added arcs as \(\Sigma\{x_i,A_j\}=\{(x-1)_i,A_j\}\) and \(\Sigma\{A_i,A_k\}=\{A_i,A_k\}\).

And for indecomposable objects \(X,Y\in\overline{\mathcal{C}_p}\) it holds that \(\Hom_{\overline{\mathcal{C}_p}}(X,Y)\simeq0\) or \(\Hom_{\overline{\mathcal{C}_p}}(X,Y)\simeq K\); the~latter happens if and only if at least one of the following conditions holds:
\begin{enumerate}[label=(\roman*)]
\item \(X=Y\),
\item \(\Sigma X\) and \(Y\) cross, 
\item \(\Sigma X=\{x,A_i\}\), \(Y=\{y,A_i\}\) for some \(1\leq i\leq p\) and \(x,y\in\mathcal{Z}\cup\{A_1,\ldots,A_{i-1}\}\cup\{A_{i+1},\ldots,A_p\}\) and the inequality \(y<A_i<x\) holds.
\end{enumerate}
\end{theorem}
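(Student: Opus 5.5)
Since \(\overline{\mathcal{C}_p}\) is by construction the Verdier quotient \(\mathcal{C}_{2p}/\mathcal{D}\), with \(\mathcal{C}_p\) identified with \(\mathcal{D}'=\mathcal{D}^{\perp}={}^{\perp}\mathcal{D}\), I would argue throughout inside \(\mathcal{C}_{2p}\). The tools are the quotient functor \(\pi:\mathcal{C}_{2p}\rightarrow\mathcal{C}_{2p}/\mathcal{D}\) and the combinatorial description of morphisms and triangles in \(\mathcal{C}_{2p}\) recalled in Subsection~\ref{SS:DiscreteClusterCategories}.

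\textbf{Step 1: objects.} I would show that moving an endpoint inside one of the new segments \(\Z_{i'}\) does not change the image under \(\pi\). Take arcs \(\{u,y\}\) and \(\{u',y\}\) with \(u,u'\in\Z_{i'}\) and \(u<u'\). The relevant crossing produces a nonzero morphism between them. By the triangle formula, its cone is a sum of the arc \(\{u,u'\}\), which lies in \(\mathcal{D}\), and an arc with neighbouring endpoints, which is zero. So \(\pi\) identifies the two arcs. Consequently every arc of \(\mathcal{C}_{2p}\) not in \(\mathcal{D}\) has an image depending only on the \(\mathcal{Z}\)-endpoints it keeps, with each segment \(\Z_{i'}\) collapsing to a single point. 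I would label this point \(A_i\), so that the new accumulation points of \(\mathcal{C}_{2p}\) play the role of the original \(p\) points. This yields the three listed families \(\{x_i,y_j\}\), \(\{x_i,A_j\}\) and \(\{A_i,A_k\}\); the case \(i=k\) is excluded because both endpoints then lie in one \(\Z_{i'}\) and the arc belongs to \(\mathcal{D}\). The shift formulas follow directly, since \(\Sigma\) on \(\mathcal{C}_{2p}\) moves each endpoint one step back and this step is invisible once \(\Z_{i'}\) has collapsed. Every object of the quotient is a finite sum of such images because \(\mathcal{C}_{2p}\) is Krull-Schmidt and \(\pi\) kills \(\mathcal{D}\).

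\textbf{Step 2: morphisms.} I would compute \(\Hom\) in the quotient using that \(\mathcal{C}_p=\mathcal{D}^{\perp}={}^{\perp}\mathcal{D}\). For each arc \(X\) there is a \(\mathcal{D}\)-approximation triangle, so the colimit over roofs computing \(\Hom_{\mathcal{C}_{2p}/\mathcal{D}}(\pi X,\pi Y)\) can be evaluated on a cofinal family of representatives. Concretely, I would move the \(\Z_{i'}\)-endpoint of \(X\) far in one direction and that of \(Y\) far in the other, then read off \(\Hom_{\mathcal{C}_{2p}}\) by the crossing criterion. Once the representatives are pushed far enough the value stabilises, and it is \(K\) exactly in the listed cases. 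Condition (ii) is genuine crossing. Condition (iii) is the limit situation in which both arcs end at the same \(A_i\): after choosing representatives, \(\Sigma X\) and \(Y\) cross inside \(\Z_{i'}\) precisely when \(y<A_i<x\). Condition (i) covers the identity in the degenerate cases such as \(\{A_i,A_k\}\) where there is no crossing. Knowing that these Hom-spaces are \(0\) or \(K\) gives Hom-finiteness, and local endomorphism rings \(K\) give the Krull-Schmidt property.

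\textbf{Main obstacle.} I expect the hardest part to be the precise bookkeeping in Step 2. I must check that the directed system of roofs is really computed by the chosen one-sided limits, and that the stable value is attained rather than oscillating. This is where I would invest most care, handling separately the cases according to how many endpoints of \(X\) and \(Y\) lie at accumulation points.
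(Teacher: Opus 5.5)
The paper gives no proof of this statement; it imports it from \cite{PaquetteYildirim21}. Judged on its own, your Step~1 is essentially sound. One slip: by the triangle rule, the cone of the nonzero map $\{u,y\}\to\{u',y\}$ is $\{u-1,u'\}$ plus a zero arc, not $\{u,u'\}$. This matters when $u'=u+1$, since then $\{u,u'\}$ is not an arc; but $\{u-1,u'\}\in\mathcal{D}$ still makes $\pi$ of the map invertible.

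The genuine gap is Step~2, which carries essentially all the content of the theorem. It must establish that the images are nonzero, indecomposable and pairwise non-isomorphic, Hom-finiteness, the Krull--Schmidt property, and criteria (i)--(iii). Your justification for evaluating the Verdier colimit on pushed representatives is false: an arc $X=\{x_i,u\}$ with $u\in\Z_{j'}$ has no $\mathcal{D}$-approximation triangle.
\begin{itemize}
\item All arcs $\{a,b\}\in\mathcal{D}$ with $a\ll u\ll b$ map nonzero to $X$.
\item Since $\{a-1,b-1\}$ is nested around any fixed finite family of arcs of $\mathcal{D}$, these arcs admit no nonzero maps into such a family.
\item Hence $X$ has no $\mathcal{D}$-precover, and dually no $\mathcal{D}$-preenvelope.
\end{itemize}
So $\mathcal{D}$ is neither coreflective nor reflective (Proposition~\ref{P:CharacterisationReflectivity}). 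Were it coreflective, $\mathcal{C}_p=\mathcal{D}^{\perp}\to\mathcal{C}_{2p}/\mathcal{D}$ would be an equivalence and no new arcs could exist (compare Corollary~\ref{C:CharterisingCoreflectivity}). Consequently the cofinality and stabilisation you yourself flag as the main obstacle are unproved, and the theorem is not yet established.

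The missing ingredient is a finality criterion that replaces approximations. Suppose $Y=Y_0\to Y_1\to\cdots$ has all cones in $\mathcal{D}$ and $\varinjlim_n\Hom(C,Y_n)=0$ for every $C\in\mathcal{D}$. Then the sequence is final among right fractions $s:Y\to Y'$ with $\cone s\in\mathcal{D}$.
\begin{itemize}
\item The composite $\Sigma^{-1}\cone s\to Y\to Y_n$ vanishes for $n\gg0$, so $Y\to Y_n$ factors through $s$.
\item Two factorisations differ by a map through $\cone s$, and that map also dies further along.
\end{itemize}
Hence $\Hom_{\mathcal{C}_{2p}/\mathcal{D}}(\pi W,\pi Y)\simeq\varinjlim_n\Hom_{\mathcal{C}_{2p}}(W,Y_n)$. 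This is exactly what Lemma~\ref{L:BasicFactorisationProperty} yields via $\overline{\mathcal{C}_p}\simeq\mathfrak{S}_{\mathcal{D}}(\mathcal{C}_{2p})$ (Corollary~\ref{C:CountablyGeneratedCompletionVerdierQuotient}).

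For $Y=\{y,v\}$ with $v\in\Z_{i'}$, take $Y_n=\{y,v+n\}$, moving anticlockwise towards $A_i$; move both endpoints if both lie in new segments. The cones are $\{v+n-1,v+n+1\}$, and for large $n$ they receive no nonzero maps from $W$ or $\Sigma W$. The long exact sequence then shows that $\Hom(W,Y_n)\to\Hom(W,Y_{n+1})$ is eventually an isomorphism, which rules out the oscillation you worry about. Only the target needs to move; the source stays fixed. Applying the crossing criterion to $\Sigma W$ and $Y_n$ for $n\gg0$ then produces (ii) and (iii), with (i) covering the identity of $\{A_i,A_k\}$.
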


An arc \(\{x,y\}\in\mathcal{C}_{2p}\) with \(x\in\Z_i\) and \(y\in\Z_{j'}\) for \(i,j\in\{1,\ldots,p\}\) is mapped to the arc \(\{x,A_{j}\}\in\overline{\mathcal{C}_{p}}\) by the~quotient functor \(\mathcal{C}_{2p}\rightarrow\overline{\mathcal{C}_{p}}\); and similarly \(\{y,z\}\mapsto\{A_{i},A_{j}\}\) if additionally \(z\in\Z_{i'}\).

\begin{example}
\label{E:TrianglesInCompletion}
The rules for determining distinguished triangles in \(\overline{\mathcal{C}_p}\) with two indecomposable terms are rather similar to those in \(\mathcal{C}_p\). For crossing arcs, the rule is formally the same.

We give an example with arcs with one endpoint being an accumulation point \(A\).
Let us have \(X,Y\in\overline{\mathcal{C}_{p}}\) indecomposable. Suppose that \(X=\{x_i,A_j\}, Y=\{y,A_j\}\) for some \(i,j\in\{1,\ldots,p\}\), \(x\in\Z\), and \(y\in\mathcal{Z}\).
If \(0\neq f:X\rightarrow Y\) is a morphism enabled by the inequality \(y<A_j<x_i\), then we get a triangle
\[
X\xrightarrow{f} Y \rightarrow \{(x-1)_i,y\} \rightarrow \Sigma X
.\]
\end{example}

\begin{lemma}
\label{L:PaquetteYildirimGeneration}
The arcs of the form \(\{x,A\}\) for some \(x\in\mathcal{Z}\) and \(A\) an accumulation point generate \(\overline{\mathcal{C}_p}\) as a thick subcategory. 
\end{lemma}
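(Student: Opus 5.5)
Let \(\mathcal{G}\subseteq\overline{\mathcal{C}_p}\) denote the thick subcategory generated by all arcs \(\{x,A\}\) with \(x\in\mathcal{Z}\) and \(A\) an accumulation point. By Theorem~\ref{T:PaquetteYildirimTheorem}, \(\overline{\mathcal{C}_p}\) is Krull-Schmidt and its indecomposables are the arcs \(\{x_i,y_j\}\), \(\{x_i,A_j\}\) and \(\{A_i,A_k\}\) with \(i\neq k\). A thick subcategory is closed under finite sums and summands, so it suffices to show that each of these three kinds of indecomposable lies in \(\mathcal{G}\). The second kind lies in \(\mathcal{G}\) by definition. For the other two kinds I will realise each arc as a direct summand of the cone of a morphism between generators, using the triangle rule of Example~\ref{E:TrianglesInCompletion} and its analogue for arcs with two accumulation endpoints.

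\emph{Arcs with endpoints in \(\mathcal{Z}\).} Let \(Z=\{a,b\}\) be an arc with \(a,b\in\mathcal{Z}\), and choose an accumulation point \(A_j\). Name the endpoints so that, in the cyclic order, \(b<A_j<a\) (possibly after swapping \(a\) and \(b\)). Write \(a=x_i\) and set \(X:=\{(x+1)_i,A_j\}\), so that \(\Sigma X=\{x_i,A_j\}\), and set \(Y:=\{b,A_j\}\).

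1. First I check that condition (iii) of Theorem~\ref{T:PaquetteYildirimTheorem} yields a nonzero morphism \(f:X\rightarrow Y\). This requires \(b<A_j<x_i\), which is arranged by the choice above. A sign check is still needed here, since the condition in Example~\ref{E:TrianglesInCompletion} is stated for the endpoint of \(X\) rather than of \(\Sigma X\). If the convention is off by one, I will shift \(x\) appropriately.

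2. Example~\ref{E:TrianglesInCompletion} then gives a triangle \(X\rightarrow Y\rightarrow\{x_i,b\}\rightarrow\Sigma X\). Its cone is \(Z\), so \(Z\in\mathcal{G}\).

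3. There is one degenerate situation: the arc produced might be "invalid" (neighbouring endpoints), and then the cone is zero. To avoid this, I choose \(A_j\) and the labelling so that the cone is exactly the given arc \(Z\). Since \(Z\) is a genuine arc, its endpoints are not neighbours, so the cone is not degenerate.

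\emph{Arcs \(\{A_i,A_k\}\).} Here I apply the same rule with \(Y=\{A_i,A_k\}\) replaced by a generator. Choose \(X=\{x,A_k\}\) and \(Y=\{y,A_k\}\) with \(y=A_i\) allowed in condition (iii), or more simply use \(X=\{x,A_k\}\) and \(Y=\{x',A_i\}\) with \(\Sigma X\) crossing \(Y\). By the crossing rule, which is formally the same as in \(\mathcal{C}_p\), the cone is \(\{x\text{-ish},x'\}\oplus\{A_k,A_i\}\). The first summand was shown above to lie in \(\mathcal{G}\), so taking direct summands gives \(\{A_i,A_k\}\in\mathcal{G}\).

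The main obstacle is the precise shape of these triangles in \(\overline{\mathcal{C}_p}\). Only the two-term rule with one accumulation endpoint is stated explicitly in Example~\ref{E:TrianglesInCompletion}. If the crossing rule for arcs with accumulation endpoints proves awkward to justify, the fallback is to lift to \(\mathcal{C}_{2p}\):
\begin{itemize}
\item \(\{A_i,A_k\}\) is the image of an arc \(\{y,z\}\) with \(y\in\mathbb{Z}_{i'}\) and \(z\in\mathbb{Z}_{k'}\);
\item in \(\mathcal{C}_{2p}\), this arc is the cone, up to a summand, of a morphism \(\{w,z\}\rightarrow\{y,u\}\) with \(w,u\in\mathcal{Z}\), which comes from an explicit crossing there;
\item both of these arcs map to arcs of the form \(\{x,A\}\) in \(\overline{\mathcal{C}_p}\);
\item the quotient functor is triangulated, so it carries this triangle to a triangle in \(\overline{\mathcal{C}_p}\).
\end{itemize}
The same lifting argument also handles the first case without relying on the conventions of Example~\ref{E:TrianglesInCompletion}.
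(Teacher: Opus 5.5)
Your proposal follows the paper's proof. Arcs with both endpoints in $\mathcal{Z}$ are cones of the triangles in Example~\ref{E:TrianglesInCompletion}, and your off-by-one worry is harmless: no accumulation point lies between $x_i$ and $(x+1)_i$, so the inequality in the example and condition (iii) of Theorem~\ref{T:PaquetteYildirimTheorem} agree. The arc $\{A_i,A_k\}$ is then a summand of the cone of a crossing-rule morphism between two arcs with one accumulation endpoint.

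In that second step, two things need fixing:
\begin{enumerate}
\item Drop the option $Y=\{A_i,A_k\}$, since that is the target, not a generator.
\item State the interleaving explicitly, because ``$\Sigma X$ crosses $Y$'' alone is not enough. You need $x-1<x'<A_k<A_i$; the other interleaving gives the cone $\{x-1,A_i\}\oplus\{A_k,x'\}$, which does not contain $\{A_i,A_k\}$.
\end{enumerate}
The paper uses the same point $x$ in both arcs: in its notation $X=\{x,A_i\}$, $Y=\{x,A_j\}$ with $x<A_i<A_j$. Then the extra summand $\{x-1,x\}$ is an invalid arc, and the cone is exactly $\{A_i,A_j\}$.
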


\begin{proof}
Every arc with endpoints in \(\mathcal{Z}\) is generated by arcs with precisely one accumulation endpoint; this follows from Example~\ref{E:TrianglesInCompletion}.
Let \(1\leq i<j\leq p\). Choose \(x\in\mathcal{Z}\) with \(x<A_i<A_j\). Then for \(X:=\{x,A_i\}\) and \(Y:=\{x,A_j\}\) the arcs \(\Sigma X\) and \(Y\) cross, so there is a triangle 
\[
X\rightarrow Y\rightarrow\{A_i,A_j\}\rightarrow\Sigma X
.\]
Therefore, even the arcs with two accumulation endpoints are generated by the arcs with one accumulation endpoint.
\end{proof}

\subsubsection{Cummings-Gratz metric completions}

In \cite{CummingsGratz24}, Cummings and Gratz fully classify metric completions of \(\mathcal{C}_p\) with respect to aisle and coaisle metrics for any possible choice of a $t$-structure on \(\mathcal{C}_p\). 
They prove that completions with respect to aisle metrics are always thick subcategories for, in fact, any triangulated category (see Theorem~\ref{T:CummingsGratzAisleTheorem}). The completions of \(\mathcal{C}_p\) with respect to coaisle metrics are classified via \cite[Theorem~6.22]{CummingsGratz24} and for each such a completion a combinatorial model is given.

We will now focus on a specific choice of a coaisle metric. By \cite[Corollary~4.30]{GratzZvonareva23}, there exists a~unique non-degenerate bounded above $t$\=/structure \(\left(\mathcal{C}_p^{\leq0},\mathcal{C}_p^{>0}\right)\) up to equivalence (see Subsection~\ref{SS:Compressions}). For simplicity, we choose a representative this $t$-structure with
\[
\mathcal{C}_p^{>0}=\addf\big(\{n_i,m_i\}\in\mathcal{C}_p:n,m\in\N\cup\{0,-1\},1\leq i\leq p\big)
\]
but remind that the resulting completion does not depend on equivalences of neither metrics, nor $t$\=/structures. Denote \(\mathcal{M}=\{B_1\}_{n\in\N}=\big\{\mathcal{C}_p^{> n}\big\}_{n\in\N}\) the corresponding coaisle $t$-structure. Spelling out the description \cite[Theorem~6.22]{CummingsGratz24} of the combinatorial model using arcs in \(S^1\) for this specific metric completion \(\mathfrak{S}_{\mathcal{M}}(\mathcal{C}_p)\) gives us:
\begin{theorem}
The triangulated category \(\mathfrak{S}_{\mathcal{M}}(\mathcal{C}_p)\) is Hom-finite and Krull-Schmidt, whose indecomposable objects are arcs of the form \(\{x_i,y_j\}\in\mathcal{C}_p\), \(\{x_i,A_j\}\), or \(\{A_i,A_k\}\) for some \(i,j,k\in\{1,\ldots,p\}\), \(i\neq k\), and \(x,y\in\Z\).
The description of homomorphisms between indecomposables and of the action of \(\Sigma\) on indecomposables is identical to that in Theorem~\ref{T:PaquetteYildirimTheorem}.   
\end{theorem}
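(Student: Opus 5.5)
The plan is to obtain the statement by specialising the general classification \cite[Theorem~6.22]{CummingsGratz24} of coaisle metric completions of \(\mathcal{C}_p\) to our chosen representative \(\mathcal{M}=\{\mathcal{C}_p^{>n}\}_{n\in\N}\). This is legitimate because the completion depends only on the equivalence class of the metric, and hence of the \(t\)-structure. Hom-finiteness and the Krull-Schmidt property are part of the output of \cite[Theorem~6.22]{CummingsGratz24} for every coaisle completion, so I would cite them directly. The real work is to translate their combinatorial data into the concrete list of arcs above.

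First I would make the metric explicit. Since \(\Sigma\) lowers both endpoint labels by one, \(\mathcal{C}_p^{>n}=\Sigma^{-n}\mathcal{C}_p^{>0}\) is the additive closure of the arcs \(\{a_i,b_i\}\) with both endpoints in a single \(\Z_i\) and \(a,b\geq n-1\). As \(n\to\infty\), these arcs crowd towards the accumulation points \(A_{i+1}\), and every \(A_j\) arises in this way. In the Cummings-Gratz language, this means every accumulation point is approached by the metric. I would verify that this is precisely the case of \cite[Theorem~6.22]{CummingsGratz24} in which new endpoints are adjoined at all accumulation points, and this is the non-degenerate bounded-above case identified in \cite[Corollary~6.24]{CummingsGratz24}.

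I would then identify the indecomposables using Lemma~\ref{L:CummingsGratzTrivialisation}. A compactly supported Cauchy sequence may be taken \(B_s\)-trivial, so its entries eventually avoid arcs lying deep in the coaisle. This leaves three possibilities. Either the sequence is eventually constant at an arc \(\{x_i,y_j\}\), or exactly one endpoint runs off to some \(A_j\), giving \(\{x_i,A_j\}\), or both endpoints run off to distinct accumulation points, giving \(\{A_i,A_k\}\) with \(i\neq k\). Compact support rules out both endpoints tending to the same \(A_i\), since such arcs eventually lie in \(B_n\) and the colimit would not vanish on \(B_n\). I would read the Hom spaces off Lemma~\ref{L:BasicFactorisationProperty} as \(\varinjlim_n\Hom_{\mathcal{C}_p}(X_m,Y_n)\) for \(m\) large. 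By the crossing rule this is \(K\) or \(0\), and it reproduces conditions (i)--(iii) of Theorem~\ref{T:PaquetteYildirimTheorem}. In particular, condition (iii) comes from the fact that arcs sharing the limit endpoint \(A_i\) cross eventually exactly when \(y<A_i<x\). Finally, the shift is computed as \(F\mapsto F\circ\Sigma^{-1}\) applied termwise to the sequences. Shifting moves finite endpoints by one and fixes limit endpoints, which gives \(\Sigma\{x_i,A_j\}=\{(x-1)_i,A_j\}\) and \(\Sigma\{A_i,A_k\}=\{A_i,A_k\}\).

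The main obstacle is the bookkeeping in matching the endpoint conventions. Specifically, one must check that the cyclic order in \cite[Theorem~6.22]{CummingsGratz24} and the direction in which our coaisle pushes arcs (towards \(A_{i+1}\) inside \(\Z_i\)) give exactly the inequality \(y<A_i<x\) in case (iii), and not its reverse. I would settle this by testing one explicit pair, namely \(\Sigma X=\{x,A_i\}\) and \(Y=\{y,A_i\}\) represented by sequences \(\{x,n_{i-1}\}\) and \(\{y,n_{i-1}\}\). I would compute directly when \(\Sigma X_n\) and \(Y_n\) cross for large \(n\).
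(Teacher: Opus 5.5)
Your plan matches the paper's approach: the paper obtains this statement simply by spelling out \cite[Theorem~6.22]{CummingsGratz24} (cf.\ \cite[Corollary~6.24]{CummingsGratz24}) for the chosen representative \(\{\mathcal{C}_p^{>n}\}_{n\in\N}\) of the non-degenerate bounded-above coaisle metric, so your hands-on checks via Lemma~\ref{L:CummingsGratzTrivialisation} and Lemma~\ref{L:BasicFactorisationProperty} are only supplementary. One small correction to those checks: a sequence of arcs whose two endpoints both run up \(\Z_i\) towards \(A_{i+1}\) has zero module colimit, because \(\Hom_{\mathcal{C}_p}(Z,E_n)=0\) for any fixed arc \(Z\) once \(n\) is large; so such sequences are excluded because they contribute nothing (or by \(B_s\)-triviality), not because their colimit fails to vanish on \(B_n\).
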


Every indecomposable in \(\mathfrak{S}_{\mathcal{M}}(\mathcal{C}_p)\) can be expressed as module colimit of a Cauchy sequence called a~\textit{double fan} (see \cite[Definition~5.17, Remark~5.18]{CummingsGratz24}). For \(X\in\mathcal{C}_p\subseteq\mathfrak{S}_{\mathcal{M}}(\mathcal{C}_p)\), the sequence can be taken constant \(X\xrightarrow{\id_X}X\xrightarrow{\id_X}\cdots\). If \(X=\{x_i,A_{j+1}\}\) for \(x\in\Z\) and \(i,j\in\{1,\ldots,p\}\), then $X$ corresponds to \(\moco_{n\in\N}\big\{x_i,(x+n+1)_j\big\}\). And if \(X=\{A_{i+1},A_{j+1}\}\) for \(1\leq i< j\leq p\), then one obtains \(X\) as \(\moco_{n\in\N}\big\{n_i,n_j\big\}\). See Figure~\ref{F:Two}.

\begin{figure}[!ht]
\begin{center}
\centering
\includegraphics[width=1\linewidth]{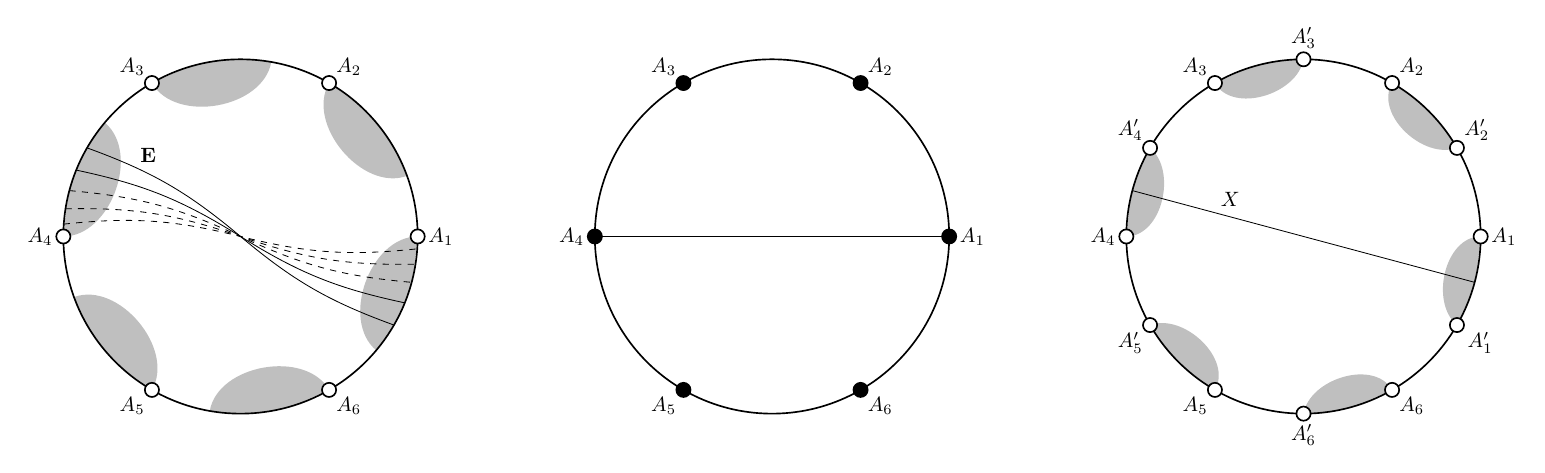}
\end{center}
\caption{The coaisle \(\mathcal{C}^{>0}_6\) (grey area) and a double fan \(\mathbf{E}\) with respect respect to the~co\-aisle metric \(\mathcal{M}\) (left). The subcategory \(\mathcal{D}\subseteq\mathcal{C}_{12}\) (grey area) and \(X\in\mathcal{C}_{12}\) (right). Both \(\mathbf{E}\) and \(X\) correspond to the infinite arc \(\{A_1,A_4\}\) (middle) in \(\mathfrak{S}_{\mathcal{M}}(\mathcal{C}_6)\), or \(\overline{\mathcal{C}_6}\), resp.}\label{F:Two}
\end{figure}

The rules for computing triangles in  \(\mathfrak{S}_{\mathcal{M}}(\mathcal{C}_p)\) are also the same as in \(\overline{\mathcal{C}_p}\), so even an analogue of Example~\ref{E:TrianglesInCompletion} is true, and the following lemma holds:
\begin{lemma}
\label{L:CummingsGratzGeneration}
The arcs of the form \(\{x,A\}\) for some \(x\in\mathcal{Z}\) and \(A\) an accumulation point generate \(\mathfrak{S}_{\mathcal{M}}(\mathcal{C}_p)\) as a thick subcategory. 
\end{lemma}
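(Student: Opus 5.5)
The plan is to transport the argument of Lemma~\ref{L:PaquetteYildirimGeneration} into \(\mathfrak{S}_{\mathcal{M}}(\mathcal{C}_p)\). Since the latter is Krull-Schmidt, every object is a finite direct sum of indecomposable arcs. A thick subcategory is closed under finite direct sums and summands, so it suffices to check that every indecomposable lies in the thick closure \(\mathcal{G}\) of the arcs \(\{x,A\}\) with \(x\in\mathcal{Z}\). Arcs of this shape lie in \(\mathcal{G}\) by definition, which leaves two remaining types: arcs \(\{x,y\}\) with \(x,y\in\mathcal{Z}\), and arcs \(\{A_i,A_k\}\) with \(i\neq k\).

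For an arc \(\{x,y\}\in\mathcal{C}_p\), I would take the analogue of Example~\ref{E:TrianglesInCompletion} in \(\mathfrak{S}_{\mathcal{M}}(\mathcal{C}_p)\). This analogue is asserted in the text just before the lemma, and it follows from the fact that the Hom-description and the action of \(\Sigma\) agree with Theorem~\ref{T:PaquetteYildirimTheorem}.
\begin{itemize}
\item Choose an accumulation point \(A_j\) and write \(\{x,y\}\) as \(\{(x'-1)_i,y\}\) with \(y<A_j<x'_i\) in the cyclic order. This requires that \(A_j\) separates the two endpoints on the appropriate side, which can be arranged by choosing \(A_j\) on the arc of \(S^1\) between them.
\item With \(X:=\{x'_i,A_j\}\) and \(Y:=\{y,A_j\}\), the nonzero morphism \(f\colon X\to Y\) of type (iii) fits into a triangle \(X\to Y\to\{(x'-1)_i,y\}\to\Sigma X\).
\item Since \(X,Y\in\mathcal{G}\), the cone \(\{x,y\}\) also lies in \(\mathcal{G}\).
\end{itemize}
When \(p=1\), the unique accumulation point still works: for any two points of \(\mathcal{Z}\), the point \(A_1\) lies on one of the two boundary arcs between them, and relabelling the endpoints puts us in the required configuration.

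For \(\{A_i,A_k\}\) with \(i\neq k\), I would copy the second half of the proof of Lemma~\ref{L:PaquetteYildirimGeneration} word for word.
\begin{itemize}
\item Pick \(x\in\mathcal{Z}\) with \(x<A_i<A_k\), and set \(X:=\{x,A_i\}\) and \(Y:=\{x,A_k\}\).
\item Using \(\Sigma X=\{x-1,A_i\}\), the arcs \(\Sigma X\) and \(Y\) cross.
\item By the crossing rule this gives a triangle \(X\to Y\to\{x-1,x\}\oplus\{A_i,A_k\}\to\Sigma X\). The first summand is an invalid arc, hence zero, so \(\{A_i,A_k\}\in\mathcal{G}\).
\end{itemize}

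The main obstacle is rigour about the triangulated structure. The theorem preceding the lemma only records objects, Homs and \(\Sigma\) in \(\mathfrak{S}_{\mathcal{M}}(\mathcal{C}_p)\). Two facts are needed beyond this:
\begin{itemize}
\item that cones of the displayed maps are the predicted arcs;
\item that no indecomposable of \(\mathfrak{S}_{\mathcal{M}}(\mathcal{C}_p)\) falls outside the three listed types.
\end{itemize}
For the first, I would rely on the statement that the triangle rules match those of \(\overline{\mathcal{C}_p}\), i.e.\ on \cite[Theorem~6.22]{CummingsGratz24}. For the second, the listed indecomposables suffice by the Krull-Schmidt property.

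If one wants to avoid citing the triangle rules, each cone can be computed directly. Realise \(X\) and \(Y\) as module colimits of double fans, and lift \(f\) to a morphism of these Cauchy sequences. Take levelwise triangles in \(\mathcal{C}_p\) via the crossing rule, and pass to the module colimit, which is distinguished by the definition of the triangulated structure on the completion. The levelwise cones form a double fan converging to the expected arc.
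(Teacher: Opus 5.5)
Your proposal is correct and follows the paper's route. The paper gives no separate proof; it only notes that the triangle rules in \(\mathfrak{S}_{\mathcal{M}}(\mathcal{C}_p)\) (from the Cummings--Gratz combinatorial model) coincide with those in \(\overline{\mathcal{C}_p}\), so the proof of Lemma~\ref{L:PaquetteYildirimGeneration} carries over: finite arcs arise as cones via the analogue of Example~\ref{E:TrianglesInCompletion}, and \(\{A_i,A_k\}\) arises as the cone of \(\{x,A_i\}\to\{x,A_k\}\), which is exactly what you spell out, with useful extra care in choosing the accumulation point and labelling the endpoints. One small correction: the fact that every indecomposable is one of the three listed arc types comes from the cited classification theorem, not from the Krull--Schmidt property.
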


All the above tells us (as originally proved in \cite[Corollary~6.24, Remark~6.25]{CummingsGratz24}), that there exists an~additive equivalence \(\mathfrak{S}_{\mathcal{M}}(\mathcal{C}_p)\xrightarrow{\simeq}\overline{\mathcal{C}_p}\) sending every arc \(\{x,y\}\in\mathfrak{S}_{\mathcal{M}}(\mathcal{C}_p)\) to ``the very same arc'' \(\{x,y\}\in\overline{\mathcal{C}_p}\), commuting with \(\Sigma\) and preserving triangles with two indecomposable terms. However, it is \textit{a priori} not apparent that this additive equivalence gives rise to a triangulated functor.

The rest of this section is dedicated to proving that it does.

\subsection{Construction of the equivalence}
\label{SS:Construction}

\begin{notation}
We shall still use the notation from Subsection~\ref{SS:DiscreteClusterCompletions}, namely the subcategories \(\mathcal{D},\mathcal{D}'\subseteq\mathcal{C}_{2p}\) and the coaisle metric \(\mathcal{M}=\{B_n\}_{n\in\N}\).
Let \(\nu:\mathcal{C}_p\xhookrightarrow{}\mathcal{C}_{2p}\) be the fully faithful triangulated functor exhibiting \(\mathcal{C}_p\) as \(\mathcal{D}'\). We shall perceive \(\nu\) as the inclusion.
The family \(\mathcal{N}:=\big\{\nu(B_n)\big\}_{n\in\N}\) of subcategories of \(\mathcal{D}'\subseteq\mathcal{C}_{2p}\) is a good metric on \(\mathcal{C}_{2p}\) by \cite[Lemma~4.14]{Matousek26C}.
Moreover, the subcategory \(\nu(B_1)\) is a coaisle of a $t$-structure on \(\mathcal{C}_{2p}\) by \cite[Lemma~5.5.1]{Franchini25}.   
\end{notation}

We want to construct a triangulated equivalence \(\mathfrak{S}_{\mathcal{M}}(\mathcal{C}_p)\xrightarrow{\simeq}\overline{\mathcal{C}_p}\). The category \(\overline{\mathcal{C}_p}\) is a Verdier quotient, so it is also a metric completion \(\mathfrak{S}_{\mathcal{D}}(\mathcal{C}_{2p})\) by Corollary~\ref{C:CountablyGeneratedCompletionVerdierQuotient}. So all we need to do is to compare two metric completions.
Our strategy is to construct an auxiliary triangulated category \(\mathcal{T}\) into which both \(\mathfrak{S}_{\mathcal{M}}(\mathcal{C}_p)\) and \(\overline{\mathcal{C}_p}\) embed via continuous functors.
Then we show that there exists a TTF-triple \(\big(\mathfrak{S}_{\mathcal{M}}(\mathcal{C}_p),\mathcal{G},\overline{\mathcal{C}_p}\big)\) in~\(\mathcal{T}\). The natural equivalence between the outer terms of the TTF-triple will then be our desired triangulated functor.

\begin{lemma}
\label{L:DimensionCounting}
Let us have Cauchy sequences \(\mathbf{E}=(E_n,e_n)_{n\in\N}\) and \(\mathbf{F}=(F_n,f_n)_{n\in\N}\) with respect to a~good metric on \(\mathcal{C}_p\). Set \(E:=\moco\mathbf{E}\) and \(F:=\moco\mathbf{F}\).
Suppose that \(F\) is compactly supported at \(s\in\N\) and \(\mathbf{E}\) stabilises at \(s+1\) from \(N\in\N\) onwards.
Let \(k\) be the number of indecomposable direct summands of \(E_N\). 
If there exists \(l\in\N\) such that for all \(n\in\N\) the object \(F_n\) has at most $l$ indecomposable direct summands, then \(\dim_K\Hom_{\Mod\dashmodule\mathcal{C}_p}(E,F)\leq k\cdot l\).
\end{lemma}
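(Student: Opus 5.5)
The first step is to reduce, via Lemma~\ref{L:BasicFactorisationProperty}, to a computation of Hom-spaces between representables. The module colimit $F$ is a filtered colimit of representables, and representables are cohomological. Filtered colimits of exact sequences of abelian groups are exact, so $F$ is cohomological. It is compactly supported at $s$ and $\mathbf{E}$ stabilises at $s+1$ from $N$ onwards. Lemma~\ref{L:BasicFactorisationProperty} therefore gives an isomorphism
\[
\Hom_{\Mod\dashmodule\mathcal{C}_p}(E,F)\simeq\Hom_{\Mod\dashmodule\mathcal{C}_p}\big(\Yoneda(E_N),F\big)\simeq F(E_N)
,\]
where the second isomorphism is the Yoneda lemma. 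The statement thus reduces to showing $\dim_K F(E_N)\leq k\cdot l$. All isomorphisms here are $K$-linear, since everything is $K$-linear.

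Next I decompose $E_N\simeq X_1\oplus\cdots\oplus X_k$ into indecomposables, using that $\mathcal{C}_p$ is Krull--Schmidt. Additivity gives $F(E_N)\simeq\bigoplus_{i=1}^k F(X_i)$, so it suffices to prove $\dim_K F(X)\leq l$ for every indecomposable (arc) $X$. Now
\[
F(X)=\varinjlim_{n\in\N}\Hom_{\mathcal{C}_p}(X,F_n)
.\]
Write $F_n\simeq\bigoplus_{j=1}^{l_n}Y_{n,j}$ with $l_n\leq l$ and each $Y_{n,j}$ an arc. From the combinatorial description of Hom-spaces in Subsection~\ref{SS:DiscreteClusterCategories}, $\dim_K\Hom_{\mathcal{C}_p}(X,Y_{n,j})\leq1$. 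Hence $\dim_K\Hom_{\mathcal{C}_p}(X,F_n)\leq l$ for every $n$.

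Finally, a sequential colimit of $K$-vector spaces that all have dimension at most $l$ itself has dimension at most $l$. To see this, suppose $v_1,\dots,v_{l+1}$ are elements of the colimit. Each $v_i$ comes from some stage, so all of them come from a common stage $n$. Since $\dim_K\Hom_{\mathcal{C}_p}(X,F_n)\leq l$, their lifts are linearly dependent there. The colimit map is linear, so the images $v_1,\dots,v_{l+1}$ are dependent as well. Hence $\dim_K F(X)\leq l$, and therefore $\dim_K\Hom(E,F)\leq k\cdot l$.

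The only step needing care is the applicability of Lemma~\ref{L:BasicFactorisationProperty}, which requires $F$ to be cohomological; the argument above settles this. The rest is routine dimension counting.
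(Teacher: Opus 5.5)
Your proof is correct and follows the paper's argument. Both reduce via Lemma~\ref{L:BasicFactorisationProperty} to $\varinjlim_{n}\Hom_{\mathcal{C}_p}(E_N,F_n)$ and bound each term using that Hom-spaces between arcs are at most one-dimensional; you additionally verify two points the paper leaves implicit, namely that $F$ is cohomological and that a sequential colimit of spaces of dimension at most $l$ has dimension at most $l$.
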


\begin{proof}
By Lemma~\ref{L:BasicFactorisationProperty}, we have
\[
\Hom_{\Mod\dashmodule\mathcal{C}_p}(E,F)\simeq\Hom_{\Mod\dashmodule\mathcal{C}_p}\big(\Yoneda(E_N),F\big)\simeq\varinjlim_{n\in\N}\Hom_{\mathcal{C}_p}(E_N,F_n)
.\]
Assume for all \(n\in\N\) that the number of direct summands of \(F_n\) is bounded by $l$.
Since the Hom-space between two indecomposable objects can only be zero or one-dimensional depending on whether the~corresponding arcs cross or not,  for all \(n\in\N\) it holds that that \(\dim_K\Hom_{\mathcal{C}_p}(E_N,F_n)\leq k\cdot l\). Hence the claim follows.
\end{proof}

Below, we need the notion of a join of two metrics from Subsection~\ref{SS:Compressions} and Definition~\ref{D:Continuous}.

\begin{lemma}
There is a continuous compression \(\big(\nu,\mathcal{M},\mathcal{N}\vee\mathcal{D}\big)\). 
\end{lemma}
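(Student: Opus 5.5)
The plan is to verify the two defining conditions directly: first that \(\big(\nu,\mathcal{M},\mathcal{N}\vee\mathcal{D}\big)\) is a compression in the sense of Definition~\ref{D:Compression}, and then that \(\nu\) sends compactly supported Cauchy sequences to compactly supported ones, as Definition~\ref{D:Continuous} requires. Write \(\mathcal{N}\vee\mathcal{D}=\{D_n\}_{n\in\N}\). By Subsection~\ref{SS:Compressions}, \(D_n\) is the closure of \(\nu(B_n)\cup\mathcal{D}\) under extensions.

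\emph{Compression.} By construction \(\nu(B_n)\subseteq D_n\) for every \(n\in\N\), so for each \(n\) we may take \(m:=n\). Hence \(\big(\nu,\mathcal{M},\mathcal{N}\vee\mathcal{D}\big)\) is a compression. By Lemma~\ref{L:CompressionsPreserveCauchySequences}, \(\nu\) then sends \(\mathcal{M}\)-Cauchy sequences to \((\mathcal{N}\vee\mathcal{D})\)-Cauchy sequences.

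\emph{Continuity.} Let \(\mathbf{E}=(E_n,e_n)_{n\in\N}\) be an \(\mathcal{M}\)-Cauchy sequence in \(\mathcal{C}_p\) with \(\moco\mathbf{E}\) compactly supported at \(s\), so that \(\moco\mathbf{E}(B_s)=0\). We show that \(G:=\moco\nu(\mathbf{E})\in\Mod\dashmodule\mathcal{C}_{2p}\) vanishes on \(D_s\). The argument has three steps.
\begin{enumerate}[label=(\roman*)]
\item \(G\) is cohomological. It is a filtered colimit of representable functors, and filtered colimits in \(\Ab\) are exact. Consequently, if \(X\rightarrow Y\rightarrow Z\rightarrow\Sigma X\) is a triangle with \(G(X)=0=G(Z)\), then \(G(Y)=0\). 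So the subcategory on which \(G\) vanishes is closed under extensions, and it suffices to show that \(G\) vanishes on \(\nu(B_s)\) and on \(\mathcal{D}\).
\item \(G\) vanishes on \(\mathcal{D}\). Each \(\nu(E_n)\) lies in \(\mathcal{D}'=\mathcal{D}^{\perp}\). Hence \(\Hom_{\mathcal{C}_{2p}}\big(D,\nu(E_n)\big)=0\) for all \(D\in\mathcal{D}\) and all \(n\), and therefore the colimit \(G(D)\) is zero.
\item \(G\) vanishes on \(\nu(B_s)\). For \(B\in B_s\), full faithfulness of \(\nu\) gives
\[
G\big(\nu(B)\big)\simeq\varinjlim_{n\in\N}\Hom_{\mathcal{C}_{2p}}\big(\nu(B),\nu(E_n)\big)\simeq\varinjlim_{n\in\N}\Hom_{\mathcal{C}_p}(B,E_n)\simeq\moco\mathbf{E}(B)=0.
\]
\end{enumerate}
Together these give \(G(D_s)=0\). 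Thus \(\nu(\mathbf{E})\) is a compactly supported Cauchy sequence with respect to \(\mathcal{N}\vee\mathcal{D}\), and the compression is continuous.

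There is no serious obstacle here. The only point requiring care is step (i): the extension closure defining the join must be controlled by the fact that module colimits of sequences of representables are cohomological. That fact follows from exactness of filtered colimits in \(\Ab\).
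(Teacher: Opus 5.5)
Your proof is correct and follows the paper's argument. The compression property is immediate, and continuity comes from \(\nu(\mathbf{E})\) vanishing on \(\nu(B_s)\) (by full faithfulness) and on \(\mathcal{D}\) (since its entries lie in \(\mathcal{D}^{\perp}\)). The one difference is that the paper cites \cite[Lemma~5.3]{Matousek26B} for \(\mathfrak{C}_{\mathcal{N}\vee\mathcal{D}}(\mathcal{C}_{2p})=\mathfrak{C}_{\mathcal{N}}(\mathcal{C}_{2p})\cap\mathfrak{C}_{\mathcal{D}}(\mathcal{C}_{2p})\), whereas you prove the inclusion you need directly: the module colimit is cohomological, so its vanishing locus is closed under extensions.
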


\begin{proof}
It is obvious that \(\big(\nu,\mathcal{M},\mathcal{N}\vee\mathcal{D}\big)\) is a compression. Let \(\mathbf{E}=(E_n,e_n)_{n\in\N}\) be a compactly supported Cauchy sequence in \(\mathcal{C}_p\). Then \(\nu(\mathbf{E})\) is clearly compactly supported with respect to \(\mathcal{N}\). Given that \(\nu(E_n)\in\mathcal{D}^{\perp}\) for all \(n\in\N\), the sequence \(\mathbf{E}\) is also compactly supported at \(\mathcal{D}\).
The claim now follows from \(\mathfrak{C}_{\mathcal{N}\vee\mathcal{D}}(\mathcal{C}_{2p})=\mathfrak{C}_{\mathcal{N}}(\mathcal{C}_{2p})\cap\mathfrak{C}_{\mathcal{D}}(\mathcal{C}_{2p})\) (see \cite[Lemma~5.3]{Matousek26B}).
\end{proof}

\begin{lemma}
There is a continuous compression \(\big(\Id_{\mathcal{C}_{2p}},\mathcal{D},\mathcal{N}\vee\mathcal{D}\big)\). 
\end{lemma}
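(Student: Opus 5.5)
We need to show that \(\big(\Id_{\mathcal{C}_{2p}},\mathcal{D},\mathcal{N}\vee\mathcal{D}\big)\) is a compression and that it is continuous.

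\emph{Compression.} This part is immediate. The join \(\mathcal{N}\vee\mathcal{D}\) is the family \(\{D_n\}_{n\in\N}\), where \(D_n\) is the extension closure of \(\nu(B_n)\cup\mathcal{D}\). In particular \(\mathcal{D}\subseteq D_n\) for every \(n\in\N\), so for each \(n\) we may take any \(m\) and get \(\Id(\mathcal{D})\subseteq D_n\). Hence \(\mathcal{D}\leq\mathcal{N}\vee\mathcal{D}\).

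\emph{Continuity.} Let \(\mathbf{E}\) be a Cauchy sequence in \(\mathcal{C}_{2p}\), Cauchy with respect to \(\mathcal{D}\), such that \(\moco\mathbf{E}\) vanishes on \(\mathcal{D}\). By \cite[Lemma~5.3]{Matousek26B},
\[
\mathfrak{C}_{\mathcal{N}\vee\mathcal{D}}(\mathcal{C}_{2p})=\mathfrak{C}_{\mathcal{N}}(\mathcal{C}_{2p})\cap\mathfrak{C}_{\mathcal{D}}(\mathcal{C}_{2p}),
\]
and compact support at \(\mathcal{D}\) is given. So it remains to find \(n\) with \(\moco\mathbf{E}\big(\nu(B_n)\big)=0\).

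Since \(\mathbf{E}\) stabilises at \(\mathcal{D}\) from some \(N\) onwards, for \(m\geq N\) each map \(E_N\to E_m\) has cone in \(\mathcal{D}\). For \(X\in\nu(B_n)\subseteq\mathcal{D}'={}^{\perp}\mathcal{D}=\mathcal{D}^{\perp}\) we have \(\Hom(X,\mathcal{D})=0\), and also \(\Hom(X,\Sigma^{-1}\mathcal{D})=0\) because \(\mathcal{D}\) is thick. Applying \(\Hom(X,-)\) to the triangle of \(E_N\to E_m\) therefore shows that \(\Hom(X,E_N)\to\Hom(X,E_m)\) is an isomorphism. Consequently
\[
\moco\mathbf{E}(X)\simeq\Hom(X,E_N)\qquad\text{for all }X\in\mathcal{D}',
\]
so the problem reduces to a single object: we must find \(n\) with \(\Hom\big(\nu(B_n),E_N\big)=0\).

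To get this vanishing, use the TTF-triple \((\mathcal{D}',\mathcal{D},\mathcal{D}')\), which exists because \(\mathcal{D}'=\mathcal{D}^{\perp}={}^{\perp}\mathcal{D}\) and \(\mathcal{D}\) is bireflective in \(\mathcal{C}_{2p}\). Take the triangle \(D\to E_N\to E'\to\Sigma D\) with \(D\in\mathcal{D}\) and \(E'\in\mathcal{D}^{\perp}=\mathcal{D}'\). Since \(\Hom(\mathcal{D}',\mathcal{D})=0\) and \(\Hom(\mathcal{D}',\Sigma\mathcal{D})=0\), we get \(\Hom(X,E_N)\simeq\Hom(X,E')\) for \(X\in\mathcal{D}'\). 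Now \(E'\) is a single object of \(\mathcal{D}'\simeq\mathcal{C}_p\), which is a finite sum of arcs. Because the \(t\)-structure \(\big(\mathcal{C}_p^{\leq0},\mathcal{C}_p^{>0}\big)\) is bounded above, \(E'\in\mathcal{C}_p^{\leq k}\) for some \(k\), and hence \(\Hom\big(\mathcal{C}_p^{>k},E'\big)=0\). Choosing \(n\geq k\) gives \(\Hom\big(\nu(B_n),E_N\big)=0\).

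The main obstacle is the bireflectivity, i.e.\ the existence of the approximation triangle \(D\to E_N\to E'\). If this triangle is not directly available, the fallback is to note that both \(E_N\) and \(\nu(B_n)\) decompose into arcs of \(\mathcal{C}_{2p}\), with arcs of \(\nu(B_n)\) having both endpoints in \(\Z_i\) and \(n\) large. One then checks combinatorially that, for large \(n\), no such arc \(X\) has \(\Sigma X\) crossing any of the finitely many summands of \(E_N\). This works because such arcs are short and escape towards the accumulation points \(A_{i+1}\).
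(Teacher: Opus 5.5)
\textbf{Your primary route fails.} The problem is the claim that $\mathcal{D}$ is bireflective in $\mathcal{C}_{2p}$. It is not even coreflective. The equality $\mathcal{D}^{\perp}={}^{\perp}\mathcal{D}$ follows automatically from the $2$-Calabi--Yau property and gives no approximation triangles.

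A triangle $D\to E_N\to E'\to\Sigma D$ with $D\in\mathcal{D}$ and $E'\in\mathcal{D}^{\perp}$ would make $D\to E_N$ a $\mathcal{D}$-precover. Now take $E_N=\{x_{1'},y_1\}$, an arc with exactly one endpoint in $\Z_{1'}$. Such entries do occur in $\mathcal{D}$-Cauchy, $\mathcal{D}$-compactly supported sequences, for example the sequence $\{-1_i,n_{(j+1)'}\}$ in the proof of Theorem~\ref{T:TheEquivalenceIsAnEquivalence}. Given any finite sum $D_0$ of arcs from $\mathcal{D}$, choose $D=\{a_{1'},b_{1'}\}$ with $a\ll x\ll b$ and $a,b$ beyond all endpoints of $D_0$. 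Then $\Sigma D$ crosses $\{x_{1'},y_1\}$ but no summand of $D_0$. Hence the non-zero map $D\to E_N$ cannot factor through $D_0$.

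This is also visible globally. If $\mathcal{D}$ were coreflective, the fully faithful functor $\mathcal{C}_p\simeq\mathcal{D}^{\perp}\to\overline{\mathcal{C}_p}$ would be essentially surjective. It is not, since it misses the arcs $\{x_i,A_j\}$.

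Separately, the next step has the orthogonality backwards. $E'\in\mathcal{C}_p^{\leq k}$ gives $\Hom(E',\mathcal{C}_p^{>k})=0$, not $\Hom(\mathcal{C}_p^{>k},E')=0$. Reversing it requires the $2$-Calabi--Yau property and costs two indices.

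\textbf{Your fallback is correct and proves the lemma.} An indecomposable $X\in\nu(B_n)$ has both endpoints in the tail $\{z_i:z\geq n-1\}$ of a single $\Z_i$. So $\Sigma X$ can only cross an arc with an endpoint in $\{z_i:z>n-2\}$. Once $n$ exceeds every $\Z_i$-index among the endpoints of the finitely many summands of $E_N$, this gives $\Hom(\nu(B_n),E_N)=0$. What matters is not that these arcs are short, but that they lie in a tail of $\Z_i$ avoiding all endpoints of $E_N$. Combined with your correct reduction $\moco\mathbf{E}(X)\simeq\Hom(X,E_N)$ for $X\in\mathcal{D}'$ and \cite[Lemma~5.3]{Matousek26B}, this finishes the proof.

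\textbf{Comparison with the paper.} The paper obtains the same vanishing structurally:
\begin{enumerate}
\item[(1)] Since $\nu(B_1)$ is the coaisle of a bounded above $t$-structure on $\mathcal{C}_{2p}$, one has $E_N\in{}^{\perp}\nu(B_m)$ for some $m$.
\item[(2)] The $2$-Calabi--Yau property converts this into $E_N\in\nu(B_{m+2})^{\perp}$.
\item[(3)] The exact sequence $\Hom(X,E_N)\to\Hom(X,E_n)\to\Hom(X,D)$, with $D\in\mathcal{D}\subseteq\nu(B_{m+2})^{\perp}$, then kills $\Hom(X,E_n)$.
\end{enumerate}
Your fallback replaces the $t$-structure and Serre duality input with a direct endpoint count in the arc model. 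It is more elementary and self-contained but tied to the combinatorics. The paper's argument would transfer to other $2$-Calabi--Yau settings where the metric comes from a bounded above coaisle contained in ${}^{\perp}\mathcal{D}$.
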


\begin{proof}
It is obvious that \(\big(\Id_{\mathcal{C}_{2p}},\mathcal{D},\mathcal{N}\vee\mathcal{D}\big)\) is a compression. Let \(\mathbf{E}=(E_n,e_n)_{n\in\N}\) be a~compactly supported Cauchy sequence in \(\mathcal{C}_{2p}\) with respect to \(\mathcal{D}\). 
Suppose \(\mathbf{E}\) stabilises at \(\mathcal{D}\) from \(N\in\N\) onwards. 
The subcategory \(\nu(B_1)\subseteq\mathcal{C}_{2p}\) is a coaisle of a bounded above $t$-structure on \(\mathcal{C}_{2p}\), so we may find \(m\in\N\) such that \(E_N\in{}^{\perp}\Sigma^{-m}\nu(B_1)={}^{\perp}\nu(B_m)\). By the 2-Calabi-Yau property of \(\mathcal{C}_{2p}\), we get
\[
E_N\in\Sigma^{-2}\nu(B_m)^\perp=\nu(B_{m+2})^\perp
.\]
Let \(n\geq N\) and consider the distinguished triangle 
\[
E_N\rightarrow E_n\rightarrow D\rightarrow\Sigma E_N
\]
with \(D\in\mathcal{D}\subseteq\nu(B_{m+2})^\perp\). For all \(X\in B_{m+2}\) the exactness of 
\[
\Hom_{\mathcal{C}_{2p}}(X,E_N)\rightarrow \Hom_{\mathcal{C}_{2p}}(X,E_n)\rightarrow \Hom_{\mathcal{C}_{2p}}(X,D)
\]
yields \(\Hom_{\mathcal{C}_{2p}}(X,E_n)\simeq0\). In other words, for all \(n\geq N\) the object \(E_n\) is compactly supported at \(m+2\), and so is \(\moco\mathbf{E}\).
\end{proof}

Now we are in a situation where we have two fully faithful continuous functors into \(\mathcal{C}_{2p}\). By Proposition~\ref{P:FullContinuousFullyFaithful}, the completions \(\mathfrak{S}_{\mathcal{M}}(\mathcal{C}_p)\) and \(\mathfrak{S}_{\mathcal{D}}(\mathcal{C}_{2p})\) embed into \(\mathfrak{S}_{\mathcal{N}\vee\mathcal{D}}(\mathcal{C}_{2p})\). We know by Corollary~\ref{C:CountablyGeneratedCompletionVerdierQuotient}, that \(\mathfrak{S}_{\mathcal{D}}\big(\mathcal{C}_{2p}\big)\simeq\overline{\mathcal{C}_p}\).

To simplify the notation, we will from now on refer to  \(\mathfrak{S}_{\mathcal{N}\vee\mathcal{D}}(\mathcal{C}_{2p})\) as \(\mathcal{T}\). We also refer to \(\mathfrak{S}_{\mathcal{M}}(\mathcal{C}_p)\) as \(\mathcal{S}\) and identify it with the corresponding thick subcategory of  \(\mathcal{T}\). Lastly, we shall refer to \(\mathfrak{S}_{\mathcal{D}}(\mathcal{C}_{2p})\) as \(\overline{\mathcal{C}_p}\) and again identify it with the subcategory of \(\mathcal{T}\). On the side-note, it even holds \(\mathcal{C}_p\subseteq\mathcal{S}\cap\overline{\mathcal{C}_p}\subseteq\mathcal{T}\).

\subsubsection{Building the TTF triple}
We need one more thick subcategory \(\mathcal{G}\subseteq\mathcal{T}\) to obtain the triple \(\big(\mathcal{S},\mathcal{G},\overline{\mathcal{C}_p}\big)\).

For each \(1\leq i\leq p\) we define an object \(G^i\in\mathcal{T}\) in the following way. Under the convention \(\Z_{p+1}:=\Z_{1'}\), set \(G^i_n:=\left\{n_i,n_{(i+1)'}\right\}\in\mathcal{C}_{2p}\) for all \(n\in\N\) and notice that \(G^i_{n+1}=\Sigma^{-1} G^i_n\). 
We assemble these objects into a sequence \(\mathbf{G}^i:=\big(G^i_n,g^i_n\big)_{n\in\N}\) where the morphisms give rise to distinguished triangles 
\[
G^i_n\xrightarrow{g^i_n} G^i_{n+1} \rightarrow \{(n-1)_i,(n+1)_i\}\oplus \left\{(n-1)_{(i+1)'},(n+1)_{(i+1)'}\right\}\rightarrow \Sigma G^i_n
\]
with \(\cone g^i_n\in \nu(B_{n})\coprod\mathcal{D}\). Hence \(\mathbf{G}^i\) is Cauchy.

Put \(G^i:=\moco \mathbf{G}^i\in\mathfrak{L}_{\mathcal{N}\vee\mathcal{D}}\big(\mathcal{C}_{2p}\big)\).
We need to verify that \(G^i\) is compactly supported.
Let \(X\in \nu(B_1)\) be an indecomposable arc. If the endpoints of $X$ do not lie in \(\Z_i\), then clearly \(G^i(X)\simeq0\).
If the endpoints \(x,y\) of \(X\) lie in \(\Z_i\), we find \(N\in\N\) such that \(x<y<N_i\) or \(y<x<N_i\). Then \(\Hom_{\mathcal{C}_{2p}}(X,G_n)\simeq0\) for all \(n\geq N\) and again \(G^i(X)\simeq0\).
Hence \(G^i\) is compactly supported at \(\nu(B_1)\).
An~analogous argument shows that \(G^i\) is compactly supported at \(\mathcal{D}\). Hence \(G^i\in\mathfrak{C}_{\mathcal{N}\vee\mathcal{D}}(\mathcal{C}_{2p})\), so \(G^i\in\mathcal{T}\).

We observe that \(\Sigma G^i\simeq\moco \Sigma\mathbf{G}^i\simeq\moco\mathbf{G}^i\simeq G^i\).
Finally, we define a thick subcategory \(\mathcal{G}:=\langle G^1,\ldots,G^p\rangle\subseteq \mathcal{T}\).

\begin{lemma}
\label{L:DzisOrthogonal}
Let \(i,j\in\{1,\ldots,p\}, i\neq j\). Then \(\Hom_{\mathcal{T}}\left(G^i,G^j\right)\simeq0\).
\end{lemma}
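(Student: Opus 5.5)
Since $G^j$ is compactly supported and $\mathbf{G}^i$ is Cauchy, I will compute $\Hom_{\mathcal{T}}(G^i,G^j)$ through Lemma~\ref{L:BasicFactorisationProperty} and reduce it to a colimit of Hom-spaces between arcs in $\mathcal{C}_{2p}$. Note that $\mathcal{T}$ is a full subcategory of $\Mod\dashmodule\mathcal{C}_{2p}$, that $G^j$ is cohomological (being a filtered colimit of representables), and that $G^j$ is compactly supported at some $s$ with respect to $\mathcal{N}\vee\mathcal{D}$ (we showed it vanishes on $\nu(B_1)$ and on $\mathcal{D}$, hence on their extension closure). Choose $N$ such that $\mathbf{G}^i$ stabilises at $s+1$ from $N$ onwards. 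Lemma~\ref{L:BasicFactorisationProperty} then gives
\[
\Hom_{\mathcal{T}}\big(G^i,G^j\big)\simeq\Hom_{\Mod\dashmodule\mathcal{C}_{2p}}\big(\Yoneda(G^i_N),G^j\big)\simeq G^j\big(G^i_N\big)\simeq\varinjlim_{n\in\N}\Hom_{\mathcal{C}_{2p}}\big(G^i_N,G^j_n\big).
\]
It therefore suffices to show that $\Hom_{\mathcal{C}_{2p}}(G^i_N,G^j_n)=0$ for all sufficiently large $n$.

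This becomes a crossing check in the combinatorial model of $\mathcal{C}_{2p}$, where a nonzero morphism exists exactly when $\Sigma G^i_N=\{(N-1)_i,(N-1)_{(i+1)'}\}$ crosses $G^j_n=\{n_j,n_{(j+1)'}\}$. The arc $\Sigma G^i_N$ has one endpoint in $\Z_i$ and one in $\Z_{(i+1)'}$, so the cyclic order splits into two arcs of the circle. One side is the interval from $(N-1)_i$ to $(N-1)_{(i+1)'}$, which passes through $A_{(i+1)'}$ and contains only points of $\Z_i$ larger than $(N-1)_i$ and points of $\Z_{(i+1)'}$ smaller than $(N-1)_{(i+1)'}$. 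The other side contains all remaining points, in particular every $\Z_k$ and $\Z_{k'}$ for the indices not already listed.

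Now I split into cases for $j\neq i$. If $j\notin\{i-1,i\}$ (indices taken cyclically), then both endpoints of $G^j_n$ lie in $\Z_j\cup\Z_{(j+1)'}$, which sits entirely on the far side. Hence there is no crossing for any $n$.

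The main obstacle is the adjacency case $j=i-1$ (for $p\ge2$, and with the convention $\Z_{p+1}=\Z_{1'}$). Here $G^j_n=\{n_{i-1},n_{i'}\}$ and the relevant points $n_{i'}$, $n_{i-1}$ lie near $A_{i'}$ and $A_i$. These points are all positioned before $(N-1)_i$ in the cyclic order, strictly between $A_{i-1}$ and $A_i$. So again both endpoints lie on the same side of $\Sigma G^i_N$ and there is no crossing. I need to check this carefully against the ordering $A_{1'}<A_1<A_{2'}<\cdots$ and the positions of $\Z_{i'}$ and $\Z_i$. If some finitely many small $n$ do cross, this does not matter, because only large $n$ enter the colimit. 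The case $j=i+1$ is handled symmetrically, since $G^{i+1}_n$ has endpoints in $\Z_{i+1}$ and $\Z_{(i+2)'}$, both on the far side.

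Since all terms of the colimit vanish for large $n$, we conclude $\Hom_{\mathcal{T}}(G^i,G^j)\simeq0$.
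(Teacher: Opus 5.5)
Your proposal is correct and follows essentially the paper's route: reduce $\Hom_{\mathcal{T}}(G^i,G^j)$ to Hom-spaces between the arcs $G^i_n$ and $G^j_m$ in $\mathcal{C}_{2p}$, then observe that $\Sigma G^i_n$ never crosses $G^j_m$. The paper does the reduction directly as $\varprojlim_n\varinjlim_m\Hom_{\mathcal{C}_{2p}}(G^i_n,G^j_m)$, which needs no compact-support input. Your case split (including the ``adjacency case'' you hedge on) is unnecessary: for every $j\neq i$ the index sets $\{j,(j+1)'\}$ and $\{i,(i+1)'\}$ are disjoint, so the no-crossing argument holds uniformly for all $n,m$.
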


\begin{proof}
We have
\[
\Hom_{\mathcal{T}}\left(G^i,G^j\right)\simeq\Hom_{\mathcal{T}}\left(\varinjlim_{n\in\N}\Yoneda\big(G^i_n\big),\varinjlim_{m\in\N}\Yoneda\big(G^j_m\big)\right)\simeq\varprojlim_{n\in\N}\varinjlim_{m\in\N}\Hom_{\mathcal{C}_{2p}}\left(G^i_n,G^j_m\right)
.\]
But for no combination of \(n,m\in\N\) the arcs \(G^i_n\) and \(G^j_n\) cross. Hence \(\Hom_{\mathcal{T}}\left(G^i,G^j\right)\simeq0\).
\end{proof}

\begin{lemma}
\label{L:DzisIndecomposable}
Let \(1\leq i\leq p\).
Then \(\End_{\mathcal{T}}\left(G^i\right)\simeq K\).
\end{lemma}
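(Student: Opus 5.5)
We compute \(\End_{\mathcal{T}}(G^i)\) as in Lemma~\ref{L:DzisOrthogonal}, reducing it to a colimit of Hom-spaces in \(\mathcal{C}_{2p}\) via Lemma~\ref{L:BasicFactorisationProperty}. The sequence \(\mathbf{G}^i\) is Cauchy with respect to \(\mathcal{N}\vee\mathcal{D}\), and \(G^i\) was shown to be compactly supported at \(\nu(B_1)\) and at \(\mathcal{D}\), hence at some level \(s\) of \(\mathcal{N}\vee\mathcal{D}\). Choose \(N\) such that \(\mathbf{G}^i\) stabilises at \(s+1\) from \(N\) onwards. Then
\[
\End_{\mathcal{T}}\big(G^i\big)\simeq\Hom_{\Mod\dashmodule\mathcal{C}_{2p}}\big(\Yoneda(G^i_N),G^i\big)\simeq\varinjlim_{n\geq N}\Hom_{\mathcal{C}_{2p}}\big(G^i_N,G^i_n\big).
\]
Lemma~\ref{L:DimensionCounting} is stated for \(\mathcal{C}_p\), but its proof works verbatim in \(\mathcal{C}_{2p}\). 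With \(k=l=1\), since every \(G^i_n\) is a single arc, it gives \(\dim_K\End_{\mathcal{T}}(G^i)\leq 1\).

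For the lower bound, note that \(G^i\neq0\), so \(\id_{G^i}\neq0\) and the dimension is exactly \(1\). This already gives \(\End_{\mathcal{T}}(G^i)\simeq K\) as \(K\)-algebras, because a one-dimensional unital \(K\)-algebra is \(K\). To see \(G^i\neq0\), it suffices to show that the element \(\id_{G^i_N}\in\Hom(G^i_N,G^i_N)\) survives in the colimit. That is, we need \(g^i_{N,n}\neq0\) for all \(n\geq N\).

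This nonvanishing is the main obstacle, and it is a combinatorial check in the arc model. Since \(G^i_n=\Sigma^{-(n-N)}G^i_N\), we have \(\Hom(G^i_N,G^i_n)\simeq K\) exactly when \(\Sigma G^i_N=\{(N-1)_i,(N-1)_{(i+1)'}\}\) crosses \(\{n_i,n_{(i+1)'}\}\). This holds because \(n>N-1\) in both \(\Z_i\) and \(\Z_{(i+1)'}\), which lie on opposite sides of the accumulation point \(A_{i+1}\); so these Hom-spaces are all one-dimensional. It remains to show that the composite \(g^i_{N,n}\) is the nonzero element. I would argue by induction on \(n\) using the explicit triangles defining \(g^i_n\). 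If \(g^i_{N,n}\neq0\) and \(g^i_n\circ g^i_{N,n}=0\), then \(g^i_{N,n}\) factors through \(\Sigma^{-1}\cone g^i_n=\{(n-2)_i,n_i\}\oplus\{(n-2)_{(i+1)'},n_{(i+1)'}\}\) (up to the shift convention). This factorisation is impossible, because \(\Sigma G^i_N\) does not cross either of these short arcs; equivalently, \(\Hom(G^i_N,\Sigma^{-1}\cone g^i_n)=0\) for \(n\) large relative to \(N\), which we can arrange by enlarging \(N\). Hence the transition maps in the colimit are eventually isomorphisms \(K\to K\), so the colimit is \(K\).
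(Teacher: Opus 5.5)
Your proof is correct. The upper bound \(\dim_K\End_{\mathcal{T}}(G^i)\leq 1\) comes from Lemma~\ref{L:DimensionCounting} exactly as in the paper, but you prove \(G^i\not\simeq0\) by a different route.

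\textbf{Comparison with the paper.} The paper argues abstractly. By \cite[Lemma~4.19]{Matousek26C}, \(G^i\simeq0\) would force some \(G^i_n\) to lie in \(\nu(B_1)\coprod\mathcal{D}\). This is impossible, since \(G^i_n\) has one endpoint in \(\Z_i\) and the other in \(\Z_{(i+1)'}\). You instead compute \(\varinjlim_{n\geq N}\Hom_{\mathcal{C}_{2p}}(G^i_N,G^i_n)\) directly in the arc model. Each transition map \(K\to K\) is non-zero because the non-zero map \(G^i_N\to G^i_n\) cannot factor through \(\Sigma^{-1}\cone g^i_n\). Your route is self-contained and gives \(\End_{\mathcal{T}}(G^i)\simeq K\) outright, which makes the dimension count redundant. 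The paper's route is shorter and only uses that no term of \(\mathbf{G}^i\) lies in the first level of the metric, but it depends on an external lemma.

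\textbf{Two slips to fix.} First, the accumulation point between \(\Z_i\) and \(\Z_{(i+1)'}\) is \(A_{(i+1)'}\), not \(A_{i+1}\). What you actually need is the interleaving \((N-1)_i<n_i<(N-1)_{(i+1)'}<n_{(i+1)'}\), and that does hold.

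Second, and more importantly, \(\Sigma\) lowers indices. From \(\cone g^i_n=\{(n-1)_i,(n+1)_i\}\oplus\{(n-1)_{(i+1)'},(n+1)_{(i+1)'}\}\) one gets
\[
\Sigma^{-1}\cone g^i_n=\{n_i,(n+2)_i\}\oplus\{n_{(i+1)'},(n+2)_{(i+1)'}\}.
\]
The arcs you wrote make up \(\Sigma\cone g^i_n\) instead.
\begin{itemize}
\item For the correct arcs and every \(n\geq N\), both endpoints of each summand lie in the same component of \(S^1\) minus the endpoints of \(\Sigma G^i_N=\{(N-1)_i,(N-1)_{(i+1)'}\}\). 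Hence \(\Hom_{\mathcal{C}_{2p}}(G^i_N,\Sigma^{-1}\cone g^i_n)=0\) for all \(n\geq N\), and your induction runs from \(n=N\) with no enlarging needed.
\item With your arcs, the non-crossing claim fails at \(n=N\); for instance \(\{(N-2)_i,N_i\}\) crosses \(\Sigma G^i_N\). Enlarging \(N\) cannot repair this, because the failure occurs for \(n\) close to \(N\). Only your final formulation, that the transition maps are eventually isomorphisms, survives that error.
\end{itemize}
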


\begin{proof}
By Lemma~\ref{L:DimensionCounting}, the dimension of \(\End_{\mathcal{T}}(G^i)\) is either $1$, or $0$.
If \(G^i\) was the zero object, then by \cite[Lemma~4.19]{Matousek26C} there would exist \(n\in\N\) such that \(G^i_n\in\nu(B_1)\coprod\mathcal{D}\). Since this cannot happen, the~endomorphism ring of \(G^i\) has to be $K$ itself.
\end{proof}

\begin{lemma}
\label{L:AdditiveGenerators}
\(\mathcal{G}=\addf(G^1,\ldots,G^p)\)
\end{lemma}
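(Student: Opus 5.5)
The plan is to show that \(\mathcal{A}:=\addf(G^1,\ldots,G^p)\) is already a thick subcategory of \(\mathcal{T}\). Since it contains the generators \(G^1,\ldots,G^p\) and lies in \(\mathcal{G}\), it then equals \(\mathcal{G}=\langle G^1,\ldots,G^p\rangle\).

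First, closure under shifts and summands. By construction \(\Sigma G^i\simeq G^i\), so \(\Sigma\mathcal{A}=\mathcal{A}=\Sigma^{-1}\mathcal{A}\). By Lemma~\ref{L:DzisIndecomposable}, each \(G^i\) has local endomorphism ring \(K\). By Lemma~\ref{L:DzisOrthogonal}, \(\Hom_{\mathcal{T}}(G^i,G^j)\simeq0\) for \(i\neq j\). Hence for any object \(A=\bigoplus_i (G^i)^{a_i}\in\mathcal{A}\) we get
\[
\End_{\mathcal{T}}(A)\simeq\prod_{i=1}^p M_{a_i}(K),
\]
a semisimple finite-dimensional algebra. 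Every idempotent of \(\End_{\mathcal{T}}(A)\) therefore splits inside \(\mathcal{A}\), with image again a finite sum of copies of the \(G^i\). So summands of objects of \(\mathcal{A}\) that exist in \(\mathcal{T}\) are again in \(\mathcal{A}\), and \(\mathcal{A}\) is closed under summands.

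Second, closure under cones, which is the main step. Take \(f:A\to A'\) with \(A,A'\in\mathcal{A}\). By orthogonality, \(f\) is block diagonal, \(f=\bigoplus_i f_i\), where \(f_i:(G^i)^{a_i}\to(G^i)^{a_i'}\) is a \(K\)-matrix in \(\End\)-terms. Choose bases, i.e.\ automorphisms of source and target, that put each \(f_i\) in normal form: an identity block \((G^i)^{r_i}\to(G^i)^{r_i}\) plus zero maps. The cone of \(f\) is then a direct sum of cones of \(\id_{G^i}\), which are \(0\), and of cones of \(0\to G^i\) and \(G^i\to 0\), which are \(G^i\) and \(\Sigma G^i\simeq G^i\). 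Since a cone is determined up to isomorphism and cones of direct sums of morphisms are direct sums of cones, \(\cone f\in\mathcal{A}\). Together with shift closure, this shows that \(\mathcal{A}\) is a triangulated subcategory, and with summand closure it is thick.

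The step I expect to require the most care is the cone computation. It hinges on \(\End(G^i)=K\) being a field, so that every morphism between finite sums of copies of \(G^i\) is a matrix over \(K\) and can be diagonalised by invertible matrices, i.e.\ by isomorphisms in \(\mathcal{T}\). Lemma~\ref{L:DzisIndecomposable} gives exactly this. The orthogonality from Lemma~\ref{L:DzisOrthogonal} is used only in one direction, but since Lemma~\ref{L:DzisOrthogonal} holds for all ordered pairs \(i\neq j\), both directions vanish and the blocks decouple.
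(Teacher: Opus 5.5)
Your proof is correct and follows essentially the same route as the paper: both rest on \(\Sigma G^i\simeq G^i\), \(\End_{\mathcal{T}}(G^i)\simeq K\), and the orthogonality of the \(G^i\), which together force every morphism between objects of \(\addf(G^1,\ldots,G^p)\) to be, up to isomorphism, a direct sum of identities and zero maps, so cones stay in the additive closure. The only difference is organisational. The paper first splits \(\mathcal{G}\) as \(\coprod_i\langle G^i\rangle\) and then argues one \(G^i\) at a time via the equivalence \(\addf G^i\simeq\modf\dashmodule K\). You instead treat all \(G^i\) at once by block-diagonalising morphisms, which avoids that splitting step and also makes closure under summands explicit.
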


\begin{proof}
From Lemma~\ref{L:DzisOrthogonal}, we infer \(\mathcal{G}\simeq\coprod_{i=1}^p\left\langle G^i\right\rangle\).
Let \(1\leq i\leq p\).
Since \(\Sigma G^i=G^i\), any triangle extension of elements of \(\addf G^i\) is given by a morphism \(f\) in \(\addf G^i\).
By \cite[Proposition~4.2]{Shah23} and Lemma~\ref{L:DzisIndecomposable}, there is an additive equivalence \(\Hom_{\mathcal{G}}(G^i,\blank):\addf G^i\xrightarrow{\simeq}\modf\dashmodule K\), so any such $f$ is a direct sum of zeros and isomorphisms.
We conclude that \(\addf G^i=\left\langle G^i\right\rangle\) and \(\mathcal{G}=\addf(G^1,\ldots,G^p)\). 
\end{proof}

\begin{lemma}
\label{L:CummingsGratzAisle}
 \(\mathcal{S}={}^\perp\mathcal{G}\)
\end{lemma}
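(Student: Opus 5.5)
The plan is to prove the two inclusions separately. The inclusion \(\mathcal{S}\subseteq{}^{\perp}\mathcal{G}\) should be a direct computation with Lemma~\ref{L:BasicFactorisationProperty}. The reverse inclusion will go through a description of the indecomposable objects of \(\mathcal{T}\). Since \(\Sigma G^i\simeq G^i\) and \(\mathcal{G}=\addf(G^1,\ldots,G^p)\) by Lemma~\ref{L:AdditiveGenerators}, the condition \(X\in{}^{\perp}\mathcal{G}\) just says \(\Hom_{\mathcal{T}}(X,G^i)\simeq0\) for every \(1\leq i\leq p\).

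\emph{Step 1: \(\mathcal{S}\subseteq{}^{\perp}\mathcal{G}\).} Let \(E\in\mathcal{S}\subseteq\mathcal{T}\). Through the continuous functor \(\mathfrak{S}(\nu)\) it is \(E=\moco\nu(\mathbf{E})\) for a compactly supported Cauchy sequence \(\mathbf{E}=(E_n,e_n)_{n\in\N}\) in \(\mathcal{C}_p\), and \(\nu(\mathbf{E})\) is Cauchy for \(\mathcal{N}\vee\mathcal{D}\). We showed that \(G^i\) is compactly supported at some \(s\), so choose \(N\) such that \(\nu(\mathbf{E})\) stabilises at \(s+1\) from \(N\) onwards. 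Lemma~\ref{L:BasicFactorisationProperty} then gives
\[
\Hom_{\mathcal{T}}(E,G^i)\simeq\varinjlim_{m\in\N}\Hom_{\mathcal{C}_{2p}}\big(E_N,G^i_m\big).
\]
The object \(E_N\) is a finite sum of arcs whose endpoints lie in the original sets \(\Z_1,\ldots,\Z_p\). The arc \(G^i_m=\{m_i,m_{(i+1)'}\}\) spans the region around \(A_{i+1}\) and \(A_{(i+1)'}\). Between its two endpoints, the only points of \(\mathcal{Z}\cap\mathcal{D}'\) are those of \(\Z_i\) greater than \(m_i\). Hence for \(m\) large, \(\Sigma E_N\) and \(G^i_m\) do not cross, every term of the colimit vanishes, and \(\Hom_{\mathcal{T}}(E,G^i)=0\).

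\emph{Step 2: \({}^{\perp}\mathcal{G}\subseteq\mathcal{S}\).} I would first check that \(\mathcal{T}\) is Hom-finite and Krull–Schmidt. Lemma~\ref{L:DimensionCounting} gives Hom-finiteness once sequences have a bounded number of summands, and idempotents split since \(\mathcal{T}\) is a completion. It then suffices to treat an indecomposable \(X\in{}^{\perp}\mathcal{G}\).

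By Lemma~\ref{L:CummingsGratzTrivialisation}, write \(X=\moco\mathbf{F}\) with \(\mathbf{F}\) a trivial Cauchy sequence of arcs in \(\mathcal{C}_{2p}\). The next task is to classify such indecomposables, mimicking the double-fan analysis of \cite{CummingsGratz24}. Each endpoint of the arcs \(F_n\) either stabilises, or drifts towards an accumulation point \(A_j\) or \(A_{j'}\). Drifting endpoints are forced by the metric \(\mathcal{N}\vee\mathcal{D}\): short morphisms have cones in \(\nu(B_n)\) or in \(\mathcal{D}\).

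I expect the outcome to be the following list of indecomposables of \(\mathcal{T}\):
\begin{itemize}
\item the indecomposables of \(\mathcal{S}\), namely arcs with endpoints in \(\mathcal{Z}\cap\mathcal{D}'\) or in the points \(A_j\);
\item the objects \(G^i\), arising when one endpoint tends to \(A_{i+1}\) through \(\Z_i\) and the other tends to \(A_{(i+1)'}\) through \(\Z_{(i+1)'}\);
\item further arcs whose endpoints in \(\Z_{j'}\) are identified, via the \(\mathcal{D}\)-part of the metric, with the endpoint \(A_j\); these should be isomorphic to objects already in \(\mathcal{S}\) or to some \(G^i\).
\end{itemize}
Granting this, an indecomposable \(X\notin\mathcal{S}\) must be some \(G^i\), and \(\End_{\mathcal{T}}(G^i)\simeq K\) by Lemma~\ref{L:DzisIndecomposable}, so \(X\notin{}^{\perp}\mathcal{G}\). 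This proves the claim.

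The main obstacle is this classification of indecomposables of \(\mathcal{T}\) in Step 2. In particular, I must rule out extra limit objects, and show that limits involving \(\Z_{j'}\)-endpoints collapse onto \(\mathcal{S}\) or \(\mathcal{G}\). My first attempt would be to build explicit triangles: for such an indecomposable \(X\), a triangle \(S\to X\to G\to\Sigma S\) with \(S\in\mathcal{S}\) and \(G\in\mathcal{G}\), obtained as the module colimit of a Cauchy sequence of arc triangles in \(\mathcal{C}_{2p}\), as in Example~\ref{E:TrianglesInCompletion}. If \(X\in{}^{\perp}\mathcal{G}\), the map \(X\to G\) vanishes, so \(X\) is a summand of \(S\) and lies in the thick subcategory \(\mathcal{S}\).
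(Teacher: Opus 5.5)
Your Step~1 is correct and is essentially the paper's argument. The gap is in Step~2. You make the reverse inclusion rest on a classification of the indecomposables of $\mathcal{T}$ that you do not prove, and the classification you predict is false.

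Its third bullet fails. Let $F=\moco\mathbf{F}$ with $F_n=\{-1_i,n_{(j+1)'}\}$. This is the arc $\{-1_i,A_{j+1}\}$ of $\overline{\mathcal{C}_p}\subseteq\mathcal{T}$, whose second endpoint runs through $\Z_{(j+1)'}$.
\begin{itemize}
\item It is indecomposable in $\mathcal{T}$: its endomorphism ring is $K$, since $\overline{\mathcal{C}_p}$ embeds fully faithfully.
\item It is not isomorphic to any $G^i$, since $\Sigma F\not\simeq F$ while $\Sigma G^i\simeq G^i$.
\item It satisfies $\Hom_{\mathcal{T}}(F,G^j)\neq0$ (apply the crossing computation below to $X=F_N$). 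So by your own Step~1 it is not in $\mathcal{S}$ either.
\end{itemize}
Hence ``an indecomposable $X\notin\mathcal{S}$ must be some $G^i$'' is wrong, even though the lemma itself is true.

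Two further points are unsupported. First, reducing to indecomposables presupposes that $\mathcal{T}$ is Krull--Schmidt, which you have not established. Lemma~\ref{L:DimensionCounting} only bounds Hom-spaces into objects represented by sequences with a uniformly bounded number of summands, and no such representative is available for an arbitrary object of $\mathcal{T}$. Second, your fallback asks for a triangle $S\to X\to G\to\Sigma S$ with $S\in\mathcal{S}$ and $G\in\mathcal{G}$ for every $X$. That is precisely the statement $\mathcal{T}=\mathcal{S}*\mathcal{G}$, i.e.\ that $(\mathcal{S},\mathcal{G})$ is a $t$-structure. The present lemma together with Lemma~\ref{L:TTF} is what establishes this, and you offer no construction of these triangles.

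What you are missing is that no classification is needed; the paper argues directly on a representing sequence.
\begin{enumerate}
\item Given $E\in{}^{\perp}\mathcal{G}$, write $E=\moco\mathbf{E}$ with $\mathbf{E}$ Cauchy for $\mathcal{N}\vee\mathcal{D}$ and, by Lemma~\ref{L:CummingsGratzTrivialisation}, $\mathcal{D}$-trivial.
\item Suppose infinitely many $E_n$ lie outside $\mathcal{D}'$. Pick $N$ as in Lemma~\ref{L:BasicFactorisationProperty} such that $E_N$ has an indecomposable summand $X=\{x_{(i+1)'},y_k\}$. Triviality forces $k\neq(i+1)'$.
\item For $m\geq\max\{1,x,y\}$, the arc $\Sigma X$ crosses $G^i_{m+1}$ and $G^i_{m+2}$ but none of the outer terms of the triangle defining $g^i_{m+1}$. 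So $\Hom_{\mathcal{C}_{2p}}(X,g^i_{m+1})$ is an isomorphism of one-dimensional spaces.
\item Hence $G^i(X)\neq0$ and $\Hom_{\mathcal{T}}(E,G^i)\simeq G^i(E_N)\neq0$, a contradiction.
\item So from some point on $E_n\in\mathcal{D}'=\mathcal{D}^{\perp}={}^{\perp}\mathcal{D}$. In the triangles $E_n\to E_{n'}\to C\oplus D\to\Sigma E_n$ with $C\in\nu(B_s)$ and $D\in\mathcal{D}$, the summand $D$ must then vanish.
\item Thus $\mathbf{E}$ is an $\mathcal{N}$-Cauchy sequence in $\mathcal{C}_p$, and $E\in\mathcal{S}$.
\end{enumerate}
This works object by object and needs neither the Krull--Schmidt property of $\mathcal{T}$ nor any list of its indecomposables.
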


\begin{proof}
``\(\subseteq\)'' Let \(E\in\mathcal{S}\subseteq\mathcal{T}\). Then \(E\simeq\moco\mathbf{E}\) for sequence \(\mathbf{E}=(E_n,e_n)_{n\in\N}\) in \(\mathcal{C}_p\subseteq\mathcal{C}_{2p}\) which is Cauchy with respect to \(\mathcal{N}\).
Pick \(1\leq i\leq p\).
By Lemma~\ref{L:BasicFactorisationProperty}, there is an isomorphism 
\[
\Hom_{\mathcal{T}}\left(E,G^i\right)\simeq\Hom_{\Mod\dashmodule\mathcal{C}_{2p}}\left(\Yoneda(E_N),G^i\right)\simeq\varinjlim_{m\in\N}\Hom_{\mathcal{C}_{2p}}\left(E_N,G^i_m\right)
\]
for some \(N\in\N\).
Since \(\nu(B_1)\) is a coaisle of a bounded above $t$-structure on \(\mathcal{C}_{2p}\), we may find \(M\in\N\) such that \(E_N\in{}^{\perp}\Sigma^{-M}\nu(B_1)={}^{\perp}\nu(B_M)\).
Let \(X\) be an indecomposable direct summand of \(E_N\).
Then the arc corresponding to \(X\) cannot have an endpoint in the set \(\{z_i:z>M\}\). And since we embed \(\mathcal{C}_p\) into \(\mathcal{C}_{2p}\) as~\(\mathcal{D}'\), the arc of \(X\) cannot have an endpoint in \(\Z_{(i+1)'}\) either.
Thus for all \(m\geq M\) the arcs of \(\Sigma X\) and \(G^i_m\) do not cross. Hence \(\Hom_{\mathcal{C}_{2p}}\left(E_N,G^i_m\right)\simeq0\) for all \(m\geq M\), so \(\Hom_{\mathcal{T}}\left(E,G^i\right)\simeq0\). Because $i$ was arbitrary and the objects of the~form \(G^i\) generate \(\mathcal{G}\) by Lemma~\ref{L:AdditiveGenerators}, we get \(E\in{}^{\perp}\mathcal{G}\).

``\(\supseteq\)''
Let \(E\in{}^{\perp}\mathcal{G}\subseteq\mathcal{T}\). Then \(E\simeq\moco\mathbf{E}\) for some Cauchy sequence \(\mathbf{E}=(E_n,e_n)_{n\in\N}\) in \(\mathcal{C}_{2p}\) with respect to \(\mathcal{N}\vee\mathcal{D}\). 
By Lemma~\ref{L:CummingsGratzTrivialisation}, we can further assume that \(\mathbf{E}\) is \(\mathcal{D}\)-trivial.

Assume for contradiction that there exist infinitely many indices \(n\in\N\) such that \(E_n\notin\mathcal{D}'\).
We can apply Lemma~\ref{L:BasicFactorisationProperty} to obtain an isomorphism 
\[
\Hom_{\mathcal{T}}\left(E,\coprod_{i=1}^pG^i\right)\simeq\Hom_{\Mod\dashmodule\mathcal{C}_{2p}}\left(\Yoneda(E_N),\coprod_{i=1}^pG^i\right)\simeq\coprod_{i=1}^p\varinjlim_{m\in\N}\Hom_{\mathcal{C}_{2p}}\left(E_N,G^i_m\right)
\]
for some \(N\in\N\), and we can even choose \(N\) large enough for \(E_N\) to contain an~indecomposable direct summand \(X\in\mathcal{C}_{2p}\setminus\mathcal{D}'\).
Then the arc corresponding to \(X\) has an endpoint of the form \(x_{(i+1)'}\) for some \(x\in\Z\) and \(1\leq i\leq p\). By the~\(\mathcal{D}\)\=/triviality, the other endpoint of \(X\) cannot lie in \(\Z_{(i+1)'}\), so there exist
\[
y\in\Z\text{\quad\quad\quad and\quad\quad\quad} k\in\left\{j,j':1\leq j\leq p\right\}\setminus\left\{(i+1)'\right\}
\] such that \(X=\big\{x_{(i+1)'},y_k\big\}\).
Let \(m\geq\max\{1,x,y\}\). 
Consider the distinguished triangle
\[
\begin{split}
\big\{(m+1)_i,(m+3)_i\big\}&\oplus \big\{(m+1)_{(i+1)'},(m+3)_{(i+1)'}\big\}\rightarrow G^i_{m+1} \xrightarrow{g^i_{m+1}} G^i_{m+2}\\
&\rightarrow\big\{m_i,(m+2)_i\big\}\oplus \big\{m_{(i+1)'},(m+2)_{(i+1)'}\big\}.
\end{split}
\]
Since \(\Sigma X\) does not cross any of the indecomposables of the outer terms of the~triangle but crosses the~inner terms, the map \(\Hom_{\mathcal{C}_{2p}}\left(X,g^i_{m+1}\right)\) is an isomorphism of one-dimensional vector spaces. As $m$ was arbitrary, we get that
\[
\Hom_{\Mod\dashmodule\mathcal{C}_{2p}}\left(\Yoneda(X),G^i\right)\simeq\varinjlim_{m\geq\max\{1,x,y\}}\Hom_{\mathcal{C}_{2p}}\left(X,G^i_{m+1}\right)
\]
is a non-zero direct summand of \(\Hom_{\mathcal{T}}\left(E,G^i\right)\), which is a contradiction.

Now, we know that \(\mathbf{E}\) consists of terms in \(\mathcal{D}'\simeq\mathcal{C}_p\). All that rests to be shown is that \(\mathbf{E}\) is Cauchy with respect to \(\mathcal{N}\).
Choose \(s\in\N\) and let \(N\in\N\) be such that \(\mathbf{E}\) stabilises at \(\nu(B_s)\coprod\mathcal{D}\) from \(N\) onwards. Let \(n\geq N\).
Consider the distinguished triangle
\[
E_n\rightarrow E_{n+1}\rightarrow C\oplus D\rightarrow\Sigma E_n
\]
where \(C\in B_s\) and \(D\in\mathcal{D}\). Since \(E_n,E_{n+1}\in \mathcal{D}'=\mathcal{D}^{\perp}={}^{\perp}\mathcal{D}\), the object $D$ is forced to be zero. Hence \(\mathbf{E}\) stabilises at \(\nu(B_s)\) from \(N\) onwards and \(E\in\mathcal{S}\).
\end{proof}

\begin{lemma}
\label{L:TTF}
The thick subcategory \(\mathcal{G}\subseteq\mathcal{T}\) is bireflective.  
\end{lemma}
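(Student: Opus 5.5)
We need to prove that \(\mathcal{G}=\addf(G^1,\ldots,G^p)\) is both reflective and coreflective in \(\mathcal{T}\). By Proposition~\ref{P:CharacterisationReflectivity}, coreflectivity is equivalent to \(\mathcal{G}\) being precovering, and reflectivity to \(\mathcal{G}\) being preenveloping. Since \(\mathcal{G}=\addf(G)\) for the single object \(G:=\coprod_{i=1}^p G^i\) (Lemma~\ref{L:AdditiveGenerators}), both properties follow once we know that \(\Hom_{\mathcal{T}}(E,G)\) and \(\Hom_{\mathcal{T}}(G,E)\) are finite-dimensional over \(K\) for every \(E\in\mathcal{T}\).

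Indeed, choose a finite \(K\)-basis \(u_1,\ldots,u_r\) of \(\Hom_{\mathcal{T}}(G,E)\). Then \(\coprod_{j=1}^r G\xrightarrow{(u_j)_j}E\) is a \(\mathcal{G}\)-precover, because every map from an object of \(\addf(G)\) to \(E\) factors through it. Dually, a basis of \(\Hom_{\mathcal{T}}(E,G)\) gives a \(\mathcal{G}\)-preenvelope \(E\rightarrow\coprod G\). We then conclude with the first two items of Proposition~\ref{P:CharacterisationReflectivity}, which apply since \(\mathcal{G}\) is thick and \(\mathcal{T}\) is triangulated.

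The main step is the finiteness of Hom spaces. Write \(E=\moco\mathbf{E}\) for a Cauchy sequence \(\mathbf{E}\) in \(\mathcal{C}_{2p}\) with respect to \(\mathcal{N}\vee\mathcal{D}\).

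\textbf{Maps \(E\rightarrow G^i\).} Each \(G^i_n\) is a single arc, so the number of indecomposable summands of \(G^i_n\) is bounded by \(l=1\). Since \(G^i\) is compactly supported, Lemma~\ref{L:DimensionCounting} (whose proof works verbatim in \(\mathcal{C}_{2p}\), a Hom-finite category with at most one-dimensional Homs between indecomposables) gives \(\dim_K\Hom_{\mathcal{T}}(E,G^i)\leq k\), where \(k\) is the number of summands of a suitable \(E_N\).

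\textbf{Maps \(G^i\rightarrow E\).} Here the roles are reversed, and the terms of \(E\) need not have a bounded number of summands. I would try two routes.

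\emph{First route.} Apply Lemma~\ref{L:DimensionCounting} with \(\mathbf{G}^i\) as the source sequence. This requires a uniform bound on the number of summands of the \(E_n\). The Krull-Schmidt trivialisation of Lemma~\ref{L:CummingsGratzTrivialisation} does not obviously supply such a bound.

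\emph{Second route (preferred).} Use that \(\mathcal{T}\) carries a Serre-type duality inherited from the 2-Calabi-Yau property of \(\mathcal{C}_{2p}\). Concretely, compute
\[
\Hom_{\mathcal{T}}(G^i,E)\simeq\varinjlim_{m}\Hom_{\mathcal{C}_{2p}}(G^i_N,E_m)\simeq\varinjlim_m D\Hom_{\mathcal{C}_{2p}}(E_m,\Sigma^2G^i_N),
\]
using Lemma~\ref{L:BasicFactorisationProperty} and the fact that \(\mathbf{G}^i\) stabilises. Then bound this directly, noting that only the finitely many summands of \(E_m\) whose arcs cross the fixed arc \(\Sigma G^i_N\) contribute.

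Failing both, I would use the combinatorial description of indecomposables of \(\mathcal{T}\), which are arcs together with the \(G^i\). Maps into \(E\) factor through finitely many indecomposable summands of \(E\), and each such space is at most one-dimensional. This is where I expect the real work to lie: controlling \(\Hom(G^i,E)\) when \(E\) is a genuine limit rather than an object of \(\mathcal{C}_{2p}\).
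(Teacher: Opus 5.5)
Your reduction to finite-dimensionality of $\Hom$-spaces, the construction of precovers and preenvelopes from finite bases, and your bound on $\dim_K\Hom_{\mathcal{T}}(E,G^i)$ via Lemma~\ref{L:DimensionCounting} all agree with the paper. Together they already show that $\mathcal{G}$ is preenveloping, hence reflective. The gap is the other direction: none of your routes bounds $\dim_K\Hom_{\mathcal{T}}(G^i,E)$ for an arbitrary $E\in\mathcal{T}$.

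\textbf{Why your routes do not close it.} In the second route, termwise 2-Calabi-Yau duality in $\mathcal{C}_{2p}$ preserves dimensions, so it only relabels the problem. Here $\dim_K\Hom_{\mathcal{C}_{2p}}(G^i_N,E_m)$ is the number of summands of $E_m$ whose arcs cross $\Sigma G^i_N$. This is finite for each $m$ but not bounded in $m$, so the colimit may a priori be infinite-dimensional --- exactly the obstruction of your first route. There is also no duality on $\mathcal{T}$ itself to fall back on. $\mathcal{T}$ contains $\overline{\mathcal{C}_p}$ as a full triangulated subcategory, and by condition (iii) of Theorem~\ref{T:PaquetteYildirimTheorem} there are arcs $X,Y$ ending at the same accumulation point with $\Hom(X,Y)\simeq K$ but $\Hom(Y,\Sigma^2X)=0$. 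Your third route presupposes that $\mathcal{T}$ is Krull-Schmidt with indecomposables the arcs and the $G^i$. That description is essentially a consequence of the TTF-triple this lemma is used to build, so the argument is circular.

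\textbf{The missing idea.} Use the reflectivity you have already proved to replace $E$ by an object whose Cauchy sequences have a uniformly bounded number of summands.
\begin{enumerate}
\item By Proposition~\ref{P:CharacterisationReflectivity}, $({}^{\perp}\mathcal{G},\mathcal{G})$ is a $t$-structure on $\mathcal{T}$. So there is a triangle $F\to E\to G'\to\Sigma F$ with $G'\in\mathcal{G}$ and $F\in{}^{\perp}\mathcal{G}$. Equivalently, take a $\mathcal{G}$-envelope $E\to G'$ and apply the triangulated Wakamatsu lemma.
\item Lemma~\ref{L:CummingsGratzAisle} identifies ${}^{\perp}\mathcal{G}=\mathcal{S}$. $\mathcal{S}$ is Krull-Schmidt, and every indecomposable is the module colimit of a double fan whose entries are single arcs. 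Hence $F$ is the module colimit of a Cauchy sequence whose entries have at most $l$ indecomposable summands, for a fixed $l$.
\item Lemma~\ref{L:DimensionCounting} with source $\mathbf{G}^i$ (so $k=1$) gives $\dim_K\Hom_{\mathcal{T}}(G^i,F)\leq l$.
\item $\Hom_{\mathcal{T}}(G^i,G')$ is finite-dimensional by Lemmas~\ref{L:DzisOrthogonal}, \ref{L:DzisIndecomposable} and~\ref{L:AdditiveGenerators}.
\item Exactness of $\Hom_{\mathcal{T}}(G^i,F)\to\Hom_{\mathcal{T}}(G^i,E)\to\Hom_{\mathcal{T}}(G^i,G')$ then shows $\Hom_{\mathcal{T}}(G^i,E)$ is finite-dimensional, and your precover construction finishes the proof.
\end{enumerate}
The two directions are therefore not symmetric: coreflectivity is derived from reflectivity together with the identification ${}^{\perp}\mathcal{G}=\mathcal{S}$.
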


\begin{proof}
Let \(E\in\mathcal{T}\). By Lemma~\ref{L:DimensionCounting}, the vector space \(\Hom_{\mathcal{T}}\left(E,G^i\right)\) is finite dimensional for every \(1\leq i\leq p\). As \(\mathcal{G}=\addf(G^1,\ldots,G^p)\) by Lemma~\ref{L:AdditiveGenerators}, we can construct a \(\mathcal{G}\)-preenvelope of~\(E\) by a standard argument (as in the proof of Corollary~\ref{C:CountablyGeneratedCompletionVerdierQuotient}). By Proposition~\ref{P:CharacterisationReflectivity}, we now know that the subcategory \(\mathcal{G}\) is enveloping. 

Pick a \(\mathcal{G}\)-envelope \(E\rightarrow G\) of $E$ and complete it to a triangle
\[
G\rightarrow F\rightarrow E\rightarrow G
.\]
By the triangulated Wakamatsu's lemma (e.g.\ the dual of \cite[Lemma~2.1]{Joergensen08}), the~object $F$ lies in \({}^\perp\mathcal{G}\). Fix \(1\leq i\leq p\).
We have an exact sequence
\[
\Hom_{\mathcal{T}}\big(G^i,F\big)\rightarrow\Hom_{\mathcal{T}}\big(G^i,E\big)\rightarrow\Hom_{\mathcal{T}}\big(G^i,G\big)
.\]
Since Lemma~\ref{L:CummingsGratzAisle} tells us that \(F\in\mathcal{S}\) and \(\mathcal{S}\) is a Krull-Schmidt category where indecomposables are module colimits of double fans with entries consisting of a single indecomposable object from \(\mathcal{C}_{2p}\), Lemma~\ref{L:DimensionCounting} ensures that the outer terms of the exact sequence are finite dimensional. Thus so is \(\Hom_{\mathcal{T}}(G^i,E)\).

With this information, the proof that \(\mathcal{G}\) is precovering is dual to the proof of the preenveloping property above. Then \(\mathcal{G}\) is bireflective by Proposition~\ref{P:CharacterisationReflectivity}.
\end{proof}

By Lemma~\ref{L:CummingsGratzAisle} and Lemma~\ref{L:TTF}, there is a TTF-triple \((\mathcal{S},\mathcal{G},\mathcal{G}^{\perp})\) on \(\mathcal{T}\).

\begin{theorem}
\label{T:TheEquivalenceIsAnEquivalence}
Consider the $t$-structure \((\mathcal{G},\mathcal{G}^{\perp})\) on \(\mathcal{T}\).
Then the coaisle truncation functor restricts to a triangulated equivalence \(\varphi:\mathcal{S}\xrightarrow{\simeq}\overline{\mathcal{C}_p}\).
Moreover, as an additive equivalence \(\varphi\) coincides with the~equivalence from \cite[Corollary~6.24]{CummingsGratz24} preserving the combinatorial model.
\end{theorem}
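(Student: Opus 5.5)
The plan has three parts: apply the TTF-triple formalism to \((\mathcal{S},\mathcal{G},\mathcal{G}^{\perp})\), identify the third term \(\mathcal{G}^{\perp}\) with \(\overline{\mathcal{C}_p}\subseteq\mathcal{T}\), and then compare the resulting functor with the combinatorial one on indecomposables.

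\textbf{The TTF-triple.} By Lemma~\ref{L:CummingsGratzAisle} and Lemma~\ref{L:TTF}, both \(({}^{\perp}\mathcal{G},\mathcal{G})=(\mathcal{S},\mathcal{G})\) and \((\mathcal{G},\mathcal{G}^{\perp})\) are $t$-structures on \(\mathcal{T}\), by Proposition~\ref{P:CharacterisationReflectivity}. The cited result from \cite[Sous-section~1.4.4]{BeilinsonBernsteinDeligne82} (see also \cite{NicolasSaorin09}) then says that the coaisle truncation \(\tr:\mathcal{T}\to\mathcal{G}^{\perp}\), restricted to \(\mathcal{S}\), is an equivalence \(\mathcal{S}\simeq\mathcal{G}^{\perp}\). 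It is triangulated, being the restriction of the left adjoint to the inclusion of a thick subcategory, i.e. a Verdier-localisation-type functor. So everything reduces to proving \(\mathcal{G}^{\perp}=\overline{\mathcal{C}_p}\) inside \(\mathcal{T}\).

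\textbf{The inclusion \(\overline{\mathcal{C}_p}\subseteq\mathcal{G}^{\perp}\).} By Lemma~\ref{L:PaquetteYildirimGeneration}, it suffices to show \(\Hom_{\mathcal{T}}(G^i,X)=0\) for arcs \(X=\{x,A\}\), viewed in \(\mathcal{T}\) via the embedding \(\mathfrak{S}_{\mathcal{D}}(\mathcal{C}_{2p})\hookrightarrow\mathcal{T}\). Such an \(X\) is represented by a constant sequence on an arc of \(\mathcal{C}_{2p}\) with one endpoint in \(\Z_{j'}\); this is the constant-sequence representative under \(\mathfrak{S}_{\mathcal{D}}\), coming from the proof of Corollary~\ref{C:CountablyGeneratedCompletionVerdierQuotient} and Lemma~\ref{L:FunctorIntoQuotient}. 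Lemma~\ref{L:BasicFactorisationProperty} gives
\[
\Hom_{\mathcal{T}}(G^i,X)\simeq\varprojlim_n\Hom_{\mathcal{C}_{2p}}(G^i_n,X').
\]
For \(n\) large, \(\Sigma G^i_n\) has endpoints near \(A_{i+1}\) on both sides and does not cross the fixed arc \(X'\), so the limit vanishes.

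\textbf{The inclusion \(\mathcal{G}^{\perp}\subseteq\overline{\mathcal{C}_p}\).} This is the step I expect to be the main obstacle. I would argue via the equivalence \(\tr:\mathcal{S}\simeq\mathcal{G}^{\perp}\): since \(\mathcal{S}\) is generated as a thick subcategory by the arcs \(\{x,A\}\) (Lemma~\ref{L:CummingsGratzGeneration}), the category \(\mathcal{G}^{\perp}\) is generated by their truncations. So it is enough to compute \(\tr\) on a double fan \(E=\moco_n\{x_i,(x+n+1)_j\}\). My expectation is a triangle \(G\to E\to \{x_i,A_{j+1}\}_{\overline{\mathcal{C}_p}}\to\Sigma G\) with \(G\in\addf(G^j)\); in \(\mathcal{C}_{2p}\) this is built as a module colimit of a Cauchy sequence of triangles using the crossing rule relating the arcs \(\{x_i,(x+n+1)_j\}\), \(\{x_i,y_{(j+1)'}\}\) and \(G^j_n\). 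The third term lies in \(\overline{\mathcal{C}_p}\subseteq\mathcal{G}^{\perp}\) and \(G\in\mathcal{G}\), so by uniqueness of approximation triangles, \(\tr E\simeq\{x_i,A_{j+1}\}\). Hence \(\mathcal{G}^{\perp}\) is generated by objects of the thick subcategory \(\overline{\mathcal{C}_p}\), which gives the inclusion.

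\textbf{Comparison with the combinatorial equivalence.} The same computation shows that \(\varphi\) sends arcs with one accumulation endpoint to the same arcs. Arcs in \(\mathcal{C}_p\) are fixed, since they already lie in \(\mathcal{S}\cap\overline{\mathcal{C}_p}\), and \(\tr\) is the identity on \(\mathcal{G}^{\perp}\). For arcs \(\{A_i,A_k\}\), the generating triangle of Lemma~\ref{L:PaquetteYildirimGeneration} is preserved by the triangulated functor \(\varphi\). Finally, \(\varphi\) is fully faithful and both sides have one-dimensional Hom-spaces described identically, so on morphisms \(\varphi\) agrees with the equivalence of \cite[Corollary~6.24]{CummingsGratz24}, possibly after rescaling by nonzero scalars.
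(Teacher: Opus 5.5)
Your overall architecture matches the paper: use the TTF-triple, reduce to $\mathcal{G}^{\perp}=\overline{\mathcal{C}_p}$, compute the truncation of the double fans with one accumulation endpoint, and conclude by generation. However, your argument for $\overline{\mathcal{C}_p}\subseteq\mathcal{G}^{\perp}$ fails as written. In your route this step is load-bearing: the uniqueness argument identifying the truncation of $E$ needs the third term to lie in $\mathcal{G}^{\perp}$.

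First, the object of $\mathfrak{S}_{\mathcal{D}}(\mathcal{C}_{2p})\subseteq\mathcal{T}$ corresponding to $\{x_i,A_j\}$ is not $\Yoneda(X')$ for an arc $X'=\{x_i,y_{j'}\}$. The arc $D=\{y_{j'},(y+2)_{j'}\}\in\mathcal{D}$ has $\Sigma D=\{(y-1)_{j'},(y+1)_{j'}\}$, which crosses $X'$. So $\Yoneda(X')(D)\neq0$, and $\Yoneda(X')$ is not compactly supported, hence not in $\mathcal{T}$ at all. Lemma~\ref{L:FunctorIntoQuotient} and the proof of Corollary~\ref{C:CountablyGeneratedCompletionVerdierQuotient} only say that the image in the quotient is $\pi(E_m)$; the representing sequence is not constant.

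Second, your crossing claim fails for the same reason. Inside $\mathcal{C}_{2p}$, the endpoints of $\Sigma G^i_n=\{(n-1)_i,(n-1)_{(i+1)'}\}$ converge to two distinct points, $A_{(i+1)'}$ and $A_{i+1}$. So the arc eventually sweeps over the whole segment $\Z_{(i+1)'}$; both ends look close to $A_{i+1}$ only after the quotient collapses that segment. For $X'=\{x_k,y_{(i+1)'}\}$ and $n>y+1$, the arcs $\Sigma G^i_n$ and $X'$ do cross. The transition maps are then eventually isomorphisms, and your inverse limit is $K$, not $0$.

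The repair is to use the correct representative $F=\moco_m\{x_i,m_{(j+1)'}\}$ of $\{x_i,A_{j+1}\}$. It is Cauchy and compactly supported at $\mathcal{D}$, since each arc of $\mathcal{D}$ crosses only finitely many entries. Lemma~\ref{L:BasicFactorisationProperty} then gives $\Hom_{\mathcal{T}}(G^k,F)\simeq\varinjlim_m\Hom_{\mathcal{C}_{2p}}(G^k_N,\{x_i,m_{(j+1)'}\})$ for $N$ large. For $m\geq N>x+1$, both endpoints of $\{x_i,m_{(j+1)'}\}$ lie on the same side of $\Sigma G^k_N$, so this vanishes.

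The truncation step, which you flag as the main obstacle, is where the paper does its actual work, and you only state an expectation there. Building the triangle directly as a colimit of triangles and invoking uniqueness is a legitimate alternative. But you would have to check two things. First, that the nonzero maps can be chosen to assemble into a morphism of sequences of triangles. Second, that the spurious summands in the cones — compare the triangle $G^j_{M+1}\to E_L\to\{M_j,L_j\}\oplus F_M\to\Sigma G^j_{M+1}$ — lie in ever deeper balls $\nu(B_n)$, so that the third-term sequence is Cauchy with module colimit exactly $F$.

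The paper avoids this bookkeeping as follows:
\begin{itemize}
\item It takes the abstract approximation triangle and computes $\Hom_{\mathcal{T}}(G^k,E)=0$ for $k\neq j$ and $\Hom_{\mathcal{T}}(G^j,E)\simeq K$. This gives $G^j\xrightarrow{\alpha}E\to\varphi(E)$ with $\alpha\neq0$.
\item It presents this triangle as a colimit of triangles in $\mathcal{C}_{2p}$ by \cite[Lemma~2.8.iii]{Neeman25A}.
\item It uses the single triangle above to see that $F_M$ is a summand of $F'_M$, and lifts this splitting to show that $F$ is a summand of $\varphi(E)$.
\item It concludes $F\simeq\varphi(E)$ by indecomposability.
\end{itemize}
In that route $\varphi(E)\in\mathcal{G}^{\perp}$ is automatic, so $\overline{\mathcal{C}_p}\subseteq\mathcal{G}^{\perp}$ is never needed as a separate input. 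Equality of the two thick subcategories then follows because both are generated by the same arcs.
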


\begin{proof}
Since \((\mathcal{S},\mathcal{G},\mathcal{G}^{\perp})\) is a TTF-triple, the coaisle truncation already restricts to a triangulated equivalence \(\varphi:\mathcal{S}\xrightarrow{\simeq}\mathcal{G}^{\perp}\).
All we need to show is \(\varphi(\mathcal{S})\simeq\overline{\mathcal{C}_p}\) and discuss how \(\varphi\) acts on \(\mathcal{S}\).

Firstly, we consider a special case. Let \(E\simeq\moco\mathbf{E}\in\mathcal{S}\) be a double fan \(\mathbf{E}=(E_n,e_n)_{n\in\N}\) satisfying \(E_n=\{-1_i,n_j\big\}\in\mathcal{C}_p\) for some \(i,j\in\{1,\ldots,p\}\) and all \(n\in\N\).
For each \(k\in\{1,\ldots,p\}\setminus\{j\}\) and \(n\in\N\), the arc \(G^k_m\) does not cross \(E_n\) for all \(m>n\). Thus \(\Hom_{\mathcal{T}}(G^k,E)\simeq0\).
By Lemma~\ref{L:BasicFactorisationProperty}, we know \(\Hom_{T}(G^j,E)\simeq\Hom_{\mathcal{S}}\big(G^j_N,E\big)\) for some \(N\in\N\).
For each \(n\geq N\), applying \(\Hom_{\mathcal{T}}\big(G^j_N,\blank\big)\) to the~distinguished triangle
\[
\{n_i,(n+2)_i\} \rightarrow E_n \xrightarrow{e_{n} } E_{n+1}\rightarrow\{(n-1)_i,(n+1)_i\}
\]
yields an isomorphism \(\Hom_{\mathcal{T}}(G^j_N,e_n)\). Hence \(\Hom_{T}(G^j,E)\simeq K\), so the approximation triangle for \(E\) has to look like 
\[
G^j \xrightarrow{\alpha} E \xrightarrow{} \varphi(E)\rightarrow G^j
\]
with \(\alpha\neq 0\).
By \cite[Lemma~2.8.iii]{Neeman25A}, this approximation triangle is a module colimit of a Cauchy sequence of distinguished triangles
\[
G^j_n \xrightarrow{\alpha_n} E_n' \xrightarrow{} F_n' \rightarrow \Sigma G^j_n
\]
as \(n\rightarrow\infty\). We are also allowed to impose for all \(n\in\N\) that \(E'_n\simeq E_m\) for some \(m> n\).

We define another sequence \(\mathbf{F}=(F_n,f_n)_{n\in\N}\) in \(\mathcal{C}_{2p}\) with \(F_n=\{-1_i,n_{(j+1)'}\}\) and \(f_n\neq0\) for all \(n\in\N\). 
Set \(F:=\moco\mathbf{F}\).
Clearly \(\mathbf{F}\) is Cauchy with respect to \(\mathcal{D}\). But also $F$ is compactly supported at \(\mathcal{D}\) because every arc in \(\mathcal{D}\) crosses only finitely many entries of \(\mathbf{F}\). This means \(F\in\mathfrak{S}_{\mathcal{D}}(C_{2p})\simeq\overline{\mathcal{C}_p}\subseteq\mathcal{T}\).

Let \(s,M\in\N\) be such that \(F,\varphi(E)\) are compactly supported at \(s\) and stabilise at \(s+1\) from \(M\) onwards and \(\alpha_{M+1}\neq0\).
We find \(L> M+1\) such that \(E_{M+1}'\simeq E_L\).
Then the existence of the triangle 
\[
G^j_{M+1} \xrightarrow{\alpha_{M+1}} E_L \xrightarrow{} \{M_j,L_j\}\oplus F_M \rightarrow \Sigma G^j_{M+1}
\]
tells us that \(F_M\) is a direct summand of \(F_M'\). By Lemma~\ref{L:BasicFactorisationProperty}, the factorisation \(F_M\rightarrow F_M'\rightarrow F_M\) in \(\mathcal{C}_{2p}\) of \(\id_{F_M}\) uniquely lifts to the level of completions as the factorisation of \(\id_F\) as \(F\rightarrow \varphi(E)\rightarrow F\).
Since \(\varphi\restriction\mathcal{S}\) is fully faithful, the object \(\varphi(E)\) is indecomposable, so \(F\simeq\varphi(E)\).

The object \(E\) corresponds to the arc \(\{-1_i,A_{j+1}\}\) in the combinatorial model of \(\mathcal{S}\). 
By Lemma~\ref{L:FunctorIntoQuotient}, the~object \(F\) is the image of \(\{-1_i,1_{(j+1)'}\}\) under the quotient functor \(\mathcal{C}_p\rightarrow\overline{\mathcal{C}_p}\), which then corresponds the arc \(\{-1_i,A_{j+1}\}\) in the combinatorial model of \(\overline{\mathcal{C}_p}\).

Now we move to the general case. In both of the combinatorial models of \(\mathcal{S}\) and \(\overline{\mathcal{C}_p}\),
every arc of the~form \(\{x_i,A_j\}\) for some \(i,j\in\{1,\ldots,p\}\) and some \(x\in\Z\) can be obtained as an integer-iterated shift of \(\{-1_i,A_j\}\). So from the special case, we can infer that \(\varphi\) preserves all arcs of this form. 

But by Lemma~\ref{L:PaquetteYildirimGeneration} and Lemma~\ref{L:CummingsGratzGeneration}, both \(\varphi(\mathcal{S})\) and \(\overline{\mathcal{C}_p}\) are generated by indecomposable objects corresponding to the arcs of the~form \(\{x_i,A_j\}\) for some \(i,j\in\{1,\ldots,p\}\) and some \(x\in\Z\), each within their own respective combinatorial models. And every indecomposable object is a cone of a morphism between arcs of this form. With this in mind, we may finally conclude
that 
\(\varphi(\mathcal{S})\simeq\overline{\mathcal{C}_p}\)
and \(\varphi:\mathcal{S}\rightarrow\overline{\mathcal{C}_p}\) is a triangulated equivalence fully preserving the combinatorial model. 
\end{proof}

\section{Metric completions from weighted projective lines}
\label{sec:CompletingWPL}

This final section is dedicated to classifying metric completions of \(\dbcohX\) for \(\X\) a weighted projective line of domestic or tubular type.
Firstly, we summarise the key properties coherent sheaves of weighted projective lines (Subsection~\ref{SS:WPL}) and their derived categories (Subsection~\ref{SS:DerivedSheaves}). The classification result (Theorem~\ref{T:WeightedCompletions}) is proved in Subsection~\ref{SS:Classification}.

\subsection{Weighted projective lines}
\label{SS:WPL}

Weighted projective lines and their categories of coherent sheaves were introduced in \cite{GeigleLenzing87}. Roughly speaking, a weighted projective line is obtained from the usual projective line by assigning weights to finitely many distinguished points. Weighted points then give rise to exceptional simple object in the category of coherent sheaves. While a weighted projective line was originally defined in \cite{GeigleLenzing87} using graded sheaf theory, here we follow a more direct approach from \cite{ChenKrause10} and \cite{Lenzing11} where coherent sheaves are constructed as a quotient of suitable category of graded modules. For the construction of quasi-coherent sheaves on weighted projective lines, we refer to \cite[pg.\ 75]{HugelKussin17} and \cite{ArtinZhang94}.

Let \(K\) be an algebraically closed field. Let \(k\in\N\) and \(\mathbf{p}=(p_1,\ldots,p_k)\in\N^k\).
Let \(\bflambda=(\lambda_1,\ldots,\lambda_k)\)
be a $k$-tuple of pairwise distinct points on \(\ProjectiveLine{K}\).
By \textit{a weighted projective line} we understand the triple \(\X=\left(\ProjectiveLine{K},\bflambda,\mathbf{p}\right)\).
Let \(\gradingL\) be a rank one abelian group with generators \(\vX_1,\ldots,\vX_k\) subjected to relations \(p_1\vX_1=\ldots=p_n\vX_n\). 
We specify two distinguished elements of \(\gradingL\), \textit{the canonical element} \mbox{\(\CanonicalElement:=p_1\vX_1\)} and \textit{the dualising element} \(\DualisingElement:=(k-2)\CanonicalElement-\sum_{i=1}^k\vX_i\), and a partial order on \(\gradingL\) by \(\vL\geq0\Leftrightarrow \vL\in\sum_{i=1}^k\N_0\vX_i\).
The group \(\gradingL\) is almost totally ordered in a sense that for all \(\vL\in\gradingL\) we have \(\vL\geq0\) or \(\vL\leq\CanonicalElement+\DualisingElement\).

We define a \(K\)-algebra
\[\gradedS:=K[u,v,x_1,\ldots,x_k]/(f_1,\ldots,f_k)
\text{\quad where \quad}
f_i:=x_i^{p_i}-\left(\lambda^1_i u-\lambda^0_i v\right)\]
for each \(1\leq i\leq k\) and \(\lambda_i=\left[\lambda^0_i,\lambda^1_i\right]\in\bflambda\).
The algebra \(\gradedS\) is \(\gradingL\)-graded by imposing grading \(\deg(u)=\deg(v)=\CanonicalElement\) and \(\deg(x_i)=\vX_i\) for each \(1\leq i\leq k\).
Thus \(\gradedS=\bigoplus_{\vL\in\gradingL}\gradedS_{\vL}\) decomposes as a direct sum of its homogenous components \(\gradedS_{\vL}\) of degree \(\vL\). It holds that \(\gradedS_0=K\) and \(\gradedS_{\vL}=0\) whenever \(\vL\not\geq0\).
Consequently, the algebra \(\gradedS\) is graded local with the unique graded maximal ideal
\(\bigoplus_{0\neq\vL\in\gradingL}\gradedS_{\vL}\).

We consider the category \(\Mod^{\gradingL}\dashmodule\gradedS\) of \(\gradingL\)-graded \(\gradedS\)-modules and its subcategories:
\begin{itemize}
    \item \(\modf^{\gradingL}\dashmodule\gradedS\) finitely generated graded \(\gradedS\)-modules,
    \item \(\modf_0^{\gradingL}\dashmodule\gradedS\) graded \(\gradedS\)-modules of finite length,
    \item \(\Mod_0^{\gradingL}\dashmodule\gradedS:=\varinjlim\modf_0^{\gradingL}\dashmodule\gradedS\) (i.e.\ closure under directed colimits).
\end{itemize}
On a graded module \(M=\bigoplus_{\vX\in\gradingL}M_{\vX}\), we can perform a graded shift by \(\vY\in\gradingL\) to obtain \(M\left(\vY\right)\) with homogenous components \(M(\vY)_{\vX}:=M_{\vX+\vY}\).

As \(\Mod_0^{\gradingL}\dashmodule\gradedS\) is a Serre subcategory of \(\Mod^{\gradingL}\dashmodule\gradedS\), we may apply Serre's construction and define the category of \textit{quasi-coherent sheaves} over \(\X\) as the quotient
\[
\QcohX:=\Mod^{\gradingL}\dashmodule\gradedS\left/\Mod_0^{\gradingL}\dashmodule\gradedS\right.
.\]
This construction restricts to \(\modf^{\gradingL}\dashmodule\gradedS\) giving us the category of \textit{coherent sheaves} over \(\X\) as \(\cohX:=\modf^{\gradingL}\dashmodule\gradedS/\modf_0^{\gradingL}\dashmodule\gradedS\).

The subcategory of finite length objects of \(\cohX\) (also called \textit{torsion sheaves}) is denoted \(\cohfX\).
On the~other hand, the subcategory \(\vectX\) of \textit{vector bundles} consists precisely of objects of \(\cohX\) without any non-zero finite length subobjects.

The graded shift action of \(\gradingL\) on \(\Mod^{\gradingL}\dashmodule\gradedS\) induces a graded shift on \(\QcohX\).
The image \(\mathcal{O}\) of \(\gradedS\) in \(\cohX\) under the quotient functor is called \textit{the~structure sheaf}; its shifts \(\mathcal{O}(\vX)\) by some \(\vX\in\gradingL\) are \textit{line bundles}.  

\begin{remark}[{\cite[Corollary~7.4.4]{ChenKrause10}}]
The category \(\cohX\) does not depend on the choice of coordinates of the weighted points \(\lambda_i=\left[\lambda^0_i,\lambda^1_i\right]\in\bflambda\), \(1\leq i\leq k\), during the construction of \(\gradedS\).
\end{remark}

\begin{definition}
Let \(\mathcal{H}\) be a Hom-finite, $K$-linear, length, abelian category satisfying Serre duality. 
Assume that the Auslander-Reiten translate \(\tau:\mathcal{H}\rightarrow\mathcal{H}\) has finite order, and let \(n\in\N\) be minimal such that \(\tau^n\simeq\Id_{\mathcal{H}}\).
We say that \(\mathcal{H}\) is \textit{a tube} of \textit{rank} $n$ if \(\tau\) acts transitively on simple objects, i.e.\ for all simples \(U,V\in\mathcal{H}\) there exists \(m\in\N\) satisfying \(\tau^mU=V\).
Tube of rank $1$ is \textit{homogenous}.
\end{definition}

The basic properties of \(\cohfX\) are summarised in the theorem below.

\begin{theorem}[{\cite[Theorem~2.1]{Jasso15},\cite[Theorem~2.2]{Lenzing11}}]
The category \(\cohX\) is an abelian, noetherian, hereditary, $K$\=/linear, Hom-finite, Krull-Schmidt category. Moreover:
\begin{enumerate}[label=(\roman*)]
\item Every object \(X\in\cohX\) uniquely decomposes as a direct sum \(X=Y\oplus Z\) with \(Y\in\vectX\) and \(Z\in\cohfX\).
\item For all \(\vX,\vY\in\gradingL\) it holds that \(\Hom_{\cohX}\big(\mathcal{O}(\vX),\mathcal{O}(\vY)\big)\simeq\gradedS_{\vY-\vX}\).
\item Simple objects of \(\cohfX\) are up to isomorphism parametrised by \(\ProjectiveLine{\lambda}\). Each simple is of one of the two following forms:
\begin{enumerate}[label=(\alph*)]
  \item for \(\lambda\in\ProjectiveLine{K}\setminus\bflambda\), \(\left[\lambda^0,\lambda^1\right]\in\ProjectiveLine{K}\), there is a unique \textit{ordinary} simple \(S_{\lambda}\) determined by the short exact sequence
  \[
  0\rightarrow\mathcal{O}\xrightarrow{\lambda^1u-\lambda^0v}\mathcal{O}(\CanonicalElement)\rightarrow S_{\lambda}\rightarrow0,
  \]
  \item for \(1\leq i\leq k\) and \(1\leq j\leq p_i\) 
  there is a unique \textit{exceptional} simple \(S_{\lambda_i,j}\) determined by the~short exact sequence
  \[
  0\rightarrow\mathcal{O}\big((j-1)\vX_i\big)\xrightarrow{x_i}\mathcal{O}(j\vX)\rightarrow S_{\lambda_i,j}\rightarrow0.
  \]
\end{enumerate}
    \item The category \(\cohX\) satisfies Serre duality with the graded shift \(X\mapsto X(\DualisingElement)\) being the~Auslander-Reiten translation \(\tau\). In particular, we have \(S(\DualisingElement)\simeq S\) for every exceptional simple \(S\), and \(S_{\lambda_i,j}(\DualisingElement)\simeq S_{\lambda_i,(j \modf p_i)+1}\) for all \(1\leq i\leq k\) and \(1\leq j\leq p_i\). 
\end{enumerate}
\end{theorem}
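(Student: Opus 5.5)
The plan is to derive everything from the graded commutative algebra of \(\gradedS\), following \cite{GeigleLenzing87} and \cite{Lenzing11}, in the order: structural facts, then (ii), then (iii), then (iv), and finally (i) as a consequence of (iv).

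\emph{Structural facts and (ii).} First I would record that \(\gradedS\) is a noetherian \(\gradingL\)-graded complete intersection of Krull dimension two, with \(x_1,\ldots,x_k\) and \(u,v\) playing the role of a homogeneous system of parameters; in particular \(\gradedS\) is graded Cohen--Macaulay of depth two. Since \(\modf_0^{\gradingL}\dashmodule\gradedS\) is a Serre subcategory, \(\cohX\) is abelian. It is noetherian because \(\modf^{\gradingL}\dashmodule\gradedS\) is noetherian and the quotient functor is exact and essentially surjective. For (ii), I would compute morphisms in the Serre quotient as \(\varinjlim\Hom(M',N/N')\) over finite-colength \(M'\subseteq M\) and finite-length \(N'\subseteq N\). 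For \(M=\gradedS(\vX)\) and \(N=\gradedS(\vY)\), depth two forces the local cohomology groups \(H^0_{\mathfrak m}\) and \(H^1_{\mathfrak m}\) of \(\gradedS\) to vanish. Hence the colimit stabilises at \(\Hom^{\gradingL}(\gradedS(\vX),\gradedS(\vY))=\gradedS_{\vY-\vX}\). Each \(\gradedS_{\vL}\) is finite dimensional, since \(\gradedS_0=K\) and \(\gradedS\) is finitely generated. Every coherent sheaf has a finite presentation by line bundles, so Hom-finiteness of \(\cohX\) follows. Together with the fact that idempotents split in an abelian category, this gives that \(\cohX\) is Krull--Schmidt.

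\emph{(iii).} Finite-length sheaves come from graded modules of Krull dimension one. Their simple objects correspond to graded primes of height one, that is, homogeneous prime ideals \(\mathfrak p\) with \(\dim\gradedS/\mathfrak p=1\). Modulo \(x_1,\ldots,x_k\), every such prime is generated by a linear form in \(u,v\) up to radical, which gives the parametrisation by \(\ProjectiveLine{K}\). At an ordinary point the form \(\lambda^1u-\lambda^0v\) is prime, and the cokernel of the corresponding degree-\(\CanonicalElement\) map is simple, which gives \(S_\lambda\). At \(\lambda_i\) the prime is \((x_i)\). The quotient \(\gradedS/(x_i)\) has a filtration by the \(p_i\) cokernels of \(x_i\colon\mathcal{O}((j-1)\vX_i)\to\mathcal{O}(j\vX_i)\), and these are pairwise non-isomorphic simples. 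This gives \(S_{\lambda_i,j}\). Uniqueness follows by comparing the degrees in which the simple is supported.

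\emph{(iv) and (i).} Serre duality is the main obstacle. I would derive it from graded local duality. The graded canonical module of the complete intersection \(\gradedS\) is \(\gradedS(\DualisingElement)\), computed from the degrees of the relations \(f_i\) and generators, namely \(k\CanonicalElement\) versus \(2\CanonicalElement+\sum\vX_i\). Local duality then gives
\[
\Ext^1_{\cohX}(X,Y)\simeq \mathrm{D}\Hom_{\cohX}\big(Y,X(\DualisingElement)\big)
\]
first on line bundles, using the identification \(\Ext^1=H^2_{\mathfrak m}\) after sheafification. It extends to all coherent sheaves by resolutions and five-lemma arguments. Right exactness of \(\Ext^1(X,-)\) then shows that \(\cohX\) is hereditary, as already noted in the preliminaries. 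The action on simples is checked by twisting the defining sequences: since \(\DualisingElement\equiv -\vX_i\) modulo the relations relevant at \(\lambda_i\), the exceptional simples are permuted cyclically and the ordinary ones are fixed. Finally, for (i) I would take the torsion subobject \(tX\subseteq X\). The quotient \(X/tX\) is a vector bundle, \(\Hom(\cohfX,\vectX)=0\), and \(\cohfX\) is stable under \(\tau\). Hence \(\Ext^1(X/tX,tX)\simeq \mathrm{D}\Hom(tX,\tau(X/tX))=0\), so the sequence splits, and uniqueness follows from the same vanishing of Hom.
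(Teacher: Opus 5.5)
The paper gives no proof of this statement. It is quoted from \cite{Jasso15,Lenzing11}, which go back to \cite{GeigleLenzing87}. Your outline follows that classical route: depth two of \(\gradedS\) for (ii), graded local duality with canonical module \(\gradedS(\DualisingElement)\) for (iv), and heredity and the splitting (i) derived from Serre duality. Much of it is sound, but the step that (iv), heredity and (i) all rest on is not justified as written.

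\textbf{The gap: extending Serre duality.} Graded local duality combined with \(\Ext^1_{\cohX}(\mathcal{O}(\vX),\widetilde M)\simeq H^2_{\mathfrak m}(M)_{-\vX}\) gives \(\Ext^1(L,Y)\simeq\mathrm{D}\Hom(Y,\tau L)\), naturally in \(Y\). It also gives \(\Ext^2(L,-)=0\), since \(H^3_{\mathfrak m}=0\). But this holds only when the first argument \(L\) is a line bundle. (No d\'evissage in \(Y\) is needed, since local duality applies to every finitely generated \(M\).)

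The passage to arbitrary \(X\) ``by resolutions and five-lemma arguments'' does not work as described. Take \(0\to X'\to P\to X\to 0\) with \(P\) a sum of line bundles.
\begin{itemize}
\item Depending on where you cut the long exact sequences, the five lemma needs one of two things. Either the companion formula \(\Hom(A,Y)\simeq\mathrm{D}\Ext^1(Y,\tau A)\) for \(A=P,X'\), with \(Y\) arbitrary in the \emph{first} slot of \(\Ext^1\); the identification \(\Ext^1=H^2_{\mathfrak m}\) does not provide this. Or the vanishing of \(\Ext^2\)-terms, which is exactly the heredity you only deduce afterwards.
\item The kernel \(X'\) is merely a vector bundle, not a sum of line bundles, and torsion sheaves do not embed into line bundles. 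So the d\'evissage does not terminate on line bundles.
\item The comparison needs isomorphisms natural in both variables and compatible with connecting morphisms. Isomorphisms produced one line bundle at a time do not supply this.
\end{itemize}

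One way to close the gap is to prove heredity first and independently. For a simple \(S\) with \(0\to L\to L'\to S\to 0\), the map \(\Ext^1(L',Y)\to\Ext^1(L,Y)\) is dual to the injection \(\Hom(Y,\tau L)\to\Hom(Y,\tau L')\), hence onto, so \(\Ext^2(S,-)=0\). Then pass to vector bundles via line bundle filtrations, and to arbitrary \(X\) via \(0\to tX\to X\to X/tX\to 0\). Only afterwards construct a duality natural in both variables. This can be done through a Yoneda pairing with a trace map, or by transporting the Serre functor of \(\derived^b(\modf\dashmodule A)\) for the canonical algebra \(A\) and identifying it with \(\Sigma\tau\) on line bundles.

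\textbf{Smaller slips.}
\begin{itemize}
\item Hom-finiteness does not follow from finite presentations by line bundles. A presentation only gives \(\Hom(F,G)\subseteq\bigoplus_j\Hom(\mathcal{O}(\vec b_j),G)\), and line bundles are not projective in \(\cohX\) (e.g.\ \(\Ext^1(\mathcal{O},\mathcal{O}(\DualisingElement))\neq 0\)). So finiteness of \(\Hom(\mathcal{O}(\vec b),G)\) for arbitrary \(G\) needs finite-dimensionality of graded pieces of \(H^1_{\mathfrak m}\), for instance via the local-duality formula above. Hom-finiteness and Krull--Schmidt must therefore come after that step.
\item In (iii), it is \(\gradedS/(x_i^{p_i})=\gradedS/(\lambda_i^1u-\lambda_i^0v)\), not \(\gradedS/(x_i)\), whose image has the \(p_i\) exceptional simples as composition factors. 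The image of \(\gradedS/(x_i)\) is itself simple.
\item Your congruence \(\DualisingElement\equiv-\vX_i\), with the defining sequences as printed, yields \(S_{\lambda_i,j}(\DualisingElement)\simeq S_{\lambda_i,j-1}\). So the printed index \((j\bmod p_i)+1\) is that of \(\tau^{-1}\). Also, the fixed simples are the ordinary ones, not the exceptional ones as printed. State the direction explicitly rather than writing ``permuted cyclically''.
\end{itemize}
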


We can see that the Auslander-Reiten translation \(\tau:\cohX\rightarrow\cohX\) restricts to an~Aus\-lan\-der-Rei\-ten translation on torsion sheaves and that simple objects concentrated at a fixed point \(\lambda\in\ProjectiveLine{K}\) form a~\(\tau\)\=/orbit. But \(\cohfX\) is a length category,  so \cite[Proposition~1.8.2]{ChenKrause10} implies \(\cohfX=\coprod_{\lambda\in\ProjectiveLine{K}}\mathcal{H}_\lambda\) where the wide subcategories \(\mathcal{H}_{\lambda}\) are generated by \(\tau\)-orbits of simple objects. If \(\lambda=\lambda_i\in\bflambda\) for some \(1\leq i\leq k\), then \(\mathcal{H}_{\lambda}\) is a tube of rank~\(p_i\). Otherwise, the tube \(\mathcal{H}_{\lambda}\) is homogenous.

\begin{lemma}[{\cite[Lemma~2.4]{Lenzing11}}]
Let \(X\in\cohX\). Then \(X\simeq0\) if and only if \([X]=0\) in~\(K_0(\cohX)\).   
\end{lemma}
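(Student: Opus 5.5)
The plan is to reduce the statement to the two basic structural facts about \(\cohX\): it is hereditary and Krull-Schmidt with the decomposition \(\cohX=\vectX\oplus\cohfX\), and the rank and degree of sheaves are additive functions on \(K_0(\cohX)\). The implication ``\(X\simeq0\Rightarrow[X]=0\)'' is trivial, so only the converse needs an argument.

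To that end, I would use the linear form \(\operatorname{rk}:K_0(\cohX)\to\Z\) and a degree-type form \(\deg:K_0(\cohX)\to\Z\). These can be defined on line bundles by \(\operatorname{rk}\mathcal{O}(\vX)=1\) and \(\deg\mathcal{O}(\vX)=\delta(\vX)\), where \(\delta:\gradingL\to\Z\) is the homomorphism with \(\delta(\vX_i)=p/p_i\) and \(p=\operatorname{lcm}(p_i)\). They are well defined because \(K_0(\cohX)\) is generated by classes of line bundles, and the defining exact sequences of the simples are compatible with these values.

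The key properties are the following.
\begin{itemize}
\item Every simple \(S\) satisfies \(\operatorname{rk}S=0\) and \(\deg S>0\). From the short exact sequences above, \(\deg S_{\lambda}=\delta(\CanonicalElement)=p\) and \(\deg S_{\lambda_i,j}=\delta(\vX_i)=p/p_i\).
\item Every nonzero finite length sheaf therefore has rank \(0\) and strictly positive degree, by additivity along a composition series.
\item Every nonzero vector bundle \(E\) has \(\operatorname{rk}E>0\). I would cite that vector bundles admit a filtration by line bundles, so the rank is a positive count of those line bundles.
\end{itemize}

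Now suppose \([X]=0\) and write \(X=Y\oplus Z\) with \(Y\in\vectX\) and \(Z\in\cohfX\). Then \(0=\operatorname{rk}X=\operatorname{rk}Y\), which forces \(Y=0\). Next \(0=\deg X=\deg Z\), which forces \(Z=0\). Hence \(X\simeq0\).

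The main obstacle is the well-definedness of \(\operatorname{rk}\) and \(\deg\) on \(K_0(\cohX)\), together with positivity of rank on nonzero vector bundles. For well-definedness, the cleanest route is via the known presentation of \(K_0(\cohX)\), which is free with basis given by classes of \(\mathcal{O}(\vX)\) for \(0\le\vX\le\CanonicalElement\). Alternatively, rank can be defined directly as the dimension of the generic fibre, i.e.\ after localising at the Serre subcategory \(\cohfX\), which makes it additive automatically.

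Similarly, the degree can be taken as \(\deg X=\langle\mathcal{O},X\rangle-\langle\mathcal{O},\mathcal{O}\rangle\operatorname{rk}X\) up to normalisation, using the Euler form, which is additive on \(K_0\). If that normalisation is awkward, positivity on torsion sheaves only needs the Euler form \(\langle\mathcal{O}(\vX),S\rangle=\dim\Hom(\mathcal{O}(\vX),S)\ge0\), together with the observation that for each simple \(S\) some line bundle maps nonzero to it. Summing \(\langle\mathcal{O}(\vX),\blank\rangle\) over \(0\le\vX\le\CanonicalElement\) then gives an additive function that is positive on every nonzero torsion sheaf and vanishes on nothing torsion.
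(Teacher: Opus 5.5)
The paper gives no argument of its own here, only the citation \cite[Lemma~2.4]{Lenzing11}. Your proof is the standard argument and it is correct: you split \(X=Y\oplus Z\), kill \(Y\) with the rank, and then kill \(Z\) with an additive function that is positive on every simple torsion sheaf.

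Two side remarks are inaccurate, though neither is load-bearing. First, generation of \(K_0(\cohX)\) by classes of line bundles does not by itself make \(\operatorname{rk}\) and \(\deg\) well defined; use the free basis or the generic-rank argument you mention. Second, \(\langle\mathcal{O},\blank\rangle-\operatorname{rk}\) is not a multiple of the degree once weights are present, since it vanishes on all but one exceptional simple at each \(\lambda_i\). Your functional \(\sum_{0\le\vX\le\CanonicalElement}\langle\mathcal{O}(\vX),\blank\rangle\) does the job instead. It is non-negative on torsion sheaves \(Z\) because \(\Ext^1(\mathcal{O}(\vX),Z)\) is dual to \(\Hom(Z,\tau\mathcal{O}(\vX))=0\).
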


\begin{definition}[{\cite[Section~2.2]{Lenzing11}}]
Let \(p\) be the least common multiple of \(p_1,\ldots,p_k\).
We define the~following two linear forms on the~Grothendieck group \(K_0(\cohX)\):
\begin{enumerate}[label=(\roman*)]
\item \textit{The rank} is the linear form \(\rank: K_0(\cohX)\rightarrow\Z\) determined by imposing \(\rank([X])=1\) for every line bundle $X$.
\item \textit{The degree} is the linear form \(\deg:K_0(\cohX)\rightarrow\Z\) characterised by \(\deg([\mathcal{O}])=0\), \(\deg([S])=p\) for every ordinary simple $S$, and \(\rank([S_{\lambda_i,j}])=\frac{p}{p_i}\) for every exceptional simple \(S_{\lambda_i,j}\)  given by \(1\leq i\leq k\) and \(1\leq j\leq p_i\).  
\end{enumerate}
For \(X\in\cohX\) we denote \(\rank(X):=\rank([X])\) and \(\deg(X):=\deg([X])\).
And for \(0\neq X\in\cohX\) we define \textit{the slope} of \(X\) as \(\mu(X):=\frac{\deg(X)}{\rank(X)}\in\Q\cup\{\infty\}\) where the~slope is infinite precisely when \(X\in\cohfX\).
\end{definition}

\begin{definition}[{\cite[Section~3.1]{Lenzing11}}]
We say that \(0\neq X\in\cohX\) is \textit{semistable} if for every subobject \(0\neq Y\) of $X$ we get \(\mu(Y)\leq\mu(X)\).

Let \(q\in\Q\cup\{\infty\}\). We define the category \(\mathcal{H}^{(q)}\) as the subcategory of \(\cohX\) consisting of the zero object and all the semistable objects \(X\) with \(\mu(X)=q\).
\end{definition}

\begin{proposition}[{\cite[Lemma~3.1]{Lenzing11}}]
Let \(q\in\Q\cup\{\infty\}\). Then \(\mathcal{H}^{(q)}\) is abelian, hereditary, length, $K$-linear category which is extension-closed as a subcategory of \(\cohX\).
\end{proposition}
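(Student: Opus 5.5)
The statement is Lenzing's Lemma 3.1, asserting that \(\mathcal{H}^{(q)}\) is abelian, hereditary, length, \(K\)-linear and extension-closed in \(\cohX\). I would prove it by the standard slope-stability argument built on additivity of rank and degree. Since \(\rank\) and \(\deg\) are linear forms on \(K_0(\cohX)\), for any short exact sequence \(0\to X'\to X\to X''\to 0\) of non-zero objects the slope \(\mu(X)\) lies between \(\mu(X')\) and \(\mu(X'')\), with equality throughout as soon as two of the three slopes agree. For this I need two positivity facts. First, \(\rank\geq0\) on \(\cohX\), with \(\rank(X)=0\) exactly when \(X\in\cohfX\). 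Second, \(\deg>0\) on non-zero torsion sheaves. Both are read off from the decomposition \(X=Y\oplus Z\) with \(Y\in\vectX\) and \(Z\in\cohfX\), from line bundle filtrations of vector bundles, and from the values on simples; the lemma that \([X]=0\) forces \(X\simeq 0\) rules out degenerate cases. From these I get the seesaw property: \(X\neq0\) is semistable if and only if every non-zero quotient \(X''\) satisfies \(\mu(X'')\geq\mu(X)\).

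\emph{Extension-closure.} Take \(0\to X'\to X\to X''\to0\) with \(X',X''\in\mathcal{H}^{(q)}\). Additivity gives \(\mu(X)=q\). For a subobject \(Y\subseteq X\), set \(Y'=Y\cap X'\) and let \(Y''\) be the image of \(Y\) in \(X''\). Semistability of the outer terms bounds \(\mu(Y')\) and \(\mu(Y'')\) by \(q\), and the seesaw gives \(\mu(Y)\leq q\), where zero terms are simply dropped.

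\emph{Abelian.} This is the key step. Given \(f:X\to X'\) in \(\mathcal{H}^{(q)}\) with \(f\neq0\), semistability of \(X'\) gives \(\mu(\Image f)\leq q\), and the quotient form of semistability for \(X\) gives \(\mu(\Image f)\geq q\), so \(\mu(\Image f)=q\). The seesaw then yields \(\mu(\Ker f)=q\) (if non-zero) and \(\mu(\Coker f)=q\) (if non-zero). Kernel and image are subobjects of semistable objects of the same slope, hence semistable. For the cokernel I would use the quotient characterisation: a quotient of \(\Coker f\) is a quotient of \(X'\), so its slope is at least \(q\). The case \(q=\infty\) needs separate care, since there \(\mathcal{H}^{(\infty)}=\cohfX\) directly, which is clearly abelian. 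Kernels and cokernels computed in \(\cohX\) therefore stay in \(\mathcal{H}^{(q)}\), so \(\mathcal{H}^{(q)}\) is an exact abelian subcategory.

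\emph{Hereditary and \(K\)-linear.} By extension-closure, Yoneda \(\Ext^1\) in \(\mathcal{H}^{(q)}\) agrees with \(\Ext^1\) in \(\cohX\). Higher Ext vanishes in \(\cohX\); for the subcategory I would argue that \(\Ext^1\) is right exact on \(\mathcal{H}^{(q)}\), because it is a restriction of a right exact functor along exact inclusions. \(K\)-linearity is inherited from \(\cohX\).

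\emph{Length.} I expect this to be the main obstacle. For \(q\neq\infty\), a proper non-zero subobject in \(\mathcal{H}^{(q)}\) has strictly smaller rank, or equal rank and strictly smaller degree, and \(\deg\) is bounded on subobjects of equal rank. I would therefore induct on \((\rank,\deg)\); the difficulty is bounding chains, which I would handle via the fact that \(\rank\) takes values in \(\N_0\) together with slope equality. Since all objects in a chain have slope \(q\), rank strictly decreases along any strict chain, which bounds chain length by \(\rank X\). For \(q=\infty\), the length property holds because \(\cohfX\) is a length category.
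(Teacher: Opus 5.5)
Your proof is correct. The seesaw property from additivity and positivity of rank and degree, closure of \(\mathcal{H}^{(q)}\) under kernels, cokernels and extensions, hereditariness via right exactness of \(\Ext^1\), and the bound by the rank of \(X\) on strict chains for finite \(q\) together form the standard slope argument. The paper gives no proof of its own and simply cites Lenzing's Lemma~3.1, so there is no differing route to compare against.
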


Note that \(\mathcal{H}^{(\infty)}=\cohfX\).

\begin{definition}
We define \textit{the Euler characteristic} of \(\X\) as \(\chi_{\X}:=2-\sum_{i=1}^k\left(1-\frac{1}{p_i}\right)\) and say that \(\X\) is:
\begin{itemize}
    \item \textit{domestic} if \(\chi_{\X}>0\),
    \item  \textit{tubular} if \(\chi_{\X}=0\),
    \item or \textit{wild} if \(\chi_{\X}<0\).
\end{itemize}
\end{definition}

\begin{remark}
As discussed in \cite[pg.\ 12]{Jasso15}, the weighted projective line \(\X\) is tubular if and only if the~tuple \(\mathbf{p}\) is a permutation of \((2,2,2,2)\), \((3,3,3)\), \((2,4,4)\), or \((2,3,6)\).
\end{remark}

\begin{remark}
The Euler characteristic is tied to the degree of the dualising element \(\DualisingElement\) as
\[
\chi_{\X}=-\frac{\deg\big(\mathcal{O}(\DualisingElement)\big)}{p}
\]
holds for \(p\) being the least common multiple of \(p_1,\ldots,p_k\). By \cite[Lemma~2.5]{Lenzing11}, we get
\[
\mu(\tau X)=\mu\big(X(\DualisingElement)\big)=\mu(X)+\deg\big(\mathcal{\DualisingElement}\big)=\mu(X)-p\chi_{\X}
.\]
In other words, the Euler characteristic tells us how \(\tau\) changes the slope of an object.
\end{remark}

\begin{proposition}[{\cite[Proposition~5.5)]{GeigleLenzing87}}]
If \(\chi_{\X}\geq0\), then every indecomposable object in \(\cohX\) is semistable.

Moreover, if \(\X\) is domestic, then every indecomposable vector bundle is exceptional.
\end{proposition}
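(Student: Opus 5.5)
The plan is the usual Harder--Narasimhan argument combined with Serre duality $\Ext^1_{\cohX}(X,Y)\simeq \Hom_K\big(\Hom_{\cohX}(Y,\tau X),K\big)$. The basic input is: \emph{if $A,B$ are semistable with $\mu(A)>\mu(B)$, then $\Hom_{\cohX}(A,B)=0$.} To see this, take a nonzero $f\colon A\to B$. Semistability of $A$ and additivity of $\rank$ and $\deg$ on $0\to\Ker f\to A\to\operatorname{Im}f\to 0$ give $\mu(\operatorname{Im}f)\geq\mu(A)$. Semistability of $B$ gives $\mu(\operatorname{Im}f)\leq\mu(B)$. The infinite-slope case needs a brief separate check: a torsion sheaf admits no nonzero map to a vector bundle.

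For the first claim, let $X$ be indecomposable. If $X\in\cohfX$, it is trivially semistable of slope $\infty$, so assume $X$ is a vector bundle. Suppose $X$ is not semistable.
\begin{itemize}
\item Take the maximal destabilising subobject $X_1\subsetneq X$: the largest subobject among those of maximal slope. It exists by noetherianity of $\cohX$ and boundedness of slopes of subbundles of a fixed bundle.
\item Then $X_1$ is semistable, and every nonzero subobject of $X/X_1$ has slope $<\mu(X_1)$.
\item Hence every Harder--Narasimhan factor of $X/X_1$ has slope $<\mu(X_1)$.
\end{itemize}
Now $\Ext^1(X/X_1,X_1)\simeq \Hom_K\big(\Hom(X_1,\tau(X/X_1)),K\big)$. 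The translation $\tau$ is the exact autoequivalence $X\mapsto X(\DualisingElement)$, so it carries the Harder--Narasimhan filtration of $X/X_1$ to that of $\tau(X/X_1)$. By the Remark, it shifts all slopes by $-p\chi_{\X}\leq 0$. Hence all Harder--Narasimhan factors of $\tau(X/X_1)$ have slope $<\mu(X_1)$. Filtering and applying the basic vanishing factor by factor gives $\Hom(X_1,\tau(X/X_1))=0$. The extension $0\to X_1\to X\to X/X_1\to 0$ therefore splits, which contradicts indecomposability since $X_1\neq 0,X$.

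For the second claim, let $\X$ be domestic and $X$ an indecomposable vector bundle. By the first part $X$ is semistable, and $\tau X$ is semistable of slope $\mu(X)-p\chi_{\X}<\mu(X)$. Serre duality and the basic vanishing then give $\Ext^1(X,X)\simeq\Hom_K\big(\Hom(X,\tau X),K\big)=0$.

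It remains to show $\End(X)=K$. Here I would invoke the Happel--Ringel lemma for hereditary categories: if $X,Y$ are indecomposable with $\Ext^1(Y,X)=0$, every nonzero $f\colon X\to Y$ is a monomorphism or an epimorphism. Applied to $Y=X$, a nonzero endomorphism $f$ is mono or epi. In either case its kernel or cokernel has class $0$ in $K_0(\cohX)$, hence vanishes by Lenzing's lemma, so $f$ is an isomorphism. Thus $\End(X)$ is a finite-dimensional division algebra over the algebraically closed field $K$, hence equals $K$, and $X$ is exceptional.

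The main obstacle I expect is the Harder--Narasimhan step: existence of the maximal destabilising subobject and the slope bound on the quotient's factors. I would argue this directly from noetherianity and additivity of $\rank$ and $\deg$, rather than citing it.
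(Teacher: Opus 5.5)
The paper gives no proof of this statement; it is quoted from Geigle--Lenzing. Your argument is correct and is essentially the classical one behind that reference. It consists of a maximal destabilising subobject $X_1\subsetneq X$, Serre duality $\Ext^1(X/X_1,X_1)\simeq\Hom_K\big(\Hom(X_1,\tau(X/X_1)),K\big)$, and the fact that $\tau X=X(\DualisingElement)$ lowers finite slopes by $p\chi_{\X}\geq0$. In the domestic case you then use $\Hom(X,\tau X)=0$, plus Happel--Ringel in the hereditary Krull--Schmidt category $\cohX$ and Lenzing's $K_0$-lemma, to get $\End(X)=K$.

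One simplification: you do not need the Harder--Narasimhan filtration of $X/X_1$. Suppose $f\colon X_1\to\tau(X/X_1)$ is nonzero with image $I$. Then $\mu(I)\geq\mu(X_1)$, because $I$ is a nonzero quotient of the semistable bundle $X_1$. On the other hand, $\tau^{-1}I$ is a nonzero subobject of $X/X_1$, so $\mu(I)\leq\mu(\tau^{-1}I)<\mu(X_1)$, a contradiction.

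So the only genuinely non-formal input is the existence of $X_1$. This amounts to showing that slopes of subobjects of a fixed vector bundle are bounded above; the maximum is then attained, since the denominators are bounded by the rank. You should either cite this from the slope theory of $\cohX$ or derive it. One route is Riemann--Roch together with the vanishing of $\Hom(L,X)$ for line bundles $L$ of sufficiently large degree, which follows from a line bundle filtration of $X$.
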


\subsection{Derived categories of weighted projective lines}
\label{SS:DerivedSheaves}

In this subsection, we fix a weighted projective line \(\X=\left(\ProjectiveLine{K},\bflambda,\mathbf{p}\right)\) with \(k\in\N\) weighted points over an algebraically closed field $K$.

The quasi-coherent sheaves \(\QcohX\) form a hereditary, locally noetherian Gro\-then\-dieck category (\cite[pg.\ 1481]{Kussin16}) with \(\cohX\) being its noetherian objects. The derived category \(\derived(\QcohX)\) is then compactly generated with compact elements being isomorphic to bounded complexes of objects from \(\cohX\) by \cite[Proposition~2.5]{Stovicek09} (see also \cite{Krause05A}). Hence, we may identify \(\dbcohX\simeq\derived(\QcohX)^c\) as a~triangulated subcategory of \(\derived(\QcohX)\).
As \(\QcohX\) is hereditary, every object of \(\derived(\QcohX)\) decomposes as a~coproduct of its cohomologies (see e.g.\ \cite[Proposition~4.4.15]{Krause21}).

The category \(\dbcohX\) is generated by an exceptional sequence. For the sake of completeness, we include the definition.

\begin{definition}[{\cite[Section~2]{Bondal90}}]
Let \(\mathcal{T}\) be a $K$-linear triangulated category.
An object \(E\in\mathcal{T}\) is \textit{exceptional} if \(\End_{\mathcal{T}}(E)\simeq K\) and
\(
\Hom_{\mathcal{T}}(E,\Sigma^n E)\simeq0
\)
for all
\(n\in\Z\setminus\{0\}\).

A tuple \((E_1,\ldots,E_n)\) of exceptional objects \(E_1,\ldots,E_n\in\mathcal{T}\) for some \(n\in\N_0\) is said to be \textit{an exceptional sequence} if \(\Hom_{\mathcal{T}}(X_j,\Sigma^m X_i)\simeq0\) for all \(m\in\Z\) and \(1\leq i<j\leq n\).
\end{definition}

\begin{lemma}[{See eg.\ \cite[pg.\ 38]{Elagin25}}]
\label{L:LineBundlesFormExceptionalSequence}
The category \(\dbcohX\) is generated by an exceptional sequence.
Line bundles of the form \(\mathcal{O}(\vL)\) for \(0\leq\vL\leq\CanonicalElement\) can be arranged into
a particular example of an exceptional sequence
\[
\Big(\mathcal{O},\mathcal{O}(\vX_1),\ldots,\mathcal{O}\big((p_1-1)\vX_1\big),\ldots,\mathcal{O}(\vX_k),\ldots,\mathcal{O}\big((p_k-1)\vX_k\big),\mathcal{O}(\CanonicalElement)\Big)
.\]
of length \(2+\sum_{i=1}^k(p_i-1)\) generating \(\dbcohX\).    
\end{lemma}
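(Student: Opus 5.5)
The statement has two parts. First, the line bundles \(\mathcal{O}(\vL)\) with \(0\leq\vL\leq\CanonicalElement\) form an exceptional sequence in \(\dbcohX\) in the displayed order. Second, that sequence generates \(\dbcohX\) as a thick subcategory. I plan to reduce both parts to Hom computations in \(\cohX\) between line bundles, using the theorem above: \(\Hom_{\cohX}\big(\mathcal{O}(\vX),\mathcal{O}(\vY)\big)\simeq\gradedS_{\vY-\vX}\), Serre duality with \(\tau=(\blank)(\DualisingElement)\), and the fact that \(\cohX\) is hereditary.

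\textbf{Exceptionality.} Since \(\cohX\) is hereditary, for sheaves \(A,B\) the only possibly non-zero groups \(\Hom_{\dbcohX}(A,\Sigma^mB)\) occur at \(m=0,1\). Serre duality gives
\[
\Ext^1\big(\mathcal{O}(\vX),\mathcal{O}(\vY)\big)\simeq D\,\gradedS_{\vX-\vY+\DualisingElement}.
\]
\begin{itemize}
\item Each \(\mathcal{O}(\vL)\) is exceptional: \(\End\simeq\gradedS_0=K\), and \(\Ext^1\simeq D\gradedS_{\DualisingElement}=0\) because \(\DualisingElement\not\geq0\).
\item Vanishing in the backward direction: take indices \(i<j\) with \(\vL_i,\vL_j\in[0,\CanonicalElement]\) in the listed order. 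I must show \(\gradedS_{\vL_i-\vL_j}=0\) and \(\gradedS_{\vL_j-\vL_i+\DualisingElement}=0\).
\item For the first, check that \(\vL_i-\vL_j\not\geq0\) in each case: \(\vL_i\) is a smaller multiple of the same \(\vX_s\); or \(\vL_i,\vL_j\) lie on different arms \(\vX_s\neq\vX_t\), in which case the difference is not in \(\sum\N_0\vX_r\) by the normal form of elements of \(\gradingL\); or \(\vL_i=0\); or \(\vL_j=\CanonicalElement\).
\item For the second, write each element in the normal form \(\sum l_r\vX_r+l\CanonicalElement\) with \(0\leq l_r<p_r\). Then use \(\DualisingElement=(k-2)\CanonicalElement-\sum\vX_r\) to see that \(\vL_j-\vL_i+\DualisingElement\) has \(\CanonicalElement\)-coefficient negative. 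This is the fiddly bookkeeping step.
\end{itemize}

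\textbf{Generation.} Let \(\mathcal{U}\) be the thick subcategory generated by the sequence. The first goal is \(\mathcal{O}(\vL)\in\mathcal{U}\) for all \(\vL\in\gradingL\).
\begin{itemize}
\item The exact sequences \(0\rightarrow\mathcal{O}((j-1)\vX_i)\rightarrow\mathcal{O}(j\vX_i)\rightarrow S_{\lambda_i,j}\rightarrow0\) for \(1\leq j\leq p_i\) show that all exceptional simples \(S_{\lambda_i,j}\) lie in \(\mathcal{U}\); here \(\mathcal{O}(p_i\vX_i)=\mathcal{O}(\CanonicalElement)\).
\item The sequence \(0\rightarrow\mathcal{O}\rightarrow\mathcal{O}(\CanonicalElement)\rightarrow S_\lambda\rightarrow0\) puts each ordinary simple \(S_\lambda\) in \(\mathcal{U}\). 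Since \(\mathcal{O}\) and \(\mathcal{O}(\CanonicalElement)\) already lie in \(\mathcal{U}\), this argument works directly, without needing the ordinary simples in advance.
\item Twisting these sequences by \(\vL\) gives \(0\rightarrow\mathcal{O}(\vL)\rightarrow\mathcal{O}(\vL+\vX_i)\rightarrow S'\rightarrow0\), where \(S'\) is again a simple. All simples are already in \(\mathcal{U}\), so induction in both directions along each generator \(\vX_i\) gives all line bundles \(\mathcal{O}(\vL)\in\mathcal{U}\).
\end{itemize}

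\textbf{Main obstacle.} It remains to show that every coherent sheaf lies in \(\mathcal{U}\). Torsion sheaves have finite length, so they are iterated extensions of simples, all of which lie in \(\mathcal{U}\). For vector bundles I would use that every finitely generated graded \(\gradedS\)-module is a quotient of a finite sum of shifts \(\gradedS(\vL)\). This gives an epimorphism \(\bigoplus\mathcal{O}(\vL_r)\twoheadrightarrow X\) in \(\cohX\). Its kernel is a vector bundle of smaller rank, or alternatively one can invoke the standard fact that each vector bundle has a line bundle filtration. Induction on rank then finishes the argument, since \(\cohX\) generates \(\dbcohX\). Justifying this last step cleanly is the one I expect to be the hardest; I would cite the line bundle filtration of \cite{Lenzing11} if the rank induction gets awkward.
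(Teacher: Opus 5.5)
\textbf{The gap: the rank induction for vector bundles.} Your exceptionality check and your derivation that every simple, and then every $\mathcal{O}(\vL)$ with $\vL\in\gradingL$, lies in $\mathcal{U}$ are correct. (In the normal form $\sum_r l_r\vX_r+l\CanonicalElement$ with $0\le l_r<p_r$, $\DualisingElement$ has $l=-2$, and in every case of your split both $\vL_i-\vL_j$ and $\vL_j-\vL_i+\DualisingElement$ have $l=-1$.)

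Your primary argument for vector bundles fails, though. For an epimorphism $\bigoplus_{r=1}^N\mathcal{O}(\vL_r)\rightarrow X$, the kernel has rank $N-\rank(X)$. This is smaller than $\rank(X)$ only if $N<2\rank(X)$, and nothing in your construction guarantees that. For instance, $(u,v)\colon\mathcal{O}\oplus\mathcal{O}\rightarrow\mathcal{O}(\CanonicalElement)$ is an epimorphism in $\cohX$, since its graded cokernel is a shift of $\gradedS/(u,v)$, which has finite length. Yet its kernel has rank $1=\rank\big(\mathcal{O}(\CanonicalElement)\big)$. So the claim ``its kernel is a vector bundle of smaller rank'' is false as stated, and the induction does not run. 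Rescuing it would require producing covers with few summands, a general-position argument you have not supplied.

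\textbf{The fix.} Your fallback is the right move, and it is cheap here. The cleanest version uses the exact sequence $0\rightarrow V\rightarrow L\rightarrow T\rightarrow 0$, with $L$ a finite direct sum of line bundles and $T$ torsion. The paper itself quotes this from \cite[Corollary~2.7]{GeigleLenzing87} in the proof of Lemma~\ref{L:HomogenousTubesExtSeeVectorBundles}. Since $L,T\in\mathcal{U}$, you get $V\in\mathcal{U}$ immediately, with no induction.

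A line bundle filtration works equally well. So does inducting via a line \emph{sub}bundle $\mathcal{O}(\vL)\hookrightarrow V$ instead of a cover. Any nonzero map from a line bundle to a vector bundle is a monomorphism, and its cokernel is the direct sum of a vector bundle of rank $\rank(V)-1$ and a torsion sheaf.

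\textbf{Comparison with the paper.} With this repair you have a complete, self-contained proof. The paper gives no argument at all; it simply refers to \cite{Elagin25}. The underlying fact is that $\bigoplus_{0\le\vL\le\CanonicalElement}\mathcal{O}(\vL)$ is the canonical tilting bundle \cite[Theorem~2.9]{Lenzing11}. Your route costs the normal-form bookkeeping and one structural input on vector bundles. In return, it makes explicit which properties of $\cohX$ the lemma rests on.
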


The direct sum \(\bigoplus_{0\leq\vL\leq\CanonicalElement}\mathcal{O}(\vL)\) is known as \textit{the canonical tilting bundle} (see \cite[Theorem~2.9]{Lenzing11} and \cite[Theorem~7.1.1]{ChenKrause10}) and its endomorphism ring is isomorphic to \textit{the canonical algebra} $A$ (originally defined by Ringel in \cite[Section~3.7]{Ringel84}) of type \((\mathbf{p},\bflambda)\) which is the path algebra of the quiver
\begin{center}
\[
\xymatrix@!0@R20pt@C30pt{
             && \mathcal{O}(\vX_1)\ar[rr]^{x_1}&&\mathcal{O}(2\vX_1)\ar[rr]&&\cdots\ar[rr]&&\mathcal{O}\big((p_1-1)\vX_1\big)\ar[rrrdddd]^{x_1}\\
             &&                  &&                 &&             &&        &                  \\
             &&\mathcal{O}(\vX_2)\ar[rr]_{x_2}&&\mathcal{O}(2\vX_2)\ar[rr]&&\cdots\ar[rr]&&\mathcal{O}\big((p_2-1)\vX_2\big)\ar[rrrdd]_{x_2} && \\
             &&           &&  \vdots    &&           &&  \vdots   &&   & \\
\mathcal{O}\ar[rruuuu]^{x_1}\ar[rruu]_{x_2}\ar[rrdd]^{x_n} \ar@<-.5ex>[rrrrrrrrrrr]_u \ar@<.5ex>[rrrrrrrrrrr]^v &&           &&      &&           &&     &&   &\mathcal{O}(\CanonicalElement)\\
             &&              &&     \vdots       &&             &&      \vdots & \\
             &&\mathcal{O}(\vX_n)\ar[rr]^{x_n}&&\mathcal{O}(2\vX_n)\ar[rr]&&\cdots\ar[rr]&&\mathcal{O}\big((p_n-1)\vX_n\big)\ar[rrruu]^{x_n} & \\
}
\]
\end{center}
bound by the relations \(x_i^{p_i}=\left(\lambda^1_i u-\lambda^0_i v\right)\) for each \(1\leq i\leq n\).
By \cite[Theorem~3.2.5]{ChenKrause10} (a~generalisation of \cite[Theorem~III.2.10]{Happel88}), the~category \(\dbcohX\) is then equivalent to \(\derived^b(\modf\dashmodule A)\).

\begin{theorem}[{\cite[Theorem~3.5]{Lenzing11}}]
If \(\X\) is domestic, then there exists a tilting object \(T\in\cohX\) such that \(A:=\End_{\cohX}(T)\) is isomorphic to a path algebra of a Euclidean quiver over $K$. Hence \(A\) is a tame finite dimensional $K$-algebra and \(\dbcohX\simeq\derived^b(\modf\dashmodule A)\).
\end{theorem}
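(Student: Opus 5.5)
The plan is to build $T$ as the direct sum of the indecomposable vector bundles lying on a section of the Auslander--Reiten quiver of $\vectX$. I would then read off $\End_{\cohX}(T)$ as the path algebra of that section. The final assertions follow from the tilting theorem already quoted, \cite[Theorem~3.2.5]{ChenKrause10}, together with the classical fact that path algebras of Euclidean quivers are tame.

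\textbf{Step 1: the shape of $\vectX$.} Since $\chi_{\X}>0$, the weight tuple $\mathbf{p}$ is (up to permutation and adding weights $1$) one of $(a,b)$, $(2,2,n)$, $(2,3,3)$, $(2,3,4)$, $(2,3,5)$. In each case the star $[p_1,\ldots,p_k]$ is a Dynkin diagram $\Delta$ with $2+\sum_{i}(p_i-1)$ vertices when extended by one vertex to a Euclidean diagram $\widetilde\Delta$. By the proposition of Geigle--Lenzing quoted above, every indecomposable bundle is semistable and exceptional. By the Remark on slopes, $\mu(\tau X)=\mu(X)-p\chi_{\X}<\mu(X)$. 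I would use this together with Serre duality $\Ext^1(X,Y)\simeq D\Hom(Y,\tau X)$ and the fact that nonzero maps between semistable bundles can only increase the slope. From these I would show that $\vectX$ has a single Auslander--Reiten component. I expect this component to be of the form $\Z\widetilde\Delta$, containing the line bundles $\mathcal{O}(\vL)$, $0\le\vL\le\CanonicalElement$, arranged as in the quiver of the canonical algebra, and with $\Hom$ given by the mesh category. I expect this identification to be the main obstacle. My first attempt would compute the Auslander--Reiten sequences starting at line bundles directly. One such sequence is $0\to\mathcal{O}(\vL)\to\bigoplus_i\mathcal{O}(\vL+\vX_i)\to\mathcal{O}(\vL+\CanonicalElement+\ldots)\to0$ (suitably corrected by $\DualisingElement$); I would then propagate by $\tau=(\DualisingElement)$ and use the directedness coming from the slope.

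\textbf{Step 2: a section is tilting.} Choose a section $\Sigma\subseteq\Z\widetilde\Delta$, for instance one passing through $\mathcal{O}$. Let $T$ be the direct sum of the corresponding bundles. Rigidity comes from two facts. First, $\Ext^1(X,Y)\simeq D\Hom(Y,\tau X)$. Second, in a mesh category of $\Z\widetilde\Delta$ there are no nonzero maps from a vertex of a section to the $\tau$-translate of another vertex of the same section, because every path goes strictly forward. Hence $\Ext^1(T,T)=0$, and as $\cohX$ is hereditary no higher self-extensions occur.

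\textbf{Step 3: generation and the endomorphism ring.} The thick subcategory $\langle T\rangle$ is closed under $\tau^{\pm1}$ on the component, since Auslander--Reiten triangles express $\tau X$ in terms of the middle terms. Therefore $\langle T\rangle$ contains every indecomposable bundle, in particular the exceptional sequence of Lemma~\ref{L:LineBundlesFormExceptionalSequence}. Hence $\langle T\rangle=\dbcohX$ and $T$ is tilting. The number of summands matches the rank of $K_0(\cohX)$, as it must. Finally, the mesh relations only involve paths passing through two consecutive $\tau$-levels, so they impose nothing within a section. Therefore $\End(T)\simeq K\Sigma^{\opposite}$, which is the path algebra of a Euclidean quiver of type $\widetilde\Delta$. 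Tameness is then the Nazarova/Dlab--Ringel theorem, and $\dbcohX\simeq\derived^b(\modf\dashmodule A)$ follows from \cite[Theorem~3.2.5]{ChenKrause10}.
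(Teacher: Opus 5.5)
The paper gives no proof of this statement; it only cites Lenzing. Your route is the standard one: identify $\vectX$ with a single standard Auslander--Reiten component $\Z\widetilde\Delta$, take a section $\Sigma$, and read off $\End(T)$ as the path algebra of $\Sigma$. Steps~2 and~3 are sound once that structure is available. In Step~3, closure of $\langle T\rangle$ under $\tau^{\pm1}$ should be argued by reflecting $\Sigma$ at a source, respectively a sink, $x$. Then the middle term of the almost split sequence starting, respectively ending, at $x$ lies in $\addf(\Sigma)$, and iterating sweeps out the component.

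The genuine gap is Step~1. It carries the entire content of the theorem, and you only announce it. Worse, the mechanism you propose fails as soon as there are three weights $p_i\geq2$, i.e.\ in types $(2,2,n)$, $(2,3,3)$, $(2,3,4)$, $(2,3,5)$.
\begin{itemize}
\item In these types the almost split sequence starting at a line bundle $L$ is $0\to L\to E_L\to L(-\DualisingElement)\to0$, where $E_L$ is an \emph{indecomposable} bundle of rank two (the Auslander bundle).
\item There are no irreducible maps between line bundles at all: each $x_i\colon\mathcal{O}\to\mathcal{O}(\vX_i)$ factors through the left almost split map $\mathcal{O}\to E_{\mathcal{O}}$.
\item Your displayed sequence $0\to\mathcal{O}(\vL)\to\bigoplus_i\mathcal{O}(\vL+\vX_i)\to(\text{line bundle})\to0$ cannot be exact, since the ranks are $1,k,1$.
\item The line bundles $\mathcal{O}(\vL)$, $0\leq\vL\leq\CanonicalElement$, are not arranged in the Auslander--Reiten quiver like the canonical quiver. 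Indeed, the bound path algebra of that quiver has global dimension two in these types, so it cannot be the path algebra of a section.
\item Propagating line-bundle meshes by $\tau$ never determines the meshes at the higher-rank orbits, e.g.\ the rank-three central orbit in type $(2,3,3)$.
\end{itemize}
Your picture is correct only for at most two weights. There all indecomposable bundles are line bundles and the component is $\Z\widetilde{A}_{p_1,p_2}$. Note that this depends on the orientation of the cycle, not only on the graph.

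What is missing is an actual argument that $\vectX$ is one standard component of Euclidean shape, with $\Hom$ given by the mesh category. One way is to combine three facts: rank is additive on almost split sequences and $\tau$-invariant; $\tau$ lowers slopes by $p\chi_{\X}>0$, which gives directedness and vanishing of $\mathrm{rad}^{\infty}$; and every bundle has a line subbundle, so the component of $\mathcal{O}$ exhausts $\vectX$. Alternatively, cite the known description of $\vectX$ as the mesh category of $\Z\widetilde\Delta$ in the domestic case (Geigle--Lenzing).
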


\begin{proposition}
\label{P:BireflectiveExceptionalCollection}
The following statements are equivalent for a thick subcategory \(\mathcal{S}\subseteq\dbcohX:\)
\begin{enumerate}[label=(\roman*)]
    \item \(\mathcal{S}\) is generated by an exceptional sequence,
    \item \(\mathcal{S}\) is bireflective,
    \item \(\mathcal{S}\) is reflective,
    \item \(\mathcal{S}\) is coreflective.
\end{enumerate}
\end{proposition}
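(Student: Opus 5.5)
We prove the equivalences by routing everything through (ii), using Proposition~\ref{P:CharacterisationReflectivity} together with the fact that \(\dbcohX\) is Hom-finite, \(K\)-linear, Krull-Schmidt, has a Serre functor \(\Serre=\Sigma\tau\), and admits a full exceptional sequence (Lemma~\ref{L:LineBundlesFormExceptionalSequence}).

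\emph{(i)\(\Rightarrow\)(ii).} Suppose \(\mathcal{S}=\langle E_1,\ldots,E_n\rangle\) for an exceptional sequence. We argue by induction on \(n\) that \(\mathcal{S}\) is both precovering and preenveloping, and then invoke Proposition~\ref{P:CharacterisationReflectivity}. For \(n=1\), \(\langle E_1\rangle=\addf\{\Sigma^mE_1\}_{m\in\Z}\simeq\derived^b(\modf\dashmodule K)\). For any \(X\), the space \(\bigoplus_m\Hom(\Sigma^mE_1,X)\) is finite dimensional, because \(\dbcohX\) is Hom-finite and only finitely many shifts contribute (bounded complexes over a hereditary category). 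Summing over a basis therefore gives an \(\langle E_1\rangle\)-precover, exactly as in the proof of Corollary~\ref{C:CountablyGeneratedCompletionVerdierQuotient}. Dually one obtains preenvelopes. The induction step is the standard semiorthogonal gluing \(\mathcal{S}=\langle E_1,\ldots,E_{n-1}\rangle * \langle E_n\rangle\), noting that admissibility is preserved under semiorthogonal decompositions in the sense of Bondal--Kapranov~\cite{BondalKapranov90}.

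\emph{(ii)\(\Rightarrow\)(iii),(iv)} is trivial. Moreover (iii)\(\Leftrightarrow\)(iv): if \(\mathcal{S}\) is coreflective, then \((\mathcal{S},\mathcal{S}^\perp)\) is a \(t\)-structure. Since the Serre functor gives \({}^{\perp}(\Serre\mathcal{S})=\mathcal{S}^\perp\) and \(\Serre\mathcal{S}\) is a thick subcategory, applying \(\Serre^{-1}\) shows that the right-admissibility of \(\mathcal{S}\) implies left-admissibility of \(\mathcal{S}\) itself. This is Bondal--Kapranov's observation that in the presence of a Serre functor, one-sided admissibility implies two-sided admissibility. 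Concretely, \((\mathcal{S},\mathcal{S}^{\perp})\) and \(({}^\perp\mathcal{S},\mathcal{S})=(\Serre^{-1}\mathcal{S}^\perp,\mathcal{S})\) are both \(t\)-structures, after checking that \(\Serre^{-1}\) restricts to \(\mathcal{S}\) via Serre duality on \(\mathcal{S}\).

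\emph{(ii)\(\Rightarrow\)(i)} is the main obstacle. Here we expect the author to cite a known result rather than prove it. If \(\mathcal{S}\) is admissible, then \(\dbcohX=\langle\mathcal{S}^\perp,\mathcal{S}\rangle\) is a semiorthogonal decomposition, and both pieces are admissible subcategories of \(\derived^b(\modf\dashmodule A)\) for the canonical algebra \(A\). We would then use that \(K_0\) is free of finite rank and that \(\dbcohX\) is equivalent to the derived category of a (quasi-)tilted or piecewise hereditary algebra. By results on piecewise hereditary categories (e.g.\ the transitivity of the braid group action on full exceptional sequences in \(\dbcohX\) by Meltzer / Kussin--Meltzer, or the statement that admissible subcategories of derived categories of hereditary abelian categories with a tilting object are again of the form \(\derived^b(\mathcal{H}')\) with \(\mathcal{H}'\) hereditary with a tilting object, hence generated by an exceptional sequence), \(\mathcal{S}\) inherits a full exceptional sequence. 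The plan is: first show that \(\mathcal{S}\) is equivalent to \(\derived^b(\mathcal{H}')\) for some Hom-finite hereditary \(\mathcal{H}'\) with finitely generated \(K_0\) and a Serre functor. Then note that the perpendicular category of a tilting-admitting hereditary category is again such (Geigle--Lenzing perpendicular calculus). Finally, apply Happel's classification: such \(\mathcal{H}'\) are \(\modf\) of finite dimensional hereditary algebras or \(\coh\) of weighted projective lines, all of which admit full exceptional sequences. The delicate point is excluding exotic admissible pieces such as a tube or \(\mathscr{H}^{(q)}\), which are not admissible because they lack Serre duality with a finite-rank Grothendieck group.
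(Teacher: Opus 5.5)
Your proof of (i)$\Rightarrow$(ii) is fine; it is Bondal's argument, which the paper cites as \cite[Theorem~3.2.a]{Bondal90}. The genuine gap is in (iii)$\Leftrightarrow$(iv). This step is essential to your cycle, because it is the only one showing that a merely reflective or merely coreflective $\mathcal{S}$ is bireflective.

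The Serre functor $\Serre=\Sigma\tau$ of $\dbcohX$ does not restrict to $\mathcal{S}$. For $\mathcal{S}=\langle\mathcal{O}\rangle=\addf\{\Sigma^m\mathcal{O}:m\in\Z\}$ one has $\Serre\mathcal{O}=\Sigma\mathcal{O}(\DualisingElement)\notin\mathcal{S}$, because $\DualisingElement\neq0$. Transporting the $t$-structure $(\mathcal{S},\mathcal{S}^{\perp})$ along $\Serre^{-1}$ only gives the $t$-structure $(\Serre^{-1}\mathcal{S},{}^{\perp}\mathcal{S})$. That shows ${}^{\perp}\mathcal{S}$ is reflective, not that $({}^{\perp}\mathcal{S},\mathcal{S})$ is a $t$-structure.

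More precisely, suppose $\mathcal{S}$ is coreflective, with $i^!$ right adjoint to the inclusion $i$. Then $i^!\Serre i$ is a right Serre functor on $\mathcal{S}$, and $\mathcal{S}$ is reflective if and only if $i^!\Serre i$ is essentially surjective. A Serre functor on the ambient category does not guarantee this. The missing input is saturatedness, which is what the relevant Bondal--Kapranov results need. Since $\dbcohX$ is saturated, a reflective $\mathcal{S}$ is right saturated by \cite[Proposition~2.8]{BondalKapranov90}, hence coreflective by \cite[Proposition~2.6]{BondalKapranov90}, and dually. This is exactly the paper's argument.

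For (ii)$\Rightarrow$(i) you give a plan rather than a proof. Its first step, that $\mathcal{S}\simeq\derived^b(\mathcal{H}')$ with $\mathcal{H}'$ hereditary admitting a tilting object, is asserted without argument and carries essentially all the content. Geigle--Lenzing perpendicular calculus and braid group transitivity both presuppose that the relevant exceptional sequences already exist.

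Your explanation for excluding tubes is also wrong. For $\lambda\in\ProjectiveLine{K}\setminus\bflambda$, the subcategory $\langle\mathcal{H}_\lambda\rangle$ is $\Serre$-stable, so it has the Serre functor $\Sigma\tau$, and its Grothendieck group has rank one. It fails to be reflective for a different reason: $\dim_K\Hom_{\dbcohX}(\mathcal{O},T)$ is unbounded as $T$ runs through the indecomposables of $\mathcal{H}_\lambda$. By contrast, $\dim_K\Hom_{\dbcohX}(R,T)$ stays bounded for any fixed $R\in\langle\mathcal{H}_\lambda\rangle$, so $\mathcal{O}$ has no reflection.

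The paper simply cites \cite[Corollary~8.4]{Elagin25} here. As the remark after Corollary~\ref{C:LiberteEgaliteFraternite} points out, this implication fails for general proper categories with full exceptional sequences, because of phantoms. So it needs input specific to $\dbcohX$ and cannot come from formal arguments alone.
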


\begin{proof}
The implication ``i)\(\Rightarrow\)ii)'' holds by \cite[Theorem~3.2.a]{Bondal90}, while \cite[Corollary~8.4]{Elagin25} proves the converse.
The remaining implications have been implicitly proved in \cite[Section~2]{BondalKapranov90} for \(K=\C\) but the~same argument works for any field. 
By \cite[Corollary~on~pg.\ 530]{BondalKapranov90}, the category \(\dbcohX\) is left and right saturated - see \cite[Definition~2.5]{BondalKapranov90} for the definition.
If \(\mathcal{S}\) if reflective, then it is right saturated by \cite[Proposition~2.8]{BondalKapranov90}, so \cite[Proposition~2.6]{BondalKapranov90} implies that \(\mathcal{S}\) is coreflective. The~rest is dual.
\end{proof}

\begin{corollary}
\label{C:LiberteEgaliteFraternite}
Let \(m,n\in\N_0\).
Let \((E_1,\ldots,E_m)\) and \((F_1,\ldots,F_n)\) be two exceptional sequences in \(\dbcohX\).
Denote \(\mathcal{U}:=\langle E_1,\ldots,E_m\rangle\) and \(\mathcal{V}:=\langle F_1,\ldots,F_n\rangle\).
If \(\mathcal{U}\subseteq\mathcal{V}\), then \(m\leq n\).

Moreover, the equality \(\mathcal{U}=\mathcal{V}\) holds if and only if \(m=n\).
\end{corollary}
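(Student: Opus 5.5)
The plan is to use the Grothendieck group \(K_0(\dbcohX)\), which is free abelian of finite rank. By Lemma~\ref{L:LineBundlesFormExceptionalSequence}, \(\dbcohX\) is generated by an exceptional sequence of length \(r:=2+\sum_{i=1}^k(p_i-1)\), so \(K_0(\dbcohX)\simeq\Z^r\). On this group we have the Euler form \(\chi(X,Y):=\sum_{m}(-1)^m\dim_K\Hom(X,\Sigma^mY)\). It is well defined because \(\dbcohX\) is Hom-finite and \(\cohX\) is hereditary, so only finitely many \(m\) contribute.

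For the first claim, take an exceptional sequence \((E_1,\ldots,E_m)\). Its Gram matrix \(\big(\chi(E_i,E_j)\big)_{i,j}\) is upper unitriangular: the diagonal entries are \(1\) by exceptionality, and \(\chi(E_j,E_i)=0\) for \(i<j\). Hence the classes \([E_1],\ldots,[E_m]\) are linearly independent in \(K_0(\dbcohX)\otimes\Q\). Now assume \(\mathcal{U}\subseteq\mathcal{V}\). Every object of \(\mathcal{V}\) is built from \(F_1,\ldots,F_n\) by shifts, cones and summands. Since \(\mathcal{V}\) is bireflective by Proposition~\ref{P:BireflectiveExceptionalCollection}, there is a semiorthogonal decomposition, and \(K_0(\mathcal{V})\) is generated by \([F_1],\ldots,[F_n]\). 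Each summand of an object of \(\langle F_1,\ldots,F_n\rangle\) still has class in the span of the \([F_j]\). The cleanest way to see this is that the filtration by the semiorthogonal pieces \(\langle F_j\rangle\simeq\derived^b(\modf\dashmodule K)\) gives classes \(\sum_j c_j[F_j]\). Therefore \([E_1],\ldots,[E_m]\) are \(m\) linearly independent vectors inside the \(\Q\)-span of \(n\) vectors, which forces \(m\leq n\).

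For the second claim, the direction \(\mathcal{U}=\mathcal{V}\Rightarrow m=n\) follows by applying the inequality both ways. For the converse, assume \(\mathcal{U}\subseteq\mathcal{V}\) and \(m=n\). Since \(\mathcal{U}\) is bireflective in \(\dbcohX\), it is also admissible in \(\mathcal{V}\), so \(\mathcal{V}=\langle\mathcal{U},\mathcal{U}^{\perp}\cap\mathcal{V}\rangle\) is a semiorthogonal decomposition. The subcategory \(\mathcal{W}:=\mathcal{U}^{\perp}\cap\mathcal{V}\) is admissible in \(\dbcohX\), being an intersection of admissible pieces, so by Proposition~\ref{P:BireflectiveExceptionalCollection} it is generated by an exceptional sequence \((G_1,\ldots,G_l)\). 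Then \((E_1,\ldots,E_m,G_1,\ldots,G_l)\), ordered appropriately as \((G,E)\) or \((E,G)\) according to the convention of the decomposition, is an exceptional sequence generating \(\mathcal{V}\). Its Gram matrix argument shows that the rank of \(K_0(\mathcal{V})\) is \(m+l\). The same argument applied to \((F_j)\) gives rank \(n\). Hence \(l=0\), so \(\mathcal{W}=0\) and \(\mathcal{U}=\mathcal{V}\).

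The main obstacle is justifying that \(K_0\) of the subcategory generated by an exceptional sequence is free with basis the classes of the sequence, in particular that it injects into \(K_0(\dbcohX)\), or at least that ranks are computed correctly. I would handle this via the Euler pairing alone, which avoids computing \(K_0(\mathcal{V})\) abstractly. The \(\Q\)-span of \(\{[X]:X\in\mathcal{V}\}\) inside \(K_0(\dbcohX)\otimes\Q\) equals the span of the \([F_j]\), by the filtration argument. Unitriangularity then gives dimension exactly \(n\). Likewise, for the concatenated sequence it gives dimension \(m+l\). Comparing these two dimensions finishes the proof.
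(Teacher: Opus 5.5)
Your proof is correct in outline and takes a different route from the paper's. However, one step rests on a principle that is false in general.

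\textbf{Comparison with the paper.} The paper splits \(K_0(\mathcal{V})\simeq K_0(\mathcal{U})\oplus K_0(\mathcal{U}^{\perp}\cap\mathcal{V})\) using the truncations of the \(t\)-structure \((\mathcal{U},\mathcal{U}^{\perp}\cap\mathcal{V})\) on \(\mathcal{V}\), and compares ranks using \(K_0(\mathcal{U})\simeq\Z^m\). For \(m=n\) it kills \(\mathcal{U}^{\perp}\cap\mathcal{V}\) with Elagin's no-phantom result \cite[Corollary~8.10]{Elagin25}. You work instead in \(K_0(\dbcohX)\otimes\Q\) with the Euler form. Unitriangular Gram matrices give linear independence of the classes directly, so you never compute the Grothendieck group of a subcategory. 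For the equality case you replace the no-phantom theorem by the implication ``bireflective \(\Rightarrow\) generated by an exceptional sequence'' of Proposition~\ref{P:BireflectiveExceptionalCollection}, followed by a second dimension count.

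\textbf{The flawed step.} You claim \(\mathcal{W}=\mathcal{U}^{\perp}\cap\mathcal{V}\) is admissible in \(\dbcohX\) ``being an intersection of admissible pieces''. Intersections of admissible subcategories need not be admissible, so this reasoning does not stand. The conclusion is nevertheless true, for the following reason.
\begin{enumerate}
\item Since \(\mathcal{U}\subseteq\mathcal{V}\) and \(\mathcal{V}\) is thick, the \((\mathcal{U},\mathcal{U}^{\perp})\)-truncation triangles of objects of \(\mathcal{V}\) stay in \(\mathcal{V}\). So \(\mathcal{W}\) is the coaisle of a \(t\)-structure on \(\mathcal{V}\), hence reflective in \(\mathcal{V}\).
\item \(\mathcal{V}\) is reflective in \(\dbcohX\), so composing left adjoints shows \(\mathcal{W}\) is reflective in \(\dbcohX\).
\item Proposition~\ref{P:BireflectiveExceptionalCollection} then makes \(\mathcal{W}\) bireflective and generated by an exceptional sequence \((G_1,\ldots,G_l)\).
\end{enumerate}
The concatenation to use is \((G_1,\ldots,G_l,E_1,\ldots,E_m)\), since \(\mathcal{U}\) has no morphisms into \(\mathcal{W}\). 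With this repair your argument is complete.

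\textbf{A more elementary finish.} Your Euler-form set-up allows you to avoid this step, and Elagin's results, altogether.
\begin{enumerate}
\item When \(m=n\), the \(\Q\)-spans of the classes of objects of \(\mathcal{U}\) and of \(\mathcal{V}\) coincide, so every \(X\in\mathcal{W}\) satisfies \([X]=\sum_i c_i[E_i]\).
\item Since \(\chi(E_j,X)=0\) for all \(j\), unitriangularity forces all \(c_i=0\).
\item Objects of \(\dbcohX\) split into shifted cohomology sheaves, so the thick subcategory \(\mathcal{W}\) contains the cohomology sheaves of its objects.
\item \(K_0(\cohX)\) is torsion-free, so \cite[Lemma~2.4]{Lenzing11} gives \(\mathcal{W}=0\), hence \(\mathcal{U}=\mathcal{V}\).
\end{enumerate}
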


\begin{proof}
As \(\mathcal{U}\) is bireflective, the truncation functors of the $t$-structure \(\left(\mathcal{U},\mathcal{U}^{\perp}\cap\mathcal{V}\right)\) on \(\mathcal{V}\) induce a splitting of the Grothendieck group
\(
K_0\left(\mathcal{V}\right)\simeq K_0\left(\mathcal{U}\right)\oplus K_0\left(\mathcal{U}^{\perp}\cap\mathcal{V}\right)
.\)
One can prove by induction on \(m\) that \(K_0(\mathcal{U})\simeq\Z^m\). Then \(K_0\left(\mathcal{U}^{\perp}\cap\mathcal{V}\right)\simeq\Z^{n-m}\) shows \(m\leq n\).

If we furthermore assume the equality \(m=n\), we get an isomorphism \(K_0\left(\mathcal{U}^{\perp}\cap\mathcal{V}\right)\simeq0\). However, \cite[Corollary~8.10]{Elagin25} tells us that the only triangulated subcategory of \(\dbcohX\) with zero Grothendieck group is the zero category.
The equality \(\mathcal{U}=\mathcal{V}\) directly follows.
\end{proof}

\begin{remark}
The implications ``i)\(\Rightarrow\)ii)\(\Leftrightarrow\)iii)\(\Leftrightarrow\)iv)'' of Proposition~\ref{P:BireflectiveExceptionalCollection} hold even more generally for proper $K$-linear triangulated categories generated by an~exceptional sequence. But ``i)\(\Leftarrow\)ii)'' and Corollary~\ref{C:LiberteEgaliteFraternite} are specific to \(\dbcohX\) because it does not contain \textit{phantom} subcategories; those are non\=/zero bireflective triangulated subcategories with vanishing Grothendieck group.
A counterexample to the~statement of Corollary~\ref{C:LiberteEgaliteFraternite} for a $K$-linear triangulated category possessing phantom subcategories can be found in \cite[Theorem~1.1]{Krah23}.
\end{remark}

\begin{lemma}
\label{L:EverythingExceptional}
Let \(\mathcal{S}\) be a thick subcategory of \(\dbcohX\). If every indecomposable object of \(\mathcal{S}\) is exceptional, then \(\mathcal{S}\) is bireflective.
\end{lemma}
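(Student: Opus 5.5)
By Proposition~\ref{P:BireflectiveExceptionalCollection}, it suffices to show that \(\mathcal{S}\) is generated by an exceptional sequence. Equivalently, it suffices to show that \(\mathcal{S}\) is coreflective. I plan to prove coreflectivity by producing \(\mathcal{S}\)-precovers and then appealing to Proposition~\ref{P:CharacterisationReflectivity}.

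\textbf{Finite type.} The first step is to show that \(\mathcal{S}\) has only finitely many indecomposable objects up to shift, or at least that it is generated by finitely many exceptional objects. Every indecomposable in \(\dbcohX\) is a shift of an indecomposable sheaf, so the indecomposables of \(\mathcal{S}\) are shifts of exceptional sheaves \(E\). For such \(E\), the classes \([E]\in K_0(\cohX)\simeq\Z^{n}\), with \(n=2+\sum_i(p_i-1)\), are real roots of the Euler form, and an exceptional sheaf is determined by its class.

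I would then build an exceptional sequence inside \(\mathcal{S}\) greedily. Start with \(E_1\in\mathcal{S}\) indecomposable. At each stage, if \(\mathcal{U}=\langle E_1,\dots,E_m\rangle\subsetneq\mathcal{S}\), choose an indecomposable \(X\in\mathcal{S}\setminus\mathcal{U}\). Since \(\mathcal{U}\) is bireflective in \(\dbcohX\) by Proposition~\ref{P:BireflectiveExceptionalCollection}, the approximation triangle of \(X\) with respect to \((\mathcal{U},\mathcal{U}^\perp)\) has its third term in \(\mathcal{U}^\perp\cap\mathcal{S}\), and that term is nonzero because \(X\notin\mathcal{U}\). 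Any indecomposable summand \(E_0\) of it is exceptional, since it lies in \(\mathcal{S}\). Then \((E_0,E_1,\dots,E_m)\) is again an exceptional sequence in \(\mathcal{S}\), and it generates a strictly larger thick subcategory.

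By Corollary~\ref{C:LiberteEgaliteFraternite}, the length of any exceptional sequence in \(\dbcohX\) is at most \(n\), because every such sequence generates a subcategory of \(\dbcohX=\langle\text{canonical sequence}\rangle\). Hence the process stops after at most \(n\) steps. At that point \(\mathcal{S}=\langle E_1,\dots,E_m\rangle\) is generated by an exceptional sequence, which proves bireflectivity directly.

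\textbf{Obstacle.} The main point to verify is that the orthogonal term \(\mathcal{U}^\perp\cap\mathcal{S}\) is indeed nonzero and lies in \(\mathcal{S}\). Nonzeroness follows because \(X\notin\mathcal{U}\). Membership in \(\mathcal{S}\) holds because \(\mathcal{S}\) is thick and contains both \(X\) and the \(\mathcal{U}\)-part of the triangle. The strict growth of the generated subcategory is also clear: \(X\) lies in \(\langle E_0,\mathcal{U}\rangle\) by the triangle, while \(X\notin\mathcal{U}\). Given these checks, the argument is essentially a bounded induction, with Corollary~\ref{C:LiberteEgaliteFraternite} supplying the bound.
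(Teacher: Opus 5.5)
Your argument is correct and is essentially the paper's proof. The paper takes an exceptional sequence in \(\mathcal{S}\) of maximal length (bounded via Lemma~\ref{L:LineBundlesFormExceptionalSequence} and Corollary~\ref{C:LiberteEgaliteFraternite}); if it does not generate \(\mathcal{S}\), prepending an indecomposable summand of the \(\mathcal{U}^{\perp}\)-term of an approximation triangle gives a contradiction, which is exactly your greedy step phrased via maximality.

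One small inaccuracy: the triangle only shows that \(X\) lies in the thick closure of \(\mathcal{U}\) together with the whole third term, not of \(\mathcal{U}\) and the single summand \(E_0\). Strict growth is not needed anyway, since the length of the sequence rises by one at each step, and \(E_0\notin\mathcal{U}\) holds because \(0\neq E_0\in\mathcal{U}^{\perp}\). Likewise, the \(K_0\) and real-root remarks in your first paragraph are never used.
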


\begin{proof}
By Lemma~\ref{L:LineBundlesFormExceptionalSequence} and Corollary~\ref{C:LiberteEgaliteFraternite}, the length of any exceptional sequence in \(\dbcohX\) is bounded by \(2+\sum_{i=1}^k(p_i-1)\).
Let \((E_1,\ldots,E_m)\) be an exceptional sequence in \(\mathcal{S}\) with \(m\in\N\) maximal possible, and \(\mathcal{E}:=\langle E_1,\ldots,E_m\rangle\subseteq\mathcal{S}\). 
Assume for contradiction, that \(\mathcal{E}\subsetneq\mathcal{S}\). Choose \(X\in\mathcal{S}\setminus\mathcal{E}\). Since \(\left(\mathcal{E},\mathcal{E}^{\perp}\right)\) is a $t$-structure on \(\dbcohX\), there exists a~distinguished triangle
\[
E\rightarrow X\rightarrow Y\rightarrow \Sigma E
\]
in \(\mathcal{S}\) with \(Y\in\mathcal{E}^\perp\). Pick an indecomposable direct summand \(E_0\) of \(Y\).
Then the exceptional sequence \((E_0,E_1,\ldots,E_m)\) in \(\mathcal{S}\) has \(m+1\) terms. A contradiction.
\end{proof}

\begin{proposition}
\label{P:VectBireflective}
If \(\X\) is domestic, then any thick subcategory of \(\dbcohX\) containing a vector bundle is bireflective.    
\end{proposition}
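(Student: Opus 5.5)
The plan is to reduce the statement to Lemma~\ref{L:EverythingExceptional}: show that if a thick subcategory \(\mathcal{S}\subseteq\dbcohX\) contains a vector bundle, then every indecomposable object of \(\mathcal{S}\) is exceptional, after which bireflectivity follows at once. Since \(\cohX\) is hereditary, every indecomposable of \(\dbcohX\) is a shift of an indecomposable sheaf. For \(\X\) domestic, every indecomposable vector bundle is exceptional by \cite[Proposition~5.5]{GeigleLenzing87}, and so are the non-homogeneous torsion sheaves of quasi-length less than the rank of their tube. The indecomposables that are \emph{not} exceptional are therefore exactly the torsion sheaves \(X\) with \(\Hom(X,\tau X)\neq 0\), i.e.\ those lying in a homogeneous tube, or in an exceptional tube with quasi-length at least the rank \(p_i\).

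The key step is then as follows. If \(\mathcal{S}\) contains a vector bundle \(V\) and also one of these non-exceptional torsion sheaves \(X\) (or a shift of one), then \(\mathcal{S}=\dbcohX\). That case is harmless, since \(\dbcohX\) is bireflective in itself, being generated by the exceptional sequence of Lemma~\ref{L:LineBundlesFormExceptionalSequence}. To prove the claim, note first that we may take \(V\) to be an indecomposable summand, so \(V\) has positive rank. A non-exceptional torsion \(X\) at a point \(\lambda\) has a nonzero class that is a positive multiple of the class \(\delta\) of a homogeneous simple, up to contributions of a full \(\tau\)-orbit. I would show that \(\mathcal{S}\) then contains the whole tube \(\mathcal{H}_\lambda\), and hence all \(V\)-twists.

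Concretely, Hom from \(V\) into torsion sheaves is nonzero. Taking cones of maps \(V\to X\), or of maps \(X(-\vec{c})\to\cdots\), one produces inside \(\mathcal{S}\) line bundles and their twists \(L\) and \(L(\vec c)\). Here I would use that \(\Hom(L,X)\neq0\) and that the kernel of an epimorphism \(L\twoheadrightarrow X\) is a line bundle of smaller degree. Iterating gives \(L(-n\vec c)\) for all \(n\). One also needs all exceptional simples; these arise as cones of \(x_i\)-maps once enough line bundles \(\mathcal{O}(j\vec x_i)\) are in \(\mathcal{S}\). The line bundles \(\mathcal{O}(\vL)\) with \(0\leq\vL\leq\CanonicalElement\) then lie in \(\mathcal{S}\) up to a common twist, so \(\mathcal{S}=\dbcohX\) by Lemma~\ref{L:LineBundlesFormExceptionalSequence}.

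An alternative, possibly cleaner route avoids this explicit generation. By the tame hereditary equivalence \(\dbcohX\simeq\derived^b(\modf\dashmodule A)\) with \(A\) Euclidean, a thick subcategory containing a regular non-exceptional module \(X\) together with a preprojective or preinjective module \(V\) must be everything: \(\mathcal{S}\cap\modf\dashmodule A\) is a wide subcategory containing \(X\) and \(V\), and its perpendicular category is then trivial by a Grothendieck group and defect argument. Which route is easier depends on the details.

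The main obstacle I expect is this generation step, namely showing that a vector bundle together with a single non-exceptional torsion object already generates \(\dbcohX\). The difficulty is that the tube at \(\lambda\) might only be partially contained in \(\mathcal{S}\), so one has to argue carefully that the cones produced really yield two line bundles differing by \(\vec c\) and all the relevant exceptional simples. I would first try the defect argument: the functional \(\langle\delta,-\rangle\) is nonzero on \([V]\), so the perpendicular category \(\mathcal{S}^{\perp}\) contains no objects of nonzero rank and no homogeneous torsion. A rank count using Corollary~\ref{C:LiberteEgaliteFraternite} then forces \(\mathcal{S}^{\perp}=0\), and with Proposition~\ref{P:BireflectiveExceptionalCollection} this gives \(\mathcal{S}=\dbcohX\).
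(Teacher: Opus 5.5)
You handle the first case correctly: if \(\mathcal{S}\) has no non-exceptional indecomposable, Lemma~\ref{L:EverythingExceptional} applies. The gap is in the second case. Your key claim is that a vector bundle together with a non-exceptional torsion sheaf generates \(\dbcohX\). This is false as soon as some \(p_i\geq2\).

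Take \(\mathcal{U}:=\langle\mathcal{O},\mathcal{O}(\CanonicalElement)\rangle\). It contains the following objects:
\begin{itemize}
\item the vector bundle \(\mathcal{O}\);
\item for every \(\lambda\in\ProjectiveLine{K}\setminus\bflambda\), the cokernel \(S_\lambda\) of \(\lambda^1u-\lambda^0v\colon\mathcal{O}\to\mathcal{O}(\CanonicalElement)\), which is not exceptional since \(\tau S_\lambda\simeq S_\lambda\) gives \(\Ext^1(S_\lambda,S_\lambda)\neq0\);
\item the cokernel of \(x_i^{p_i}\), a non-exceptional indecomposable of length \(p_i\) in the tube at \(\lambda_i\).
\end{itemize}
Yet \((\mathcal{O},\mathcal{O}(\CanonicalElement))\) is an exceptional pair, being a subsequence of the sequence in Lemma~\ref{L:LineBundlesFormExceptionalSequence}. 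So Corollary~\ref{C:LiberteEgaliteFraternite} gives \(\mathcal{U}\neq\dbcohX\), because \(2<2+\sum_{i=1}^k(p_i-1)\).

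This is exactly where your explicit generation stalls. Starting from \(V=\mathcal{O}\) and \(X=S_\lambda\), everything you produce stays in \(\mathcal{U}\). But \(\mathcal{U}\) contains neither \(\mathcal{O}(\vX_i)\) nor any exceptional simple, because their classes do not lie in \(\Z[\mathcal{O}]+\Z[S_\lambda]\).

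The same example defeats your fallback routes.
\begin{itemize}
\item Transported to \(\derived^b(\modf\dashmodule A)\) with \(A\) Euclidean, \(\mathcal{U}\) is a proper thick subcategory containing a non-regular object and a non-exceptional regular one.
\item \(\mathcal{U}^{\perp}\neq0\): it contains the \(p_i-1\) exceptional simples \(S\) at \(\lambda_i\) with \(\Hom(\mathcal{O},S)=0\). This uses \(S(\CanonicalElement)\simeq S\) and \(\Ext^1(L,S)=0\) for line bundles \(L\).
\end{itemize}
So even granting that \(\mathcal{S}^{\perp}\) contains no vector bundles and no homogeneous torsion, exceptional torsion survives, and no rank count forces \(\mathcal{S}^{\perp}=0\). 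The argument is also circular in two places. Corollary~\ref{C:LiberteEgaliteFraternite} only applies to subcategories already known to be generated by exceptional sequences. And passing from \(\mathcal{S}^{\perp}=0\) to \(\mathcal{S}=\dbcohX\) requires \(\mathcal{S}\) to be coreflective, which is what you are trying to prove.

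The content of the proposition lies precisely in subcategories like \(\mathcal{U}\): they contain non-exceptional objects, yet are proper and bireflective, so neither Lemma~\ref{L:EverythingExceptional} nor a ``generate everything'' argument reaches them. The paper handles them with the classification-level input \cite[Proposition~7.1]{Cheng24}: for \(\X\) domestic, every thick subcategory containing a vector bundle is generated by an exceptional sequence. Bireflectivity then follows from Proposition~\ref{P:BireflectiveExceptionalCollection}. Your proposal has no replacement for that step.
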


\begin{proof}
By {\cite[Proposition~7.1]{Cheng24}}, if a thick subcategory of \(\dbcohX\) contains a vector bundle, it is generated by an exceptional sequence. The rest follows from Proposition~\ref{P:BireflectiveExceptionalCollection}.
\end{proof}

\begin{theorem}[{\cite[{Proposition~10.18, Theorem~10.19}]{Lenzing07}}]
\label{T:TubularMutation}
Let \(q,q'\in\Q\cup\{\infty\}\). If \(\X\) is tubular, then there exists a triangulated autoequivalence \(\dbcohX\xrightarrow{\simeq}\dbcohX\) restricting to an equivalence \(\mathcal{H}^{(q)}\xrightarrow{\simeq}\mathcal{H}^{(q')}\).
In particular, every \(\mathcal{H}^{(q)}\) decomposes as a coproduct of tubes, i.e.\ \(\mathcal{H}^{(q)}\simeq\coprod_{\lambda\in\ProjectiveLine{K}}\mathcal{H}^{(q)}_\lambda\) where each \(\mathcal{H}^{(q)}_{\lambda}\) is equivalent to the tube \(\mathcal{H}_{\lambda}\subseteq\cohfX\). 
\end{theorem}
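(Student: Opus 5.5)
The plan is to produce enough autoequivalences of \(\dbcohX\) that their action on slopes is transitive on \(\Q\cup\{\infty\}\), and then to check that each of them carries a whole \(\mathcal{H}^{(q)}\) onto a single shifted \(\mathcal{H}^{(q')}\). Tubularity enters in three places. First, \(\chi_{\X}=0\), so by the remark relating \(\chi_{\X}\) to \(\deg\big(\mathcal{O}(\DualisingElement)\big)\) the Auslander-Reiten translation \(\tau\) preserves slopes. Second, every indecomposable of \(\cohX\) is semistable by the proposition of Geigle-Lenzing. Third, every indecomposable of \(\dbcohX\) is a shift \(\Sigma^nX\) of an indecomposable sheaf, and \(X\) lies in exactly one \(\mathcal{H}^{(\mu(X))}\).

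First, the line bundle twists \(X\mapsto X(\vX)\) for \(\vX\in\gradingL\) are exact autoequivalences of \(\cohX\). They preserve rank and add \(\deg\mathcal{O}(\vX)\) to the degree of a line bundle. On the pair \((\rank,\deg)\) they therefore act by a unipotent upper triangular matrix, so they map \(\mathcal{H}^{(q)}\) onto \(\mathcal{H}^{(q+c)}\) for a fixed constant \(c\). This gives translations of the slope. Second, I would use a tubular mutation (the telescoping functor in Lenzing-Meltzer). This is a triangulated autoequivalence built as a twist along the family of tubes \(\mathcal{H}^{(\infty)}=\cohfX\): one applies the cone of the evaluation map from a suitable \(\tau\)-orbit sum of simples in each tube, or equivalently perpendicular-calculus reflections at the exceptional tubes combined with the twist along the homogeneous ones. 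It acts on \(K_0\) by a lower triangular unipotent matrix on \((\rank,\deg)\). Together with the translations, this realises an action of a congruence-type subgroup of \(\mathrm{SL}_2(\Z)\) on slopes by Möbius transformations. A Euclidean-algorithm argument on reduced fractions \(d/r\) then shows that the orbit of \(\infty\) is all of \(\Q\cup\{\infty\}\), after tracking the normalisation of degree by \(p\) and the weights.

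Next, let \(\Phi\) be a composite of these functors with \(\Phi(\mathcal{H}^{(q)})\) meeting \(\Sigma^n\mathcal{H}^{(q')}\). I would show that \(\Phi\) maps all of \(\mathcal{H}^{(q)}\) into \(\Sigma^n\mathcal{H}^{(q')}\). Each generator does this with \(n\in\{0,1\}\), depending only on whether \(q\) lies on one side of a threshold slope. This can be checked on a single object, because the functor is exact on the abelian subcategory obtained by tilting \(\cohX\) at the slope threshold, where it sends semistables of a given slope to semistables of a fixed slope, computed via \(K_0\). Composing with \(\Sigma^{-n}\) yields the autoequivalence in the statement. Essential surjectivity onto \(\mathcal{H}^{(q')}\) follows by applying the same argument to the inverse functor.

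Finally, the tube decomposition follows by transporting \(\cohfX=\coprod_{\lambda}\mathcal{H}_\lambda\) along the equivalence \(\mathcal{H}^{(\infty)}\simeq\mathcal{H}^{(q)}\), setting \(\mathcal{H}^{(q)}_\lambda\) to be the image of \(\mathcal{H}_\lambda\). I expect the main obstacle to be constructing the tubular mutation and verifying that it is exact on the appropriate tilted heart, that is, that semistable sheaves of a fixed slope are not split across two cohomological degrees. For this I would lean directly on the Lenzing-Meltzer telescoping construction rather than rebuilding it from scratch.
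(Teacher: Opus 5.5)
Your overall shape (slope translations, a second rank-changing autoequivalence, then shift bookkeeping via \(K_0\)) is the right one. However, the second generator you actually describe does not do what you claim, and without it the transitivity step fails.

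\textbf{The gap.} Every object you twist along lies in \(\cohfX\), so it has rank \(0\). A twist acts on \(K_0\) by \([F]\mapsto[F]-\sum_S\langle S,F\rangle[S]\), where \(\langle E,F\rangle=\sum_n(-1)^n\dim_K\Hom(E,\Sigma^nF)\). It therefore preserves rank and cannot be the transvection \((r,d)\mapsto(r+c\,d,d)\) you need. Concretely:
\begin{itemize}
\item Serre duality with \(\tau S_\lambda=S_\lambda\) gives \(\langle S_\lambda,F\rangle=-\rank F\) for an ordinary simple \(S_\lambda\).
\item The \(\tau\)-orbit of exceptional simples at \(\lambda_i\) sums to \([S_\lambda]\) in \(K_0\).
\item Hence the twist along \(S_\lambda\) acts by \((r,d)\mapsto(r,d+pr)\), and the twist along that orbit acts by \((r,d)\mapsto(r,d+(p/p_i)r)\). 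These are exactly the actions of \((\blank)(\CanonicalElement)\) and \((\blank)(\vX_i)\).
\end{itemize}
So your mutation along \(\mathcal{H}^{(\infty)}\) is just one more slope translation. The group you generate acts on \(\Q\cup\{\infty\}\) by \(q\mapsto q+n\) and fixes \(\infty\), so the Euclidean-algorithm step never starts. Note also that a congruence-type subgroup would not suffice in general: proper congruence subgroups such as \(\Gamma(2)\) have several orbits on \(\Q\cup\{\infty\}\), so you need transvections with entry \(\pm1\) in both corners. The paper itself gives no argument and only cites Lenzing. In a proof along your lines, producing a rank-changing autoequivalence is the essential step.

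\textbf{A fix.} You need a mutation at a tubular family of finite slope, for instance slope \(0\). In the tubular case you can write one down directly.
\begin{itemize}
\item We have \(\deg\mathcal{O}(\DualisingElement)=-p\chi_{\X}=0\), and \(\DualisingElement\) has order \(p\) in \(\gradingL\).
\item Hence, modulo \(p\), \(\Hom(\mathcal{O}(i\DualisingElement),\mathcal{O}(j\DualisingElement))\neq0\) only if \(j\equiv i\), and \(\Ext^1(\mathcal{O}(i\DualisingElement),\mathcal{O}(j\DualisingElement))\neq0\) only if \(j\equiv i+1\).
\item Since the Serre functor is \(\Sigma\tau\), the line bundles \(\mathcal{O},\mathcal{O}(\DualisingElement),\ldots,\mathcal{O}((p-1)\DualisingElement)\in\mathcal{H}^{(0)}\) form an exceptional cycle in the sense of Broomhead--Ploog. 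The twist along their sum is therefore an autoequivalence.
\item Checking on \(\mathcal{O}\) and on all simples gives \(\sum_j\langle\mathcal{O}(j\DualisingElement),F\rangle=\deg F\). So this twist acts by \((r,d)\mapsto(r-d,d)\), i.e.\ \(q\mapsto q/(1-q)\).
\item Take \((\blank)(\vX_i)\) for an \(i\) with \(p_i=p\); such an \(i\) exists in all four tubular types, and this twist gives \(q\mapsto q+1\).
\end{itemize}
Together these two give both elementary transvections, hence all of \(\mathrm{SL}_2(\Z)\) acting on primitive pairs \((d,r)\), which is transitive on \(\Q\cup\{\infty\}\).

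With that replacement the rest of your outline goes through. The shift bookkeeping is easier via \(K_0\) than via tilted hearts. Every indecomposable sheaf is semistable, so \(\Phi(X)\simeq\Sigma^nY\) with \([Y]=(-1)^n\Phi_*[X]\). This determines \(\mu(Y)\) and the parity of \(n\) from \(q\) alone. Nonzero morphisms, both inside tubes and from a fixed tube of smaller slope, then force \(n\) to be constant on \(\mathcal{H}^{(q)}\).
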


We shall use the notation \(\mathscr{H}^{(q)}:=\big\langle\mathcal{H}^{(q)}\big\rangle\) for the corresponding thick categories generated by sem\-i\-sta\-ble objects of a fixed slope \(q\in\Q\cup\{\infty\}\). 
If \(q=\infty\) or \(\X\) is tubular, we also denote \(\mathscr{H}^{(q)}_{\lambda}:=\big\langle\mathcal{H}^{(q)}_{\lambda}\big\rangle\) for each \(\lambda\in\ProjectiveLine{K}\).
In that case, the thick categories generated by tubes of the same slope are mutually orthogonal, meaning that
\[
\mathscr{H}^{(q)}_{\lambda}\subseteq\left(\mathscr{H}^{(q)}_{\lambda'}\right)^{\perp}
\text{\ \ and\ \ }
\mathscr{H}^{(q)}_{\lambda'}\subseteq\left(\mathscr{H}^{(q)}_{\lambda}\right)^{\perp}
\text{\ \ in \(\dbcohX\) for any \(\lambda\neq\lambda'\in\ProjectiveLine{K}\)}
.\]

\begin{lemma}
\label{L:TwoSlopesBireflective}
Let \(q,q'\in\Q\cup\{\infty\},q\neq q'\), and let \(\mathcal{S}\) be a thick subcategory of \(\dbcohX\) containing \(X,Y\in\cohX\) with \(\mu(X)=q\) and \(\mu(Y)=q'\).
If \(\chi_{\X}\geq0\), then \(\mathcal{S}\) is bireflective.
\end{lemma}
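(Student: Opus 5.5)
The plan is to reduce to the case of two indecomposable sheaves and then produce enough exceptional objects inside \(\mathcal{S}\) to apply Proposition~\ref{P:BireflectiveExceptionalCollection}.

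\textbf{Reduction to indecomposables.} Since \(\mathcal{S}\) is thick, it contains all direct summands of \(X\) and \(Y\). Because \(\chi_{\X}\geq0\), every indecomposable sheaf is semistable. If all indecomposable summands of \(X\) and of \(Y\) had one common slope \(r\), then additivity of \(\deg\) and \(\rank\) would force \(\mu(X)=\mu(Y)=r\), which is false. So I may assume \(X\) and \(Y\) are indecomposable with \(q\neq q'\).

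\textbf{Domestic case.} At most one of \(q,q'\) equals \(\infty\), so one of \(X,Y\) is an indecomposable vector bundle. Proposition~\ref{P:VectBireflective} then gives bireflectivity directly.

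\textbf{Tubular case.} By Theorem~\ref{T:TubularMutation}, there is a triangulated autoequivalence of \(\dbcohX\) sending \(\mathcal{H}^{(q')}\) to \(\mathcal{H}^{(\infty)}=\cohfX\). Bireflectivity is invariant under autoequivalences, and I will assume this equivalence carries \(\mathcal{H}^{(q)}\) to some \(\mathcal{H}^{(r)}\) with \(r\neq\infty\) (it permutes slopes, a fact recorded in \cite{Lenzing07}). So I may assume \(Y\) is an indecomposable torsion sheaf in a tube \(\mathcal{H}_\lambda\) and \(X\) is an indecomposable vector bundle.

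Since \(\cohX\) is hereditary, the cone of any morphism \(f\colon X\to Y\) is isomorphic to \(\Sigma\Ker f\oplus\operatorname{Coker} f\). Hence \(\mathcal{S}\) is closed under kernels and cokernels of morphisms between its sheaves. Starting from \(X\), I take the simple top \(U\) of \(Y\), which lies in \(\mathcal{S}\) by the same closure argument inside \(\mathcal{H}_\lambda\) applied to a suitable endomorphism of \(Y\). There is a surjection \(X\twoheadrightarrow U\), because a vector bundle maps onto every simple sheaf. Its kernel \(X'\) is a vector bundle of the same rank and strictly smaller degree, and it lies in \(\mathcal{S}\). Iterating along simples of \(\mathcal{H}_\lambda\), and where necessary using \(\tau\)-shifts of simples obtained from Serre duality and the fact that \(\tau\)-orbits of simples generate the tube, I aim to reach an exceptional vector bundle \(E\in\mathcal{S}\), for instance a line bundle.

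\textbf{Perpendicular reduction.} Once \(E\in\mathcal{S}\), the subcategory \(\langle E\rangle\) is bireflective. Using the perpendicular calculus of Geigle–Lenzing, I would identify \(E^{\perp}\cap\dbcohX\) with the bounded derived category of a hereditary category of strictly smaller rank of \(K_0\). In the tubular case, the perpendicular category of an exceptional vector bundle is derived equivalent to modules over a tame or representation-finite hereditary algebra, that is, a domestic-type situation. The intersection \(\mathcal{S}\cap E^{\perp}\) is thick there, and using the \(t\)-structure \((\langle E\rangle,E^{\perp})\) one checks that \(\mathcal{S}\) is generated by \(\langle E\rangle\) together with \(\mathcal{S}\cap E^{\perp}\). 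I then argue by induction on the length bound of exceptional sequences from Corollary~\ref{C:LiberteEgaliteFraternite}. Either \(\mathcal{S}\cap E^{\perp}\) again contains sheaves of two slopes, or it contains a vector bundle (which settles the domestic-type step), or all its indecomposables are exceptional (Lemma~\ref{L:EverythingExceptional}). In each case it is generated by an exceptional sequence, so \(\mathcal{S}\) is generated by an exceptional sequence and Proposition~\ref{P:BireflectiveExceptionalCollection} concludes.

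\textbf{Main obstacle.} The hardest step is the tubular one: producing an exceptional object inside \(\mathcal{S}\) from a vector bundle and a torsion sheaf, and then controlling \(\mathcal{S}\cap E^{\perp}\) well enough to run the induction. The danger is that \(\mathcal{S}\cap E^{\perp}\) might be a homogeneous tube-like category, which is not bireflective. I would rule this out by checking that the remaining slopes still differ after the reduction.
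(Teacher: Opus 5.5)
Your reduction to indecomposable summands and your domestic case are fine and agree with the paper. The tubular case has a genuine gap, starting with the construction of an exceptional bundle \(E\in\mathcal{S}\).

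\textbf{The construction of \(E\).} Suppose \(Y\) lies in the tube at a weighted point \(\lambda_i\) and has quasi-length \(\ell\) with \(2\leq\ell\leq p_i\). Then \(\End(Y)\simeq K\), so no endomorphism of \(Y\) yields its simple top \(U\). A vector bundle also does not map onto every simple sheaf. The classes of the \(p_i\) simples at \(\lambda_i\) sum to the class of an ordinary simple, and \(\dim\Hom(L,\blank)\) is additive on torsion sheaves. Hence a line bundle \(L\) has a non-zero morphism to exactly one of these simples. Moreover, \(\mathcal{S}\) need not be \(\tau\)-stable, so \(\tau\)-shifts of simples are not at your disposal. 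Finally, ``iterating until I reach an exceptional bundle'' comes with no argument that the kernels stay indecomposable, or that the process ever produces an exceptional object.

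\textbf{The induction.} Even granting some exceptional \(E\in\mathcal{S}\), the induction breaks exactly where you flag it. Your trichotomy for \(\mathcal{S}\cap E^{\perp}\) omits the case where it lies in a single slope and contains a non-exceptional object (e.g.\ a homogeneous tube), which is not bireflective. Excluding this case is the whole content of the lemma. Also, Proposition~\ref{P:VectBireflective} concerns vector bundles on a domestic weighted projective line, not objects of \(E^{\perp}\).

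\textbf{What the paper does instead.} The paper does not try to build generators.
\begin{enumerate}
\item If every indecomposable object of \(\mathcal{S}\) is exceptional, Lemma~\ref{L:EverythingExceptional} finishes. There one takes a maximal exceptional sequence generating \(\mathcal{E}\subseteq\mathcal{S}\); this works precisely because every non-zero object of \(\mathcal{S}\cap\mathcal{E}^{\perp}\) then has an exceptional summand, which is what your induction cannot guarantee.
\item Otherwise, Theorem~\ref{T:TubularMutation} moves a non-exceptional indecomposable of \(\mathcal{S}\) (not an arbitrary one of \(X,Y\)) to slope \(\infty\). Now \(\mathcal{S}\) contains a vector bundle and a non-exceptional torsion sheaf. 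Cheng's classification \cite[Proposition~6.2, Theorem~6.3]{Cheng24} then gives \(\mathcal{S}=\langle E_1,\ldots,E_n\rangle^{\perp}\) for an exceptional sequence of torsion sheaves. Thus \(\mathcal{S}\) is reflective, and Proposition~\ref{P:BireflectiveExceptionalCollection} upgrades this to bireflective.
\end{enumerate}
Some such structural input on the non-exceptional case is what your argument is missing.
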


\begin{proof}
Note that \(X\) or \(Y\) is a vector bundle contained in \(\mathcal{S}\).
Firstly, if \(\X\) is domestic, then the statement now directly follows from Proposition~\ref{P:VectBireflective}.
Secondly, if every indecomposable object of \(\mathcal{S}\) is exceptional, then \(\mathcal{S}\) is bireflective by Lemma~\ref{L:EverythingExceptional}.

Finally, suppose that \(\X\) is tubular and \(\mathcal{S}\) contains a non-exceptional indecomposable object. W.l.o.g.\ the object \(X\) is non-exceptional and indecomposable.
By Theorem~\ref{T:TubularMutation}, we may also w.l.o.g.\ assume \(q=\infty\).
Since \(\mathcal{S}\cap\,\cohX\) now contains a vector bundle and a non-exceptional indecomposable torsion sheaf $X$, it holds by \cite[Proposition~6.2, Theorem~6.3]{Cheng24} that \(\mathcal{S}=\langle E_1,\ldots,E_n\rangle^{\perp}\) for some exceptional sequence \((E_1,\ldots,E_n)\) of objects from \(\cohfX\) and index \(n\in\N_0\).

Considering that \(\langle E_1,\ldots,E_n\rangle\) is bireflective, this already implies that \(\mathcal{S}\) is reflective. 
Using Proposition~\ref{P:BireflectiveExceptionalCollection}, we get that \(\mathcal{S}\) is bireflective.
\end{proof}

We now state variants of \cite[Lemma~2.25]{Matousek26B} for domestic and tubular weighted projective lines.

\begin{lemma}
\label{L:TubularInfiniteChainDiffersAtHomogenousTubes}
Assume \(\X\) is tubular.
Then there exists a number \(M\in\N\) (depending on~$\X$) such that for any strictly decreasing chain \(\mathcal{S}_1\supsetneq\mathcal{S}_2\supsetneq\cdots\supsetneq\mathcal{S}_M\) of thick subcategories of~\(\dbcohX\) we can find \(1\leq i<j\leq M\), \(q\in\Q\cup\{\infty\}\), and \(\lambda\in\ProjectiveLine{K}\setminus\bflambda\) with \(\mathcal{S}_i,\mathcal{S}_j\subseteq\mathcal{H}^{(q)}\) such that \(\mathcal{H}^{(q)}_{\lambda}\subseteq\mathcal{S}_i\) and \(\mathcal{H}^{(q)}_{\lambda}\cap\,\mathcal{S}_j=0\).
\end{lemma}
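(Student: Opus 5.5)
The plan is to split a long chain into an initial part made of bireflective subcategories, whose length is bounded by exceptional sequences, and a tail lying inside a single $\mathscr{H}^{(q)}$, where everything decomposes tube by tube. Put $N:=2+\sum_{i=1}^k(p_i-1)$, the length of the exceptional sequence from Lemma~\ref{L:LineBundlesFormExceptionalSequence}. Since $\cohX$ is hereditary, every indecomposable object of $\dbcohX$ is a shift of an indecomposable sheaf. By the Geigle--Lenzing proposition, every indecomposable sheaf is semistable, so it lies in some $\mathcal{H}^{(q)}$. Consequently, a thick subcategory $\mathcal{S}_i$ falls into one of two cases. Either it contains (shifts of) sheaves of two different slopes, in which case it is bireflective by Lemma~\ref{L:TwoSlopesBireflective}. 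Or it is contained in $\mathscr{H}^{(q)}$ for a single $q$ (the zero subcategory counts here for every $q$).

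\emph{Step 1: bounding the bireflective part.} By Proposition~\ref{P:BireflectiveExceptionalCollection}, every bireflective member of the chain is generated by an exceptional sequence. Corollary~\ref{C:LiberteEgaliteFraternite} then says that a strict inclusion of such subcategories strictly increases the length of the generating sequence, and every length is at most $N$. Being bireflective passes to larger members, so these members form an initial segment $\mathcal{S}_1,\ldots,\mathcal{S}_r$ of the chain with $r\leq N+1$. All later $\mathcal{S}_j$ lie in a common $\mathscr{H}^{(q)}$: a nonzero $\mathcal{S}_j\subseteq\mathscr{H}^{(q)}$ forces every nonzero $\mathcal{S}_{j'}\subseteq\mathcal{S}_j$ to have the same slope $q$. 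Applying the autoequivalence of Theorem~\ref{T:TubularMutation}, which sends $\mathcal{H}^{(q)}$ to $\mathcal{H}^{(\infty)}=\cohfX$ tube by tube and fixes the rank of each tube, I reduce to $q=\infty$.

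\emph{Step 2: decomposing along tubes.} The blocks $\mathscr{H}_\lambda$ are mutually orthogonal and every indecomposable lies in exactly one of them. Hence every thick subcategory $\mathcal{U}\subseteq\mathscr{H}^{(\infty)}$ decomposes as $\mathcal{U}=\coprod_\lambda(\mathcal{U}\cap\mathscr{H}_\lambda)$, with each $\mathcal{U}\cap\mathscr{H}_\lambda$ thick in $\mathscr{H}_\lambda$. For a homogeneous tube, I claim the only thick subcategories of $\mathscr{H}_\lambda$ are $0$ and $\mathscr{H}_\lambda$. Indeed, a nonzero one contains some uniserial $S_\lambda[n]$. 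The endomorphism of $S_\lambda[n]$ with image $S_\lambda[n-1]$ has kernel and cokernel isomorphic to $S_\lambda$, so its cone, which splits into its cohomologies, gives $S_\lambda$ as a summand. Since $S_\lambda$ generates the tube, the subcategory is all of $\mathscr{H}_\lambda$.

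For an exceptional tube $\mathcal{H}_{\lambda_i}$ of rank $p_i$, I need a bound $c_i$ on the length of strictly decreasing chains of thick subcategories of $\mathscr{H}_{\lambda_i}$. This is the step I expect to be the main obstacle. What I would try first is the following. A proper thick subcategory of $\mathscr{H}_{\lambda_i}$ contains no non-exceptional indecomposable; if it did, it would contain a whole ray and hence generate the tube. So by Lemma~\ref{L:EverythingExceptional}, it is generated by an exceptional sequence of objects of the tube. Such sequences have length at most $p_i-1$, by the Grothendieck group argument of Corollary~\ref{C:LiberteEgaliteFraternite} applied inside the tube. This gives $c_i=p_i+1$. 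Alternatively, one could cite the known finite classification of thick subcategories of a tube, which correspond to non-crossing partitions.

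\emph{Step 3: counting.} Set $M:=N+2+\sum_{i=1}^k c_i$. In the tail of the chain contained in $\mathscr{H}^{(\infty)}$, every strict step $\mathcal{S}_j\supsetneq\mathcal{S}_{j+1}$ must strictly shrink at least one tube component. Each exceptional component is monotone, so the exceptional components can shrink at most $\sum_i c_i$ times in total. The tail has more steps than that, so some step changes only homogeneous components. By the dichotomy of Step 2, this means some ordinary $\lambda\notin\bflambda$ has $\mathcal{H}_\lambda\subseteq\mathcal{S}_j$ and $\mathcal{H}_\lambda\cap\mathcal{S}_{j+1}=0$. Transporting back along the autoequivalence yields the claimed $i<j$, $q$ and $\lambda$.
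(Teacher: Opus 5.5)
Your skeleton is the paper's proof. You remove the at most $N+1=3+\sum_{i=1}^k(p_i-1)$ bireflective members via Proposition~\ref{P:BireflectiveExceptionalCollection} and Corollary~\ref{C:LiberteEgaliteFraternite}, and put the remaining members into one $\mathscr{H}^{(q)}$ via Lemma~\ref{L:TwoSlopesBireflective}. You then use that homogeneous tubes have only trivial thick subcategories while exceptional tubes have finitely many.

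The genuine gap is in the argument you offer first for the exceptional tubes. The claim that a proper thick subcategory of $\mathscr{H}_{\lambda_i}$ contains no non-exceptional indecomposable is false whenever $p_i\geq2$. Let $S$ be a simple in $\mathcal{H}_{\lambda_i}$ and $S[p_i]$ the uniserial of length $p_i$ with top $S$. Then $\End(S[p_i])\simeq K$ and $\Ext^1(S[p_i],S[p_i])\simeq K$. The wide subcategory generated by $S[p_i]$ is its filtration closure $\addf\{S[np_i]:n\in\N\}$, a copy of a homogeneous tube. Hence $\langle S[p_i]\rangle$ is a nonzero proper thick subcategory of $\mathscr{H}_{\lambda_i}$: its intersection with $\cohX$ is that wide subcategory, which misses $S$. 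It contains no exceptional object at all, and it contains every $S[np_i]$ but no $S[m]$ with $p_i\nmid m$, so the ``whole ray'' justification fails too. Consequently Lemma~\ref{L:EverythingExceptional} cannot be applied, and your argument does not justify $c_i=p_i+1$. For $p_i=2$ the thick subcategories are exactly $0$, $\langle S_1\rangle$, $\langle S_2\rangle$, $\langle S_1[2]\rangle$, $\langle S_2[2]\rangle$ and $\mathscr{H}_{\lambda_i}$, matching $\binom{4}{2}=6$.

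You therefore need your fallback, the finite classification: a tube of rank $p$ has exactly $\binom{2p}{p}$ thick subcategories \cite[Proposition~2.4.2]{Dichev09}. This is the paper's input; it pigeonholes over the $\prod_{i=1}^k\binom{2p_i}{p_i}$ thick subcategories of $\big\langle\coprod_{\lambda\in\bflambda}\mathcal{H}^{(q)}_{\lambda}\big\rangle$. With $c_i:=\binom{2p_i}{p_i}$ your count in Step~3 goes through unchanged.

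Step~1 also contains a false sentence: bireflectivity does not pass to larger members. The zero subcategory is bireflective, while $\mathscr{H}_{\lambda}$ for $\lambda\notin\bflambda$ contains no exceptional object and so is not bireflective. Thus the chain $\mathscr{H}_{\lambda}\supsetneq0$ has a bireflective member after a non-bireflective one. What is upward closed is containing sheaves of two different slopes. Those members do form an initial segment and are bireflective by Lemma~\ref{L:TwoSlopesBireflective}, so there are at most $N+1$ of them, and the tail lies in one $\mathscr{H}^{(q)}$ as you say. The paper avoids this by deleting all bireflective members and working with the remaining subsequence, which is still strictly decreasing. Your reduction to $q=\infty$ is harmless but unnecessary, since Theorem~\ref{T:TubularMutation} already gives the tube decomposition of $\mathcal{H}^{(q)}$.
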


\begin{proof}
If \(\mathcal{S}\subsetneq\mathcal{S}'\) are bireflective subcategories of \(\dbcohX\), they are generated by exceptional collections due to Proposition~\ref{P:BireflectiveExceptionalCollection}. Then the exceptional sequence generating \(\mathcal{S}\) must be shorter than the one generating \(\mathcal{S}'\) because of Corollary~\ref{C:LiberteEgaliteFraternite}.
Using induction, we infer that any strictly decreasing chain \(\mathcal{S}_1\supsetneq\mathcal{S}_2\supsetneq\cdots\supsetneq\mathcal{S}_N\), \(N\in\N\), of thick subcategories of \(\dbcohX\) can include at most \(3+\sum_{i=1}^k(p_i-1)\) bireflective subcategories, where the upper bound is provided by Lemma~\ref{L:LineBundlesFormExceptionalSequence}. 

We are now left with proving the statement for
\(
M:=1+\prod_{i=1}^k\binom{2p_i}{p_i}
\)
and a~strictly decreasing chain \(\mathcal{S}_1\supsetneq\mathcal{S}_2\supsetneq\cdots\supsetneq\mathcal{S}_M\) of non-bireflective thick subcategories of \(\dbcohX\). Then by Lemma~\ref{L:TwoSlopesBireflective} we get \(\mathcal{S}_1\subseteq\mathscr{H}^{(q)}\) for some \(q\in\Q\cup\{\infty\}\).
As \(\mathcal{H}^{(q)}\simeq\coprod_{\lambda\in\ProjectiveLine{K}}\mathcal{H}^{(q)}_{\lambda}\) and
\(
\left\langle\coprod_{\lambda\in\bflambda}\mathcal{H}^{(q)}_{\lambda}\right\rangle\)
has only
\(
\prod_{i=1}^k\binom{2p_i}{p_i}
\)
thick subcategories (see {\cite[Proposition~2.4.2]{Dichev09}}), there must consequently exist indices \(1\leq i< j\leq M\) such that the~subcategories \(\mathcal{S}_i\) and \(\mathcal{S}_j\) differ by \(\mathcal{S}_i\) containing a homogenous tube \(\mathcal{H}^{(q)}_{\lambda}\) for some \(\lambda\in\ProjectiveLine{K}\setminus\bflambda\), while  \(\mathcal{H}^{(q)}_{\lambda}\cap\,\mathcal{S}_j=0\).
\end{proof}

\begin{lemma}
\label{L:DomesticInfiniteChainDiffersAtHomogenousTubes}
Assume \(\X\) is domestic.
Then there exists a number \(M\in\N\) (depending on~$\X$) such that for any strictly decreasing chain \(\mathcal{S}_1\supsetneq\mathcal{S}_2\supsetneq\cdots\supsetneq\mathcal{S}_M\) of thick subcategories of~\(\dbcohX\) we can find \(1\leq i<j\leq M\) and \(\lambda\in\ProjectiveLine{K}\setminus\bflambda\) with \(\mathcal{S}_i,\mathcal{S}_j\subseteq\mathscr{H}^{(\infty)}\) such that \(\mathcal{H}_{\lambda}\subseteq\mathcal{S}_i\) and \(\mathcal{H}_{\lambda}\cap\,\mathcal{S}_j=0\).
\end{lemma}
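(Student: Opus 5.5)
We follow the proof of Lemma~\ref{L:TubularInfiniteChainDiffersAtHomogenousTubes}, with the tubular input replaced by the domestic one.

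\emph{Step 1: bound the bireflective terms.} Suppose \(\mathcal{S}\subsetneq\mathcal{S}'\) are bireflective thick subcategories of \(\dbcohX\). By Proposition~\ref{P:BireflectiveExceptionalCollection} both are generated by exceptional sequences. By Corollary~\ref{C:LiberteEgaliteFraternite}, the sequence generating \(\mathcal{S}\) is strictly shorter than the one generating \(\mathcal{S}'\). Lemma~\ref{L:LineBundlesFormExceptionalSequence} bounds the length of any exceptional sequence by \(n_0:=2+\sum_{i=1}^k(p_i-1)\). Hence a strictly decreasing chain contains at most \(n_0+1\) bireflective members.

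\emph{Step 2: locate the non-bireflective terms.} Let \(\mathcal{S}\) be a non-bireflective thick subcategory. By Proposition~\ref{P:VectBireflective}, \(\mathcal{S}\) contains no vector bundle. Since \(\cohX\) is hereditary, every object of \(\dbcohX\) is a direct sum of shifted sheaves, and each sheaf splits as a vector bundle plus a torsion sheaf. A thick subcategory is closed under summands and shifts, so all indecomposables of \(\mathcal{S}\) are shifts of torsion sheaves. Therefore \(\mathcal{S}\subseteq\mathscr{H}^{(\infty)}\).

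Moreover, since the tubes are mutually orthogonal and \(\cohfX=\coprod_\lambda\mathcal{H}_\lambda\), every thick subcategory of \(\mathscr{H}^{(\infty)}\) decomposes as \(\coprod_\lambda(\mathcal{S}\cap\mathscr{H}_\lambda)\). Each homogeneous tube \(\mathcal{H}_\lambda\) (\(\lambda\notin\bflambda\)) has only two thick subcategories, \(0\) and \(\mathscr{H}_\lambda\). This holds because every nonzero object of a homogeneous tube generates the whole tube as a thick subcategory; this can be cited from \cite{Dichev09}. The exceptional part \(\langle\coprod_{\lambda\in\bflambda}\mathcal{H}_\lambda\rangle\) has only finitely many thick subcategories, at most \(\prod_{i=1}^k\binom{2p_i}{p_i}\) by \cite[Proposition~2.4.2]{Dichev09}.

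\emph{Step 3: choose \(M\).} Set \(M:=n_0+2+\prod_{i=1}^k\binom{2p_i}{p_i}\). Given a strictly decreasing chain of length \(M\), delete its at most \(n_0+1\) bireflective members. The remaining members still form a strictly decreasing chain, and it has length exceeding \(\prod_{i=1}^k\binom{2p_i}{p_i}\).

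All remaining members lie in \(\mathscr{H}^{(\infty)}\) by Step 2. By pigeonhole, two of them, \(\mathcal{S}_i\supsetneq\mathcal{S}_j\) with \(i<j\), have the same intersection with the exceptional part. By the decomposition in Step 2 they must then differ at some homogeneous tube \(\lambda\notin\bflambda\). Since \(\mathcal{S}_i\supseteq\mathcal{S}_j\), this forces \(\mathcal{H}_\lambda\subseteq\mathcal{S}_i\) and \(\mathcal{H}_\lambda\cap\mathcal{S}_j=0\).

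\emph{Main obstacle.} The delicate point is Step 2: justifying that thick subcategories of \(\mathscr{H}^{(\infty)}\) split tube-by-tube and that homogeneous tubes admit only the trivial thick subcategories. This rests on the orthogonality of distinct tubes and on the thick-subcategory classification of tubes in \cite{Dichev09}.
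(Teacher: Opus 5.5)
Your proposal is correct and follows the paper's proof. The paper invokes Proposition~\ref{P:VectBireflective} and then repeats the argument of Lemma~\ref{L:TubularInfiniteChainDiffersAtHomogenousTubes} with \(q=\infty\), bounding the bireflective members by exceptional-sequence length and pigeonholing over the at most \(\prod_{i=1}^k\binom{2p_i}{p_i}\) thick subcategories of the non-homogeneous tubes. You are in fact more careful than the paper: you justify that a homogeneous tube has only the two trivial thick subcategories, and you combine both bounds into the single constant \(M=n_0+2+\prod_{i=1}^k\binom{2p_i}{p_i}\), whereas the paper only records \(M=1+\prod_{i=1}^k\binom{2p_i}{p_i}\) for the reduced chain of non-bireflective members.
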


\begin{proof}
Considering that any thick subcategory of \(\dbcohX\) containing a vector bundle is bireflective by Proposition~\ref{P:VectBireflective}, the proof of the lemma is identical to the~proof of Lemma~\ref{L:TubularInfiniteChainDiffersAtHomogenousTubes} in the special case of \(q=\infty\).    
\end{proof}

\begin{lemma}
\label{L:HomogenousTubesExtSeeVectorBundles}
Let \(\lambda\in\ProjectiveLine{K}\setminus\bflambda\). Let \(0\neq X\in\mathcal{H}_{\lambda}\) and \(V\in\vectX\). Then \(\Hom_{\dbcohX}(X,\Sigma V)\neq0\).
\end{lemma}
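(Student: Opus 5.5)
The plan is to translate the statement into a Hom-nonvanishing in \(\cohX\) via Serre duality, and then to detect that Hom by an Euler form computation against the ordinary simple \(S_\lambda\). Throughout, assume \(V\neq0\); otherwise the claim is false.

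First, since \(\cohX\) is hereditary and satisfies Serre duality with Auslander-Reiten translation \(\tau=(\blank)(\DualisingElement)\), we have
\[
\Hom_{\dbcohX}(X,\Sigma V)\simeq\Ext^1_{\cohX}(X,V)\simeq\Hom_K\big(\Hom_{\cohX}(V,\tau X),K\big).
\]
Because \(\lambda\notin\bflambda\), the tube \(\mathcal{H}_\lambda\) is homogenous, so \(\tau\) fixes it; in particular \(\tau X\) is again a nonzero object of \(\mathcal{H}_\lambda\). It therefore suffices to show \(\Hom_{\cohX}(V,Y)\neq0\) for every \(0\neq Y\in\mathcal{H}_\lambda\).

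Next, \(Y\) has finite length, so it has a simple quotient \(Y\twoheadrightarrow S\). Since \(\cohfX=\coprod_{\lambda'}\mathcal{H}_{\lambda'}\) and \(\mathcal{H}_\lambda\) is a wide subcategory whose only simple object is \(S_\lambda\), we get \(S\simeq S_\lambda\). Composing with this epimorphism, it is enough to find a nonzero map \(V\to S_\lambda\) whose image is not killed by the quotient. In fact it suffices to show \(\Hom_{\cohX}(V,S_\lambda)\neq0\) and then argue as follows. By heredity, \(\Ext^1(V,\blank)\) is right exact, so applying \(\Hom(V,\blank)\) to \(0\to Y'\to Y\to S_\lambda\to0\) gives \(\Hom(V,Y)\to\Hom(V,S_\lambda)\to\Ext^1(V,Y')\). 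Moreover \(\Ext^1(V,Y')\simeq D\Hom(Y',\tau V)=0\), since \(\tau V\) is a vector bundle and \(Y'\) is torsion. Hence \(\Hom(V,Y)\to\Hom(V,S_\lambda)\) is surjective.

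The remaining step is \(\Hom_{\cohX}(V,S_\lambda)\neq0\), and I expect it to be the main obstacle. The same duality argument gives \(\Ext^1(V,S_\lambda)=0\), so \(\dim_K\Hom(V,S_\lambda)=\langle[V],[S_\lambda]\rangle\), where \(\langle\blank,\blank\rangle\) is the Euler form. This form is additive in \([V]\) on \(K_0(\cohX)\).
\begin{itemize}
\item For a line bundle \(\mathcal{O}(\vX)\), apply \(\Hom(\mathcal{O}(\vX),\blank)\) to \(0\to\mathcal{O}\to\mathcal{O}(\CanonicalElement)\to S_\lambda\to0\). This gives \(\langle\mathcal{O}(\vX),S_\lambda\rangle=\chi(\mathcal{O}(\vX),\mathcal{O}(\CanonicalElement))-\chi(\mathcal{O}(\vX),\mathcal{O})\).
\item To evaluate this, I would rather argue directly. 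The generic section \(\lambda^1u-\lambda^0v\) shows that \(\Hom(\mathcal{O}(\vX),S_\lambda)\) is nonzero: the map \(\mathcal{O}(\vX)\to S_\lambda\) is obtained by choosing \(0\neq s\in\gradedS_{\CanonicalElement-\vX}\) or a suitable shift, using that \(S_\lambda(\vY)\simeq S_\lambda\) for all \(\vY\). Combined with \(\Ext^1=0\), this identifies \(\langle\mathcal{O}(\vX),S_\lambda\rangle\) with the positive constant \(1\), independent of \(\vX\). Hence \(\langle[V],[S_\lambda]\rangle\) is the linear form \(\rank\) on classes of vector bundles, using that \(K_0(\cohX)\) is spanned by line bundles and torsion classes and that \(\langle[T],[S_\lambda]\rangle=0\) for the relevant torsion classes.
\item Thus \(\dim_K\Hom(V,S_\lambda)=\rank V>0\) for \(V\neq0\) a vector bundle.
\end{itemize}
A safer alternative for this step is to use that every vector bundle has a line bundle filtration, and that a nonzero vector bundle \(V\) admits a line bundle quotient \(V\twoheadrightarrow L\). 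Then use \(\Hom(L,S_\lambda)\neq0\) together with right exactness, and the fact that \(\Hom(V,\blank)\) applied to maps into torsion factors through \(L\) surjectively onto its image.
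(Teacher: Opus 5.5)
Your argument is correct, but it takes a different route from the paper's.

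\textbf{The paper's route.} It quotes \cite[Corollary~2.7]{GeigleLenzing87} for a short exact sequence \(0\rightarrow V\rightarrow L\rightarrow T\rightarrow0\) with \(L\) a direct sum of line bundles and \(0\neq T\in\mathcal{H}_{\lambda}\). This sequence cannot split, because a vector bundle has no torsion summand, so \(\Ext^1(T,V)\neq0\). It then passes from \(T\) to an arbitrary \(X\) using that objects of the homogeneous tube are filtered by \(S_{\lambda}\).

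\textbf{Your route.} You dualise via Serre duality to \(\Hom(V,\tau X)\). You then lift along the simple top \(Y\twoheadrightarrow S_{\lambda}\), using \(\Ext^1(V,Y')\simeq D\Hom(Y',\tau V)=0\) for torsion \(Y'\). This reduces everything to \(\Hom(V,S_{\lambda})\neq0\).

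\textbf{Comparison.} The paper's proof is shorter, given the cited existence result. Its step from \(T\) to \(X\) silently needs the same kind of half-exactness argument that you make explicit, in dual form. Your proof needs only Serre duality, the invariance \(S_{\lambda}(\vX)\simeq S_{\lambda}\), and a line bundle filtration. It also yields the sharper count \(\dim_K\Ext^1(S_{\lambda},V)=\dim_K\Hom(V,S_{\lambda})=\rank V\). You are right that \(V\neq0\) must be assumed; the paper's non-splitting argument needs this as well.

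\textbf{Two small points in your last step.}
\begin{itemize}
\item ``Nonzero Hom plus vanishing \(\Ext^1\)'' only gives \(\langle[\mathcal{O}(\vX)],[S_{\lambda}]\rangle\geq1\), not \(=1\). Twist invariance makes this value a constant \(c\geq1\) independent of \(\vX\), and that is all you need.
\item The positivity \(\rank V>0\) for \(V\neq0\) itself comes from the line bundle filtration. So your ``safer alternative'' is the cleanest way to finish: compose \(V\twoheadrightarrow L\), the top factor of such a filtration, with a nonzero map \(L\rightarrow S_{\lambda}\). Such a map exists because \(S_{\lambda}\simeq S_{\lambda}(\vX)\) is a quotient of every line bundle.
\end{itemize}
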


\begin{proof}
By \cite[Corollary~2.7]{GeigleLenzing87}, there exists a short exact sequence
\[
0\rightarrow V\rightarrow L\rightarrow T\rightarrow 0
\]
in \(\cohX\) with $L$ a direct sum of line bundles and \(0\neq T\in\mathcal{H}_{\lambda}\). This sequence does not split, so \(\Hom_{\dbcohX}(T,\Sigma V)\neq0\).
The tube \(\mathcal{H}_{\lambda}\) is homogenous, so any of its elements is filtered by the unique simple of the tube. 
This implies \(\Hom_{\dbcohX}(X,\Sigma V)\neq0\).
\end{proof}

\subsection{Classifying metric completions}
\label{SS:Classification}

As in the preceding subsection, we fix a weighted projective line \(\X=\left(\ProjectiveLine{K},\bflambda,\mathbf{p}\right)\) with \(k\in\N\) weighted points over an algebraically closed field $K$.
We also set \(\mathcal{S}:=\derived^b(\cohX)\) and \(\mathcal{T}:=\derived(\QcohX)\).
We know that \(\mathcal{S}\xhookrightarrow{}\mathcal{T}\) is a good extension with respect to all metrics due to Theorem~\ref{T:PrototypicalGoodExtension} because \(\mathcal{T}\) has coproducts and \(\mathcal{S}=\mathcal{T}^c\). Hence all completions \(\mathfrak{S}'(\mathcal{S})\) will be calculated as subcategories of \(\mathcal{T}\).

This section is dedicated to the classification of completions of \(\mathcal{S}\) (Theorem~\ref{T:WeightedCompletions}) when \(\X\) is domestic or tubular. We mimic the overall strategy of Section~7 of \cite{Matousek26B}.

We shall now state four results from \cite{Matousek26B}, which directly generalise from the original setting of derived categories of hereditary, finitely generated algebras, to the setting of derived categorises of locally noetherian, hereditary Grothendieck categories. In particular, they apply to \(\derived(\QcohX)\).
As the original proofs need only formal modifications to include the stronger hypothesis, we omit their proofs here.

\begin{lemma}[{\cite[Lemma~4.5]{Matousek26B}}]
\label{L:GrothendieckBruningWideTheoremForMetrics}
Equip \(\mathcal{S}\) with an~additive good metric \(\mathcal{M}=\{B_n\}_{n\in\N}\). Let \(\mathcal{C}\subseteq\cohX\). Denote \(\mathcal{W}\subseteq\cohX\) the wide subcategory generated by \(\mathcal{C}\). If \(\mathcal{C}\subseteq B_{n+2}\) as objects of \(\mathcal{S}\), then \(\mathcal{W}\subseteq B_n\).
\end{lemma}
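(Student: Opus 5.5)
We argue by closing $\mathcal{C}$ under the operations that produce $\mathcal{W}$ and tracking how many indices of the metric each operation costs. Inside the hereditary abelian category $\cohX$, the wide subcategory $\mathcal{W}$ generated by $\mathcal{C}$ is the closure of $\mathcal{C}$ under kernels, cokernels and extensions. The key observation is that, viewed in $\mathcal{S}=\dbcohX$, every object of $\mathcal{W}$ is a direct summand of a cohomology object of a complex built from $\mathcal{C}$ by cones and shifts. The target inclusion $\mathcal{W}\subseteq B_n$ should therefore follow from $B_{n+2}$ being closed under extensions and under the shifts $\Sigma^{\pm1}$ (which land in $B_{n+1}$), together with additivity $B_n=\addf(B_n)$.

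First, let $\mathcal{C}'$ be the smallest subcategory of $\mathcal{S}$ containing $\mathcal{C}$ and closed under extensions and direct summands. Since $B_{n+2}*B_{n+2}=B_{n+2}$ and $B_{n+2}=\addf(B_{n+2})$, we have $\mathcal{C}'\subseteq B_{n+2}$. Second, take a morphism $f:X\to Y$ in $\cohX$ with $X,Y$ in the extension closure. Form the triangle $X\xrightarrow{f}Y\to\cone f\to\Sigma X$. Since $\Sigma X\in B_{n+1}$ and $Y\in B_{n+2}\subseteq B_{n+1}$, we get $\cone f\in B_{n+1}*B_{n+1}=B_{n+1}$. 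Because $\cohX$ is hereditary, $\cone f\simeq \operatorname{Coker} f\oplus\Sigma\operatorname{Ker} f$. Additivity then gives $\operatorname{Coker} f\in B_{n+1}$ and $\Sigma\operatorname{Ker}f\in B_{n+1}$, hence $\operatorname{Ker}f\in B_n$.

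The main obstacle is that one step of kernels and cokernels already drops the index by up to two, whereas $\mathcal{W}$ requires iterating kernels, cokernels and extensions. Naive iteration would cost unboundedly many indices. To avoid this, we use the standard description of wide closures in hereditary categories, as in Brüning's work, which the lemma's name references: $\mathcal{W}$ consists of the kernels and cokernels of morphisms between objects of the extension closure $\mathcal{E}$ of $\mathcal{C}$, and this class is already closed under extensions. Granting this one-step description, the bound is as follows. Every $W\in\mathcal{W}$ is $\operatorname{Ker}f$ or $\operatorname{Coker}f$ for some $f:E\to E'$ with $E,E'\in\mathcal{E}\subseteq B_{n+2}$. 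By the previous paragraph, $W\in B_n$.

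I would verify Brüning's description carefully, since it is the substantive input. It is proved for hereditary abelian categories in general, so it applies to $\cohX$ directly. The only adaptation needed from the module setting of \cite{Matousek26B} is to check that $\cohX$ is hereditary abelian, which holds by the structure theorem above.
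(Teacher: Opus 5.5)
\textbf{There is a genuine gap: the ``one-step description'' in your third paragraph is false.} Kernels and cokernels of morphisms in the extension closure $\mathcal{E}$ do not exhaust $\mathcal{W}$ in general, and this class is not closed under extensions. Brüning's theorem gives $\mathcal{W}=\langle\mathcal{C}\rangle\cap\cohX$, which bounds nothing.

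Here is a counterexample. Take $Q\colon 1\to 2\to 3$, and let $P_1$ be the projective-injective $KQ$-module with top $S_1$ and socle $S_3$. Put $\mathcal{C}=\{S_1,S_3,P_1\}$ in $\modf\dashmodule KQ$.
\begin{itemize}
\item All $\Ext^1$ between these three objects vanish, so $\mathcal{E}=\addf(S_1\oplus S_3\oplus P_1)$.
\item $\Hom(S_2,E)=0=\Hom(E,S_2)$ for every $E\in\mathcal{E}$. So $S_2$ is neither a subobject nor a quotient of an object of $\mathcal{E}$, hence neither a kernel nor a cokernel of a morphism in $\mathcal{E}$.
\item Yet $S_2=\operatorname{Ker}(P_1/S_3\to S_1)\in\mathcal{W}$.
\end{itemize}
The same configuration occurs in $\cohX$: three consecutive simples in a tube of rank at least $4$ span a Serre subcategory equivalent to $\modf\dashmodule KQ$. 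So your argument does not cover all of $\mathcal{W}$. The unbounded iteration you correctly identified as the main obstacle is exactly what the false description assumes away.

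\textbf{A repair.} What is true, and suffices, is a two-round description. Let $\mathcal{Q}$ be the class of cokernels of morphisms in $\mathcal{E}$, and $\mathcal{R}$ the class of kernels of morphisms between objects of $\mathcal{Q}$.
\begin{itemize}
\item Since $\cohX$ is hereditary, $\Ext^1(X,-)$ preserves epimorphisms and $\Ext^1(-,X)$ turns monomorphisms into epimorphisms.
\item Lifting the relevant pullback and pushout extensions along these surjections shows that $\mathcal{Q}$ is closed under extensions and cokernels.
\item The same technique then shows $\mathcal{R}$ is closed under extensions, kernels and cokernels. Hence $\mathcal{W}=\mathcal{R}$.
\end{itemize}

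The index count must also be sharpened. For $f\colon E\to E'$ in $\mathcal{E}$, both $\cone f\in E'*\Sigma E$ and $\Sigma^{-1}\cone f\in\Sigma^{-1}E'*E$ lie in $B_{n+1}$. Since $\cone f\cong\operatorname{Coker}f\oplus\Sigma\operatorname{Ker}f$, every $Q\in\mathcal{Q}$ satisfies $Q\in B_{n+1}$ and $\Sigma^{-1}Q\in B_{n+1}$. For $h\colon Q\to Q'$ in $\mathcal{Q}$, $\operatorname{Ker}h$ is a summand of $\Sigma^{-1}\cone h\in\Sigma^{-1}Q'*Q\subseteq B_{n+1}$. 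This gives $\mathcal{W}\subseteq B_{n+1}\subseteq B_n$.

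Your own estimate $\operatorname{Ker}f\in B_n$ loses an index for the same reason. $\operatorname{Ker}f$ is already a summand of $\Sigma^{-1}\cone f\in B_{n+1}$, so there is no need to pass through $\Sigma\operatorname{Ker}f$.
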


\begin{lemma}[{\cite[Lemma~4.9]{Matousek26B}}]
\label{L:LivingInCompletionGivenByThickSubcategory}
Let \(\mathcal{M}=\{B_n\}_{n\in\N}\) be an additive good metric on \(\mathcal{S}\) and \(E\in\mathfrak{S}'_{\mathcal{M}}(\mathcal{S})\). Then for every \(n\in\N\) there exist a~countably generated thick subcategory \(\mathcal{C}\subseteq\mathcal{S}\) such that \(\mathcal{C}\subseteq\big\langle H^0(B_n)\big\rangle\) and \(E\in\mathfrak{S}'_{\mathcal{C}}(\mathcal{S})\).
\end{lemma}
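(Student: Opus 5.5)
The plan is to realise $E$ as a homotopy colimit and to build $\mathcal{C}$ from the cohomology sheaves of the cones in a tail of a Cauchy sequence. Write $E\simeq\hoco\mathbf{E}$ for a Cauchy sequence $\mathbf{E}=(E_m,e_m)_{m\in\N}$ in $\mathcal{S}$ with respect to $\mathcal{M}$. Take $s$ such that $\YonedaKatakana(E)$ is compactly supported at $s$. Fix $n$, choose a level $r\gg\max\{n,s\}$ and $N$ such that $\mathbf{E}$ stabilises at $r$ from $N$ onwards. Passing to the tail $(E_m,e_m)_{m\geq N}$ changes neither $\hoco\mathbf{E}$ nor the module colimit.

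First I will place the relevant sheaves inside $B_n$. Every object of $\mathcal{S}$ is a finite direct sum of shifted sheaves, since $\cohX$ is hereditary. $B_r$ is additive, so every summand $\Sigma^{-j}H^j(\cone e_m)$ of $\cone e_m$ lies in $B_r$. By the shift axiom (iii), $\Sigma^{\pm1}B_{t+1}\subseteq B_t$, so $H^j(\cone e_m)\in B_{r-|j|}$. The amplitude of $\cone e_m$ is not a priori bounded uniformly in $m$. I therefore refine the choice of $N$ level by level: for each $m\geq N$ I only need $H^j(\cone e_m)\in B_n$ (in degree $0$), so that the sheaf lies in $H^0(B_n)$. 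Let $\mathcal{C}$ be the thick subcategory generated by the countable set $\{H^j(\cone e_m):m\geq N,\ j\in\Z\}$. Then $\mathcal{C}\subseteq\langle H^0(B_n)\rangle$ and $\mathcal{C}$ is countably generated. The octahedral axiom puts every $\cone e_{m,m'}$ with $m'\geq m\geq N$ into $\mathcal{C}$, so the tail of $\mathbf{E}$ stabilises at $\mathcal{C}$ and is Cauchy for the constant metric. Hence $E\in\mathfrak{L}'_{\mathcal{C}}(\mathcal{S})$.

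It remains to show $\YonedaKatakana(E)\in\mathfrak{C}_{\mathcal{C}}(\mathcal{S})$, i.e.\ $\Hom_{\mathcal{T}}(X,E)=0$ for all $X\in\mathcal{C}$. The class of $X\in\mathcal{S}$ with $\Hom_{\mathcal{T}}(\Sigma^iX,E)=0$ for all $i\in\Z$ is thick. It therefore suffices to check this vanishing for every generator $G=H^j(\cone e_m)$ and every shift $i$. The tool is compact support together with the shift axiom: $\Sigma^iB_{s+|i|}\subseteq B_s$, so $\Hom(\Sigma^iG,E)=0$ as soon as $G\in B_{s+|i|}$. This is the main obstacle, because the required level grows with $|i|$ and a single choice of $r$ is not uniform in $i$.

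I would first try to remove the dependence on $i$ using hereditarity in $\mathcal{T}=\derived(\QcohX)$. $E$ is a coproduct of its shifted cohomology sheaves, so $\Hom(\Sigma^iG,E)$ involves only $H^{-i}(E)$ and $H^{-i-1}(E)$. Moreover, since $\mathfrak{S}'_{\mathcal{M}}(\mathcal{S})$ is triangulated, each $\Sigma^{i}E$ is again compactly supported, at some level $s_i$. Two cases remain, which should be handled by Lemma~\ref{L:GrothendieckBruningWideTheoremForMetrics} or by a diagonal argument.

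In the first case, $E$ has only finitely many nonzero cohomology sheaves (bounded amplitude, which I expect to follow from compact support and stabilisation). Then only finitely many $i$ matter, and taking $r\geq\max_i(s_i+|i|)+n+2$ finishes the proof.

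Otherwise, I refine the construction diagonally: choose $N_1<N_2<\cdots$ so that the cones beyond $N_k$ lie in $B_{r_k}$ with $r_k\geq \max_{|i|\leq k}(s_i+|i|)$. The wide closure provided by Lemma~\ref{L:GrothendieckBruningWideTheoremForMetrics} lets the finitely many early generators be absorbed, because their relevant shifts are controlled by the later levels. This yields the vanishing for all $i$ and completes $E\in\mathfrak{S}'_{\mathcal{C}}(\mathcal{S})$.
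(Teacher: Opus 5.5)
Your construction has a genuine gap at the step where you refine $N$ so that every cohomology sheaf $H^j(\cone e_m)$, $m\geq N$, lies in $B_n$. From $\cone e_m\in B_r$ and additivity you only get $H^j(\cone e_m)\in B_{r-|j|}$. Passing to a later tail does not help, because the amplitude of the cones can grow as fast as their level. Here is a concrete example. Take the aisle metric $B_n=\mathcal{S}^{\leq -n}$ of the standard $t$-structure, sheaves $X$ and $Y\neq 0$, and set $E_m=X\oplus\Sigma^mY$ with $e_m=\mathrm{id}_X\oplus 0$.
\begin{itemize}
\item The sequence $\mathbf{E}$ is Cauchy, since $\cone e_{m,m'}\simeq\Sigma^{m'}Y\oplus\Sigma^{m+1}Y\in B_{m+1}$.
\item $\hoco\mathbf{E}\simeq X$ is compactly supported at $1$.
\item $\langle H^0(B_n)\rangle=0$ for $n\geq 1$.
\end{itemize}
Yet every tail contains a cone with cohomology sheaf $Y\oplus Y$, so your $\mathcal{C}$ is never inside $\langle H^0(B_n)\rangle$. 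In this example the only admissible choice is $\mathcal{C}=0$, realised by a different sequence (the constant one at $X$). So you must modify the Cauchy sequence, not merely shorten it. The second half of your argument has the same root. The bounded amplitude you only ``expect'' is never proved. The diagonal ``absorption'' via Lemma~\ref{L:GrothendieckBruningWideTheoremForMetrics} is also not an argument: that lemma lowers levels by two and says nothing about large shifts.

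The missing idea is Lemma~\ref{L:CummingsGratzTrivialisation}, which applies because $\mathcal{S}$ is Krull--Schmidt and $\mathcal{M}$ is additive. Replace $\mathbf{E}$ by a $B_s$-trivial Cauchy sequence $\mathbf{F}=(F_m,f_m)_{m\in\N}$ with the same module colimit. Since the restricted Yoneda functor is an equivalence on $\mathfrak{S}'_{\mathcal{M}}(\mathcal{S})$, this gives $\hoco\mathbf{F}\simeq E$.

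Let $\mathbf{F}$ stabilise at $s+1$ from $N$ onwards, and let $F_N$ have cohomology in degrees $[a,b]$. By hereditarity, every morphism from $F_N$ to an object concentrated outside degrees $[a-1,b]$ is zero. Hence the summand of $F_m$ concentrated outside $[a-1,b]$ is a direct summand of $\cone f_{N,m}\in B_{s+1}\subseteq B_s$, and so it vanishes by $B_s$-triviality. Thus for $m\geq N$ all $F_m$ and $E$ have cohomology in $[a-1,b]$, and all $\cone f_{m,m'}$ have cohomology in $[a-2,b]$.

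Now fix a constant depending only on $a,b$, e.g.\ $c=\max\{|a-2|,|a-1|,|b|,|b+1|\}$. Choose $r\geq c+\max\{n,s+c\}$ and a tail on which $\mathbf{F}$ stabilises at $r$. The cohomology sheaves of the cones in this tail then lie in $B_n\cap B_{s+c}$. They generate a countably generated $\mathcal{C}\subseteq\langle H^0(B_n)\rangle$ at which the tail stabilises. For such a sheaf $G$, $\Hom(\Sigma^iG,E)$ can be nonzero only when $|i|\leq c$, and in that range $\Sigma^iG\in B_s$ forces it to vanish. Hence $E\in\mathfrak{S}'_{\mathcal{C}}(\mathcal{S})$, with no case distinction needed.
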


We recall that for a good metric \(\mathcal{M}=\{B_n\}_{n\in\N}\) on \(\mathcal{S}\) and the standard $t$-structure \((\mathcal{S}^{\leq0},\mathcal{S}^{>0})\), we define metric \(\mathcal{M}_{\infty}:=\mathcal{M}\wedge\left(\{\mathcal{S}^{\leq n}\}_{n\in\N}\vee\{\mathcal{S}^{>n}\}_{n\in\N}\right)\). Informally speaking, the metric \(\mathcal{M}_{\infty}\) is obtained by reducing the entries of \(\mathcal{M}\) to objects concentrated in very large or very small cohomological degrees.
In the upcoming, the functor \(H^0:\mathcal{S}\rightarrow\cohX\) is the zeroth cohomology with respect to this $t$-structure.

\begin{lemma}[{\cite[Proposition~5.10]{Matousek26B}}]
\label{L:DecompostionMetricsCompletion}
Equip \(\mathcal{S}\) with additive good metrics \(\mathcal{M}\) and \(\mathcal{N}\).
If \(\mathcal{M}=\mathcal{M}_{\infty}\vee\mathcal{N}\), then \(\mathfrak{S}'_{\mathcal{M}}(\mathcal{S})=\mathfrak{S}'_{\mathcal{N}}(\mathcal{S})\).
\end{lemma}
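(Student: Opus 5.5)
We have to prove Lemma~\ref{L:DecompostionMetricsCompletion}: if \(\mathcal{M}=\mathcal{M}_{\infty}\vee\mathcal{N}\) for additive good metrics on \(\mathcal{S}=\dbcohX\), then \(\mathfrak{S}'_{\mathcal{M}}(\mathcal{S})=\mathfrak{S}'_{\mathcal{N}}(\mathcal{S})\) inside \(\mathcal{T}=\derived(\QcohX)\). The strategy is to prove the two inclusions separately.

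\emph{Inclusion \(\supseteq\).} Since \(\mathcal{N}\leq\mathcal{M}\), every \(\mathcal{N}\)-Cauchy sequence is \(\mathcal{M}\)-Cauchy, so \(\mathfrak{L}'_{\mathcal{N}}\subseteq\mathfrak{L}'_{\mathcal{M}}\). For compact support, take \(E=\hoco\mathbf{E}\) with \(\YonedaKatakana(E)\) vanishing on some \(C_s\) of \(\mathcal{N}\). We must show it also vanishes on some \(B_t\). Because \(B_t\) is built from \(C_t\) and objects of \(B_t\cap(\mathcal{S}^{\leq -t}*\mathcal{S}^{>t})\), and \(\YonedaKatakana(E)\) is cohomological, it suffices to prove that \(\Hom_{\mathcal{T}}(X,E)=0\) for \(X\) concentrated in very high or very low degrees. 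Here we use that the terms of \(\mathbf{E}\) eventually differ only by cones in \(C_{s+1}\), so Lemma~\ref{L:BasicFactorisationProperty} reduces Hom out of \(E\) to Hom out of a single bounded complex \(E_N\). Since \(\mathcal{T}\) is hereditary, \(E\) splits as a coproduct of its shifted cohomologies. The boundedness of \(E_N\), combined with the fact that the cones lie in \(C_{s+1}\), should bound the range of cohomological degrees in which \(E\) lives, at least up to pieces orthogonal to \(C_s\). This in turn forces vanishing against \(\mathcal{S}^{\leq -t}\) and \(\mathcal{S}^{>t}\) for \(t\) large.

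\emph{Inclusion \(\subseteq\).} Let \(E\in\mathfrak{S}'_{\mathcal{M}}(\mathcal{S})\). We would first follow the strategy of \cite[Proposition~5.10]{Matousek26B} and show that an \(\mathcal{M}\)-Cauchy sequence converging to a compactly supported \(E\) can be modified into an \(\mathcal{N}\)-Cauchy sequence. Concretely, each cone of \(e_{m,n}\) lies in \(B_M=\) the extension closure of pieces from \(\mathcal{M}_\infty\) and from \(C_M\). We truncate with respect to the standard \(t\)-structure, using hereditarity to split objects into cohomology. Compact support of \(E\) at some \(B_s\) means \(E\) sees nothing in extreme degrees, so we can discard the \(\mathcal{S}^{\leq -n}\) and \(\mathcal{S}^{>n}\) summands of the cones. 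This replaces \(\mathbf{E}\) by a sequence with the same homotopy colimit whose cones lie in \(C_M\), after an index shift of about \(2\) coming from Lemma~\ref{L:GrothendieckBruningWideTheoremForMetrics}-type closure arguments. The compactly supported condition for \(\mathcal{N}\) is then immediate, because \(C_n\subseteq B_n\).

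\emph{Main obstacle.} The hard step is the modification of the sequence in the \(\subseteq\) direction: we must show that the high- and low-degree parts of the cones contribute nothing to the homotopy colimit. The tool we would try first is to apply the truncations \(\tr^{\leq n}\) and \(\tr^{>-n}\) termwise to \(\mathbf{E}\) for suitable growing \(n\), and then verify two things via Lemma~\ref{L:BasicFactorisationProperty}. First, the resulting sequence is \(\mathcal{N}\)-Cauchy. Second, the induced map of homotopy colimits is an isomorphism, which we would check by testing against compact objects, all of which are bounded.
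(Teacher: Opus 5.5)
Your reduction in the $\supseteq$ part (vanishing on $C_t$ together with vanishing on extreme-degree objects) is right. You also correctly locate the difficulty in $\subseteq$. However, the step that carries all the content is only asserted, and the heuristic behind it is wrong. That step is the claim that truncating the terms of an $\mathcal{M}$-Cauchy sequence yields an $\mathcal{N}$-Cauchy sequence.

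First, objects of $B_M$ are not direct sums of an extreme-degree part and a part in $C_M$ that could be ``discarded''. $B_M$ is an extension closure, and in the hereditary category $\mathcal{S}$ extreme and middle pieces are glued by nonsplit extensions in the degrees adjacent to $\pm M$.

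Second, truncation is not exact. So in general $\cone\big(\tr^{\leq k}f\big)\not\simeq\tr^{\leq k}(\cone f)$, and the former need not lie in $\mathcal{N}$ when $\cone f$ does. For instance, take an ordinary simple $S_\lambda$ and $\mathcal{N}=\langle\mathcal{O}(\CanonicalElement)\rangle$. The morphism $f\colon\Sigma^{-1}S_\lambda\to\mathcal{O}$ classifying $0\to\mathcal{O}\to\mathcal{O}(\CanonicalElement)\to S_\lambda\to0$ has $\cone f\simeq\mathcal{O}(\CanonicalElement)\in\mathcal{N}$. But $\cone\big(\tr^{\leq0}f\big)\simeq\mathcal{O}\notin\mathcal{N}$. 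So ``an index shift of about $2$'' is not an argument.

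Third, a growing window is problematic. On the lower side the natural maps $\tr^{>-n-1}X\to\tr^{>-n}X$ go the wrong way for building a sequence, so you need a fixed window.

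To close the gap, first prove the boundedness you only gesture at.
By Lemma~\ref{L:BasicFactorisationProperty} and full faithfulness of $\YonedaKatakana$ on $\mathfrak{S}'$, precomposition with $E_N\to E$ is a bijection $\Hom_{\mathcal{T}}(E,E)\to\Hom_{\mathcal{T}}(E_N,E)$. Write $E\simeq\coprod_i\Sigma^{-i}H^i(E)$, and suppose $E_N$ lives in degrees $[a,b]$. The idempotent onto a summand with $i\notin[a-1,b]$ vanishes on $E_N$, hence is zero. This settles $\supseteq$ outright; no ``up to pieces orthogonal to $C_s$'' is needed.

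For $\subseteq$, it lets you fix one window $[a',b']$ strictly containing the degrees of $E$; put $t:=\tr^{>a'-1}\tr^{\leq b'}$. Then:
(a) Pass to a subsequence along which $H^{a'}$ and $H^{b'}$ of all transition maps vanish. This is possible because the $H^{a'}(E_n)$ and $H^{b'}(E_n)$ are noetherian and their colimits $H^{a'}(E)$, $H^{b'}(E)$ are zero.
(b) Check with the $3\times3$ lemma that $H^i\big(\cone t(e_{m,n})\big)\simeq H^i(\cone e_{m,n})$ for $a'\leq i<b'$. The two boundary terms, in degrees $b'$ and $a'-1$, are $H^{b'}(E_n)$ and $H^{a'}(E_m)$. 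Each is then the image of a morphism between cohomology objects of cones of transition maps.
(c) Show that for $Z\in B_M$ and $i$ in this fixed range, $\Sigma^{-i}H^i(Z)\in C_{M'}$ with $M'\to\infty$ as $M\to\infty$. This is the only place where $\mathcal{M}=\mathcal{M}_\infty\vee\mathcal{N}$ enters. It requires controlling how far into the middle degrees iterated gluing with extreme pieces can reach.
(d) Absorb the images in (b) using Lemma~\ref{L:GrothendieckBruningWideTheoremForMetrics}.

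Only then is $t(\mathbf{E})$ $\mathcal{N}$-Cauchy. The isomorphism $\hoco t(\mathbf{E})\simeq E$ then needs no comparison map, which truncation does not canonically provide anyway. Both are bounded objects of the hereditary category $\mathcal{T}$ with the same cohomology, since $H^i$ commutes with $\hoco$.

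For the record, the paper does not argue this lemma at all; it imports \cite[Proposition~5.10]{Matousek26B}.
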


\begin{lemma}[{\cite[Lemma~5.11]{Matousek26B}}]
\label{L:ObstructingObjectsPC}
Equip \(\mathcal{S}\) with an~additive good metric \(\mathcal{M}=\{B_n\}_{n\in\N}\).
If \(\mathcal{M}\not\sim\mathcal{M}_{\infty}\vee\mathcal{B}\), where \(\mathcal{B}:=\bigcap_{n\in\N} B_n\), then for every \(E\in\mathfrak{S}'_{\mathcal{M}}(\mathcal{S})\) there exist an~additive good metric \(\mathcal{N}=\{C_n\}_{n\in\N}\sim\mathcal{M}\) on \(\mathcal{S}\) and a family \((X_n)_{n\in\N}\) of objects \(\cohX\) such that for all \(n\in\N\):
\begin{itemize}
    \item \(\Hom_{\mathcal{T}}\left(\Sigma^iX_n,E\right)=0\) for all \(i\in\Z\),
    \item \(X_n\in C_n\),
    \item \(X_{n}\notin\big\langle H^0(C_{n+1})\big\rangle\).
\end{itemize}
\end{lemma}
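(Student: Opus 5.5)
The plan is to combine the non-equivalence hypothesis with Lemma~\ref{L:LivingInCompletionGivenByThickSubcategory}, and to get the required equivalent metric \(\mathcal{N}\) by reindexing \(\mathcal{M}\) along a subsequence. Write \(\mathcal{W}_n:=\big\langle H^0(B_n)\big\rangle\). These form a decreasing chain of thick subcategories. Since \(\cohX\) is hereditary, every object of \(\mathcal{S}\) is a finite sum of shifts of its cohomology sheaves. Additivity of \(\mathcal{M}\) and Lemma~\ref{L:GrothendieckBruningWideTheoremForMetrics} then show that the part of \(B_{n+2}\) lying in bounded degrees is, up to the \(\mathcal{M}_\infty\)-part, built from shifts of objects of \(\mathcal{W}_{n+2}\cap\cohX\), and those shifts lie in \(B_{n+2-|i|}\).

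First I would show that \(\mathcal{M}\not\sim\mathcal{M}_\infty\vee\mathcal{B}\) forces the chain \((\mathcal{W}_n)_n\) not to stabilise, in the following precise sense. If there were \(n_0\) such that for every \(n\) some \(m\) satisfies \(\mathcal{W}_m\subseteq\mathcal{W}_{n'}\) for all \(n'\geq n_0\), then every sheaf in \(H^0(B_m)\) would lie in \(\bigcap_n B_n=\mathcal{B}\), using Lemma~\ref{L:GrothendieckBruningWideTheoremForMetrics} again to pass from thick closures back into the \(B_n\). Hence \(B_m\subseteq(\mathcal{M}_\infty)_m*\mathcal{B}\), giving \(\mathcal{M}\leq\mathcal{M}_\infty\vee\mathcal{B}\). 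The reverse inequality \(\mathcal{M}_\infty\vee\mathcal{B}\leq\mathcal{M}\) always holds, so the two metrics would be equivalent, a contradiction. Consequently, after replacing \(\mathcal{M}\) by an equivalent additive metric \(\mathcal{N}=\{C_n\}=\{B_{k_n}\}\) along a rapidly increasing sequence \(k_n\), the chain \(\big\langle H^0(C_n)\big\rangle\) is strictly decreasing at every step. Reindexing along a subsequence is harmless, since \(\{B_{k_n}\}\sim\mathcal{M}\) whenever \(k_n\to\infty\).

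Next I would make the chosen objects \(X_n\) orthogonal to \(E\). Fix \(E\in\mathfrak{S}'_{\mathcal{M}}(\mathcal{S})\). For each \(n\), Lemma~\ref{L:LivingInCompletionGivenByThickSubcategory} supplies a countably generated thick subcategory \(\mathcal{C}^{(n)}\subseteq\big\langle H^0(C_n)\big\rangle\) with \(E\in\mathfrak{S}'_{\mathcal{C}^{(n)}}(\mathcal{S})\). In particular \(\YonedaKatakana(E)\) vanishes on all of \(\mathcal{C}^{(n)}\), and since \(\mathcal{C}^{(n)}\) is thick this gives \(\Hom_{\mathcal{T}}(\Sigma^iX,E)=0\) for every \(X\in\mathcal{C}^{(n)}\) and every \(i\in\Z\). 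It therefore suffices to pick a sheaf \(X_n\in\mathcal{C}^{(n)}\cap\cohX\) with \(X_n\in C_n\) and \(X_n\notin\big\langle H^0(C_{n+1})\big\rangle\). The condition \(X_n\in C_n\) can be arranged by choosing \(\mathcal{C}^{(n)}\) inside \(\big\langle H^0(B_{k_n+2})\big\rangle\) and applying Lemma~\ref{L:GrothendieckBruningWideTheoremForMetrics}. Since \(\mathcal{C}^{(n)}\) is generated by sheaves, as a subcategory of the hereditary category it is determined by its heart, so it is enough to ensure \(\mathcal{C}^{(n)}\not\subseteq\big\langle H^0(C_{n+1})\big\rangle\).

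The main obstacle is exactly this last point: one has to rule out that \(E\) already lies in the completion with respect to a constant metric contained in the much smaller category \(\big\langle H^0(C_{n+1})\big\rangle\). My first attempt would be to choose the subsequence \(k_n\) adaptively. If \(\mathcal{C}^{(n)}\subseteq\big\langle H^0(B_{k})\big\rangle\) held for all \(k\), then \(\mathcal{C}^{(n)}\subseteq\mathcal{B}\). In that case I would compare \(E\in\mathfrak{S}'_{\mathcal{C}^{(n)}}\) with the Cauchy sequence for \(\mathcal{M}\) representing \(E\), aiming to show that the cones needed at all depths lie in \(\mathcal{C}^{(n)}\) together with \(\mathcal{M}_\infty\)-pieces. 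That would contradict the strictness obtained in the first step, in the form \(\mathcal{M}\not\sim\mathcal{M}_\infty\vee\mathcal{B}\). Otherwise some \(k\) works, and I take \(k_{n+1}\) to be that \(k\) and iterate. If this comparison fails, the fallback is to enlarge \(\mathcal{C}^{(n)}\) by adjoining finitely many generators of \(\big\langle H^0(C_n)\big\rangle\) that are orthogonal to \(E\). Such generators should exist because \(\YonedaKatakana(E)\) vanishes on \(B_s\) for some \(s\), and all the shifts of a sheaf of \(B_{s+|i|+2}\) are controlled by the additivity of the metric.
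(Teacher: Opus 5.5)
\textbf{Verdict: genuine gap.} The missing piece is the first bullet, $\Hom_{\mathcal{T}}(\Sigma^iX_n,E)=0$ for \emph{all} $i\in\Z$, which you never establish. (The paper gives no proof of this lemma; it cites \cite{Matousek26B}.)

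Your first step is fine up to index bookkeeping. From $\mathcal{M}\not\sim\mathcal{M}_\infty\vee\mathcal{B}$ and additivity you get, for every $m$, a sheaf in $B_m\setminus\mathcal{B}$. A sheaf outside $B_k$ lies outside $\langle H^0(B_{k+2})\rangle$ by Lemma~\ref{L:GrothendieckBruningWideTheoremForMetrics}. So a subsequence $C_n=B_{k_n}$ gives strictness.

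The orthogonality is where it breaks. Your main route, taking $X_n$ inside the subcategory $\mathcal{C}^{(n)}$ of Lemma~\ref{L:LivingInCompletionGivenByThickSubcategory}, cannot work, because that lemma gives no lower bound on $\mathcal{C}^{(n)}$. For $E=0$, or any compactly supported $E\in\mathcal{S}$, the choice $\mathcal{C}^{(n)}=0$ is allowed, and it contains no nonzero sheaf. The contradiction you hope for when $\mathcal{C}^{(n)}\subseteq\mathcal{B}$ does not exist: $\mathcal{M}\not\sim\mathcal{M}_\infty\vee\mathcal{B}$ is a property of the metric alone and is compatible with such $E$. Your fallback has the same hole. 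Compact support at $s$ only gives $\Hom_{\mathcal{T}}(\Sigma^iX,E)=0$ for $|i|\leq k$ when $X\in B_{s+k}$. A sheaf controlled this way for every $i$ would have to lie in $\mathcal{B}$, which is exactly what the $X_n$ must avoid.

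The missing idea is that every $E\in\mathfrak{S}'_{\mathcal{M}}(\mathcal{S})$ has bounded cohomology, which reduces ``all $i$'' to finitely many $i$.
\begin{itemize}
\item Write $E\simeq\hoco\mathbf{E}$ with $\YonedaKatakana(E)$ vanishing on $B_s$, $\mathbf{E}$ stabilising at $s+1$ from $N$ onwards, and $E_N$ concentrated in degrees $[a,b]$.
\item For $n\geq N$ and $j\notin[a-1,b]$, the triangle $E_N\to E_n\to D_n\to\Sigma E_N$ with $D_n\in B_{s+1}$ gives $H^j(E_n)\simeq H^j(D_n)$. By additivity, $\Sigma^{-j}H^j(E_n)\in B_{s+1}\subseteq B_s$.
\item Hence the split inclusion $\Sigma^{-j}H^j(E_n)\to E_n$ followed by $E_n\to E$ is zero. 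So $H^j(E_n)\to H^j(E)$ is zero for all $n\geq N$, and $H^j(E)=\varinjlim_n H^j(E_n)=0$.
\item Since $\QcohX$ is hereditary, for a sheaf $X$ we have $\Hom_{\mathcal{T}}(\Sigma^iX,E)\simeq\Hom(X,H^{-i}(E))\oplus\Ext^1(X,H^{-i-1}(E))$.
\item This vanishes for $|i|>R:=\max(|a|,|b|)+1$ by degree reasons. For $|i|\leq R$ it vanishes whenever $X\in B_{s+R}$, since then $\Sigma^iX\in B_s$.
\end{itemize}
So every sheaf in $B_{s+R}$ is orthogonal to all shifts of $E$. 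Your first step with $k_1\geq s+R$ then completes the proof, and Lemma~\ref{L:LivingInCompletionGivenByThickSubcategory} is not needed. Whenever $\langle H^0(C_n)\rangle\neq\langle H^0(C_{n+1})\rangle$, some sheaf of $H^0(C_n)\subseteq C_n\subseteq B_{s+R}$ already lies outside $\langle H^0(C_{n+1})\rangle$, and you can take that as $X_n$.
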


\begin{lemma}
\label{L:ObstructingObjectsSubcategories}
Equip \(\mathcal{S}\) with an additive good metric \(\mathcal{M}=\{B_n\}_{n\in\N}\) and set \(\mathcal{B}:=\bigcap_{n\in\N} B_n\).
If it holds that \(\mathcal{M}\not\sim\mathcal{M}_{\infty}\vee\mathcal{B}\), then for every \(E\in\mathfrak{S}'_{\mathcal{M}}(\mathcal{S})\) there exists an~additive good metric \(\mathcal{N}=\{C_n\}_{n\in\N}\) on \(\mathcal{S}\) such that for all \(n\in\N\): 
\begin{itemize}
    \item \(C_n\subseteq\big\langle H^0(B_n)\rangle\) is a thick subcategory,
    \item \(C_{n+1}\neq C_n\),
    \item \(E\in\mathfrak{S}'_{C_n}(\mathcal{S})\).
\end{itemize}
\end{lemma}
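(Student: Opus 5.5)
We derive this from Lemma~\ref{L:ObstructingObjectsPC} and Lemma~\ref{L:LivingInCompletionGivenByThickSubcategory}, building the chain \(C_1\supsetneq C_2\supsetneq\cdots\) inductively.

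First, fix \(E\in\mathfrak{S}'_{\mathcal{M}}(\mathcal{S})\). Lemma~\ref{L:ObstructingObjectsPC} gives an equivalent additive good metric \(\mathcal{N}'=\{C'_n\}_{n\in\N}\sim\mathcal{M}\) and objects \(X_n\in C'_n\cap\cohX\) with \(\Hom_{\mathcal{T}}(\Sigma^iX_n,E)=0\) for all \(i\in\Z\) and \(X_n\notin\langle H^0(C'_{n+1})\rangle\). Since \(\mathcal{N}'\sim\mathcal{M}\), we have \(\mathfrak{S}'_{\mathcal{N}'}(\mathcal{S})=\mathfrak{S}'_{\mathcal{M}}(\mathcal{S})\). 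After passing to a subsequence and reindexing, which preserves Cauchy sequences and compact support up to equivalence, we may also assume \(C'_n\subseteq B_n\) for every \(n\). The non-membership condition survives the reindexing because the \(C'_n\) are decreasing.

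Next, apply Lemma~\ref{L:LivingInCompletionGivenByThickSubcategory} to \(\mathcal{N}'\) at each level \(n\). This yields a countably generated thick subcategory \(\mathcal{C}_n\subseteq\langle H^0(C'_n)\rangle\subseteq\langle H^0(B_n)\rangle\) with \(E\in\mathfrak{S}'_{\mathcal{C}_n}(\mathcal{S})\). Intersecting these with each other does not obviously keep \(E\) in the completion. So I will instead adjoin: set
\[
C_n:=\big\langle \mathcal{C}_m\cup\{X_m\}:m\geq n\big\rangle .
\]
This family is decreasing, consists of thick subcategories, and satisfies \(C_n\subseteq\langle H^0(C'_n)\rangle\subseteq\langle H^0(B_n)\rangle\), using \(X_m\in H^0(C'_m)\) and the monotonicity of the \(C'_m\).

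We must then check that \(E\) still lies in \(\mathfrak{S}'_{C_n}(\mathcal{S})\). The first ingredient is that \(E\) is a homotopy colimit of a sequence that is Cauchy for \(\mathcal{C}_n\subseteq C_n\), hence Cauchy for \(C_n\), since the constant metric \(C_n\) is coarser. The second ingredient is compact support: \(\Hom_{\mathcal{T}}(\blank,E)\) vanishes on \(\mathcal{C}_m\) for \(m\geq n\), because \(E\) is compactly supported at the constant metric \(\mathcal{C}_m\). It also vanishes on all shifts of the \(X_m\), by the first bullet of Lemma~\ref{L:ObstructingObjectsPC}. The class of objects \(Y\) with \(\Hom_{\mathcal{T}}(\Sigma^iY,E)=0\) for all \(i\) is thick, so \(\Hom_{\mathcal{T}}(\blank,E)\) vanishes on \(C_n\).

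The strictness \(C_{n+1}\neq C_n\) comes from \(X_n\in C_n\) together with \(C_{n+1}\subseteq\langle H^0(C'_{n+1})\rangle\not\ni X_n\).

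The main obstacle is the reindexing step needed to get \(C'_n\subseteq B_n\) rather than just \(\mathcal{N}'\sim\mathcal{M}\). If the equivalence only provides \(C'_{f(n)}\subseteq B_n\), I will replace \(X_n,C'_n\) by \(X_{f(n)},C'_{f(n)}\) with \(f\) strictly increasing. The obstruction condition \(X_{f(n)}\notin\langle H^0(C'_{f(n)+1})\rangle\supseteq\langle H^0(C'_{f(n+1)})\rangle\) is preserved under this replacement, and so are the completions.
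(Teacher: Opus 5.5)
Your proof is correct and follows the paper's argument. The paper also combines Lemma~\ref{L:ObstructingObjectsPC} with Lemma~\ref{L:LivingInCompletionGivenByThickSubcategory}, sets \(C_n:=\langle\mathcal{E}_m,X_m:m\geq n\rangle\), and checks membership of \(E\) using the Cauchy property for \(\mathcal{E}_n\subseteq C_n\) and the thick class of objects left-orthogonal to all shifts of \(E\). Your explicit reindexing along a strictly increasing \(f\) with \(C'_{f(n)}\subseteq B_n\) makes precise the paper's ``we can assume'' step, so that \(C_n\subseteq\langle H^0(B_n)\rangle\) holds for the original metric \(\mathcal{M}\) and not only for an equivalent one.
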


\begin{proof}
By Lemma~\ref{L:ObstructingObjectsPC}, we can assume that there exists a family \((X_n)_{n\in\N}\) of coherent sheaves such that \(X_n\in B_n\setminus\big\langle H^0(B_{n+1})\big\rangle\) and \(\langle X_n\rangle\subseteq{}^{\perp}E\).
Using Lemma~\ref{L:LivingInCompletionGivenByThickSubcategory}, we pick a thick subcategory \(\mathcal{E}_n\subseteq\big\langle H^0(B_n)\big\rangle\) satisfying \(E\in\mathfrak{S}'_{\mathcal{E}_n}(\mathcal{S})\) for each \(n\in\N\).

Then for every \(n\in\N\) we set \(C_n:=\langle\mathcal{E}_m,X_m:m\geq n\rangle\). By the~choice of the~family \((X_n)_{n\in\N}\), the~thick subcategories \(C_1\supsetneq C_2\supsetneq\ldots\) form a~strictly decreasing chain and, therefore, an additive good metric 
\(\mathcal{N}:=\{C_n\}_{n\in\N}\).
Since for all \(n,m\in\N\) we have \(E\in\mathfrak{L}'_{\mathcal{E}_n}(\mathcal{S})\cap\mathfrak{C}_{\mathcal{E}_n}'(\mathcal{S})\cap\mathfrak{C}_{\langle X_m\rangle}'(\mathcal{S})\), we also get \(E\in\mathfrak{S}'_{C_n}(\mathcal{S})\).
\end{proof}

The statement below is based on \cite[Lemma~7.1]{Matousek26B}. The formulations are adjusted to \(\cohX\).

\begin{proposition}
\label{P:UncountablyGeneratedCompletion}
Let \(\mathcal{D}\subsetneq\mathcal{C}\) be thick subcategories of \(\mathcal{S}\) generated by torsion sheaves. Let \(\lambda\in\ProjectiveLine{K}\setminus\bflambda\) and pick indecomposable \( T\in\mathcal{H}_{\lambda}\).
Then:
\begin{enumerate}[label=(\roman*)]
\item If \(T\in\mathcal{C}\setminus\mathcal{D}\), then it holds that \(\mathfrak{S}'_{\mathcal{C}}(\mathcal{S})\cap\mathfrak{S}'_{\mathcal{D}}(\mathcal{S})\subseteq\mathcal{C}^{\perp}\cap\mathscr{H}^{(\infty)}\).
\item If \(\mathcal{C}\) is not countably generated, then \(\mathfrak{S}'_{\mathcal{C}}(\mathcal{S})=\mathcal{C}^{\perp}\cap\mathscr{H}^{(\infty)}\). 
\end{enumerate}
\end{proposition}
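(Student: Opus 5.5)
The plan is to prove (i) by pinning down the entries of a Cauchy sequence representing $E$, and then to deduce (ii) from (i) using Lemma~\ref{L:LivingInCompletionGivenByThickSubcategory}.

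For (i), fix $E\in\mathfrak{S}'_{\mathcal{C}}(\mathcal{S})\cap\mathfrak{S}'_{\mathcal{D}}(\mathcal{S})$ and write $E\simeq\hoco\mathbf{E}$ for a Cauchy sequence $\mathbf{E}=(E_n,e_n)_{n\in\N}$ with respect to $\mathcal{D}$, so that $\cone e_n\in\mathcal{D}$ for all $n$. First I show that $\mathcal{D}$ and $\mathcal{H}_\lambda$ are mutually orthogonal. Because $\mathcal{D}$ is generated by torsion sheaves, it splits along the orthogonal decomposition $\cohfX=\coprod_{\mu}\mathcal{H}_\mu$. The tube $\mathcal{H}_\lambda$ is homogeneous, so its only thick subcategories are $0$ and $\mathscr{H}_\lambda$, and $T\notin\mathcal{D}$ forces $\mathcal{D}\cap\mathscr{H}_\lambda=0$. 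Hence $\mathcal{D}$ lies in the thick subcategory generated by the other tubes, which gives $\Hom_{\mathcal{T}}(\Sigma^i T,\mathcal{D})=0=\Hom_{\mathcal{T}}(\mathcal{D},\Sigma^i T)$ for all $i$.

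Since $T$ is compact, $\Hom_{\mathcal{T}}(\Sigma^iT,E)\simeq\varinjlim_n\Hom_{\mathcal{S}}(\Sigma^iT,E_n)$, and every transition map in this colimit is an isomorphism because the cones lie in $\mathcal{D}$. As $T\in\mathcal{C}$ and $E$ is compactly supported at $\mathcal{C}$, the left-hand side vanishes. Therefore $\Hom_{\mathcal{S}}(\Sigma^iT,E_n)=0$ for all $i,n$. Every object of $\mathcal{S}$ is the sum of its shifted cohomology sheaves, so Lemma~\ref{L:HomogenousTubesExtSeeVectorBundles} rules out any vector bundle summand of any $H^j(E_n)$. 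Consequently every $E_n$ lies in $\mathscr{H}^{(\infty)}$.

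It remains to show that $E$ is compact, and this is the step I expect to be the main obstacle. I would first reduce to finitely many tubes. $E_1$ has only finitely many indecomposable summands, so it meets only finitely many tubes, and each step $E_n\to E_{n+1}$ changes only the components lying in tubes that meet $\mathcal{D}$. Splitting the whole sequence tube by tube, $E$ becomes a finite sum of homotopy colimits $E^{(\mu)}$, one per tube $\mathcal{H}_\mu$, each with cones in $\mathcal{D}_\mu:=\mathcal{D}\cap\mathscr{H}_\mu$; I still have to check that these components can be tracked uniformly in $n$. There are then two cases.
\begin{itemize}
\item If $\mathcal{D}_\mu=\mathscr{H}_\mu$, compact support of $E$ at $\mathcal{D}$ kills every map from $\mathscr{H}_\mu$ into $E^{(\mu)}$. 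Since $E^{(\mu)}$ is a homotopy colimit of objects of $\mathscr{H}_\mu$, I expect this to force $E^{(\mu)}=0$.
\item If $\mathcal{D}_\mu$ is a proper thick subcategory of a tube, it should be generated by an exceptional sequence and hence be coreflective in $\mathscr{H}_\mu$. The argument of Corollary~\ref{C:CoreflectiveGivesPerp}, that is, Theorem~\ref{T:CummingsGratzAisleTheorem} applied inside $\mathscr{H}_\mu$, then identifies $E^{(\mu)}$ with the $\mathcal{D}_\mu^{\perp}$-truncation of $E^{(\mu)}_1$, which is compact.
\end{itemize}
In both cases $E$ is a compact object of $\mathscr{H}^{(\infty)}$. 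Compact support at $\mathcal{C}$ then gives $E\in\mathcal{C}^{\perp}$, as required.

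For (ii), the inclusion $\supseteq$ is straightforward. An object $X\in\mathcal{C}^{\perp}\cap\mathscr{H}^{(\infty)}$ is compact, its constant sequence is Cauchy, and $\Hom(\mathcal{C},X)=0$ says exactly that $\YonedaKatakana(X)$ is compactly supported at $\mathcal{C}$. For $\subseteq$, take $E\in\mathfrak{S}'_{\mathcal{C}}(\mathcal{S})$. Since $\mathcal{C}$ is generated by torsion sheaves and $\cohX$ is hereditary, $\langle H^0(\mathcal{C})\rangle=\mathcal{C}$. Lemma~\ref{L:LivingInCompletionGivenByThickSubcategory} then provides a countably generated thick $\mathcal{D}\subseteq\mathcal{C}$ with $E\in\mathfrak{S}'_{\mathcal{D}}(\mathcal{S})$. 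Each tube is countably generated, and only finitely many tubes are non-homogeneous. Since $\mathcal{C}$ is not countably generated, it must therefore contain $\mathscr{H}_\lambda$ for uncountably many $\lambda\in\ProjectiveLine{K}\setminus\bflambda$. The countably generated $\mathcal{D}$ meets only countably many tubes. Pick $\lambda$ with $\mathcal{H}_\lambda\subseteq\mathcal{C}$ and $\mathcal{H}_\lambda\cap\mathcal{D}=0$, and let $T$ be the simple object of $\mathcal{H}_\lambda$. In particular $\mathcal{D}\subsetneq\mathcal{C}$, so all hypotheses of (i) hold, and (i) gives $E\in\mathcal{C}^{\perp}\cap\mathscr{H}^{(\infty)}$.
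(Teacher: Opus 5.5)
Up to the compactness of $E$, your argument for (i) is the paper's: the orthogonality of $\mathcal{D}$ and $\mathcal{H}_\lambda$, the constancy of $\Hom(\Sigma^iT,\mathbf{E})$, and Lemma~\ref{L:HomogenousTubesExtSeeVectorBundles} to put every $E_n$ in $\mathscr{H}^{(\infty)}$. Your part (ii) also matches the paper.

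\textbf{The gap.} It is in your second bullet. A proper thick subcategory of a non-homogeneous tube need not be generated by an exceptional sequence. Suppose $p_i=2$. Let $S_1,S_2$ be the simples of $\mathcal{H}_{\lambda_i}$ with $\tau S_1\cong S_2$, and let $M$ be the non-split extension with top $S_1$ and socle $S_2$. Then $\End(M)\cong K\cong\Ext^1(M,M)$. So $\langle M\rangle$ is a proper thick subcategory of $\mathscr{H}_{\lambda_i}$, a copy of a homogeneous tube, containing no exceptional object. Nothing in (i) excludes $\mathcal{D}_{\lambda_i}=\langle M\rangle$, so your case analysis does not cover it.

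What you actually need is that $\mathcal{D}_\mu$ is coreflective inside $\mathscr{H}_\mu$. In this example that holds for a different reason. We have $\langle M\rangle=\langle S_1\rangle^{\perp}\cap\mathscr{H}_{\lambda_i}$. Since $\Sigma\tau$ is a Serre functor on the Hom-finite category $\mathscr{H}_{\lambda_i}$, the perpendicular category of the admissible subcategory $\langle S_1\rangle$ is again bireflective there. In general you must show that every thick subcategory of $\mathscr{H}_\mu$ is coreflective in $\mathscr{H}_\mu$. One way is to show each is either generated by an exceptional sequence or is the perpendicular category, inside the tube, of one. This is the missing ingredient.

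\textbf{Two smaller repairs.} First, the tube splitting need not be finite: components in $\mathcal{D}$ can appear in ever new tubes along the sequence. This is harmless. The module colimit $\varinjlim\Hom(-,E_n)$ is the direct sum of the componentwise module colimits. Any component with $E^{(\mu)}_1=0$ has all its entries in $\mathcal{D}$, so it vanishes by compact support, exactly as in your first bullet.

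Second, Theorem~\ref{T:CummingsGratzAisleTheorem} applied to $\mathscr{H}_\mu$ only describes the restriction of the module colimit to $\mathscr{H}_\mu$, not the object in $\mathcal{T}$. It is cleaner to truncate termwise in $\mathcal{S}$. The approximation triangles $D_n\to E^{(\mu)}_n\to Y_n\to\Sigma D_n$, with $D_n\in\mathcal{D}_\mu$ and $Y_n\in\mathcal{D}_\mu^{\perp}$, give $Y_n\cong Y_1$. Evaluate the colimit of the long exact sequences at objects of $\mathcal{D}_\mu$ and use $\Hom_{\mathcal{T}}(\mathcal{D},E)=0$. This shows $\varinjlim\Hom(-,\Sigma^jD_n)=0$ for all $j$, so the $\mu$-component is represented by $Y_1$.

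\textbf{Comparison with the paper.} The paper does not split into tubes at this point. It makes $\mathbf{E}$ $\mathcal{C}$-trivial via Lemma~\ref{L:CummingsGratzTrivialisation} and concludes that all $E_n$ lie in $\mathcal{C}^{\perp}$. It then applies \cite[Lemma~6.2]{Matousek26B} to get $E\in\mathcal{S}$: with entries in $\mathcal{C}^{\perp}$ and cones in $\mathcal{C}$, the triangles $E_n\to E_{n+1}\to C_n\to\Sigma E_n$ split and force $C_n=0$. Note that the passage from $\mathcal{C}$-triviality to $E_n\in\mathcal{C}^{\perp}$ is immediate only for tubes that $\mathcal{C}$ contains entirely or misses entirely. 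So partially contained non-homogeneous tubes are the delicate case on either route.
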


\begin{proof}
``Part i)''
Pick \(E\in\mathfrak{S}'_{\mathcal{C}}(\mathcal{S})\cap\mathfrak{S}'_{\mathcal{D}}(\mathcal{S})\) and choose a Cauchy sequence \(\mathbf{E}=(E_n,e_n)_{n\in\N}\) with respect to \(\mathcal{D}\) satisfying \(E\simeq\hoco\mathbf{E}\). The sequence \(\mathbf{E}\) is also Cauchy with respect to \(\mathcal{C}\). By Lemma~\ref{L:CummingsGratzTrivialisation}, we may additionally assume that the~sequence \(\mathbf{E}\) is \(\mathcal{C}\)-trivial. We may also assume \(\cone e_n\in\mathcal{D}\) for all \(n\in\N\); otherwise we just remove finitely many entries from \(\mathbf{E}\).

Since \(T\notin\mathcal{D}\), we infer \(\mathcal{D}\subseteq T^{\perp}\) using the tube orthogonality.
Fix \(j\in\Z\).
The functor \(\Hom_{\mathcal{S}}(\Sigma^jT,\blank)\) vanishes on \(\cone e_n\) for all \(n\in\N\). Hence, the sequence \(\Hom_{\mathcal{S}}(\Sigma^jT,\mathbf{E})\) is constant. For each \(n\in\N\) we get
\[
\Hom_{\mathcal{S}}\left(\Sigma^jT,E_n\right)\simeq\varinjlim\Hom_{\mathcal{S}}\left(\Sigma^jT,\mathbf{E}\right)\simeq\Hom_{\mathcal{S}}(\Sigma^jT,E)\simeq0
.\]
Using \(\Hom_{\mathcal{S}}\big(\Sigma^{-1}T,V\big)\neq0\) for all \(V\in\vectX\) from Lemma~\ref{L:HomogenousTubesExtSeeVectorBundles}, we get that for all \(n\in\N\) the object \(E_n\) does contain any shift of vector bundle as a~direct summand.
In other words, we get \(E_n\in\mathscr{H}^{(\infty)}\) for all \(n\in\N\). Because \(\mathbf{E}\) is also \(\mathcal{C}\)-trivial, the mutual orthogonality of tubes implies \(E_n\in\mathcal{C}^{\perp}\) for all \(n\in\N\).

We now know that all the entries of \(\mathbf{E}\) are all compactly supported with respect to \(\mathcal{C}\) at all natural numbers. By \cite[Lemma~6.2]{Matousek26B}, this property of \(\mathbf{E}\) forces \(E\in\mathcal{S}\). We again use \(\Hom_{\mathcal{S}}\big(\Sigma^{-1}T,V\big)\neq0\) for all \(V\in\vectX\) to deduce \(E\in\mathscr{H}^{(\infty)}\). Thus we may conclude \(E\in\mathcal{C}^{\perp}\cap\mathscr{H}^{(\infty)}\).

``Part ii)''  The category \(\mathcal{C}\) is now not countably generated.
Therefore, we can find an~uncountable subset \(I\subseteq\ProjectiveLine{K}\) and for each \(\lambda\in I\) a thick subcategory
\[\tube_{\lambda}\subseteq\mathscr{H}^{(\infty)}_{\lambda}
\text{\quad\quad\quad such that \quad\quad\quad}
\mathcal{C}\simeq\coprod_{\lambda\in I}\tube_\lambda.
\]
Let \(E\in\mathfrak{S}'_{\mathcal{C}}(\mathcal{S})\).
By an application of Lemma~\ref{L:LivingInCompletionGivenByThickSubcategory}, we get  \(E\in\mathfrak{S}'_{\mathcal{D}}(\mathcal{S})\) for some countably generated thick subcategory \(\mathcal{D}\subseteq\mathcal{C}\).
We have \(\mathcal{D}\subseteq\coprod_{\lambda\in J}\tube_\lambda\) for some \(J\subseteq I\) countable.  
Because \(I\) is uncountable, there exists \(\lambda\in I\setminus(\bflambda\cup J)\) satisfying
\(
\tube_{\lambda}=\mathscr{H}^{(\infty)}_{\lambda}
\)
and
\(
\mathscr{H}^{(\infty)}_{\lambda}\cap\,\mathcal{D}=0
.\)
Consequently, part i) applies, so \(E\in\mathcal{C}^{\perp}\cap\mathscr{H}^{(\infty)}\).
As $E$ was arbitrary, the~inclusion \(\mathfrak{S}'_{\mathcal{C}}(\mathcal{S})\subseteq\mathcal{C}^{\perp}\cap\mathscr{H}^{(\infty)}\) follows.
The other inclusion \(\mathcal{C}^{\perp}\cap\mathscr{H}^{(\infty)}\subseteq\mathfrak{S}'_{\mathcal{C}}(\mathcal{S})\) is automatic because every compactly supported element of \(\mathcal{S}\) is a homotopy colimit of a constant sequence.
\end{proof}

\begin{corollary}
\label{C:UncountablyGeneratedCompletionTubular}
Assume \(\X\) is tubular. Let \(q\in\Q\cup\{\infty\}\).
Let \(\mathcal{D}\subsetneq\mathcal{C}\) be thick subcategories of \(\mathscr{H}^{(q)}\subseteq\mathcal{S}\). Pick an indecomposable object \(T\in\mathscr{H}^{(q)}_{\lambda}\) for some \(\lambda\in\ProjectiveLine{K}\setminus\bflambda\).
Then:
\begin{enumerate}[label=(\roman*)]
\item If \(T\in\mathcal{C}\setminus\mathcal{D}\), then it holds that \(\mathfrak{S}'_{\mathcal{C}}(\mathcal{S})\cap\mathfrak{S}'_{\mathcal{D}}(\mathcal{S})\subseteq\mathcal{C}^{\perp}\cap\mathscr{H}^{(q)}\).
\item If \(\mathcal{C}\) is not countably generated, then \(\mathfrak{S}'_{\mathcal{C}}(\mathcal{S})=\mathcal{C}^{\perp}\cap\mathscr{H}^{(q)}\).
\end{enumerate}
\end{corollary}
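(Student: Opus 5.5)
The plan is to reduce the corollary to Proposition~\ref{P:UncountablyGeneratedCompletion} by transporting everything along a triangulated autoequivalence. By Theorem~\ref{T:TubularMutation}, since \(\X\) is tubular, there is a triangulated autoequivalence \(\Phi:\mathcal{S}\xrightarrow{\simeq}\mathcal{S}\) restricting to an equivalence \(\mathcal{H}^{(q)}\xrightarrow{\simeq}\mathcal{H}^{(\infty)}\). It therefore sends \(\mathscr{H}^{(q)}\) onto \(\mathscr{H}^{(\infty)}\) and tubes \(\mathcal{H}^{(q)}_\lambda\) onto tubes \(\mathcal{H}_{\lambda'}\). Since the rank of a tube is preserved by an equivalence, homogeneous tubes go to homogeneous tubes, so \(\lambda'\notin\bflambda\). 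Put \(\mathcal{C}':=\Phi(\mathcal{C})\), \(\mathcal{D}':=\Phi(\mathcal{D})\) and \(T':=\Phi(T)\). These are thick subcategories \(\mathcal{D}'\subsetneq\mathcal{C}'\) of \(\mathscr{H}^{(\infty)}\), hence generated by torsion sheaves, and \(T'\) is an indecomposable object of \(\mathcal{H}_{\lambda'}\), up to a shift; a shift is harmless because the subcategories are thick.

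Next I transport the completions. By Corollary~\ref{C:EkvilaleceEkvivalencakEkvivalencata}, applied to the constant metrics, \(\Phi\) induces a triangulated equivalence \(\mathfrak{S}_{\mathcal{C}}(\mathcal{S})\simeq\mathfrak{S}_{\mathcal{C}'}(\mathcal{S})\), and similarly for \(\mathcal{D}\). This is not quite enough, because we need a statement about the intersection of subcategories of \(\mathcal{T}=\derived(\QcohX)\). I would therefore extend \(\Phi\) to a coproduct-preserving autoequivalence \(\widetilde{\Phi}\) of \(\mathcal{T}\). Tubular mutations are given by tensoring with line bundles and by tilting/spherical twist functors, which have standard dg or Fourier--Mukai lifts. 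Alternatively, since \(\mathcal{T}\) is compactly generated with \(\mathcal{T}^c=\mathcal{S}\), any triangulated autoequivalence of \(\mathcal{S}\) admitting an enhancement extends uniquely.

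Such a \(\widetilde{\Phi}\) commutes with homotopy colimits and with the restricted Yoneda functor, in the sense that \(\YonedaKatakana(\widetilde\Phi X)\simeq\YonedaKatakana(X)\circ\Phi^{-1}\). Consequently \(\widetilde{\Phi}\) maps \(\mathfrak{L}'_{\mathcal{C}}\), \(\mathfrak{C}'_{\mathcal{C}}\) and \(\mathfrak{S}'_{\mathcal{C}}(\mathcal{S})\) onto the corresponding primed versions for \(\mathcal{C}'\), and likewise for \(\mathcal{D}\). It also maps \(\mathcal{C}^{\perp}\cap\mathscr{H}^{(q)}\) onto \(\mathcal{C}'^{\perp}\cap\mathscr{H}^{(\infty)}\).

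With this in place, part (i) follows by applying Proposition~\ref{P:UncountablyGeneratedCompletion}(i) to \(\mathcal{D}'\subsetneq\mathcal{C}'\) and \(T'\), then pulling back by \(\widetilde\Phi^{-1}\). Part (ii) works the same way: \(\mathcal{C}'\) is countably generated if and only if \(\mathcal{C}\) is, so Proposition~\ref{P:UncountablyGeneratedCompletion}(ii) applies.

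The main obstacle is the extension of \(\Phi\) to \(\mathcal{T}\) compatibly with homotopy colimits. Should it fail, I would instead redo the proof of Proposition~\ref{P:UncountablyGeneratedCompletion} directly in slope \(q\), replacing the three tube-specific inputs by their \(\Phi\)-transports inside \(\mathcal{S}\):
\begin{itemize}
\item the mutual orthogonality of the tubes \(\mathscr{H}^{(q)}_\lambda\), which holds within \(\mathcal{S}\);
\item the analogue of Lemma~\ref{L:HomogenousTubesExtSeeVectorBundles}, namely that \(\Hom(\Sigma^{-1}T,V)\neq0\) for every indecomposable \(V\in\cohX\) not in \(\mathscr{H}^{(q)}\) up to shift, obtained via \(\Phi\) since the statement lives in \(\mathcal{S}\);
\item the compact-support argument of \cite[Lemma~6.2]{Matousek26B}, which is intrinsic to \(\mathcal{S}\) and the metric.
\end{itemize}
In this fallback, the step replacing "not containing any shift of a vector bundle" by "lying in \(\mathscr{H}^{(q)}\)" needs care, because indecomposables of \(\mathcal{S}\) are shifts of semistable sheaves of various slopes.
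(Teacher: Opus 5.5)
Your proposal is correct and follows the paper's route. Like the paper, you transport $\mathcal{C}$, $\mathcal{D}$ and $T$ along the tubular autoequivalence of Theorem~\ref{T:TubularMutation}, use Corollary~\ref{C:EkvilaleceEkvivalencakEkvivalencata} to move the completions, and then apply Proposition~\ref{P:UncountablyGeneratedCompletion}.

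The compatibility issue you flag is real, but the paper handles it implicitly and more cheaply than by extending $\Phi$ to $\mathcal{T}$. By Theorem~\ref{T:CompletionOfContinuousFunctor}, the induced equivalence is, for every metric simultaneously, the restriction of the single autoequivalence $G\mapsto G\circ\Phi^{-1}$ of $\Mod\dashmodule\mathcal{S}$, which sends $\Yoneda(X)$ to $\Yoneda(\Phi X)$. It therefore respects intersections of completions and representable objects, so neither the extension to $\mathcal{T}$ nor your fallback is needed.
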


\begin{proof}
By Theorem~\ref{T:TubularMutation}, there is a triangulated autoequivalence \(F:\mathcal{S}\rightarrow\mathcal{S}\) such that \(F\left(\mathcal{H}^{(q)}\right)=\cohfX\).
By Corollary~\ref{C:EkvilaleceEkvivalencakEkvivalencata}, the autoequivalence $F$ induces a triangulated equivalence \(\mathfrak{S}'_{\mathcal{M}}(\mathcal{S})\simeq\mathfrak{S}'_{F(\mathcal{M})}(\mathcal{S})\) for any good metric \(\mathcal{M}\) on \(\mathcal{S}\). Since \(F(\mathcal{C})\subseteq\mathscr{H}^{(\infty)}\), the~statement now directly follows from Proposition~\ref{P:UncountablyGeneratedCompletion}.
\end{proof}

For the purpose of describing completions of \(\mathcal{S}\), we introduce a technical notion of a~slope of metric. We stress that not every metric has to have a well-defined slope. 

\begin{definition}
Equip \(\mathcal{S}\) with an additive good metric \(\mathcal{M}=\{B_n\}_{n\in\N}\).
If there exists a~unique \(q\in\Q\cup\{\infty\}\) such that for every \(n\in\N\) it holds that \(\mathcal{H}^{(q)}\cap B_n\neq0\), then we say that \(\mathcal{M}\) has \textit{slope} \(q\) and denote \(\mu(\mathcal{M}):=q\). 
\end{definition}

\begin{lemma}
\label{L:WellDefinedSlopeUncountable}
Assume \(\chi_{\X}\geq0\).
Let \(\mathcal{C}\) be a thick subcategory of \(\mathcal{S}\).
If \(\mathcal{C}\) is not countably generated, then it has a well-defined slope as constant metric on \(\mathcal{S}\), and
\[
\mathfrak{S}'_{\mathcal{C}}(\mathcal{S})=\mathcal{C}^{\perp}\cap\mathscr{H}^{\MetricSlope{C}}
.\]
\end{lemma}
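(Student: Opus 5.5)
Since $\mathcal{C}$ is not countably generated, I would first show that it is confined to a single slope $q$ and decomposes into pieces supported on uncountably many tubes; the displayed equality then follows from Corollary~\ref{C:UncountablyGeneratedCompletionTubular}, or from Proposition~\ref{P:UncountablyGeneratedCompletion} when $q=\infty$.

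\textbf{Reduction to one slope.} Suppose $\mathcal{C}$ contains coherent sheaves of two distinct slopes. Then $\mathcal{C}$ is bireflective by Lemma~\ref{L:TwoSlopesBireflective}, so it is generated by an exceptional sequence by Proposition~\ref{P:BireflectiveExceptionalCollection}. In particular it is finitely generated, which contradicts the hypothesis. Now $\mathcal{S}=\dbcohX$ is hereditary, so every object of $\mathcal{C}$ is a direct sum of shifts of indecomposable sheaves, and these lie in $\mathcal{C}$ because it is thick. Since $\chi_{\X}\geq0$, every indecomposable sheaf is semistable. Hence all nonzero indecomposable sheaves in $\mathcal{C}$ have a common slope $q$, and $0\neq\mathcal{C}\subseteq\mathscr{H}^{(q)}$. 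Because the metric is constant, $\mathcal{H}^{(q)}\cap\mathcal{C}\neq0$ holds for every $n$.

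\textbf{Uniqueness of the slope.} For $q'\neq q$, any object of $\mathcal{H}^{(q')}\cap\mathcal{C}$ is a sheaf of slope $q'$ lying in $\mathscr{H}^{(q)}$. I would show this forces it to be zero. In the tubular case, apply the autoequivalence of Theorem~\ref{T:TubularMutation} sending $\mathcal{H}^{(q)}$ to $\cohfX$; then $\mathscr{H}^{(q)}$ becomes $\mathscr{H}^{(\infty)}$, whose indecomposables are shifts of torsion sheaves, and these cannot equal shifted sheaves of finite slope. In the domestic case the only possible non-finitely-generated $\mathcal{C}$ lies in $\mathscr{H}^{(\infty)}$ anyway, since a thick subcategory containing a vector bundle is bireflective by Proposition~\ref{P:VectBireflective}. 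So the slope $\mu(\mathcal{C})=q$ is well defined.

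\textbf{The equality.} In the domestic case we have $q=\infty$, and Proposition~\ref{P:UncountablyGeneratedCompletion}(ii) gives $\mathfrak{S}'_{\mathcal{C}}(\mathcal{S})=\mathcal{C}^{\perp}\cap\mathscr{H}^{(\infty)}$ directly. One still has to check its hypothesis that $\mathcal{C}$ is generated by torsion sheaves; this follows from the decomposition into indecomposables above. In the tubular case, Corollary~\ref{C:UncountablyGeneratedCompletionTubular}(ii) applies with this $q$, since $\mathcal{C}\subseteq\mathscr{H}^{(q)}$.

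\textbf{Expected obstacle.} The main difficulty is the bookkeeping that makes $\mathscr{H}^{(q)}\cap\mathcal{H}^{(q')}=0$ rigorous for $q\neq q'$. The cleanest route is the tube decomposition of Theorem~\ref{T:TubularMutation}: $\mathscr{H}^{(q)}$ is the coproduct of the $\mathscr{H}^{(q)}_\lambda$, and each $\mathscr{H}^{(q)}_\lambda$ has indecomposables equal to shifts of objects of $\mathcal{H}^{(q)}_\lambda$.
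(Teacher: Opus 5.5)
Your proposal is correct and follows essentially the same argument as the paper. Failure of countable generation rules out bireflectivity (Proposition~\ref{P:BireflectiveExceptionalCollection}), so Lemma~\ref{L:TwoSlopesBireflective} confines $\mathcal{C}$ to a single slope $q$, with $q=\infty$ in the domestic case by Proposition~\ref{P:VectBireflective}; then Proposition~\ref{P:UncountablyGeneratedCompletion} and Corollary~\ref{C:UncountablyGeneratedCompletionTubular} give the equality.

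One simplification: uniqueness of the slope already follows from your first step. Every nonzero indecomposable sheaf in $\mathcal{C}$ has slope $q$, so $\mathcal{H}^{(q')}\cap\mathcal{C}=0$ for $q'\neq q$. The detour through the autoequivalence of Theorem~\ref{T:TubularMutation} is therefore unnecessary, and as stated that theorem says nothing about where the other slopes are sent.
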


\begin{proof}
Because \(\mathcal{C}\) is not countably generated, it is not generated by an exceptional collection, so it is not bireflective by Proposition~\ref{P:BireflectiveExceptionalCollection}.
Then by Lemma~\ref{L:TwoSlopesBireflective}, there exists \(q\in\Q\cup\{\infty\}\) such that \(\mathcal{C}\subseteq\mathcal{H}^{(q)}\). If \(\X\) is domestic, then necessiarily \(q=\infty\) due to Proposition~\ref{P:VectBireflective}.
This forces \(q=\mu(\mathcal{C})\).
By Proposition~\ref{P:UncountablyGeneratedCompletion} and Corollary~\ref{C:UncountablyGeneratedCompletionTubular}, we get
\(
\mathfrak{S}'_{\mathcal{C}}(\mathcal{S})=\mathcal{C}^{\perp}\cap\mathscr{H}^{\MetricSlope{C}}
.\)
\end{proof}

\begin{lemma}
\label{L:WellDefinedSlopeAntithesisUniformConverence}
Assume \(\chi_{\X}\geq0\).
Let \(\mathcal{C}\) be a thick subcategory of \(\mathcal{S}\).
Equip \(\mathcal{S}\) with
an additive good metric \(\mathcal{M}=\{B_n\}_{n\in\N}\). Consider the~thick subcategory \(\mathcal{B}:=\bigcap_{n\in\N} B_n\) of~\(\mathcal{S}\). 
If \(\mathcal{M}\not\sim\mathcal{M}_{\infty}\vee\mathcal{B}\), then \(\mathcal{M}\) has a well-defined slope.
\end{lemma}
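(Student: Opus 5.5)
The plan is to apply the chain lemmas (Lemma~\ref{L:TubularInfiniteChainDiffersAtHomogenousTubes} and Lemma~\ref{L:DomesticInfiniteChainDiffersAtHomogenousTubes}) to a strictly decreasing chain of thick subcategories built from $\mathcal{M}$. Since $\mathcal{M}\not\sim\mathcal{M}_{\infty}\vee\mathcal{B}$, I apply Lemma~\ref{L:ObstructingObjectsPC} to the trivially available object $E=0\in\mathfrak{S}'_{\mathcal{M}}(\mathcal{S})$. This gives an additive good metric $\mathcal{N}=\{C_n\}_{n\in\N}\sim\mathcal{M}$ and coherent sheaves $X_n\in C_n\setminus\big\langle H^0(C_{n+1})\big\rangle$.

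First I build the chain. Set $\mathcal{D}_n:=\big\langle H^0(C_n)\big\rangle$. The good metric axioms give $C_{n+1}\subseteq C_n$, hence $\mathcal{D}_{n+1}\subseteq\mathcal{D}_n$. Since $X_n\in\mathcal{D}_n\setminus\mathcal{D}_{n+1}$ (indeed $X_n=H^0(X_n)$), the chain $\mathcal{D}_1\supsetneq\mathcal{D}_2\supsetneq\cdots$ is strictly decreasing. Let $M$ be the constant from the relevant chain lemma. Applying that lemma to the segment $\mathcal{D}_N\supsetneq\cdots\supsetneq\mathcal{D}_{N+M-1}$ yields an index $i\geq N$ and a slope $q$ with $\mathcal{D}_i\subseteq\mathscr{H}^{(q)}$. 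Because the chain decreases, $\mathcal{D}_n\subseteq\mathscr{H}^{(q)}$ for all $n\geq i$. The slope $q$ is then independent of $N$: for $q\neq q'$ we have $\mathscr{H}^{(q)}\cap\mathscr{H}^{(q')}=0$, while the $\mathcal{D}_n$ are nonzero because they contain $X_n\neq0$.

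Existence of the slope is the next step. Fix $n$. By $\mathcal{N}\sim\mathcal{M}$ there is $m\geq i$ with $C_m\subseteq B_n$. Take a nonzero indecomposable summand $Y$ of $X_m$; then $Y\in C_m\subseteq B_n$ by additivity. Also $Y\in\cohX\cap\mathscr{H}^{(q)}$. I claim $\cohX\cap\mathscr{H}^{(q)}=\mathcal{H}^{(q)}$, which gives $Y\in\mathcal{H}^{(q)}\cap B_n\neq0$.

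Uniqueness follows similarly. Suppose $q'\neq q$ and $0\neq Y_n\in\mathcal{H}^{(q')}\cap B_n$ for all $n$. By $\mathcal{M}\leq\mathcal{N}$, for each $m$ there is $n$ with $B_n\subseteq C_m$. Hence $Y_n\in C_m$, and since $Y_n=H^0(Y_n)$ we get $Y_n\in\mathcal{D}_m\subseteq\mathscr{H}^{(q)}$ for large $m$. The same claim then forces $Y_n\in\mathcal{H}^{(q)}\cap\mathcal{H}^{(q')}=0$, a contradiction.

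I expect the claim $\cohX\cap\mathscr{H}^{(q)}=\mathcal{H}^{(q)}$ to be the main obstacle. I would reduce to $q=\infty$ via Theorem~\ref{T:TubularMutation} in the tubular case; in the domestic case the chain lemma already gives $q=\infty$. For $q=\infty$, the category $\cohfX$ is closed under kernels, cokernels and extensions in the hereditary category $\cohX$. So the complexes whose cohomologies lie in $\cohfX$ form a thick subcategory, and it equals $\mathscr{H}^{(\infty)}$. Its intersection with $\cohX$ is therefore $\cohfX=\mathcal{H}^{(\infty)}$. The delicate point is that the autoequivalence of Theorem~\ref{T:TubularMutation} need not preserve $\cohX$. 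I would handle this by arguing directly on slopes: every indecomposable object of $\mathscr{H}^{(q)}$ is a shift of an object of $\mathcal{H}^{(q)}$, by the same wide-subcategory argument applied to the extension-closed abelian category $\mathcal{H}^{(q)}$. An indecomposable sheaf in $\mathscr{H}^{(q)}$ is then an unshifted object of $\mathcal{H}^{(q)}$.
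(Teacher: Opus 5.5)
Your proposal is correct and is essentially the paper's proof: you apply Lemma~\ref{L:ObstructingObjectsPC} to \(E=0\), which yields the strictly decreasing chain \(\big\langle H^0(C_n)\big\rangle\), and the chain lemmas then place a tail of that chain inside some \(\mathscr{H}^{(q)}\). Your extra verifications are exactly what the paper compresses into ``this \(q\) is then necessarily unique and equal to the slope of \(\mathcal{M}\)'': transporting the slope along \(\mathcal{N}\sim\mathcal{M}\), and the identity \(\cohX\cap\mathscr{H}^{(q)}=\mathcal{H}^{(q)}\), which holds for every \(q\) because \(\mathcal{H}^{(q)}\) is a wide subcategory of the hereditary category \(\cohX\), so the detour through Theorem~\ref{T:TubularMutation} is unnecessary.
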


\begin{proof}
By Lemma~\ref{L:ObstructingObjectsPC} applied to the choice of \(E=0\in\mathfrak{S}'_{\mathcal{M}}(\mathcal{S})\), we get that (up to replacing \(\mathcal{M}\) by an~equivalent metric) there exists a family \((X_n)_{n\in\N}\) of objects of \(\cohX\) such that for all \(n\in\N\) it holds that \(X_n\in B_n\) but simultaneously \(X_n\notin\thick\big\langle H^0(B_{n+1})\rangle\).

This implies that \(\thick\big(H^0(B_{1})\big)\supsetneq\thick\big(H^0(B_{2})\big)\supsetneq\ldots\) is an~infinite, strictly decreasing chain of thick subcategories of \(\mathcal{S}\). By Lemma~\ref{L:TubularInfiniteChainDiffersAtHomogenousTubes} and Lemma~\ref{L:DomesticInfiniteChainDiffersAtHomogenousTubes}, there exist \(n\in\N\) and \(q\in\Q\cup\{\infty\}\) satisfying
\(
\big\langle H^0(B_{n})\big\rangle\subseteq\mathscr{H}^{(q)}
.\)
This $q$ is then necessarily unique and equal to the slope of \(\mathcal{M}\).
\end{proof}

We may now finally state the final result of the section.

\begin{theorem}
\label{T:WeightedCompletions}
Assume \(\chi_{\X}\geq0\).
Equip \(\mathcal{S}\) with
an additive good metric \(\mathcal{M}=\{B_n\}_{n\in\N}\). Let us denote the thick subcategory \(\mathcal{B}:=\bigcap_{n\in\N} B_n\subseteq\mathcal{S}\). 
Then:
\begin{enumerate}[label=(\roman*)]
\item If \(\mathcal{M}\sim\mathcal{M}_{\infty}\vee\mathcal{B}\), and if \(\mathcal{B}\) is countably generated, then
\[
\mathfrak{S}'_{\mathcal{M}}(\mathcal{S})\simeq\mathfrak{S}'_{\mathcal{B}}(\mathcal{S})\simeq\mathcal{S}/\mathcal{B}
.\]
 In this case, it holds that \(\mathfrak{S}'_{\mathcal{M}}(\mathcal{S})\simeq\mathcal{B}^{\perp}\cap\mathcal{S}\xhookrightarrow{}\mathcal{S}\) if and only if \(\mathcal{B}\) is generated by an~exceptional sequence.
\item Otherwise, we have
\[
\mathfrak{S}'_{\mathcal{M}}(\mathcal{S})=\mathcal{B}^{\perp}\cap\mathscr{H}^{\MetricSlope{M}}
.\]
\end{enumerate}
\end{theorem}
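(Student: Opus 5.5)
The proof splits along the dichotomy in the statement.

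\emph{Case (i).} Assume \(\mathcal{M}\sim\mathcal{M}_{\infty}\vee\mathcal{B}\). Since equivalent metrics yield equal completions, Lemma~\ref{L:DecompostionMetricsCompletion} applied with \(\mathcal{N}:=\mathcal{B}\) gives \(\mathfrak{S}'_{\mathcal{M}}(\mathcal{S})=\mathfrak{S}'_{\mathcal{B}}(\mathcal{S})\). The category \(\mathcal{S}\simeq\derived^b(\modf\dashmodule A)\) is Hom-finite over \(K\). If \(\mathcal{B}\) is countably generated, Corollary~\ref{C:CountablyGeneratedCompletionVerdierQuotient} then identifies \(\mathfrak{S}_{\mathcal{B}}(\mathcal{S})\simeq\mathcal{S}/\mathcal{B}\). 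For the ``if and only if'' clause, Corollary~\ref{C:CharterisingCoreflectivity} shows that \(\mathfrak{S}_{\mathcal{B}}(\mathcal{S})\simeq\mathcal{B}^{\perp}\) (embedded via \(\Yoneda\)) exactly when \(\mathcal{B}\) is coreflective. Proposition~\ref{P:BireflectiveExceptionalCollection} translates coreflectivity into generation by an exceptional sequence. One compatibility check is needed: the identification \(\mathfrak{S}'\simeq\mathfrak{S}\) via \(\YonedaKatakana\) sends \(\mathcal{B}^{\perp}\cap\mathcal{S}\subseteq\mathcal{T}\) to \(\Yoneda(\mathcal{B}^{\perp})\), since \(\YonedaKatakana\) restricted to \(\mathcal{S}\) is just \(\Yoneda\).

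\emph{Case (ii), subcase \(\mathcal{M}\sim\mathcal{M}_\infty\vee\mathcal{B}\) with \(\mathcal{B}\) not countably generated.} Lemma~\ref{L:DecompostionMetricsCompletion} again reduces to the constant metric \(\mathcal{B}\). Lemma~\ref{L:WellDefinedSlopeUncountable} then gives \(\mathfrak{S}'_{\mathcal{B}}(\mathcal{S})=\mathcal{B}^{\perp}\cap\mathscr{H}^{(\mu(\mathcal{B}))}\). It remains to check \(\mu(\mathcal{M})=\mu(\mathcal{B})\). Every \(B_n\) contains \(\mathcal{B}\), which meets \(\mathcal{H}^{(\mu(\mathcal{B}))}\). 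For uniqueness, use \(\mathcal{M}\sim\mathcal{M}_\infty\vee\mathcal{B}\): objects of \(\cohX\) (degree zero) in \(B_n\) for large \(n\) must lie in \(\mathcal{B}\), and \(\mathcal{B}\subseteq\mathscr{H}^{(q)}\) for a single \(q\).

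\emph{Case (ii), subcase \(\mathcal{M}\not\sim\mathcal{M}_\infty\vee\mathcal{B}\).} Lemma~\ref{L:WellDefinedSlopeAntithesisUniformConverence} gives a well-defined slope \(q=\mu(\mathcal{M})\), with \(\langle H^0(B_n)\rangle\subseteq\mathscr{H}^{(q)}\) for \(n\gg0\).

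For \(\supseteq\): an object \(X\in\mathcal{B}^{\perp}\cap\mathscr{H}^{(q)}\) lies in \(\mathcal{S}\) and appears as the constant sequence. We must show \(X\) is compactly supported, i.e.\ \(\Hom(B_n,X)=0\) for some \(n\). This follows because \(B_n\) consists, modulo \(\mathcal{M}_\infty\)-parts (which die against the bounded \(X\) for large \(n\)), of objects of \(\langle H^0(B_n)\rangle\) inside \(\mathscr{H}^{(q)}\). Tube orthogonality plus the strict decrease then forces these eventually into tubes orthogonal to \(X\), or into \(\mathcal{B}\). This step needs care and may require Lemma~\ref{L:TubularInfiniteChainDiffersAtHomogenousTubes}.

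For \(\subseteq\), which is the main obstacle: take \(E\in\mathfrak{S}'_{\mathcal{M}}(\mathcal{S})\) and apply Lemma~\ref{L:ObstructingObjectsSubcategories}. This yields a strictly decreasing chain of thick subcategories \(C_n\subseteq\langle H^0(B_n)\rangle\) with \(E\in\mathfrak{S}'_{C_n}(\mathcal{S})\) for every \(n\). By Lemmas~\ref{L:TubularInfiniteChainDiffersAtHomogenousTubes} and~\ref{L:DomesticInfiniteChainDiffersAtHomogenousTubes}, there are \(i<j\) with \(C_i,C_j\subseteq\mathscr{H}^{(q)}\) and a homogeneous tube \(\mathcal{H}^{(q)}_\lambda\) with \(\mathcal{H}^{(q)}_\lambda\subseteq C_i\) and \(\mathcal{H}^{(q)}_\lambda\cap C_j=0\). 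Corollary~\ref{C:UncountablyGeneratedCompletionTubular}(i), or Proposition~\ref{P:UncountablyGeneratedCompletion}(i) in the domestic case, then gives \(E\in C_i^{\perp}\cap\mathscr{H}^{(q)}\). Finally, \(E\in\mathcal{B}^{\perp}\) because \(E\in\mathcal{S}\) is compactly supported for \(\mathcal{M}\), so \(\Hom(B_n,E)=0\) for some \(n\), and \(\mathcal{B}\subseteq B_n\).
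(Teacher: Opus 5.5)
Your treatment of case (i), of the subcase $\mathcal{M}\sim\mathcal{M}_\infty\vee\mathcal{B}$ with $\mathcal{B}$ not countably generated, and of the inclusion $\mathfrak{S}'_{\mathcal{M}}(\mathcal{S})\subseteq\mathcal{B}^\perp\cap\mathscr{H}^{(q)}$ in the remaining case follows the paper's proof: same lemmas, same order. In the last of these you should also say why the slope $q'$ produced by Lemma~\ref{L:TubularInfiniteChainDiffersAtHomogenousTubes} or Lemma~\ref{L:DomesticInfiniteChainDiffersAtHomogenousTubes} equals $\mu(\mathcal{M})$. For instance, apply the lemma to a tail of the chain on which $\langle H^0(B_n)\rangle\subseteq\mathscr{H}^{(q)}$.

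\textbf{The gap.} The reverse inclusion $\mathcal{B}^{\perp}\cap\mathscr{H}^{(q)}\subseteq\mathfrak{S}'_{\mathcal{M}}(\mathcal{S})$ for $\mathcal{M}\not\sim\mathcal{M}_\infty\vee\mathcal{B}$ is not proved. Your sentence ``tube orthogonality plus the strict decrease forces these eventually into tubes orthogonal to $X$, or into $\mathcal{B}$'' is exactly the claim that needs proof, and neither ingredient you name gives it.
\begin{itemize}
\item The strict decrease of $\langle H^0(B_n)\rangle$ can happen entirely in tubes unrelated to $X$, while the part inside the tube of $X$ stays constant and non-zero.
\item A sheaf lying in $\langle H^0(B_n)\rangle$ for every $n$ is not thereby in $\bigcap_n B_n=\mathcal{B}$, because $\langle H^0(B_n)\rangle$ is in general much larger than $B_n$.
\end{itemize}

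\textbf{The missing argument.} This is essentially the paper's route.
\begin{itemize}
\item Reduce to an indecomposable sheaf $T\in\mathcal{B}^\perp\cap\mathcal{H}^{(q)}$.
\item Since $\mathcal{S}$ is hereditary, an indecomposable $Y$ with $\Hom_{\mathcal{S}}(Y,T)\neq0$ is either a sheaf $Z$ or $\Sigma^{-1}Z$ for a sheaf $Z$. If $Y\in B_{N+1}$, then $Z\in B_N$ in both cases. So your discussion of $\mathcal{M}_\infty$-parts is unnecessary.
\item Let $C_n$ be the thick subcategory generated by the indecomposable sheaves $Z\in B_n$ with $\Hom_{\mathcal{S}}(Z,T\oplus\Sigma T)\neq0$. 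For large $n$ these sheaves lie in $\mathscr{H}^{(q)}$, hence, by tube orthogonality, in the tube of $T$.
\item A single tube has only finitely many thick subcategories; the paper instead uses the chain lemmas together with uniqueness of the slope. Either way, the decreasing chain $(C_n)_n$ stabilises from some $N$ on.
\item Suppose some indecomposable $Y\in B_{N+1}$ had $\Hom_{\mathcal{S}}(Y,T)\neq 0$. Then the associated $Z$ lies in $C_N=C_n$ for all $n\geq N$.
\item The sheaves in $C_n$ form the wide subcategory generated by its generators, and these lie in $B_n$. Lemma~\ref{L:GrothendieckBruningWideTheoremForMetrics} therefore gives $Z\in B_{n-2}$ for all $n\geq N$, so $Z\in\mathcal{B}$ and hence $Y\in\mathcal{B}$. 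This contradicts $T\in\mathcal{B}^\perp$.
\item Hence $T$ is compactly supported at $N+1$.
\end{itemize}
Your proposal lacks two ideas: the stabilisation of this $T$-relevant chain, and the use of Lemma~\ref{L:GrothendieckBruningWideTheoremForMetrics} to pass from $\langle H^0(B_n)\rangle$ back to $B_n$.
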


\begin{proof}
The proof will done by considering the following cases:
\begin{enumerate}[label=(\alph*)]
\item Condition i) holds, i.e.\ \(\mathcal{M}\sim\mathcal{M}_{\infty}\vee\mathcal{B}\) and \(\mathcal{B}\) is countably generated.
\item Condition ii) holds, \(\mathcal{M}\sim\mathcal{M}_{\infty}\vee\mathcal{B}\), and \(\mathcal{B}\) is not countably generated. 
\item Condition ii) holds, \(\mathcal{M}\gneq\mathcal{M}_{\infty}\vee\mathcal{B}\).
\end{enumerate}

\textbf{``Case a)''} By Corollary~\ref{C:CountablyGeneratedCompletionVerdierQuotient} and Lemma~\ref{L:DecompostionMetricsCompletion}, we get \(\mathfrak{S}'_{\mathcal{M}}(\mathcal{S})\simeq\mathfrak{S}'_{\mathcal{B}}(\mathcal{S})\simeq\mathcal{S}/\mathcal{B}\).
Corollary~\ref{C:CharterisingCoreflectivity} and Proposition~\ref{P:BireflectiveExceptionalCollection} tell us that
the equivalence \(\mathfrak{S}'_{\mathcal{B}}(\mathcal{S})\simeq\mathcal{B}^{\perp}\cap\,\mathcal{S}\) holds if an only if \(\mathcal{B}\) is coreflective if and only if \(\mathcal{B}\) is generated by an exceptional sequence.

\textbf{``Case b)''} 
Assume that \(\mathcal{M}\sim\mathcal{M}_{\infty}\vee\mathcal{B}\) and \(\mathcal{B}\) is not countably generated.
Then for all but finitely many \(n\in\N\) it holds that \(H^0(B_n)=H^0(\mathcal{B})\). Lemma~\ref{L:WellDefinedSlopeUncountable} implies \(\mu(\mathcal{M})=\mu(\mathcal{B})\) exists.
By Lemma~\ref{L:DecompostionMetricsCompletion} and Lemma~\ref{L:WellDefinedSlopeUncountable} again, we get
\(
\mathfrak{S}'_{\mathcal{M}}(\mathcal{S})\simeq\mathfrak{S}'_{\mathcal{B}}(\mathcal{S})\simeq\mathcal{B}^{\perp}\cap\mathscr{H}^{\MetricSlope{M}}
.\)

\textbf{``Case c)''}
Assume \(\mathcal{M}\gneq\mathcal{M}_{\infty}\vee\mathcal{B}\).
By Lemma~\ref{L:WellDefinedSlopeAntithesisUniformConverence}, the metric \(\mathcal{M}\) has a well-defined slope \(q\in\Q\cup\{\infty\}\).
We prove
\(
\mathfrak{S}'_{\mathcal{M}}(\mathcal{S})=\mathcal{B}^{\perp}\cap\mathscr{H}^{(q)}
.\)

``$\supseteq$''
It is enough to prove that any indecomposable coherent sheaf \(T\in\mathcal{B}^\perp\cap\,\mathcal{H}^{(q)}\) is an element of the~completion \(\mathfrak{S}'_{\mathcal{M}}(\mathcal{S})\) because any other object of the right-hand side category is a direct sum of shifts of indecomposable coherent sheaves from \(\mathcal{B}^\perp\cap\,\mathcal{H}^{(q)}\).

Denote \(\mathcal{C}_n:=\big\langle X\in\cohX:X\in B_{n},\Hom_{\mathcal{S}}(X,T\oplus\Sigma T)\neq0\big\rangle\) for each \(n\in\N\).
Consider the decreasing chain \(\mathcal{C}_1\supseteq\mathcal{C}_2\supseteq\ldots\) of thick subcategories of \(\mathcal{S}\).
If the chain did not stabilise, then by Lemma~\ref{L:TubularInfiniteChainDiffersAtHomogenousTubes} or Lemma~\ref{L:DomesticInfiniteChainDiffersAtHomogenousTubes}, there would exist \(n\in\N\) such that
\[
\left\langle\mathcal{H}^{(q')}_{\lambda},\mathcal{H}^{(q')}_{\lambda'}\right\rangle\subseteq\mathcal{C}_n\subseteq\mathscr{H}^{(q')}
\text{\quad\quad for some }
\lambda,\lambda'\in\ProjectiveLine{K}, \lambda\neq\lambda', q'\in\Q\cup\{\infty\}
,\]
forcing \(q=q'\) by the uniqueness of the slope of a metric. This would be a contradiction since the~orthogonality of tubes and the indecomposability of $T$ would imply \(T\in\mathcal{H}^{(q')}_{\lambda}\cap\mathcal{H}^{(q')}_{\lambda'}=0\).

Hence, there exists \(N\in\N\) such that for all \(m,n\geq N\) we have \(\mathcal{C}_m=\mathcal{C}_n\).
Assume for contradiction that there exists an indecomposable object \(Y\in B_{N+1}\) with \(\Hom_{\mathcal{S}}(Y,T)\neq0\). Then $Y$ is concentrated in cohomological degrees either $0$, or $1$. We get \(\Sigma Y,Y\in B_N\), so \(Y\in\mathcal{C}_{N}=\mathcal{C}_n\) for all \(n\geq N\). By Lemma~\ref{L:GrothendieckBruningWideTheoremForMetrics}, this means that \(Y\in\mathcal{B}\), contradicting \(T\in\mathcal{B}^{\perp}\).

We conclude that \(T\) is compactly supported at \(N+1\). While observing \(T\simeq\hoco T\), we obtain \(T\in\mathfrak{S}'_{\mathcal{M}}(\mathcal{S})\).

``$\subseteq$''
Let \(E\in\mathfrak{S}'_{\mathcal{M}}(\mathcal{S})\).
Since \(\mathcal{M}\gneq\mathcal{M}_{\infty}\vee\mathcal{B}\), Lemma~\ref{L:ObstructingObjectsSubcategories} provides us with an~infinite, strictly decreasing chain \(\mathcal{C}_1\supsetneq\mathcal{C}_2\supsetneq\cdots\) of thick subcategories of \(\mathcal{S}\) such that \(E\in\mathfrak{S}'_{\mathcal{C}_n}(\mathcal{S})\) and \(\mathcal{C}_n\subseteq\big\langle H^0(B_n)\big\rangle\) for each \(n\in\N\).
Using Lemma~\ref{L:TubularInfiniteChainDiffersAtHomogenousTubes} or Lemma~\ref{L:DomesticInfiniteChainDiffersAtHomogenousTubes} we find \mbox{\(n>m\geq N\)}, \(q'\in\Q\cup\{\infty\}\), and \(\lambda\in\ProjectiveLine{K}\setminus\bflambda\) such that
\(
\mathscr{H}^{(q')}_{\lambda}\subseteq\mathcal{C}_m\subseteq\mathscr{H}^{(q')}
\)
and
\mbox{\(\mathscr{H}^{(q')}_{\lambda}\cap\,\mathcal{C}_n=0.\)}
By the~definition of a slope of a metric, we obtain \(q=q'\).
Thus Proposition~\ref{P:UncountablyGeneratedCompletion} and Corollary~\ref{C:UncountablyGeneratedCompletionTubular} apply, so \(E\in\mathscr{H}^{(q)}\).
As $E$ is compactly supported with respect to \(\mathcal{M}\), inevitably \(E\in\mathcal{B}^{\perp}\). We may now conclude the proof. 
\end{proof}

We conclude the paper by illustrating Theorem~\ref{T:WeightedCompletions} with examples.

\begin{example}
\label{E:Homogenous}
Let \(\lambda=[\lambda^0,\lambda^1]\in\ProjectiveLine{K}\setminus\bflambda\). Consider the short exact sequence 
 \[
  0\rightarrow\mathcal{O}\xrightarrow{\lambda^1u-\lambda^0v}\mathcal{O}(\CanonicalElement)\rightarrow S_{\lambda}\rightarrow0,
  \]
Define \(\mathcal{C}:=\mathscr{H}_{\lambda}\). We know by Theorem~\ref{T:WeightedCompletions}, that \(\mathfrak{S}'_{\mathcal{C}}(\mathcal{S})\simeq\mathcal{S}/\mathcal{C}\). We shall explicitly express the~completion as a bounded derived category of a graded localisation ring.

Denote as \(y\) the~homogenous element \(\lambda^1u-\lambda^0v\in\gradedS\) of degree \(\CanonicalElement\in\gradingL\). Set \(Y:=\{y^n:n\in\N_0\}\) the homogenous multiplicative set generated by $y$.
The localisation \(\gradedS\rightarrow\gradedS_{Y^{-1}}\) of graded rings and the~corresponding graded localisation functor
\[
F:\modf^{\gradingL}\dashmodule\gradedS\rightarrow\modf^{\gradingL}\dashmodule\gradedS_{Y^{-1}}
\]
induce an isomorphism \(\modf^{\gradingL}\dashmodule\gradedS/\Ker F\rightarrow\modf^{\gradingL}\dashmodule\gradedS_{Y^{-1}}\)
where the quotient is taken the~sense of Serre (see e.g.\ \cite[Section~1.2]{ChenKrause10}).
Since \(y\in\gradedS\) is not invertible and \(\gradedS\) is graded local, we get
\[
\modf_0^{\gradingL}\dashmodule\gradedS\subseteq \Ker F
.\]
In fact, the image of \(\Ker F\) in \(\cohX\) is precisely \(\mathcal{H}_{\lambda}\).
By the universal property of Serre's quotients and the definition of \(\cohX\), the functor $F$ factors through \(\cohX\), 
thus giving us an~isomorphism
\[
\bar{F}:\cohX/\mathcal{H}_{\lambda}\rightarrow\modf^{\gradingL}\dashmodule\gradedS_{Y^{-1}}
.\]
As \(\bar{F}\) is exact, it extends to the derived level
\[
\mathfrak{S}'_{\mathcal{C}}(\mathcal{S})\simeq\derived^b(\cohX)/\mathcal{C}\simeq\derived^b\left(\modf^{\gradingL}\dashmodule\gradedS_{Y^{-1}}\right)
.\]
\end{example}

\begin{example}
Following Example~\ref{E:Homogenous}, we aim to complete with respect to all non-homogenous tubes. Set \(\mathcal{C}:=\langle\mathcal{H}_{\lambda}:\lambda\in\bflambda\rangle\).
Again
\[
\mathfrak{S}'_{\mathcal{C}}(\mathcal{S})\simeq\derived^b\left(\modf^{\gradingL}\dashmodule\gradedS_{Y^{-1}}\right)
\]
where the graded localisation \(\gradedS_{Y^{-1}}\) is taken with respect to the homogenous multiplicative set \(Y\) generated by \(x_1,\ldots,x_n\).
Now the \(\gradingL\)-graded ring is strongly graded \(\gradedS_{Y^{-1}}\), i.e.~for every \(\vL\in\gradingL\) there exists a homogenous invertible element of \(\gradedS_{Y^{-1}}\) of degree \(\vL\). By Dade's theorem \cite[Theorem~1.5.1]{Hazrat16}, the graded module category \(\modf^{\gradingL}\dashmodule\gradedS_{Y^{-1}}\) is equivalent to the ungraded module category \(\modf^{0}\dashmodule\gradedS^0_{Y^{-1}}\) concentrated at homogenous degree $0$ over the~$0$\=/degree subring \(\gradedS^0_{Y^{-1}}\) of \(\gradedS_{Y^{-1}}\).
Hence, we get \[
\mathfrak{S}'_{\mathcal{C}}(\mathcal{S})\simeq\derived^b\left(\modf^{0}\dashmodule\gradedS^0_{Y^{-1}}\right)
.\]
The ring \(\gradedS^0_{Y^{-1}}\) is isomorphic to some (ungraded) localisation of the (ungraded) polynomial ring \(K[x]\) in one variable.
\end{example}

In Example~\ref{E:Uncountable} and Example~\ref{E:Countable}, we discuss the effects of the cardinality of the ground field $K$ on the metric completions of \(\mathcal{S}\).

\begin{example}
\label{E:Uncountable}
Assume $K$ is uncountable.
Then any thick subcategory \(\mathcal{C}\subseteq\mathcal{S}\) can be expressed as \(\mathfrak{S}'_{\mathcal{M}}(\mathcal{S})\simeq\mathcal{C}\) for some good additive metric \(\mathcal{M}\) on \(\mathcal{S}\).
If \(\mathcal{C}\) is bireflective, it is a coaisle of a $t$-structure \(\left({}^{\perp}\mathcal{C},\mathcal{C}\right)\) on \(\mathcal{S}\). Hence \(\mathfrak{S}'_{{}^{\perp}\mathcal{C}}(\mathcal{S})\simeq\mathcal{C}\) by Corollary~\ref{C:CoreflectiveGivesPerp}.

Suppose that \(\mathcal{C}\) is not bireflective. Then \(\mathcal{C}\subseteq\mathscr{H}^{(q)}\) for some \(q\in\Q\cup\{\infty\}\) by Proposition~\ref{P:VectBireflective} and Lemma~\ref{L:TwoSlopesBireflective}. If \(\X\) is domestic, then necessarily \(q=\infty\).

Firstly, assume that \(\mathcal{C}\) is generated by finitely many elements. Then \(\mathcal{D}:=\mathscr{H}^{(q)}\cap{}^{\perp}\mathcal{C}\) is generated by uncountably many elements, since \(\ProjectiveLine{K}\) is uncountable and \(\cohfX\) has to contain uncountably many tubes. 
Theorem~\ref{T:WeightedCompletions} gives us \(\mathfrak{S}'_{\mathcal{D}}(\mathcal{S})\simeq\mathcal{D}^{\perp}\cap\mathscr{H}^{(q)}\simeq\mathcal{C}\), where the second equivalence is implied by \cite[Theorem~2.3.25]{Dichev09}.

Secondly, assume that \(\mathcal{C}\) is not generated by finitely many elements. We find a countable subset \(I=\{i_n\}_{n\in\N}\subseteq\ProjectiveLine{K}\setminus\bflambda\) satisfying \(\mathscr{H}^{(q)}_{\lambda_{i_n}}\subseteq\mathcal{C}\) for each \(n\in\N\).
Then \[
\mathcal{M}=\{B_n\}_{n\in\N}=\left\{\left({}^{\perp}\mathcal{C}\cap\mathscr{H}^{(q)}\right)\oplus\coprod_{m\geq n}\mathscr{H}^{(q)}_{\lambda_{i_m}}\right\}_{n\in\N}
\]
is an additive good metric on \(\mathcal{S}\) with \(\mathcal{B}:=\bigcap_{n\in\N}B_n={}^{\perp}\mathcal{C}\cap\mathscr{H}^{(q)}\).
As \(\mathcal{M}\not\sim\mathcal{M}_{\infty}\vee\mathcal{B}\), Theorem~\ref{T:WeightedCompletions} yields \(\mathfrak{S}'_{\mathcal{M}}(\mathcal{S})\simeq\mathcal{B}^{\perp}\cap\mathscr{H}^{(q)}\simeq\mathcal{C}\).
\end{example}

\begin{example}
\label{E:Countable}
The statement of Example~\ref{E:Uncountable} does not hold when $K$ is countable. 
Let \(\mathcal{C}\) be, for example, the thick subcategory generated by all non-homogenous tubes of torsion sheaves.

Considering that in our situation any thick subcategory of \(\mathcal{S}\) is countably generated,
if there existed a thick subcategory \(\mathcal{D}\) such that  \(\mathfrak{S}'_{\mathcal{D}}(\mathcal{S})\simeq\mathcal{C}\), then \(\mathcal{D}\) would be bireflective in \(\mathcal{S}\) by Theorem~\ref{T:WeightedCompletions}. But then \(\mathcal{C}\simeq\mathcal{D}^{\perp}\) would also be bireflective in \(\mathcal{S}\) by Proposition~\ref{P:BireflectiveExceptionalCollection}, which is nevertheless not true.

Our only other chance to exhibit \(\mathcal{C}\) as a completion is to find a good additive metric \(\mathcal{M}=\{B_n\}_{n\in\N}\) satisfying
\[
\mathcal{B}:=\bigcap_{n\in\N}B_n={}^{\perp}\mathcal{C}\cap\mathscr{H}^{(\infty)}=\coprod_{\lambda\in\ProjectiveLine{K}\setminus\bflambda}\mathscr{H}_{\lambda}
\]
and \(\mathcal{M}\not\sim\mathcal{M}_{\infty}\vee\mathcal{B}\).

Assume for contradiction that such a metric \(\mathcal{M}\) exists.
Because \(\mathcal{B}\) contains all homogenous tubes and \(\mathcal{B}\subseteq\big\langle H^0(B_n)\big\rangle\) for all \(n\in\N\), Lemma~\ref{L:TubularInfiniteChainDiffersAtHomogenousTubes} and Lemma~\ref{L:DomesticInfiniteChainDiffersAtHomogenousTubes} force the~chain \(\big\langle H^0(B_1)\big\rangle\supseteq\big\langle H^0(B_2)\big\rangle\supseteq\cdots\) of thick subcategories to stabilise at some \(N\in\N\).
By Lemma~\ref{L:GrothendieckBruningWideTheoremForMetrics}, we then get \(\big\langle H^0(B_N)\big\rangle=\mathcal{B}\). 
But this is impossible, since the condition \(\mathcal{M}\not\sim\mathcal{M}_{\infty}\vee\mathcal{B}\) ensures that \(H^0(B_N)\subseteq\big\langle H^0(B_N)\big\rangle\) contains an~indecomposable object from \(\mathcal{S}\setminus\mathcal{B}\). 
\end{example}

\bibliographystyle{plain}
\bibliography{literatura}

@Book{GobelTrlifaj12,
  author    = {Göbel, Rüdiger and Trlifaj, Jan},
  publisher = {Walter de Gruyter},
  title     = {Approximations and Endomorphism Algebras of Modules},
  year      = {2012},
  address   = {Berlin},
  isbn      = {978-3-11-021810-7},
  number    = {41},
  series    = {De Gruyter Expositions in Mathematics},
}

@Book{Lane98,
  author    = {Mac Lane, Saunders},
  publisher = {Springer},
  title     = {Categories for the {W}orking {M}athematician},
  year      = {1998},
  address   = {New York},
  edition   = {Second},
  series    = {Graduate Texts in Mathematics},
}

@Article{Neeman18,
  author        = {Neeman, Amnon},
  journal       = {arXiv},
  title         = {The categories {${\mathcal T}^c$} and {${\mathcal T}^b_c$} determine each other},
  year          = {2018},
  note          = {Preprint. First version. Available online at https://arxiv.org/abs/1806.06471v1.},
  archiveprefix = {arXiv},
  eprint        = {1806.06471},
  primaryclass  = {math.CT},
}

@Article{Neeman20,
  author  = {Neeman, Amnon},
  journal = {Journal of Pure and Applied Algebra},
  title   = {Metrics on triangulated categories},
  year    = {2020},
  issn    = {0022-4049},
  number  = {4},
  pages   = {106206},
  volume  = {224},
  doi     = {https://doi.org/10.1016/j.jpaa.2019.106206},
  url     = {https://www.sciencedirect.com/science/article/pii/S0022404919302130},
}

@Article{BiswasChenRahulParkerZheng24,
  author  = {Biswas, Rudradip and Chen, Hongxing and Rahul, Kabeer Manali and Parker, Chris J. and Zheng, Junhua},
  journal = {arXiV},
  title   = {Bounded t-structures, finitistic dimensions, and singularity categories of triangulated categories},
  year    = {2024},
  note    = {Preprint. Available online at https://arxiv.org/abs/2401.00130v2.},
}

@Article{Neeman96,
  author  = {Neeman, Amnon},
  journal = {Journal of the American Mathematical Society},
  title   = {The {G}rothendieck {D}uality {T}heorem {V}ia {B}ousfield's {T}echniques and {B}rown {R}epresentability},
  year    = {1996},
  number  = {1},
  pages   = {205-236},
  volume  = {9},
}

@Article{SunZhang21,
  author  = {Sun, Yongliang and Zhang, Yaohua},
  journal = {Theory and Applications of Categories},
  title   = {{L}adders and {C}ompletion of {T}riangulated {C}ategories},
  year    = {2021},
  number  = {4},
  pages   = {95-106},
  volume  = {37},
}

@Book{Neeman01,
  author      = {Neeman, Amnon},
  publisher   = {Princeton University Press},
  title       = {Triangulated {C}ategories},
  year        = {2001},
  address     = {Princeton},
  isbn        = {9781400837212},
  series      = {Annals of Mathematics Studies},
  volume      = {148},
  doi         = {doi:10.1515/9781400837212},
  lastchecked = {2024-05-06},
  url         = {https://doi.org/10.1515/9781400837212},
}

@Book{Happel88,
  author    = {Happel, Dieter},
  publisher = {Cambridge University Press},
  title     = {Triangulated categories in the representation of finite-dimensional algebras},
  year      = {1988},
  address   = {Cambridge},
  series    = {London Mathematical Society Lecture Note Series},
  volume    = {119},
}

@Article{BondalKapranov90,
  author  = {Bondal, Alexey I. and Kapranov, Mikhail},
  journal = {Mathematics of the USSR-Izvestiya},
  title   = {Representable {F}unctors, {S}erre {F}unctors, and {M}utations},
  year    = {1990},
  number  = {3},
  pages   = {519-541},
  volume  = {35},
}

@Book{Krause21,
  author     = {Krause, Henning},
  publisher  = {Cambridge University Press},
  title      = {Homological {T}heory of {R}epresentations},
  year       = {2021},
  series     = {Cambridge Studies in Advanced Mathematics},
  collection = {Cambridge Studies in Advanced Mathematics},
  place      = {Cambridge},
}

@Article{Neeman10,
  author  = {Neeman, Amnon},
  journal = {Annals of Mathematics},
  title   = {Some adjoints in homotopy categories},
  year    = {2010},
  note    = {Second Series},
  number  = {3},
  pages   = {2143-2155},
  volume  = {171},
}

@Article{LakingVitoria20,
  author  = {Laking, Rosanna and Vitória, Jorge},
  journal = {Pacific Journal of Mathematics},
  title   = {Definability and approximations in triangulated categories},
  year    = {2020},
  number  = {2},
  pages   = {557–586},
  volume  = {306},
  doi     = {https://doi.org/10.2140/pjm.2020.306.557},
}

@Article{IyamaYoshino08,
  author  = {Iyama, Osamu and Yoshino, Yuji},
  journal = {Inventiones mathematicae},
  title   = {Mutation in triangulated categories and rigid {C}ohen–{M}acaulay modules},
  year    = {2008},
  pages   = {117–168},
  volume  = {172},
}

@Article{Lawvere73,
  author  = {Lawvere, Francis William},
  journal = {Rendiconti del seminario matématico e fisico di Milano},
  title   = {Metric spaces, generalized logic, and closed categories},
  year    = {1973},
  pages   = {135-166},
  volume  = {XLIII},
}

@Article{Rickard89,
  author  = {Rickard, Jeremy},
  journal = {Journal of the London Mathematical Society},
  title   = {Morita theory for derived categories},
  year    = {1989},
  number  = {2},
  pages   = {436-456},
  volume  = {39},
}

@Article{CummingsGratz24,
  author  = {Cummings, Charley and Gratz, Sira},
  journal = {arXiv},
  title   = {Metric completions of discrete cluster categories},
  year    = {2024},
  note    = {Preprint. Available online at https://www.arxiv.org/abs/2407.17369v2.},
}

@Article{NicolasSaorin09,
  author  = {Nicolás, Pedro and Saorín, Manuel},
  journal = {Journal of Algebra},
  title   = {Parametrizing recollement data for triangulated categories},
  year    = {2009},
  pages   = {1220–1250},
  volume  = {322},
}

@Article{Neeman25A,
  author        = {Neeman, Amnon},
  journal       = {arXiv},
  title         = {The categories {${\mathcal T}^c$} and {${\mathcal T}^b_c$} determine each other},
  year          = {2025},
  note          = {Preprint. Available online at https://arxiv.org/abs/1806.06471v2.},
  archiveprefix = {arXiv},
  eprint        = {1806.06471},
  primaryclass  = {math.CT},
}

@Article{Neeman25B,
  author  = {Neeman, Amnon},
  journal = {arXiv},
  title   = {Excellent metrics on triangulated categories, and the involutivity of the map taking {$\mathcal S$} to {$\mathfrak{S}({\mathcal{S})^{\mathrm{op}}}$}},
  year    = {2025},
  note    = {Preprint. Available online at https://arxiv.org/abs/2505.09120v1.},
}

@Article{BeilinsonBernsteinDeligne82,
  author  = {Beilinson, Alexander and Bernstein, Joseph and Deligne, Pierre},
  journal = {Astérisque},
  title   = {Faisceaux pervers},
  year    = {1982},
  number  = {100},
  pages   = {5--171},
  volume  = {1},
}

@Book{BeilinsonBernsteinDeligneGabber18,
  author    = {Beilinson, Alexander and Bernstein, Joseph and Deligne, Pierre and Gaber, Ofer},
  publisher = {Société Mathématique de France},
  title     = {Faisceaux pervers},
  year      = {2018},
  isbn      = {978-2-85629-878-7},
}

@PhdThesis{Dichev09,
  author = {Dichev, Nikolay Dimitrov},
  school = {University of Paderborn},
  title  = {Thick subcategories for quiver representations},
  year   = {2009},
}

@Article{IgusaTodorov15,
  author  = {Igusa, Kiyoshi and Todorov, Gordana},
  journal = {Communications in Algebra},
  title   = {Cluster {C}ategories {C}oming from {C}yclic {P}osets},
  year    = {2015},
  number  = {10},
  pages   = {4367--4402},
  volume  = {43},
}

@Article{PaquetteYildirim21,
  author  = {Paquette, Charles and Yıldırım, Emine},
  journal = {Transactions of the London Mathematical Society},
  title   = {Completions of discrete cluster categories of type {$\mathbb{A}$}},
  year    = {2021},
  number  = {1},
  pages   = {35--64},
  volume  = {8},
}

@Article{Rahul25A,
  author  = {Rahul, Kabeer Manali},
  journal = {arXiv},
  title   = {Admissible subcategories and metric techniques},
  year    = {2025},
  note    = {Preprint. Available online at https://www.arxiv.org/abs/2504.11772.},
}

@Article{Rahul25B,
  author  = {Rahul, Kabeer Manali},
  journal = {arXiv},
  title   = {Representability theorems via metric techniques},
  year    = {2025},
  note    = {Preprint. Available online at https://www.arxiv.org/abs/2504.11768.},
}

@Article{Keller05,
  author  = {Keller, Bernhard},
  journal = {Documenta Mathematica},
  title   = {On triangulated orbit categories},
  year    = {2005},
  pages   = {551--581},
  volume  = {10},
}

@Article{Balmer11,
  author  = {Balmer, Paul},
  journal = {Advances in Mathematics},
  title   = {Separability and triangulated categories},
  year    = {2011},
  number  = {5},
  pages   = {4352--4372},
  volume  = {226},
}

@Article{Beligiannis00,
  author  = {Beligiannis, Apostolos},
  journal = {Journal of Algebra},
  title   = {Relative homological algebra and purity in triangulated categories},
  year    = {2000},
  number  = {1},
  pages   = {268--361},
  volume  = {227},
}

@Article{BalmerStevenson21,
  author  = {Balmer, Paul and Stevenson, Greg},
  journal = {Journal of the London Mathematical Society},
  title   = {Relative stable categories and birationality},
  year    = {2021},
  number  = {4},
  pages   = {1765--1794},
  volume  = {104},
}

@InProceedings{Neeman23,
  author    = {Neeman, Amnon},
  booktitle = {International Congress of Mathematicians 2022},
  title     = {Finite approximations as a tool for studying triangulated categories},
  year      = {2023},
  pages     = {1636--1658},
  publisher = {EMS Press},
  volume    = {3},
}

@Article{GratzZvonareva23,
  author  = {Gratz, Sira and Zvonareva, Alexandra},
  journal = {Journal of the London Mathematical Society},
  title   = {Lattices of t-structures and thick subcategories for discrete cluster categories},
  year    = {2023},
  number  = {3},
  pages   = {973--1001},
  volume  = {107},
}

@Article{Kussin16,
  author  = {Kussin, Dirk},
  journal = {Journal of Noncommutative Geometry},
  title   = {Weighted noncommutative regular projective curves},
  year    = {2016},
  number  = {4},
  pages   = {1465--1540},
  volume  = {10},
}

@Article{Krause05A,
  author  = {Krause, Henning},
  journal = {Compositio Mathematica},
  title   = {The stable derived category of a noetherian scheme},
  year    = {2005},
  pages   = {1128--1162},
  volume  = {141},
}

@PhdThesis{Stovicek09,
  author = {Šťovíček, Jan},
  school = {NTNU Trondheim},
  title  = {Compactly generated triangulated categories and the telescope conjecture},
  year   = {2009},
}

@InCollection{Lenzing07,
  author    = {Lenzing, Helmut},
  booktitle = {Handbook of Tilting Theory},
  publisher = {Cambridge University Press},
  title     = {Hereditary categories},
  year      = {2007},
  pages     = {105--146},
  series    = {London Mathematical Society Lecture Note Series},
  volume    = {332},
}

@Article{Elagin25,
  author  = {Elagin, Alexey Dmitrievich},
  journal = {arXiv},
  title   = {Thick subcategories on weighted projective curves and nilpotent representations of quivers},
  year    = {2025},
  note    = {Preprint. Available online at https://arxiv.org/abs/2407.01207v2.},
}

@InProceedings{GeigleLenzing87,
  author    = {Geigle, Werner and Lenzing, Helmut},
  booktitle = {Singularities, Representation of Algebras, and Vector Bundles},
  title     = {A class of weighted projective curves arising in representation theory of finite dimensional algebras},
  year      = {1987},
  address   = {Berlin, Heidelberg},
  pages     = {265--297},
  publisher = {Springer-Verlag},
  series    = {Lecture Notes in Mathematics},
  volume    = {1273},
  isbn      = {978-3-540-47851-5},
}

@Article{Cheng24,
  author  = {Cheng, Yiyu},
  journal = {Journal of Pure and Applied Algebra},
  title   = {Wide subcategories of a domestic weighted projective line},
  year    = {2024},
  number  = {9},
  pages   = {107669},
  volume  = {228},
}

@InProceedings{Lenzing11,
  author    = {Lenzing, Helmut},
  booktitle = {Representations of Algebras and Related Topics},
  title     = {Weighted projective lines and applications},
  year      = {2011},
  address   = {Zürich},
  pages     = {153--187},
  series    = {EMS Series of Congress Reports},
}

@Book{Ringel84,
  author    = {Ringel, Claus Michael},
  publisher = {Springer},
  title     = {Tame {A}lgebras and {I}ntegral {Q}uadratic {F}orms},
  year      = {1984},
  address   = {Berlin, Heidelberg, New York, Tokyo},
  number    = {1099},
  series    = {Lecture Notes in Mathematics},
}

@Article{ChenKrause10,
  author  = {Chen, Xiao-Wu and Krause, Henning},
  journal = {arXiv},
  title   = {Introduction to coherent sheaves on weighted projective lines},
  year    = {2010},
  note    = {Preprint. Available online at https://www.arxiv.org/abs/0911.4473v3.},
}

@Book{Hazrat16,
  author     = {Hazrat, Roozbeh},
  publisher  = {Cambridge University Press},
  title      = {Graded {R}ings and {G}raded {G}rothendieck {G}roups},
  year       = {2016},
  number     = {435},
  series     = {London Mathematical Society Lecture Note Series},
  collection = {London Mathematical Society Lecture Note Series},
  place      = {Cambridge},
}

@Article{Bondal90,
  author  = {Bondal, Alexey I.},
  journal = {Mathematics of the USSR-Izvestiya},
  title   = {Representation of associative algebras and coherent sheaves},
  year    = {1990},
  number  = {1},
  pages   = {23--42},
  volume  = {34},
}

@Article{Shah23,
  author  = {Shah, Amit},
  journal = {Expositiones Mathematicae},
  title   = {Krull-{R}emak-{S}chmidt decompositions in {H}om-finite additive categories},
  year    = {2023},
  number  = {1},
  pages   = {220--237},
  volume  = {41},
}

@Article{CanonacoNeemanStellari25,
  author  = {Canonaco, Alberto and Neeman, Amnon and Stellari, Paolo},
  journal = {Bollettino dell’Unione Matematica Italiana},
  title   = {Weakly approximable triangulated categories and enhancements: a survey},
  year    = {2025},
  pages   = {109--134},
  volume  = {18},
}

@Article{Neeman21C,
  author  = {Neeman, Amnon},
  journal = {Annals of Mathematics},
  title   = {Strong generators in {${\mathbf{D}}^{\text{perf}}(X)$} and {${\mathbf{D}}^{b}_{\text{coh}}(X)$}},
  year    = {2021},
  number  = {3},
  pages   = {689--732},
  volume  = {193},
}

@Article{Neeman24,
  author  = {Neeman, Amnon},
  journal = {Acta Mathematica},
  title   = {Bounded {$t$}-structures on the category of perfect complexes},
  year    = {2024},
  pages   = {239--284},
  volume  = {233},
}

@Article{CanonacoHaesemeyerNeemanStellari24,
  author  = {Canonaco, Alberto and Neeman, Amnon and Stellari, Paolo},
  journal = {arXiv},
  title   = {The passage among the subcategories of weakly approximable triangulated categories},
  year    = {2024},
  note    = {With an appendix by Christian Haesemeyer. Preprint. Available online at https://arxiv.org/abs/2402.04605v1.},
}

@Article{DeynLankRahul25,
  author  = {De Deyn, Timothy and Lank, Pat and Manali Rahul, Kabeer},
  journal = {arXiv},
  title   = {Approximability and {R}ouquier dimension for noncommutative algebras over schemes},
  year    = {2025},
  note    = {Preprint. Available online at https://arxiv.org/abs/2408.04561v3.},
}

@Article{Matousek26B,
  author  = {Matoušek, Cyril},
  journal = {arXiv},
  title   = {Metric completions of triangulated categories from hereditary rings},
  year    = {2026},
  note    = {Preprint. Available online at https://www.arxiv.org/abs/2508.20283v2.},
}

@Article{HugelKussin17,
  author  = {Angeleri H{\"u}gel, Lidia and Kussin, Dirk},
  journal = {Documenta Mathematica},
  title   = {Large {T}ilting {S}heaves over {W}eighted {N}oncommutative {R}egular {P}rojective {C}urves},
  year    = {2017},
  pages   = {67--134},
  volume  = {22},
}

@Article{ArtinZhang94,
  author  = {Artin, Michael and Zhang, James Jian},
  journal = {Advances in Mathematics},
  title   = {Noncommutative {P}rojective {S}chemes},
  year    = {1994},
  number  = {2},
  pages   = {228--287},
  volume  = {109},
}

@Article{Jasso15,
  author  = {Jasso, Gustavo},
  journal = {Advances in Mathematics},
  title   = {{${\tau}^2$}-{S}table tilting complexes over weighed projective lines},
  year    = {2015},
  pages   = {1--31},
  volume  = {273},
}

@Article{Krah23,
  author  = {Krah, Johannes},
  journal = {Inventiones mathematicae},
  title   = {A phantom on a rational surface},
  year    = {2023},
  pages   = {1009--1018},
  volume  = {235},
}

@Article{Matousek26C,
  author  = {Matoušek, Cyril},
  journal = {Applied Categorical Structures},
  title   = {Metric {C}ompletions of {T}riangulated {C}ategories from {F}inite {D}imensional {A}lgebras},
  year    = {2026},
  number  = {27},
  pages   = {1--32},
  volume  = {34},
}

@PhdThesis{Matousek26D,
  author = {Matoušek, Cyril},
  school = {Aarhus University},
  title  = {Completing derived representation theory of algebras: Metrics on triangulated categories and applications},
  year   = {2026},
  note   = {Available online at \url{https://data.math.au.dk/publications/phd/2026/math-phd-2026-cm.pdf}.},
}

@Book{Riehl16,
  author    = {Riehl, Emily},
  publisher = {Dover Publications},
  title     = {Category {T}heory in {C}ontext},
  year      = {2016},
  isbn      = {978-0-486-80903-8},
  url       = {https://books.google.de/books?id=6B9MDgAAQBAJ},
}

@Article{RingelZhang14,
  author  = {Ringel, Claus Michael and Zhang, Pu},
  journal = {Mathematische Zeitschrift},
  title   = {From submodule categories to preprojective algebras},
  year    = {2014},
  pages   = {55--73},
  volume  = {278},
}

@Article{RingelZhang15,
  author  = {Ringel, Claus Michael and Zhang, Pu},
  journal = {Science China Mathematics},
  title   = {Objective triangle functors},
  year    = {2015},
  number  = {2},
  pages   = {221--232},
  volume  = {58},
}

@InCollection{Krause10B,
  author    = {Krause, Henning},
  booktitle = {Triangulated Categories},
  publisher = {Cambridge University Press},
  title     = {Localization theory for triangulated categories},
  year      = {2010},
  pages     = {161--235},
  series    = {London Mathematical Society Lecture Note Series},
  volume    = {375},
}

@Article{Franchini25,
  author  = {Franchini, Sofia},
  journal = {Mathematische Zeitschrift},
  title   = {Torsion pairs, t-structures, and co-t-structures forcompletions of discrete cluster categories},
  year    = {2025},
  number  = {86},
  pages   = {1--56},
  volume  = {310},
}

@Article{Joergensen08,
  author  = {Jørgensen, Peter},
  journal = {Journal of K-Theory},
  title   = {Auslander-{R}eiten triangles in subcategories},
  year    = {2008},
  number  = {3},
  pages   = {583--601},
  volume  = {3},
}

@Article{Neeman26,
  author  = {Neeman, Amnon},
  journal = {Inventiones mathematicae},
  title   = {Triangulated categories with a single compact generator and two {B}rown representability theorems},
  year    = {2026},
  pages   = {531--616},
  volume  = {244},
}

@Article{HolmJoergensen12,
  author  = {Thorsten, Holm and Jørgensen, Peter},
  journal = {Mathematische Zeitschrift},
  title   = {On a cluster category of infinite {D}ynkin type, and the relation to triangulations of the infinity-gon},
  year    = {2012},
  pages   = {277--295},
  volume  = {270},
}

@Article{GratzHolmJoergensen19,
  author  = {Gratz, Sira and Holm, Thorsten and Jørgensen, Peter},
  journal = {Mathematische Zeitschrift},
  title   = {Cluster tilting subcategories and torsion pairs in {I}gusa-{T}odorov cluster categories of {D}ynkin type {${A}_{\infty}$}},
  year    = {2019},
  pages   = {33--56},
  volume  = {292},
}

@PhdThesis{Fisher17,
  author = {Fisher, Thomas Andrew},
  school = {Newcastle University},
  title  = {On the homological algebra of clusters, quivers, and triangulations},
  year   = {2017},
}

@Article{AugustCheungFaberGratzSchroll23,
  author  = {August, Jenny and Cheung, Man Wai and Faber, Eleonore and Gratz, Sira and Schroll, Sibylle},
  journal = {Journal of the London Mathematical Society},
  title   = {Cluster structures for the {${A}_{\infty}$} singularity},
  year    = {2023},
  number  = {6},
  pages   = {2121--2149},
  volume  = {107},
}

@Article{Murphy25A,
  author  = {Murphy, Dave},
  journal = {Journal of Algebra},
  title   = {The {G}rothendieck groups of discrete cluster categories of {D}ynkin type {${A}_{\infty}$}},
  year    = {2025},
  pages   = {545--567},
  volume  = {662},
}

@Article{Murphy25B,
  author  = {Murphy, Dave},
  journal = {Journal of Pure and Applied Algebra},
  title   = {Bounding the {O}rlov spectrum for a completion of discrete cluster categories},
  year    = {2025},
  number  = {10},
  pages   = {108078},
  volume  = {229},
}

@Article{CanakciKalckPressland25,
  author  = {Çanakçı, İlke and Kalck, Martin and Pressland, Matthew},
  journal = {Journal of the London Mathematical Society},
  title   = {Cluster categories for completed infinity-gons {I}: {C}ategorifying triangulations},
  year    = {2025},
  number  = {2},
  pages   = {e70092},
  volume  = {111},
}

\end{document}